\numberwithin{equation}{subsection}
	\newtheorem{corollary}[equation]{Corollary}
	\newtheorem{conjecture}[equation]{Conjecture}
	\Crefname{conjecture}{Conjecture}{Conjectures}
	\newtheorem{fact}[equation]{Fact}
	\newtheorem{lemma}[equation]{Lemma}
	\newtheorem{proposition}[equation]{Proposition}
	\newtheorem{theorem}[equation]{Theorem}
\theoremstyle{definition}
	\newtheorem{construction}[equation]{Construction}
	\newtheorem{definition}[equation]{Definition}
\theoremstyle{remark}
	\newtheorem{example}[equation]{Example}
	\newtheorem{remark}[equation]{Remark}
	\newtheorem{warning}[equation]{Warning}
\DeclareMathOperator\Ad{Ad}
\DeclareMathOperator\Aut{Aut}
\DeclareMathOperator\Dbc{D^b_c}
\DeclareMathOperator\End{End}
\DeclareMathOperator\Four{Four}
\DeclareMathOperator\Gal{Gal}
\DeclareMathOperator\Hom{Hom}
\DeclareMathOperator\IC{IC}
\DeclareMathOperator\Ind{Ind}
\DeclareMathOperator\ind{ind}
\DeclareMathOperator\uInd{\underline{Ind}}
\DeclareMathOperator\uind{\underline{ind}}
\DeclareMathOperator\Infl{Infl}
\DeclareMathOperator\infl{infl}
\DeclareMathOperator\Irr{Irr}
\DeclareMathOperator\Isom{Isom}
\DeclareMathOperator\Lev{Lev}
\DeclareMathOperator\Lie{Lie}
\DeclareMathOperator\Loc{Loc}
\DeclareMathOperator\Out{Out}
\DeclareMathOperator\Par{Par}
\DeclareMathOperator\Perv{Perv}
\DeclareMathOperator\rank{rank}
\DeclareMathOperator\Res{Res}
\DeclareMathOperator\res{res}
\DeclareMathOperator\uRes{\underline{Res}}
\DeclareMathOperator\ures{\underline{res}}
\newcommand\cores{{}^\backprime\!\res}
\DeclareMathOperator\Ru{R_u}
\DeclareMathOperator\val{val}
\DeclareMathOperator\Cusp{Cusp}
\DeclareMathOperator\uCusp{Cusp^\prime}
\DeclareMathOperator\Sc{Sc}
\DeclareMathOperator\qcsupp{qCusp}
\DeclareMathOperator\connsupp{Cusp^\circ}
\DeclareMathOperator\GL{GL}
\DeclareMathOperator\SL{SL}
\DeclareMathOperator\Sp{Sp}
\DeclareMathOperator\Spin{Spin}
\DeclareMathOperator\SO{SO}
\newcommand\bbA{\mathbb{A}}
\newcommand\bbC{\mathbb{C}}
\newcommand\bbG{\mathbb{G}}
\newcommand\bbP{\mathbb{P}}
\newcommand\bbQ{\mathbb{Q}}
\newcommand\bbZ{\mathbb{Z}}
\newcommand{\ov}{\overline}
\newcommand\actson\curvearrowright
\newcommand\defeq{:=}
\newcommand\eqdef{=:}
\renewcommand\sslash{/\!\!/}
\newcommand\adq{/\!_\textnormal{ad}}
\newcommand\mathdef{\textsf}
\newcommand\tn{\textnormal}
\renewcommand\frak{\mathfrak}
\providecommand\cal{}
\renewcommand\cal{\mathcal}
\newcommand\rig{\textnormal{rig}}
\newcommand\rec{\textnormal{rec}}
\newcommand\smat[4]{\big[\begin{smallmatrix}#1&#2\\#3&#4\end{smallmatrix}\big]}
\newenvironment{gloss}%
{\renewcommand{\descriptionlabel}%
[1]{\hspace{\labelsep}##1:}
\begin{description}
\setlength\itemsep{1em}}
{\end{description}}
\title[A stacky Springer correspondence and rigid enhancements]%
{A stacky generalized Springer correspondence and\\ rigid enhancements of $L$-parameters}
\date{10 June 2024}
\author{Peter Dillery}
\thanks{The first author was supported by the Brin Postdoctoral Fellowship at the University of Maryland, College Park}
  \address{Department of Mathematics,
  University of Maryland, 4176 Campus Drive,
  College Park, MD 20742-4015, USA}
\author{David Schwein}
  \address{Mathematics Institute,
  University of Bonn, Endenicher Allee 60,
  53115 Bonn, Germany}
\begin{document}

\begin{abstract}
Motivated by applications to the Langlands program,
Aubert--Moussaoui--Solleveld extended Lusztig's
generalized Springer correspondence to disconnected reductive groups.
We use stacks to give a more geometric account of their theory,
in particular, formulating a truly geometric version
of the (relevant analogue of the)
Bernstein--Zelevinsky Geometrical Lemma
and explaining how to compare the correspondence
on the group and the Lie algebra using quasi-logarithms.
As an application, we study Kaletha's rigid enhancements of
$L$-parameters and draw the same conclusions
as Aubert--Moussaoui--Solleveld for this enhancement:
there exists a cuspidal support map and its fibers are parameterized
by irreducible representations of twisted group algebras.
\end{abstract}

\maketitle
\setcounter{tocdepth}{1}
\tableofcontents

\section{Introduction}

\subsection{The generalized Springer correspondence}
Partitions of a natural number~$n$
correspond to two very different objects of interest in representation theory.
On the one hand, partitions parameterize conjugacy classes of the symmetric group~$S_n$.
Using a noncanonical bijection of conjugacy classes
and irreducible representations of~$S_n$,
which exists by character theory of finite groups,
we obtain a parameterization of irreducible representations.
On the other hand, via Jordan block matrices, partitions parameterize
unipotent conjugacy classes of the general linear group $\GL_n(\bbC)$.
All in all, one has a noncanonical bijection between
the unipotent conjugacy classes of~$\GL_n(\bbC)$
and irreducible representations of its Weyl group.

One would hope that this bijection extends to an arbitrary reductive group~$G$,
with the Weyl group of $G$ playing the role of~$S_n$,
and that such a bijection could be made canonical by some natural construction.
A naive generalization is impossible, however:
for example, the group $E_8$ has $70$ unipotent orbits
but its Weyl group $W(E_8)$ has $112$ irreducible representations.
Nonetheless, in 1974, Springer \cite{springer74}
explained how to generalize the bijection using geometry:
he defined for every unipotent conjugacy class a certain variety now called a Springer fiber,
constructed an action of the Weyl group on the cohomology groups of the Springer fibers,
and showed that every irreducible representation of the Weyl group is realized in this way.

Eight years later, Lusztig \cite{lusztig84b}
both clarified and extended Springer's work using perverse $\ell$-adic sheaves.
Let $G$ be a connected (complex) reductive group
and let $\cal N_G$ be the \mathdef{nilpotent cone} of~$G$,
the variety of nilpotent elements of the Lie algebra~$\frak g$.
As we recall in \Cref{sec:param:springer},
the variety $\cal U_G$ of unipotent conjugacy classes
is $G$-equivariantly isomorphic to~$\cal N_G$,
so there is no harm in passing to the Lie algebra here.
For brevity, we summarize Lusztig's ideas
in the more recent formulation of Achar--Henderson--Juteau--Riche
\cite{achar_henderson_juteau_riche19};
see \Cref{sec:param:comparison}
for a comparison with Lusztig's earlier definitions.

The goal of Lusztig's generalized Springer correspondence
is to classify the simple objects in the category
\[
\Perv(\cal N_G/G)
\]
of $G$-equivariant \mathdef{perverse sheaves}
on $\cal N_G$ with coefficients in $\overline\bbQ_\ell$.
General facts about equivariant perverse sheaves
already give a coarse classification.
Specifically, given a unipotent conjugacy class $\cal O$
and an irreducible $G$-equivariant perverse sheaf
$\cal E$ on~$\cal O$, there is an associated
\mathdef{intersection cohomology} sheaf%
\footnote{In practice, all our intersection cohomology sheaves
will be equivariant and it would be more correct to write
$\IC(\cal O/G,\cal E)$ instead of $\IC(\cal O,\cal E)$.
To shorten notation we will use the latter notation
and leave the group action implicit.}
$\IC(\cal O,\cal E)\in\Irr\bigl(\Perv(\cal N_G/G)\bigr)$,
and every element of $\Irr\bigl(\Perv(\cal N_G/G)\bigr)$
is uniquely of this form.
In other words,
\[
\Irr\bigl(\Perv(\cal N_G/G)\bigr)
\simeq \bigsqcup_{\cal O} \Irr\bigl(\Perv(\cal O/G)\bigr).
\]
Moreover, because $\cal O$ is a homogeneous space for~$G$,
the simple objects of $\Perv(\cal O/G)$
are (shifts of) local systems, which are easy to describe:
fixing a basepoint $u\in\cal O$,
the stalk $j_u^*\cal E$ of $\cal E$ at $u$
carries a natural action of the group
$A_G(u) \defeq \pi_0\bigl(Z_G(u)\bigr)$,
and formation of stalks defines a $\overline\bbQ_\ell$-linear
equivalence of categories
\[
j_u^*\colon\Perv(\cal O/G) \simeq \tn{Mod}\bigl(A_G(u)\bigr).
\]
All in all, letting $[u]_G$ denote the $G$-orbit
of $u\in\cal N_G$, we find from general principles a bijection
\[
\Irr\bigl(\Perv(\cal N_G/G)\bigr)
\simeq \bigsqcup_{[u]_G\in\cal N_G/G} \Irr\bigl(A_G(u)\bigr).
\]

The generalized Springer correspondence
refines this description of $\Irr\bigl(\Perv(\cal N_G/G)\bigr)$
using parabolic induction and restriction functors,
like what one sees in the representation theory of reductive $p$-adic groups.
Let $L$ be a Levi subgroup of~$G$ and let $P$ be a parabolic subgroup of~$G$
with Levi quotient~$L$.
Associated to $(G,P,L)$ there is an adjoint pair of functors
\[
\begin{tikzcd}[column sep=large]
\res_{L\subseteq P}^G : \Perv(\cal N_G/G) \rar[shift left=0.5ex] &
\Perv(\cal N_L/L) : \ind_{L\subseteq P}^G \lar[shift left=0.5ex]
\end{tikzcd}
\]
called \mathdef{parabolic restriction} and
\mathdef{parabolic induction}.
Call a perverse sheaf $\cal F\in\Perv(\cal N_G/G)$
\mathdef{cuspidal} if $\cal F$ is irreducible
and $\res_{L\subseteq P}^G(\cal F) = 0$
for every proper parabolic subgroup~$P$ of~$G$.

Although the definition of cuspidality is quite sophisticated,
it is rare for a sheaf to be cuspidal and
one can combinatorially classify cuspidal sheaves,
a classification we summarize in \Cref{sec:tables}.
Parabolic induction then extends the classification to all sheaves.
Say a \mathdef{cuspidal datum} for~$G$ is a triple $(L,\cal O,\cal E)$
consisting of a Levi subgroup $L$ of~$G$,
a nilpotent orbit~$\cal O$ of~$L$,
and a cuspidal $L$-equivariant local system~$\cal E$ on~$\cal O$.
The \mathdef{induction series} 
for a cuspidal datum $(L,\cal O,\cal E)$ is the set
\[
\Irr_{(L,\cal O,\cal E)}\bigl(\Perv(\cal N_G/G)\bigr)
\subseteq \Perv(\cal N_G/G)
\]
of $\cal F\in\Irr\bigl(\Perv(\cal N_G/G)\bigr)$
such that $\cal F$ is a subobject
(equivalently, summand) of $\ind_{L\subseteq P}^G(\IC(\cal O,\cal E))$
for some parabolic subgroup of~$G$ with Levi factor~$L$.
In other words, we say the conjugacy class $[L,\cal O,\cal E]_G$
lies in the \mathdef{cuspidal support} of~$\cal F$.

With the basic definitions explained,
we can outline the correspondence.
There are two steps.
\begin{enumerate}
\item
(Construction of induction series.)
The induction series partition $\Irr\bigl(\Perv(\cal N_G/G)\bigr)$;
in other words, the cuspidal support exists and is unique.

\item
(Parameterization of induction series.)
The induction series for $[L,\cal O,\cal E]$
is parameterized by the irreducible representations
of the relative Weyl group $N_G(L)/L$.
\end{enumerate}

Before elaborating on these two steps,
we should mention the final ingredient
in the generalized Springer correspondence
under discussion in this paper:
allowing the reductive group~$G$ to be disconnected.
Motivated by applications to the Langlands correspondence,
to which we will return,
Aubert--Moussaoui--Solleveld \cite{AMS18}
generalized Lusztig's correspondence to disconnected~$G$
and their so-called quasi-Levi subgroups.
The argument proceeds by reduction to the connected case
and concludes that the induction series for $[L,\cal O,q\cal E]$
is parameterized by irreducible representations not of the group
$W=N_G(L)/L$, but rather, of the twisted group algebra
\[
\bbC[W_{(\cal O,q\cal E)},\kappa_{(\cal O,q\cal E)}]
\]
where $W_{(\cal O,q\cal E)}$ is the stabilizer of $(\cal O,q\cal E)$ in~$W$
and $\kappa_{(\cal O,q\cal E)}\in Z^2(W_{(\cal O,q\cal E)},\bbC^\times)$
is a certain subtle cocycle.

The first main goal of this paper is to redevelop
the disconnected generalized Springer correspondence
of Aubert--Moussaoui--Solleveld
by directly working with disconnected reductive groups,
rather than reducing to the known connected case using cocycles.
This redevelopment consists of implementing the two steps above,
the construction and parameterization of induction series,
and both steps require new ideas.
Let us briefly outline each.

The first step, the construction of induction series,
is carried out in \Cref{sec:springer}.
Existence of the cuspidal support is a formality but uniqueness,
in other words, disjointness of induction series, is not at all automatic.
Proving it requires a generalization of Mackey's theorem known,
at least in the setting of reductive $p$-adic groups,
as the Geometrical Lemma of Bernstein--Zelevinsky,
which shows that the parabolic restriction of a parabolic induction
is filtered by parabolic inductions of parabolic restrictions;
we refer the reader to \Cref{thm17} for a more precise statement.
When $G$ is connected, the Geometrical Lemma for $\Perv(\cal N_G/G)$
is a theorem of Achar--Henderson--Juteau--Riche
\cite[Lemma~2.1]{achar_henderson_juteau_riche17b},
building on earlier work of Lusztig and Mars.

In trying to generalize the proof of the Geometrical Lemma
to disconnected reductive groups,
one encounters the following well-known technical problem:
when $G$ is connected, a $G$-equivariant structure on a perverse sheaf is unique if it exists,
but when $G$ is disconnected, a $G$-equivariant structure is truly an additional datum.
Naively generalizing the proof to the disconnected case
would require carefully keeping track of equivariant structures,
a difficult bookkeeping problem.

Fortunately, there is a different way of organizing equivariant structures:
algebraic stacks, specifically, quotient stacks.
Rewriting the proof of the geometrical in stack language
is extremely natural because it illuminates many
change-of-coordinate maneuvers in the earlier proofs,
as well as the definitions of parabolic induction and restriction themselves,
which become pull-push functors along a certain correspondence
(in the sense of algebraic geometry, in other words,
a span in the sense of category theory).
The full statement requires a bit of set-up,
but to give the reader a flavor of the result we preview it below.
Let $P$ and~$Q$ be parabolic subgroups of~$G$ with Levi factors~$M$ and~$L$.
There is an algebraic stack $\frak X$
which encodes the composition $\res_{M\subseteq Q}^G\circ\ind_{L\subseteq P}^G$
and sits in the composition of the correspondences defining these functors.
This stack is further filtered by double cosets $w\in W_M\backslash W/W_L$,
and for each of these one has an algebraic stack $\frak Y_w$
encoding a certain parabolic restriction followed by a certain parabolic induction.

\begin{theorem}[\Cref{thm16}]
Let $\cal N_w \defeq \cal N_{Q\cap{}^wL}
\underset{\cal N_{M\cap{}^wL}}{\times}
\cal N_{M\cap{}^wP}$ and
$G_w \defeq (Q\cap{}^wL)\underset{M\cap{}^wL}{\times}M\cap{}^wP$.
\begin{enumerate}
\item
There are isomorphisms
$\frak X_w \simeq \cal N_{Q\cap{}^wP}/(Q\cap{}^wP)$
and $\frak Y_w \simeq \cal N_w/G_w$.

\item
The composition
$f_w\colon\frak X_w \simeq \cal N_{Q\cap{}^wP}/(Q\cap{}^wP)
\longrightarrow\cal N_w/G_w\simeq\frak Y_w$
fits into a commutative diagram
\[
\begin{tikzcd}
\cal N_L/L &
\frak X_w \lar \dar{f_w} \drar \\
\cal N_{{}^wL}/{}^wL \uar{\tn{Ad}(w^{-1})} &
\frak Y_w \lar \rar &
\cal N_M/M.
\end{tikzcd}
\]
\end{enumerate}
\end{theorem}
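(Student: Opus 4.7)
The plan is to unwind both $\frak X$ and $\frak Y_w$ as explicit fiber products of quotient stacks and then stratify $\frak X$ using the $(Q,P)$-double coset decomposition of~$G$. By its construction earlier in the paper, $\frak X$ should be the fiber product $\cal N_P/P\times_{\cal N_G/G}\cal N_Q/Q$; concretely its objects are triples $(x,g,y)$ with $x\in\cal N_P$, $y\in\cal N_Q$ and $\Ad(g)(x)=y$, modulo the $P\times Q$-action $(p,q)\cdot(x,g,y)=(\Ad(p)(x),\,qgp^{-1},\,\Ad(q)(y))$. Projection to the $g$-coordinate induces a map $\frak X\to Q\backslash G/P$, and $\frak X_w$ is the preimage of the double coset indexed by~$w$.

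For part~(1), I fix a lift of $w$ to~$G$ and compute directly. The stabilizer of~$w$ in $P\times Q$ under the above action is $\{(p,wpw^{-1}):p\in P\cap{}^{w^{-1}}\!Q\}\cong P\cap{}^{w^{-1}}\!Q$, and on this stratum the condition $\Ad(g)(x)=y\in\cal N_Q$ cuts out $x\in\cal N_{P\cap{}^{w^{-1}}\!Q}$. Hence $\frak X_w\simeq\cal N_{P\cap{}^{w^{-1}}\!Q}/(P\cap{}^{w^{-1}}\!Q)$, which $\Ad(w)$ identifies with the claimed form $\cal N_{Q\cap{}^wP}/(Q\cap{}^wP)$. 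The identification $\frak Y_w\simeq\cal N_w/G_w$ is then formal: $\frak Y_w$ is by construction the fiber product of quotient stacks $\cal N_{Q\cap{}^wL}/(Q\cap{}^wL)\times_{\cal N_{M\cap{}^wL}/(M\cap{}^wL)}\cal N_{M\cap{}^wP}/(M\cap{}^wP)$, and the standard description of fiber products of quotient stacks yields exactly $\cal N_w/G_w$.

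For part~(2), the map $f_w$ arises from a natural homomorphism $Q\cap{}^wP\to G_w$ assembled from the two Levi projections $Q\cap{}^wP\twoheadrightarrow Q\cap{}^wL$ (killing $Q\cap\Ru({}^wP)$) and $Q\cap{}^wP\twoheadrightarrow M\cap{}^wP$ (killing $\Ru(Q)\cap{}^wP$), which agree after further projection to $M\cap{}^wL$; this lifts to $\cal N_{Q\cap{}^wP}\to\cal N_w$ and descends to $f_w$. Commutativity of the three triangles is then mechanical: the composition $\frak X_w\to\frak Y_w\to\cal N_M/M$ unwinds as $\cal N_{Q\cap{}^wP}\to\cal N_{M\cap{}^wP}\to\cal N_M$, while the map $\frak X_w\to\cal N_L/L$ inherited from $\frak X\to\cal N_P/P\to\cal N_L/L$ becomes, after the $\Ad(w)$ change of representative, the composition through $\cal N_{{}^wL}/{}^wL\xrightarrow{\Ad(w^{-1})}\cal N_L/L$, matching the arrow in the diagram.

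The main technical ingredient is the Levi-style decomposition of the intersection $Q\cap{}^wP$ compatible with both $Q$ and ${}^wP$, which is what makes the homomorphism $Q\cap{}^wP\to G_w$ well-defined and its projections mutually compatible. This is classical for connected groups once the parabolics share a common maximal torus, and in the disconnected setting considered here it reduces to the connected case on identity components together with component-group bookkeeping. The advantage of the stacky formulation is precisely that this bookkeeping gets absorbed automatically into the quotient stacks, so the argument structurally mirrors the connected case.
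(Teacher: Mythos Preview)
Your proposal is correct and takes essentially the same approach as the paper: both compute $\frak X_w$ by first presenting $\frak X$ as pairs $(g,z)\in G\times\cal N_P$ with $g\cdot z\in\cal N_Q$ modulo $P\times Q$, then reducing the $w$-stratum to a quotient by $Q\cap{}^wP$ via the orbit--stabilizer principle (the paper phrases this as an explicit $(Q\cap{}^wP)$-torsor $P\times Q\times\cal N_{Q\cap{}^wP}\to Z_w$, which is the same computation), and both obtain $\frak Y_w$ from \Cref{thm20} on fiber products of quotient stacks with surjective split group maps. Your description of $f_w$ via the two compatible Levi projections $Q\cap{}^wP\to Q\cap{}^wL$ and $Q\cap{}^wP\to M\cap{}^wP$ is in fact more explicit than the paper's, which leaves the commutativity check to the reader.
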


The Geometrical Lemma for perverse sheaves
then follows by a standard open--closed triangles argument,
using the fact that $f_w$ has ``contractible fibers''.
Our formulation is completely geometric
in the sense that it has completely removed all mention of sheaf theory,
and can thus be recycled for other six-functor formalisms.
For example, turning to coherent sheaves,
we expect \Cref{thm16} to  assist in extending Bezrukavnikov's perverse-coherent
Springer correspondence \cite{bezrukavnikov03} to disconnected reductive groups.

To conclude our discussion of the construction of induction series,
we note that \Cref{thm16},
and more generally our treatment of the cuspidal support map,
has no restrictions on the parabolic subgroups,
unlike the work of Aubert--Moussaoui--Solleveld:
for us, a parabolic subgroup of~$G$
is simply a subgroup whose neutral component is parabolic.
In general one must record the family $\cal P$
of parabolic subgroups with which one carries out parabolic induction and restriction,
leading to a formalism of \mathdef{$\cal P$-cuspidal support}
that we explain in \Cref{sec:springer:cuspidal}.

In \Cref{sec:param} we turn to the second step,
parameterization of induction series.
At the beginning of the analysis
we can more or less follow the classical strategy from the connected case.
When $G$ is connected,
the definitions of parabolic induction
and restriction extend to the entire Lie algebra,
and if one further restricts to a certain regular locus,
depending on the induction series to parameterize,
one finds that the correspondence defining parabolic induction
becomes a finite étale covering.
The endomorphism algebra of this regular parabolic induction
is relatively easy to compute,
and can naturally be interpreted as the endomorphisms
of a certain induced representation of a finite group.
The Fourier--Laumon transform then exchanges parabolic induction on the nilpotent cone
and on this regular locus, roughly speaking,
which shows that the two parabolic inductions have the same endomorphisms.

For the most part, this argument for parameterization
of induction series extends to disconnected groups.
The main difficulty is to compute the endomorphism algebra
of the parabolic induction on the regular locus.
Here one must be careful with the class $\cal P$ of parabolic subgroups:
without additional hypotheses on~$\cal P$,
we do not expect that the $\cal P$-cuspidal induction series will be parameterized
by irreducible representations of twisted groups algebras,
as we argue in \Cref{thm78}.
Consequently, and for later applications,
we mostly focus on the quasi-Levi subgroups of Aubert--Moussaoui--Solleveld.
At this point, it would seem that we are done because
the endomorphism algebras of the relevant parabolic inductions
are computed in their earlier work.

However, there is one subtlety:
on the one hand, the argument above must take place on the Lie algebra,
because the Fourier--Laumon transform operates on sheaves on vector spaces,
but on the other hand, the work of Aubert--Moussaoui--Solleveld takes place on the group.
It seems possible that their work extends to the group setting
but we did not closely investigate this possibility.
Instead, we develop a comparison of parabolic induction on the group and the Lie algebra
using the quasi-logarithms of Bardsley--Richardson \cite{bardsley_richardson85}.
The comparison is not immediate:
although the nilpotent cone and unipotent variety are equivariantly isomorphic,
the same cannot be said of the relevant regular loci on the group and the Lie algebra.
The following result resolves this discrepancy.
Its proof passes through several lemmas of independent interest
on the étale fundamental group of an algebraic stack and its connection to local systems.

\begin{theorem}[\Cref{thm84}]
Let $\lambda\colon G\to\frak g$ be a quasi-logarithm.
Let $\cal O$ be a unipotent orbit of~$L$, let $u\in\cal O$,
and let $\cal E\in\Loc(\lambda(\cal O)/L)$ be irreducible. 
Suppose that $\pi_0(L)$ is a normal subgroup of $\pi_0(N_G(L^\circ,\cal O))$.
Then the map
\[
\lambda^*\colon
\End\bigl(\uind_{L\subseteq P}^G(\infl_{L\tn{-reg}}(\cal E))\bigr) \to
\End\bigl(\uInd_{L\subseteq P}^G(\Infl_{L\tn{-reg}}(\lambda^*\cal E))\bigr)
\]
is an isomorphism.
\end{theorem}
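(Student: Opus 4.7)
The plan is to compute both endomorphism algebras as twisted group algebras $\bbC[W_{(\cal O, \cal E)}, \kappa_{(\cal O, \cal E)}]$ via étale fundamental groupoids of quotient stacks, and to verify that $\lambda^*$ matches these descriptions.

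First, I would check that $\lambda^*$ is a well-defined algebra homomorphism. Both $\uind$ and $\uInd$ are pull-push functors along correspondences of quotient stacks, and the $G$-equivariance of $\lambda$ makes it a morphism of such correspondences, inducing by base change a natural transformation between the two functors and in particular a map on endomorphism algebras.

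Next, I would describe each endomorphism algebra as a twisted group algebra. The parabolic induction of a local system on the $L$-regular locus is a local system on the $G$-regular locus, so each endomorphism algebra is the endomorphism algebra of a representation of the étale fundamental groupoid of the relevant quotient stack ($\frak g_{L\tn{-reg}}/G$ on the Lie algebra side, $G_{L\tn{-reg}}/G$ on the group side). Using the preceding lemmas on étale fundamental groups of algebraic stacks, each such representation is parabolically induced from the component group $N_G(L^\circ, \cal O)/L$, producing the twisted group algebra $\bbC[W_{(\cal O, \cal E)}, \kappa_{(\cal O, \cal E)}]$. The normalcy hypothesis $\pi_0(L) \lhd \pi_0(N_G(L^\circ, \cal O))$ ensures that $N_G(L^\circ, \cal O)/L$ is a bona fide group, so that $W_{(\cal O, \cal E)}$ is a subgroup and the twisted group algebra is well-defined. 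Finally, $W_{(\cal O, \cal E)}$ and $\kappa$ depend only on the action of $N_G(L^\circ, \cal O)/L$ on isomorphism classes of local systems on $\cal O$; this action is preserved by $\lambda^*$ because $\lambda$ is $G$-equivariant and étale near $1$ by $d\lambda_1 = \tn{id}$, so the two descriptions agree.

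The main obstacle is the second step, translating each endomorphism algebra into a twisted group algebra via étale fundamental groupoids, especially since the regular loci on the group and on the Lie algebra are not globally isomorphic. The global comparison is bridged by restricting to a $G$-invariant neighborhood of the unipotent variety on which $\lambda$ is étale, and the stack-theoretic lemmas referenced in the excerpt are designed precisely to make this local-to-global transition work.
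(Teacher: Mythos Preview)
Your approach is in the same spirit as the paper's but is organized differently and contains one imprecision worth flagging.

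The paper does not compute each endomorphism algebra separately as a twisted group algebra and then match them. Instead it sets up the commutative square
\[
\begin{tikzcd}
(\cal O\times Z(L^\circ)_{G\tn{-reg}})/L \rar{\lambda}\dar &
(\lambda(\cal O)\times \frak z(L^\circ)_{G\tn{-reg}})/L \dar \\
Y_{L,\cal O}/G \rar{\lambda} &
Y_{\frak l,\lambda(\cal O)}/G,
\end{tikzcd}
\]
observes (via \Cref{thm73,thm74}) that the vertical arrows are $W$-torsors for $W=\pi_0(N_G(L^\circ,\cal O))/\pi_0(L)$ and compute the two parabolic inductions, and that the horizontal arrows are $W$-equivariant and generically finite \'etale (by \Cref{thm82,thm83}). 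It then applies the general \Cref{thm79}, whose conclusion is that $\mu^*$ is an isomorphism on $\End$ iff $N_W(\cal E)=N_W(\lambda^*\cal E)$, a condition that is immediate here. The twisted-group-algebra description is deduced \emph{afterwards} (\Cref{thm87}), bootstrapping from \cite[5.4]{AMS18} on the group side; it is not an input to the proof of \Cref{thm84}.

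Your route---compute both sides as twisted group algebras and match---can be made to work, but your justification that ``$\kappa$ depends only on the action of $N_G(L^\circ,\cal O)/L$ on isomorphism classes of local systems on~$\cal O$'' is not correct as stated: two local systems with the same stabilizer can give non-cohomologous cocycles, since $\kappa$ records how chosen intertwiners $\cal E\simeq w^*\cal E$ compose. What actually makes your argument go through is that $\lambda|_{\cal O}\colon\cal O\to\lambda(\cal O)$ is an $N_G(L^\circ,\cal O)$-equivariant isomorphism (a Springer-type isomorphism on the orbit), so $\lambda^*$ is an equivalence of the relevant categories of equivariant local systems and transports the intertwiners themselves, not just the stabilizers. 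The paper's Cartesian-square packaging via \Cref{thm79} sidesteps this entirely: base change gives $\mu^*p_*\simeq p'_*\lambda^*$, and Frobenius reciprocity reduces the comparison to the normalizer condition alone, with no cocycle bookkeeping.
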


All in all, we cannot say for sure that our quasi-cuspidal support map
agrees with that of Aubert--Moussaoui--Solleveld,
though our connected cuspidal support maps agree;
see \Cref{thm96} and \Cref{sec:param:comparison} for more discussion.
Nonetheless, these results are strong enough to show that with our definitions
the induction series for quasi-Levi subgroups are parameterized by 
irreducible representations of twisted group algebras.

\subsection{Supercuspidal support for $L$-parameters}
The second half of our paper applies the results of the first
to study the relationship between supercuspidal support
and the refined local Langlands conjectures.

Let $F$ be a nonarchimedean local field and
$G$ a connected quasi-split reductive $F$-group.
As was the case for $\Perv(\cal N_G/G)$, the smooth dual $\Pi(G)$ of~$G(F)$
can be organized using (normalized) parabolic induction and restriction.
Call a representation~$\pi\in\Pi(G)$
\mathdef{supercuspidal} if $\res_{L\subseteq P}^G(\pi) = 0$
for every proper parabolic subgroup $P\subset G$.
It follows by adjunction that every $\pi\in\Pi(G)$ 
arises as a subrepresentation of a parabolic induction
$\ind_{L\subseteq P}^G(\sigma)$ with $\sigma$ supercuspidal,
and by the Bernstein--Zelevinsky Geometrical Lemma,
the $G(F)$-conjugacy class of the pair $(L,\sigma)$ is unique.
We call such a pair $(L,\sigma)$ a \mathdef{cuspidal pair} for~$G$
and its conjugacy class $[L,\sigma]_G$
the \mathdef{supercuspidal support} $\Sc(\pi)$ of~$\pi$.
Writing $\Omega(G)$ for the set of $G(F)$-conjugacy classes of cuspidal pairs,
the so-called \mathdef{Bernstein variety},
supercuspidal support is a map
\[
\Sc \colon \Pi(G) \longrightarrow \Omega(G).
\]
Organizing $\Pi(G)$ by the fibers over the connected components of~$\Omega(G)$
yields the \mathdef{Bernstein decomposition} of~$\Pi(G)$.

As for the local Langlands conjectures and their refinements,
in general, a refined local Langlands correspondence
enhances the classical notion of $L$-parameter
and collects together representations of groups related to~$G$
into compound $L$-packets.
We know of at least four types of enhancement,
and in this paper, two of them appear:
the Arthur--Vogan enhancement using pure inner forms \cite{vogan93,arthur06},
which appeared in the earlier work of Aubert--Moussaoui--Solleveld,
and the rigid enhancements of Kaletha and Dillery \cite{Kal16,Dillery}.
Let us review the second, our main focus.

Kaletha's enhancement of $L$-parameters rests
on the notion of a \mathdef{rigid inner twist}.
Roughly speaking, a rigid inner twist is a way of equipping an inner twist
$(G', \xi)$ of $G$ with an extra piece of ``rigidifying data''
(namely, a certain torsor $\mathcal{T}$) in order to shrink the morphisms of inner twists
to only allow morphisms which preserve representation-theoretic information.
Fixing a sufficiently large finite central subgroup $Z \subseteq G$
(for example, $Z(G_\tn{der})$, the center of the derived subgroup of $G$)
simplifies the discussion of rigid inner twists, and we do so in this article. 
On the Galois side, the subgroup~$Z$ intervenes through the following construction:
given a subgroup $K$ of~$\widehat G$, let
\[
K^+ \defeq \widehat{G/Z} \times_{\widehat G} K
\]
be the preimage of~$K$ under the isogeny $\widehat{G/Z}\to\widehat G$.
In particular, $\widehat G^+ = \widehat{G/Z}$.

For an $L$-parameter $\varphi\colon W_F\times\SL_2\to{}^LG$,
the local Langlands conjectures predict the existence of a finite subset
$\Pi_{\varphi}$ of irreducible representations of all rigid inner twists of $G$.
They further predict the existence,
after fixing a Whittaker datum $\mathfrak{w}$ for $G$,
of a canonical bijection 
\[
\rec_{\frak w}\colon \Pi_{\varphi} \longrightarrow
\Irr\bigl(\pi_{0}\bigl(Z_{\widehat G^+}(\varphi)\bigr)\bigr)
\]
between $\Pi_\varphi$ and the set of isomorphism classes of irreducible
complex representations of the finite group $\pi_0\bigl(Z_{\widehat G^+}(\varphi)\bigr)$.
This bijection is expected to satisfy many compatibility properties, cf.~\cite[\S 7.3]{Dillery}.
In particular, the restriction map
\begin{equation}\label{centralchar}
\Irr\bigl(\pi_{0}\bigl(Z_{\widehat G^+}(\varphi)\bigr)\bigr)
\to \bigl(Z(\widehat{G})^{\Gamma,+}\bigr)^{*}
\end{equation}
whose target is identified, via an analogue of Tate-Nakayama duality,
with the set of isomorphism classes of rigid inner twists of~$G$,
is expected to identify the twist~$G'$
whose representation corresponds to
a given irreducible representation of $\pi_{0}\bigl(Z_{\widehat G^+}(\varphi)\bigr)$.
With the parameterization $\varphi\mapsto\Pi_\varphi$ in hand,
it is natural to define an \mathdef{enhanced $L$-parameter} $(\varphi, \rho)$
to consist of an $L$-parameter $\varphi$ and representation
$\rho \in \Irr\bigl(\pi_{0}\bigl(Z_{\widehat G^+}(\varphi)\bigr)\bigr)$.
One can extract from $\varphi$ the pair 
\[
(H,u_\varphi) \defeq \Bigl(Z_{\widehat G^+}(\varphi |_{W_F}),\;
\varphi\bigl(1,[\begin{smallmatrix}1&1\\0&1\end{smallmatrix}]\bigr)\Bigr)
\]
where $H \defeq Z_{\widehat G^+}(\varphi |_{W_F})$
is a (possibly disconnected, complex) reductive group and
$u_{\varphi} \defeq \varphi\bigl(1,[\begin{smallmatrix}1&1\\0&1\end{smallmatrix}]\bigr) \in H$
is unipotent.

What is the relationship between the local Langlands conjectures
and our earlier notion of supercuspidal support?
In \cite{AMS18}, Aubert--Moussaoui--Solleveld use their generalized Springer correspondence
for disconnected reductive groups to define a notion of cuspidality
for $L$-parameters with Arthur--Vogan enhancement
as well as a \mathdef{cuspidal support} map $\Cusp_\tn{AMS}$,
which produces from $(H, u_\varphi, \rho)$
a cuspidal enhanced $L$-parameter for some Levi subgroup of a pure inner twist of $G$.
They conjecture that under the refined local Langlands correspondence,
cuspidal enhanced $L$-parameters correspond to supercuspidal representations
and $\Cusp_\tn{AMS}$ corresponds to the supercuspidal support map for these enhanced $L$-parameters.

Our goal is to extend this story to rigid enhancements of $L$-parameters.
This extension enables, among other applications,
a conjectural description of the Bernstein decomposition
for representations of rigid inner twists.
In the second half of the article
we redevelop the theory of the cuspidal support map in the rigid setting,
culminating in the following conjecture.

\begin{conjecture}\label{introconj1} (\Cref{thm56} and \Cref{thm60}) 
The local Langlands correspondence for $G$
and rigid inner twists of its Levi subgroups
makes the following square commute:
\[
\begin{tikzcd}[column sep=large, row sep=large]
\Pi^{\rig,Z}(G)  \arrow["\Sc+\tn{ABPS}"]{d} \arrow["\rec_{ \mathfrak{w}}"]{r} &
\Phi_\tn{e}^Z({}^LG) \arrow["\Cusp"]{d} \\
\bigsqcup\limits_{\substack{[\mathcal{T}] \in H^{1}(\mathcal{E}, Z \to G)\\M' \in \Lev(G^{\mathcal{T}})}}
\bigl(\Pi_\tn{sc}(M') \sslash W(G,M')\bigr)_\natural \arrow["\rec_{\mathfrak{w}_{M}}"]{r} &
\bigsqcup\limits_{\substack{[\mathcal{T}] \in H^{1}(\mathcal{E}, Z \to G)\\M' \in \Lev(G^{\mathcal{T}})}}
\bigl(\Phi_{\tn{cusp}}(M',\mathcal{T}_{M}) \sslash W({}^LG, {}^LM)\bigr)_{\kappa}.
\end{tikzcd}
\]
\end{conjecture}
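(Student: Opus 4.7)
The plan is to prove this conjecture conditionally on the local Langlands correspondence for $G$ and the Levi subgroups of its rigid inner twists, decomposing the argument into three parts: well-definedness of the right vertical map $\Cusp$, well-definedness of the left vertical map $\Sc + \tn{ABPS}$, and the commutativity of the square modulo LLC. Theorems~\ref{thm56} and~\ref{thm60} are intended to establish the two nontrivial halves of this argument on the Galois and automorphic sides, respectively.

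For the right vertical map, the plan is as follows. Given $(\varphi, \rho) \in \Phi_\tn{e}^Z({}^LG)$, extract the disconnected complex reductive group $H = Z_{\widehat G^+}(\varphi|_{W_F})$ and the unipotent element $u_\varphi = \varphi\bigl(1, \smat{1}{1}{0}{1}\bigr) \in H$. The enhancement $\rho \in \Irr\bigl(\pi_0(Z_{\widehat G^+}(\varphi))\bigr)$ translates, via the stalk-at-$u_\varphi$ equivalence, into an irreducible $H$-equivariant local system $q\cal E$ on the $H$-orbit of $u_\varphi$ in the unipotent variety of $H$. Applying the quasi-cuspidal support map of \Cref{sec:springer:cuspidal} to this local system produces a quasi-Levi $M' \subseteq H$, a unipotent orbit $\cal O$ of $M'$, and a cuspidal $M'$-equivariant local system on $\cal O$. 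A geometric analysis, using that $M'$ arises as the centralizer in $\widehat{G^{\cal T}}{}^+$ of a continuous cocharacter, identifies $M'$ with the dual side of a Levi subgroup of some rigid inner twist $G^{\cal T}$ of $G$, thereby producing the claimed cuspidal enhanced parameter; the corresponding Whittaker datum $\mathfrak{w}_M$ is inherited from $\mathfrak{w}$ along this Levi.

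The commutativity then follows from the expected compatibility of $\rec_{\mathfrak{w}}$ with parabolic induction: if $\pi \hookrightarrow \ind_{L \subseteq P}^G(\sigma)$ with $\sigma$ supercuspidal, then $\varphi_\pi$ should factor through ${}^LL \hookrightarrow {}^LG$ and restrict there to $\varphi_\sigma$, with enhancements related by an explicit recipe involving the relative Weyl group. Granted this, the square commutes on each Bernstein component provided one checks that the ABPS parameterization of a representation-theoretic induction series by irreducible representations of $\bbC[W(G,M'), \natural]$ matches the parameterization from \Cref{sec:param} of the corresponding Springer induction series by $\bbC[W({}^LG, {}^LM), \kappa]$. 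The main obstacle is showing that the cocycles $\natural$ and $\kappa$ agree up to coboundary: on the $p$-adic side, $\natural$ encodes the failure of intertwining operators between parabolically induced representations to compose strictly, whereas on the Galois side, $\kappa$ encodes the analogous failure for parabolic induction of perverse sheaves on the disconnected reductive group $H$. Bridging these will require passing through the quasi-logarithm comparison of \Cref{thm84} and performing an explicit analysis of cocycle representatives in both settings, which we expect to be the heart of \Cref{thm60}.
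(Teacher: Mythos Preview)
The statement you are attempting to prove is a \emph{conjecture} in the paper, not a theorem, and the paper does not provide a proof of it. Your proposal misreads the status of \Cref{thm60}: it is itself a conjecture, not a result that will ``contain the heart'' of the cocycle comparison. The paper explicitly leaves open the matching of the cocycle families $\natural$ and $\kappa$ in general, noting only that when both are trivial the conclusion follows from \Cref{thm55}.

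What the paper actually establishes is the following. The map $\Cusp$ is constructed unconditionally in \Cref{sec:bernstein:support} (your sketch of this is roughly correct, though the Levi $L$-subgroup $\cal M$ is obtained as $Z_{{}^LG}(Z^\circ(K^+))$ for the quasi-Levi $K^+$ arising from the quasi-cuspidal support, not via a cocharacter). \Cref{thm56} then shows, \emph{conditionally on \Cref{thm55}} (which packages the supercuspidal, unramified-twist, and outer-automorphism compatibilities of $\rec$), that the bottom row of the square is well-defined and the outer square involving $\Omega$'s commutes. But the refinement to twisted extended quotients---that is, the commutativity of the square as stated---requires \Cref{thm60}, which asserts an isomorphism of twisted extended quotients compatible with the projection to naive quotients. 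The paper does not attempt to compare $\natural$ and $\kappa$ directly, and your suggestion that the quasi-logarithm comparison of \Cref{thm84} would bridge them is not supported: that theorem compares endomorphism algebras of parabolic inductions on the group and Lie algebra of the \emph{complex} group $H^+$, and says nothing about the $p$-adic intertwining operators that produce $\natural$.
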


In the above statement,
\begin{itemize}
\item
$\Phi_\tn{e}^Z({}^LG)$ is the set of $\widehat G$-conjugacy classes of
$L$-parameters of $G$ with rigid enhancement,

\item
$\Pi^{\rig,Z}(G)$ is the set of isomorphism classes of irreducible representations of rigid inner twists of $G$,

\item
``$\Sc+\tn{ABPS}$'' is the composition of the supercuspidal support map and Solleveld's solution to the ABPS conjecture \cite[Theorem~E]{solleveld22},

\item
``$\Cusp$'' is our rigid analogue of the cuspidal support map $\Cusp_\tn{AMS}$ of \cite{AMS18},

\item
the cohomology set $H^{1}(\mathcal{E}, Z \to G)$ parameterizes rigid inner twists $(G^{\mathcal{T}}, \mathcal{T})$ of $G$ (see \Cref{sec:corr:gerbe}),

\item
$\Lev(G^{\mathcal{T}})$ is a fixed set of representatives of Levi subgroups of $G^{\mathcal{T}}$ up to $G^{\mathcal{T}}(F)$-conjugacy,

\item
``$(-\sslash-)_{-}$" is notion for a \mathdef{twisted extended quotient} (see \Cref{sec:bernstein:quotient}),

\item
$\Pi_\tn{sc}(M')$ is the set of isomorphism classes of supercuspidal representations of $M'(F)$,

\item
$\Phi_{\tn{cusp}}(M',\mathcal{T}_{M})$ is the set of cuspidal enhanced $L$-parameters mapping, via \eqref{centralchar} and Tate--Nakayama duality, to the rigid inner twist $(M', \mathcal{T}_{M})$, and

\item
the superscript ``$Z$'' records the fixed choice of~$Z$.
\end{itemize}

As \cite[7.2, 7.3]{Solleveld} outlines,
when $Z=Z(G_\tn{der})$ it is possible to formulate \Cref{introconj1}
by reducing to the earlier work of Aubert--Moussaoui--Solleveld
rather than redeveloping the theory of the cuspidal support map.
Nonetheless, our direct approach,
which in particular applies the disconnected generalized Springer correspondence
directly to the groups $Z_{\widehat G^+}(\varphi |_{W_F})$,
has several advantages.

First, our methods permit $Z$ to be an arbitrary subgroup of~$Z(G)$
instead of a subgroup of $Z(G_\tn{der})$.
This level of generality is needed in \cite{kaletha18a}
to compare the rigid enhancement of $L$-parameters with a third enhancement that uses isocrystals.
The isocrystal enhancement is of great interest because,
as elaborated in \cite{bertoloni_meli_oi23},
it is closely related to the geometrization program of Fargues and Scholze.
One would hope for a theory of cuspidal support for isocrystal enhancements of $L$-parameters,
and once this theory is developed, our higher level of generality
should assist in a comparison between this theory and the rigid version.

Second, our methods illuminate a new compatibility condition
between cuspidal support and twisting by characters.
More specifically, let $G'$ be an inner form of~$G$.
On the one hand, after we fix a rigidification~$\cal T$ of~$G'$,
the fiber over $[\cal T]$ of the commutative diagram in \Cref{introconj1}
predicts that the cuspidal support map for irreducible representations of $G'(F)$
can be described purely in terms of $L$-parameters and the Langlands correspondence
for $G'$ and its Levi subgroups, as ${\rec_{\frak w}^{-1}}\circ{\Cusp}\circ{\rec_{\frak w}}$.
On the other hand, formation of supercuspidal support does not depend on
either of these auxiliary choices, the rigidification or the Whittaker datum.
On the Galois side, the expected effect of changing either of these choices is well understood
(as explained in \cite[\S 6]{kaletha18a} and \cite[\S 4]{kaletha13})
and amounts to twisting the enhancement by a certain character.
All in all, we find that the independence of the supercuspidal support from these two choices
is expressed in a natural compatibility of the cuspidal support map
and twisting the enhancement by a certain character (\Cref{thm66,thm95}).

Proving this compatibility reduces to proving an analogous compatibility
in the disconnected generalized Springer correspondence (\Cref{thm18}).
In the formulation of Aubert--Moussaoui--Solleveld
verifying this compatibility condition would seem to require
a careful selection of various representatives
that arise in the construction,
but in our formulation, the compatibility is almost immediate.

\subsection{Notation}
We fix a field isomorphism $\bbC\simeq\overline\bbQ_\ell$,
which we use to freely pass between the two fields.
More precisely, in \Cref{sec:springer,sec:param} we use
$\bbC$ as the base field for varieties or stacks
and $\overline\bbQ_\ell$ as the coefficient field
for constructible or perverse sheaves,
whereas in \Cref{sec:bernstein,sec:corr}
we always use complex coefficients
for $\widehat G$ and for representations
of finite groups or of~$G(F)$.
Our field isomorphism $\bbC\simeq\overline\bbQ_\ell$
transfers results between these two settings.
We make this standard but unsatisfying
maneuver because one needs $\overline\bbQ_\ell$
to talk about perverse sheaves%
\footnote{In fact, one could probably rewrite
\Cref{sec:springer} using perverse sheaves
with complex coefficients and the analytic topology,
the primary setting of \cite{achar21}.
We did not choose this option because
we are unaware of any systematic development
of a six-functor formalism for perverse sheaves
in the analytic setting.}
but $\bbC$ to talk about temperedness
(cf.\ \Cref{thm69}).

Suppose $G$ is a linear algebraic group acting on a variety~$X$.
In \Cref{sec:springer} we write $X/G$ for the stack quotient,
but in the rest of the paper, $X/G$ refers to the ordinary quotient.
Given $Y\subseteq X$, we write $N_G(Y)$ for
the stabilizer of $Y$ and $Z_G(Y)$ for
the pointwise stabilizer of~$Y$.
Given $x\in X$, we write $[x]_G$ for the $G$-orbit of~$x$.

Our conventions for $L$-parameters are slightly different
than the usual ones and for us an $L$-parameter
is often a pair $(\phi,u)$ (or $(\psi,v)$)
instead of a homomorphism~$\varphi$;
see \Cref{sec:lparam} for details.

Usually $G$ is a reductive group.
In \Cref{sec:bernstein,sec:corr} we assume $G$ is connected
but in \Cref{sec:springer,sec:param} we make no such assumption.

The cheat sheet below records notation used in this paper,
together with an optional parenthetical cross-reference for more information on the notation.

\begin{multicols}{2}
\begin{gloss}
\item[$A_G(u)$]
$\pi_0(Z_G(u))$

\item[$\frak B(G)$]
inertial~eq.~cl.~of cuspidal data
(\S\ref{sec:bernstein:variety})

\item[$\frak B({}^LG)$]
inertial~eq.~cl.~of cuspidal data
(\S\ref{sec:bernstein:variety})

\item[$\Cusp$]
cuspidal support of enh.~parameter
(\S\ref{sec:bernstein:support})

\item[$\uCusp$]
unnormalized cuspidal support 
(\S\ref{sec:bernstein:support})

\item[$\connsupp$]
connected cuspidal support
(\S\ref{sec:springer:cuspidal})

\item[$\qcsupp$]
quasi-cuspidal support
(\S\ref{sec:springer:cuspidal})

\item[$\Dbc(-)$]
bdd der.\ cat.\ of constr.\
$\ell$-adic shv.\ (\S\ref{sec:springer:sheaves})

\item[$\cal E$ (or $q\cal E$)]
(quasi-cuspidal) local system (in \S\S\ref{sec:springer}--\ref{sec:bernstein})

\item[$\cal E$]
Kaletha gerbe (in \S\ref{sec:corr}; see \S\ref{sec:corr:gerbe})

\item[$\cal F$]
a perverse (or just constructible) sheaf

\item[$F$]
nonarchimedean local field

\item[$\overline F$]
a fixed algebraic closure of~$F$

\item[$F^\tn{sep}$]
separable closure of~$F$ in~$\overline F$

\item[$\Four_{\frak g}$]
Fourier--Laumon transform (for $\frak g$) (\S\ref{sec:param:fibers})

\item[$G$]
reductive $\bbC$- (\S2--\S3)
or (conn.)\ $F$-group (\S4--5)

\item[$G^\circ$]
connected component of the identity

\item[$G'$]
(rigid) inner form of~$G$ (\S\ref{sec:corr:gerbe})

\item[$G_\tn{der}$]
derived subgroup of~$G$

\item[$G_\tn{sc}$]
simply-connected cover of~$G_\tn{der}$

\item[$(G',\xi)$]
inner twist of~$G$

\item[$(G',\xi,\bar h,\cal T)$]
a rigid inner twist of~$G$
(\S\ref{sec:corr:gerbe})

\item[$G^{\cal T}$]
the inner twist of~$G$ corr.~to~$\cal T$
(\S\ref{sec:corr:gerbe})

\item[$\widehat G$]
Langlands dual group of~$G$ over~$\bbC$

\item[$G\actson X$]
a variety with group action
(\S\ref{sec:springer:stacks})

\item[$H\times^G X$]
induction space (\S\ref{sec:springer:stacks})

\item[$H$]
possibly disconnected reductive $\bbC$-group

\item[$H^\circ$]
identity component of~$H$

\item[$\IC(\cal O,\cal E)$]
intersection cohomology complex

\item[$\infl(\cal E)$, $\infl_{G\tn{-reg}}(\cal E)$]
\eqref{thm97}

\item[$\Irr(H)$]
isom.~classes of algebraic irreps of~$H$

\item[$\Irr\bigl(\Perv(\frak X)\bigr)$]
irr.~perverse sheaves on~$\frak X$
(\S\ref{sec:springer:sheaves})

\item[$\ind_{L\subseteq P}^G$]
parabolic ind.\ of perv.\ sheaves
(\S\ref{sec:springer:ind})

\item[$K^+$]
quasi-Levi subgroup of~$H^+$
(cf.~\eqref{thm52})

\item[$\Lev(G)$]
representatives of
conjugacy classes of Levi subgroups of~$G$

\item[$\Loc(\frak X)$]
$\overline\bbQ_\ell$ local systems on~$\frak X$

\item[$M$]
quasi-Levi subgroup of~$H$,
often $Z_{\widehat G^+}(\phi)$

\item[$\cal M$]
Levi $L$-subgroup of~${}^LG$
(\S\ref{sec:bernstein:levi})

\item[$\cal N_G$]
nilpotent cone of~$G$

\item[$N_G(x)$]
stabilizer of~$x$ in~$G$

\item[$\cal O$]
nilpotent orbit in $\Lie(G)$

\item[$\cal P$]
admissible class of parabolics
(\S\ref{sec:springer:cuspidal})

\item[$\Perv(\frak X)$]
perverse sheaves on~$\frak X$
(\S\ref{sec:springer:sheaves})

\item[$\rec$]
local Langlands correspondence
(\S\ref{sec:corr:conj})

\item[$\res_{L\subseteq P}^G$]
parabolic restr.\ of perv.\ sheaves
(\S\ref{sec:springer:ind})

\item[$s$ (or $\hat s$)]
cuspidal datum for~$\widehat G$ (or~$G$)
(\S\ref{sec:bernstein:variety})

\item[$\frak s_{\widehat G}$
(or $\hat{\frak s}_{\widehat G}$)]
$\llbracket s \rrbracket_{\widehat G}$
(or $\llbracket \hat s \rrbracket_{\widehat G}$)
(\S\ref{sec:bernstein:variety})

\item[$\cal T$]
fpqc $G$-torsor on~$\cal E$
(\S\ref{sec:corr:gerbe})

\item[$t^\circ$, $qt$]
certain (quasi-)cuspidal supports (\S\ref{sec:param:fibers})

\item[$\frak t^\circ$, $q\frak t$]
conjugacy classes of $t^\circ$ and $qt$
(\S\ref{sec:param:fibers})

\item[$\cal U_G$]
unipotent variety of~$G$
(\S\ref{sec:param:springer})

\item[$u$, $v$]
unipotent elements

\item[$u$]
$\varprojlim_{E/F,n} \text{Res}_{E/F}(\mu_{n})/\mu_{n}$
(only in \S\ref{sec:corr:gerbe})

\item[$W_F$]
Weil group of~$F$

\item[$W(G,M)$]
Weyl group $N_G(M)/M$

\item[$W({}^LG, \cal M)$]
``Weyl group'' $N_{\widehat G}(\cal M)/\cal M^\circ$
(\S\ref{sec:bernstein:variety})

\item[$W_{qt}$] 
$N_G(qt)/M$ (\S\ref{sec:param:fibers})

\item[$W_{q\frak t}$]
$\varprojlim_{[qt]_G = q\frak t} W_{qt}$
(\S\ref{sec:param:fibers})

\item[$W_{t^\circ}$]
$N_{G^\circ}(M^\circ)/M^\circ$
(\S\ref{sec:param:fibers})

\item[$W_{\frak t^\circ}$]
$\varprojlim_{[t^\circ]_{G^\circ} = \frak t^\circ} W_{t^\circ}$
(\S\ref{sec:param:fibers})

\item[$\frak w$]
Whittaker datum for~$G$
(\S\ref{sec:corr:whittaker})

\item[$\frak X$, $\frak Y$]
algebraic stack over~$\bbC$, usually $X/G$
(\S\ref{sec:springer:stacks})

\item[$(X\sslash\Gamma)_\kappa$]
twisted extended quotient (\S\ref{sec:bernstein:quotient})

\item[$X^*_\tn{nr}(G)$]
unramified characters of $G(F)$
(\S\ref{sec:bernstein:variety})

\item[$X^*_\tn{nr}({}^LG)$]
Galois analogue of $X^*_\tn{nr}(G)$
(\S\ref{sec:bernstein:variety})

\item[$Y_{(L,\cal O)}$, $Y_{(\frak l,\cal O)}$]
Lusztig's strata (\S\ref{sec:param:regular})

\item[$Z$]
finite central $F$-subgroup of~$Z(G)$

\item[$Z_G(x)$]
pointwise stabilizer of~$x$ in~$G$

\item[$Z(G)$]
center of~$G$

\item[$z^{\val_F}\cdot\phi$]
unramified twist of~$\phi$
(\S\ref{sec:bernstein:variety})

\item[$\Gamma$]
$\Gal(F^\tn{sep}/F)$
(except in \S\ref{sec:bernstein:quotient})

\item[$\kappa_{(\cal O,q\cal E)}$]
AMS cocycle (\ref{thm87})

\item[$\lambda\colon G\to\frak g$]
quasi-logarithm (\S\ref{sec:param:quasi-log})

\item[$\Pi(G)$]
(smooth, $\bbC$)-irreps of~$G(F)$
(up to isom.)

\item[$q\Sigma_{q\frak t}$]
$\qcsupp_G^{-1}(q\mathfrak{t})
\simeq \Irr(\bbC[W_{q\frak t}, \kappa_{q\frak t}])$
(\S\ref{sec:param:fibers})

\item[${}^L\Sigma_{q\frak t,\psi,v}$]
$\uCusp^{-1}(s)
\simeq \Irr(\bbC[W_{q\frak t}, \kappa_{q\frak t}])$
(\S\ref{sec:bernstein:fibers})

\item[$\pi_0(G)$] components group $G/G^\circ$

\item[$\rho$, $\varrho$]
elements of $\Irr\bigl(\pi_0(Z_{\widehat G}(\varphi)^+)\bigr)$
(\S\ref{sec:corr:enh})

\item[$\Phi(G)$]
(equiv.~cl.~of) $L$-parameters of~$G$

\item[$\Phi_\tn{e}^Z(G)$]
(equiv.~cl.~of) $Z$-enhanced $L$-params

\item[$\Phi_\tn{e,cusp}^Z(G)$]
cuspidal elements of $\Phi_{\tn e}^Z$

\item[$\Phi_\tn{e}^Z(G)^{\hat{\frak s}_{\widehat G}}$]
block of $\Phi_\tn{e}^Z(G)$
for~$\hat{\frak s}_{\widehat G}$
(\S\ref{sec:bernstein:quotient})

\item[$\varphi$ or $(\phi,u)$]
an $L$-parameter for~$G$ (\S\ref{sec:lparam})

\item[$\Omega(G)$]
Bernstein variety of~$G$
(\S\ref{sec:bernstein:variety})

\item[$\Omega({}^LG)$]
Bernstein variety of~${}^LG$
(\S\ref{sec:bernstein:variety})

\item[$(-)^+$]
$\widehat{G/Z}\times_{\widehat G}(-)$
(\S\ref{sec:corr:enh})

\item[$(-)^*$]
the Pontryagin dual (esp.\ in \S\ref{sec:corr})

\item[$\llbracket - \rrbracket_G$]
the $G$-inertial eq.~cl.~of $(-)$
(\S\ref{sec:bernstein:variety})

\end{gloss}
\end{multicols}

\subsection*{Acknowledgments}
We would like to thank Tom Haines for proposing this project,
Alexander Bertoloni Meli for directing us to \cite[\S 2.2.2]{ABMUnitary}
and pointing out that $\Spin_{1225}$ has two cuspidal sheaves,
Johannes Anschütz and Gurbir Dhillon for providing
some tips on computing fiber products of quotient stacks,
and Alex Youcis for patiently explaining to us some aspects of
local systems in the lisse-étale topology.
This project benefited greatly from the excellent textbook of Achar \cite{achar21}.

\section{Constructing induction series}
\label{sec:springer}
Let $G$ be a complex reductive group, possibly disconnected.
In this section we explain how to organize the category
$\Perv(\cal N_G/G)$ into induction series,
the first step in the generalized Springer correspondence
for disconnected reductive groups.
The conclusion is similar to the earlier work
of Aubert--Moussaoui--Solleveld \cite[\S2--5]{AMS18}
but our treatment is different from theirs
and we hope the reader finds it illuminating
to compare the two perspectives.

For the most part, we will use properties of perverse sheaves as a black box.
These properties are nicely summarized in the reference appendix to \cite{achar21},
as well as the Springer correspondence itself in Chapter~8.
Although Achar's book works in the analytic category,
rather than the étale category, many of these properties,
especially those using the six-functor formalism,
are known to hold or can be proved in a similar way in both settings.

\subsection{Background on algebraic stacks}
\label{sec:springer:stacks}
Let $X$ be a (possibly singular) variety over $C$
and let $G$ an affine algebraic group (also over $C$) acting on~$X$.
We call $X$ a \mathdef{$G$-variety},
and the pair $G\actson X$ a \mathdef{variety with group action}.
Write $\tn{act}\colon G\times X\to X$ for the action map.

Generally speaking,%
\footnote{We are deliberately vague here
about the specific type of sheaf,
for instance constructible or quasi-coherent,
because this formalism works for
many different types of sheaves.}
a \mathdef{$G$-equivariant sheaf}
on $X$ is a sheaf~$\cal F$ on~$X$
together with an isomorphism
$\tn{pr}_2^*(\cal F)\simeq\tn{act}^*(\cal F)$
of sheaves on~$G\times X$.
There is a remarkable class of geometric objects
known as \mathdef{algebraic stacks},%
\footnote{For us, it suffices to work with
algebraic stacks of finite type over a fixed algebraically closed field of characteristic zero.}
enlarging the usual category of schemes,
which contains spaces whose sheaves are equivariant sheaves.
Namely, there is an algebraic stack $X/G$,
called a \mathdef{quotient stack},
and an abelian%
\footnote{The situation is much more complex for derived categories:
the derived category of equivariant sheaves
is not some category of complexes of sheaves
with an equivariant structure
\cite{bernstein_lunts94}.}
category of sheaves on~$X/G$
that is equivalent to the usual abelian
category of $G$-equivariant sheaves on~$X$.

Although there are many potential motivations
for working with algebraic stacks,
for us, the primary motivation
is that quotient stacks give
a convenient language
for discussing equivariant sheaves.
In this section we give a few comments
on quotient stacks,
the goal being to orient the reader
to whom these objects are unfamiliar
without losing ourselves in a thicket
of technical detail.

\begin{remark}
Strictly speaking, there is no technical need
to phrase our formulation of
the generalized Springer correspondence
in the language of stacks:
it is perfectly possible to avoid this language.
We chose this language nonetheless
-- and at risk of losing the reader --
for the following reasons.

First, many of the definitions
in the stack setting are conceptually simpler.
For instance, as we will see,
the definitions of parabolic induction
and parabolic restriction are as simple
as one could possibly hope,
as pull-push operations along a correspondence of stacks.

Second, the main property that we will prove
in this section on equivariant perverse sheaves,
the Geometrical Lemma, is fundamentally a statement
about the geometry of a certain stack.
Although the proof could be phrased without stacks,
isolating the key aspects of the geometry
could allow the Geometrical Lemma
to be easily transferred to other classes of sheaves.

Third, and relatedly, stacks play an important role
in the geometrization of the local Langlands correspondence
\cite{fargues_scholze21v2}.
It is natural to hope that the cuspidal support
map and related constructions for $L$-parameters
can be phrased on the level of stacks
in a way that integrates with
Fargues and Scholze's stack of $L$-parameters.
\end{remark}

\begin{remark}
In the notation $X/G$, the action of $X$ is left implicit.
The notation is usually clear from context.
We warn the reader, however, that the notation is ambiguous when $X=G$:
there are two natural actions of~$G$ by itself,
translation and conjugation.
The quotient under the translation action is a point
while the quotient under the conjugation action
is much more complex.
\end{remark}

\begin{definition}
The \mathdef{quotient stack} $X/G$ is defined as
the fibered category (cf., e.g. \cite[Definition 2.3]{Dillery}) $$X/G \to \text{Sch}/\mathrm{Spec}(C)$$
whose fiber $(X/G)(S)$ over a $C$-scheme $S$ is the category
of pairs $(T, h)$, where $T \to S$ is an \'{e}tale $G_{S}$-torsor
and $h \colon T \to X_{S}$ is a $G_{S}$-equivariant morphism
of schemes over $S$. The morphisms in this category are
morphisms of torsors making the obvious diagrams
commute.
\end{definition}

Since by assumption $G$ is smooth, the object $X/G$ is always an algebraic stack.%
\footnote{Depending on one's preferred definition of algebraic stack,
when $X$ is singular an appeal to Artin's theorem on flat groupoids
\cite[06DC]{stacks}
is needed to show that $X/G$ is an algebraic stack.}
If the action of $G$ on~$X$ is sufficiently nice
then the stack quotient is again a variety.
For instance, if $X$ is a $G$-torsor over the variety~$Y$,
then the stack quotient $X/G$ is just~$Y$.

The quotient stack construction is functorial
in the following sense.
Suppose we have two varieties with group action
$G\actson X$ and $H\actson Y$,
together with a homomorphism $G\to H$ and
a map $X\to Y$ equivariant for this homomorphism.
We call this pair of maps a morphism of varieties with group action
and denote it by $(G\actson X)\to(H\actson Y)$ in brief.
Then there is an induced map $X/G\to Y/H$ of quotient stacks.

In particular, if $G=H$ and $X$ is a subvariety of~$Y$
that is open, closed, or locally closed
then $X/G$ is an open, closed, or locally closed
substack of~$Y/G$, respectively.
So $G$-equivariant stratifications of~$X$
give rise to stratifications of the quotient~$X/G$.

In our proof of the Geometrical Lemma,
there are several points where it becomes necessary
to compute a fiber product of quotient stacks.
The general problem would be the following:
given a diagram
\[
\begin{tikzcd}
&
(G'\actson X') \dar{f'} \\
(G \actson X) \rar{f} &
(Y \actson H)
\end{tikzcd}
\]
of varieties with group action,
compute the fiber product $X/G\times_{Y/H}X'/G'$
of the induced diagram of quotient stacks.
Without commenting on the general calculation
of such a fiber product,
we describe the three cases of relevance to us.

First, suppose $G=H=G'$.
Then formation of fiber product commutes with quotients:
\[
X/G\times_{Y/G}X'/G \simeq (X\times_Y X')/G.
\]

Second, suppose the maps of groups are inclusions.
In this case, define the induction space $H\times^G X$
as the geometric quotient of $H\times X$
by the diagonal $G$-action $g\cdot(h,x) = (hg^{-1},gx)$.
This space is an~$H$-space: $H$ acts on~$H\times^G X$
by left multiplication on the first factor.
The map $X\to H\times^G X$ defined by $x\mapsto(1,x)$ is equivariant
for the inclusion $G\hookrightarrow H$ and induces an isomorphism of stacks
\[
\frac{X}{G} \simeq \frac{H\times^G X}{H}.
\]
Moreover, the map $X\to Y$ extends to an $H$-equivariant map
$H\times^G X\to Y$ by acting on~$Y$, and similarly for $G'\actson X'$.
Now that all the stacks have a common denominator,
we can compute that
\[
X/G\times_{Y/H}X'/G' \simeq \bigl((H\times^G X)\times_Y(H\times^{G'} X')\bigr)/H.
\]

Third, suppose one of the maps of groups, say~$f$,
is surjective and admits a (homomorphic) section~$s$.
This case is more subtle because the maps on quotient stacks
need not be representable.

\begin{lemma} \label{thm20}
Let $(G\actson X)\xrightarrow{f}(H\actson Y)\xleftarrow{f'}(G'\actson X')$
be maps of varieties with group action.
Suppose $f$ is surjective and $f|_H$ admits a homomorphic section~$s$.
Then the diagram at left induces
a Cartesian diagram of quotient stacks as at right.
\[
\begin{tikzcd}
(G\times_HG'\actson X\times_Y X') \rar\dar & (G'\actson X') \arrow{d}[name=A]{}
& (X\times_Y X')/(G\times_H G') \rar\arrow{d}[name=B]{} & X'/G' \dar
\\
(G\actson X) \rar & (H\actson Y)
& X/G \rar & Y/H.
\arrow[shorten >=15pt, shorten <=10pt, mapsto,to path={(A) -- (B)}]
\end{tikzcd}
\]
\end{lemma}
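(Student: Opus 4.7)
The plan is to exhibit mutually quasi-inverse functors between the two stacks; the section $s$ is what makes the direction ``fiber product of quotient stacks $\to$ quotient of the fiber product'' work. The ``forward'' functor
\[
F \colon (X \times_Y X')/(G \times_H G') \longrightarrow X/G \times_{Y/H} X'/G'
\]
comes for free from the universal property of the fiber product, induced by the projections $G \times_H G' \to G$, $G \times_H G' \to G'$ and $X \times_Y X' \to X$, $X \times_Y X' \to X'$, together with the evident 2-isomorphism after composing to $Y/H$.

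For the quasi-inverse, unfold what an $S$-point of $X/G \times_{Y/H} X'/G'$ is: a $G$-torsor $T$ with equivariant map $\phi \colon T \to X$, a $G'$-torsor $T'$ with equivariant map $\phi' \colon T' \to X'$, and an isomorphism $\alpha \colon T \times^G H \xrightarrow{\sim} T' \times^{G'} H$ of $H$-torsors compatible with the induced maps to $Y$. From this data form
\[
U \defeq T \times_{T \times^G H \,\simeq_\alpha\, T' \times^{G'} H} T',
\]
using the maps $t \mapsto [t,1]$ and $t' \mapsto [t',1]$ into the respective induced torsors, identified via $\alpha$. The commuting actions of $G$ on $T$ and $G'$ on $T'$ restrict to a $(G \times_H G')$-action on $U$, and the map $(t,t') \mapsto (\phi(t), \phi'(t'))$ lands in $X \times_Y X'$ by $Y$-compatibility of $\alpha$; this produces the candidate $\Psi$.

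The main obstacle is verifying that $U$ is actually a $(G \times_H G')$-torsor over $S$, which in turn requires $G \times_H G'$ to be a smooth algebraic group. This is where the section $s$ is essential: it exhibits $G$ as $\ker(f) \rtimes H$, so $f$ is a smooth surjection, and hence $G \times_H G' \to G'$ is a smooth base change of this; \'etale-locally, where $T$ and $T'$ trivialize, $U$ is identified with $G \times_H G'$ itself, confirming the torsor claim. Given this, the natural isomorphisms $F\Psi \simeq \tn{id}$ and $\Psi F \simeq \tn{id}$ are routine: $F\Psi \simeq \tn{id}$ follows by reconstructing $T \simeq U \times^{G \times_H G'} G$ and $T' \simeq U \times^{G \times_H G'} G'$, with $\alpha$ recovered as the common image in $U \times^{G \times_H G'} H$, while $\Psi F \simeq \tn{id}$ is a direct unwinding of definitions.
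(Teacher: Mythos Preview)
Your proof is correct and takes a genuinely different route from the paper's.

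The paper argues at the level of presenting groupoids rather than $S$-points. It considers the canonical functor
\[
\phi\colon (G\times_HG')\ltimes(X\times_Y X') \longrightarrow
(G\ltimes X)\times_{(H\ltimes Y)}(G'\ltimes X')
\]
from the action groupoid of the fiber product to the (strict) fiber product of action groupoids, and writes down an explicit quasi-inverse $\psi$ using~$s$: on objects, $\psi(x,x',h) = (s(h)\cdot x,\,x')$, with a matching formula on morphisms. So the section is used in an essential way to produce a \emph{morphism of groupoid schemes} going backwards; without it one cannot globally untwist the $h$-component.

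By contrast, you work directly with the torsor description and build the inverse one test scheme at a time via $U = T \times_{T\times^G H} T'$. Your invocation of~$s$ is only to ensure that $f\colon G\to H$ (and hence $G\times_H G'\to G'$) is a smooth surjection, which over~$\bbC$ is automatic for any surjective homomorphism of algebraic groups. So in fact your argument proves the lemma without the section hypothesis, which is a mild strengthening. The trade-off is that the paper's approach yields closed formulas for the equivalence on simplices---useful when one later wants to chase specific maps through the identification, as in the proof of \Cref{thm16}---whereas your approach is more intrinsic to the stacks and makes the extraneous nature of the section visible.

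One small remark: you might tighten the sentence locating the role of~$s$, since ``$f$ is smooth'' is not where the content lies in this characteristic-zero setting; the genuine point is that smoothness of $T\to T\times^G H$ (\'etale-locally just $f$) makes $U\to T'$ a smooth surjection, hence $U\to S$ as well, which is what certifies that $U$ is a torsor.
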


\begin{proof}
The diagram at left induces a homomorphism
\[
\begin{tikzcd}[column sep=huge]
\phi\colon \frak G \defeq
(G\times_HG')\ltimes(X\times_Y X') \rar &
(G\ltimes X)\times_{(H\ltimes Y)}(G'\ltimes X')
\eqdef \frak G_\tn{st}
\end{tikzcd}
\]
from the action groupoid of the fiber products
to the fiber products of the action groupoids.
(The subscript ``st'' on $\frak G_\tn{st}$ abbreviates ``strict'',
as this groupoid has fewer objects than~$\frak G$.)
We will show that this map is an equivalence of groupoids
and thus induces an isomorphism of quotient stacks,
by exhibiting a quasi-inverse.
Namely, define $\psi\colon\frak G_\tn{st}\to\frak G$
as follows, where we implicitly use~$s$ to interpret $G$
as a subgroup of~$H$.
On $0$-simplices (where $x \in X(S)$, $x' \in X'(S)$, and $h \in H(S)$ for some $C$-scheme $S$),
\[
\psi \colon (x,x',h)\mapsto(h\cdot x,x').
\]
On $1$-simplices (with $x_{i}, x_{i}'$, and $h_{i}$ as above and $g \in G(S)$, $g' \in G'(S)$),
\[
\psi \colon \bigl((g,g')\colon
(x_1,x_1',h_1) \to (x_2,x_2',h_2) \bigr)
\mapsto \bigl((h_2gh_1^{-1},g')\colon
(h_1x_1,x_1',1) \to (h_2x_2,x_2',1) \bigr).
\]
Then $\phi\circ\psi = \tn{id}$,
and we leave it to reader
to construct a $2$-isomorphism
$\psi\circ\phi\simeq\tn{id}$.
\end{proof}

\subsection{Background on equivariant perverse sheaves}
\label{sec:springer:sheaves}
At this point, there is a well-developed theory
of perverse $\ell$-adic sheaves on an algebraic stack $\frak X$
\cite{behrend03,laszlo_olsson08a,laszlo_olsson08b,laszlo_olsson09,
liu_zheng12av4,liu_zheng12bv3,liu_zheng14v2}
Let $\Dbc(\frak X) \defeq \Dbc(\frak X,\overline\bbQ_\ell)$
denote the derived category of constructible
of $\ell$-adic sheaves with coefficients in $\overline\bbQ_\ell$
as constructed by any of these authors.
Inside $\Dbc(\frak X)$ one has the category $\Loc(\frak X)$
(sitting in some fixed degree)
of $\overline\bbQ_\ell$-local systems on~$\frak X$
in the lisse-étale site.

In full generality there are many subtleties in this theory,
most notably, the failure of the pullback
and pushforward functors to preserve boundedness.
But in our case, when the stacks are
quotient stacks for varieties with group action,
the six operations can be described more simply
in terms of operations on equivariant perverse sheaves,
as outlined for instance in \cite[Table 6.8.1]{achar21}.
Moreover, in our setting pushforward maps
will usually preserve boundedness:
in the problematic case when
the original map is not representable,
as in the third part of Achar's table,
the pushforward will arise from
a surjective homomorphism with unipotent kernel,
which induces an equivalence of bounded
derived categories \cite[6.6.16]{achar21}
and in particular preserves boundedness.
In this paper we treat the six functors as automatically derived,
writing, for instance, $f_*$ instead of $Rf_*$.

There is one small difference between
$G$-equivariant perverse sheaves on~$X$
and perverse sheaves on~$X/G$, however:
a shift in the perverse $t$-structure
to account for the fact that
(at least when $X$ is equidimensional)
$\dim(X/G) = \dim(X) - \dim(G)$.
Writing $\tn{D}^\tn{b}_G(X)$ for the classical Bernstein--Lunts
\cite{bernstein_lunts94} category of $G$-equivariant perverse sheaves
and writing $\iota \colon \Dbc(X/G) \simeq \tn{D}^\tn{b}_G(X)$
for the natural (if one likes, tautological) identification,
the functor
\[
\iota[\dim G] \colon \Dbc(X/G) \to \tn{D}^\tn{b}_G(X)
\]
is a $t$-exact equivalence for
the perverse $t$-structures on source and target.
This shift is a potential source of confusion
during comparison with earlier works, such as \cite{lusztig84b}.

In the classical theory of constructible $\ell$-adic sheaves
on a connected Noetherian scheme~$X$,
there is a well-known equivalence of categories between
finite-dimensional $\ell$-adic local systems on~$X$
and finite-dimensional $\overline\bbQ_\ell$-representations
of the étale fundamental group $\pi_1^\tn{ét}(X)$.
Unsurprisingly, this bijection generalizes
to connected Noetherian algebraic stacks.

\begin{lemma} \label{thm81}
Let $\frak X$ be a connected Noetherian algebraic stack
and let $\pi_1^\tn{ét}(\frak X)$ be Noohi's étale fundamental group.
The natural Galois category structure on $\frak X_\tn{fét}$
induces an equivalence between the categories of
\begin{enumerate}
\item
finite-dimensional lisse-étale $\overline\bbQ_\ell$-local systems on~$\frak X$ and
\item
finite-dimensional $\overline\bbQ_\ell$-representations of $\pi_1^\tn{ét}(\frak X)$.
\end{enumerate}
\end{lemma}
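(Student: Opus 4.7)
The plan is to reduce to the classical equivalence for Noetherian schemes by exploiting the Galois category structure on $\frak X_\tn{fét}$ and then using a smooth atlas for descent. Choose a geometric base point $\bar x$ of $\frak X$ (which exists because $\frak X$ admits a smooth cover by a scheme) and consider the fiber functor $F_{\bar x}\colon\frak X_\tn{fét}\to\tn{Set}$. By Noohi's theorem this is a Galois category with automorphism group $\pi_1^\tn{ét}(\frak X,\bar x)$, so that finite étale covers of $\frak X$ correspond to finite continuous $\pi_1^\tn{ét}(\frak X)$-sets.

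The next step is to handle finite coefficients. For each $n\ge1$, we show that the category of locally constant lisse-étale sheaves of $\bbZ/\ell^n$-modules on~$\frak X$ is equivalent to the category of continuous $\pi_1^\tn{ét}(\frak X)$-representations on finitely generated $\bbZ/\ell^n$-modules. Indeed, such a sheaf~$\cal L$ is representable by a finite étale $\frak X$-scheme $\tn{Spec}_{\frak X}(\tn{Sym}^\bullet\cal L^\vee/\cdots)$ endowed with the $\bbZ/\ell^n$-module structure, and this representability can be verified after pullback along a smooth atlas $U\to\frak X$, where it is classical. Conversely, a continuous $\pi_1^\tn{ét}(\frak X)$-representation factors through a finite quotient, hence corresponds to a finite étale cover via the Galois category equivalence, whose $\bbZ/\ell^n$-module structure produces the desired local system.

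Finally, to pass to $\overline\bbQ_\ell$-coefficients, recall that a lisse-étale $\overline\bbQ_\ell$-local system on~$\frak X$ is, by definition, a compatible inverse system $(\cal L_n)_{n\ge1}$ of locally constant sheaves of $\bbZ/\ell^n$-modules (after choosing an integral model), localized at~$\ell$ and tensored with $\overline\bbQ_\ell$. Applying the finite-coefficient equivalence term by term, taking the inverse limit, and extending scalars gives a finite-dimensional continuous $\overline\bbQ_\ell$-representation of~$\pi_1^\tn{ét}(\frak X)$, and this construction is visibly functorial and essentially surjective.

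The main technical obstacle is the descent step in the second paragraph: lisse-étale sheaves on algebraic stacks have some well-known pathologies (see \cite{behrend03,laszlo_olsson08a}), so one must verify that a locally constant sheaf on~$\frak X$ really does descend from a locally constant sheaf on a smooth atlas in a manner compatible with the Galois-category formalism. This amounts to checking that the lisse-étale and smooth-étale definitions of local system agree for the types of sheaves under consideration; for locally constant sheaves the two notions coincide, since locally constant sheaves are automatically lisse in either site.
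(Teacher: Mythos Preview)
Your overall strategy matches the paper's: start from Noohi's Galois category structure on $\frak X_\tn{fét}$ to get the correspondence with $\pi_1^\tn{ét}(\frak X)$-sets, unwind the definition of $\overline\bbQ_\ell$-local systems through finite coefficients, and then confront the gap between the \emph{étale} and \emph{lisse-étale} notions of local system. The paper likewise flags this last point as the nontrivial one.

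Where your execution diverges is in how you represent a locally constant lisse-étale sheaf by a finite étale cover. Your proposed construction $\tn{Spec}_{\frak X}(\tn{Sym}^\bullet\cal L^\vee/\cdots)$ does not quite make sense: $\cal L$ is a sheaf of $\bbZ/\ell^n$-modules in the lisse-étale site, not a quasi-coherent $\cal O_{\frak X}$-module, so there is no relative $\tn{Spec}$ to take. The paper instead works directly with the Yoneda functor $\frak X_\tn{fét}\to\tn{Shv}(\frak X_\tn{lis-ét})$, $\frak Y\mapsto\Hom_{\frak X}(-,\frak Y)$, which is fully faithful by Yoneda and lands in finite locally constant sheaves; for essential surjectivity it builds from a finite locally constant sheaf $F$ the prestack $\frak Y_F$ whose $S$-points are pairs $(x\colon S\to\frak X,\ y\colon\Hom_{\frak X}(-,S)\to F)$, and checks that $\frak Y_F\to\frak X$ is finite étale. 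This functor-of-points construction is the clean replacement for your $\tn{Spec}$ step and sidesteps the descent-from-an-atlas argument entirely.
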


In the proof we will explain the new terminology
used in the statement of the \namecref{thm81}.

\begin{proof}[Proof sketch]
Noohi's thesis \cite{noohi04}
developed the theory of $\pi_1^\tn{ét}(\frak X)$,
showing that it behaves like the classical case.
In more detail, one defines the finite étale site, $\frak X_\tn{fét}$,
of~$\frak X$ in the usual way,
where the covers are representable finite étale maps,%
\footnote{Contrary to \cite[1.4]{noohi04}, however,
one should not require the source in a finite étale cover to be connected.}
and one then checks that $\frak X_\tn{fét}$ forms a Galois category.
The resulting identification of $\frak X_\tn{fét}$
with the category of finite $\pi_1^\tn{ét}(\frak X)$-sets
gives rise to an equivalence between
finite-dimensional étale local systems of $\overline\bbQ_\ell$-vector spaces
and finite-dimensional $\overline\bbQ_\ell$-representations
of~$\pi_1^\tn{ét}(\frak X)$,
as follows from unspooling the (lengthy!)
construction of the former category,
as given in (for instance) \cite[Chapter~1.2]{behrend03}.

This identification is not the end of the story, however,
because the the category $\Dbc(\frak X)$,
and thus the local systems in that category,
is constructed using the lisse-étale site $\frak X_\tn{lis-ét}$ of~$\frak X$.
As Noohi mentions at the beginning of \cite[Page~76]{noohi04},
this change of topology is immaterial;
let us briefly explain why.

To complete the construction of the equivalence,
consider the Yoneda functor
\[
\frak X_\tn{fét} \longrightarrow
\tn{Shv}(\frak X_\tn{lis-ét})
\colon
\frak Y\longmapsto\Hom_{\frak X}(-,\frak Y)
\]
to the lisse-étale topos.
Since objects of $\frak X_\tn{fét}$ and $\frak X_\tn{lis-ét}$
are representable, each of these $\Hom$
functors is valued in set(oid)s,
and since $X$ is Noetherian, $\Hom_{\frak X}(-,\frak Y)$
is a finite locally constant sheaf.
The Yoneda functor above is fully faithful by Yoneda's lemma,
so it remains to show that it is essentially surjective.
For this, given a finite locally constant sheaf $F$
on $\frak X_\tn{lis-ét}$, define the prestack $\frak Y_F$
by setting $\frak Y_F(S)$, for $S$ a scheme,
to be the set of pairs $(x,y)$ with $x\colon S\to\frak X$
and $y\colon\Hom_{\frak X}(-,S)\to F$.
One then checks that $\frak Y_F$ is an algebraic stack
and that the structure map $\frak Y_F\to\frak X$
(namely, $(x,y)\mapsto x$) is finite étale.
\end{proof}

\begin{corollary} \label{thm80}
Let $\frak X$ be an irreducible Noetherian algebraic stack,
let $j\colon\frak U\to\frak X$ be a nonempty open substack,
and let $\cal E\in\Loc(\frak X)$.
Then the induced map
$j^*\colon\End(\cal E)\to\End(j^*\cal E)$
is an isomorphism.
\end{corollary}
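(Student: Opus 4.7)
The plan is to translate the statement to a property of étale fundamental groups via \Cref{thm81}, then verify that property by analyzing finite étale covers under restriction to nonempty opens.

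Since nonempty open substacks of irreducible stacks are irreducible (hence connected), $\frak U$ has a well-defined étale fundamental group $\pi_1^\tn{ét}(\frak U,x)$ for any geometric point $x\in\frak U$. By \Cref{thm81}, the local system $\cal E$ corresponds to a continuous representation $\rho\colon\pi_1^\tn{ét}(\frak X,x)\to\tn{GL}(V)$ on a finite-dimensional $\overline\bbQ_\ell$-vector space~$V$, and $j^*\cal E$ corresponds to the composition $\rho\circ j_*$ with the homomorphism $j_*\colon\pi_1^\tn{ét}(\frak U,x)\to\pi_1^\tn{ét}(\frak X,x)$ induced by~$j$. Endomorphism algebras translate to commutants:
\[
\End(\cal E)\simeq\End_{\rho(\pi_1^\tn{ét}(\frak X,x))}(V),
\qquad
\End(j^*\cal E)\simeq\End_{\rho(j_*\pi_1^\tn{ét}(\frak U,x))}(V),
\]
so the problem reduces to showing that these two subgroups of $\tn{GL}(V)$ have the same commutant in $\End(V)$.

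The key step is to prove that $j_*$ has topologically dense image, after which continuity of $\rho$ and finite-dimensionality of $V$ yield the desired equality of commutants. Using the Galois-category formulation in the proof of \Cref{thm81}, density of $j_*$ is equivalent to the assertion that every connected finite étale cover $\tilde{\frak X}\to\frak X$ has connected pullback $\tilde{\frak U}\defeq\tilde{\frak X}\times_{\frak X}\frak U$. I would prove this by first upgrading connectedness of~$\tilde{\frak X}$ to irreducibility: a finite étale map is both open and closed, so the image in~$\frak X$ of any irreducible component of $\tilde{\frak X}$ is a nonempty clopen subset of the irreducible base~$\frak X$ and thus all of it; analyzing the generic fiber of $\tilde{\frak X}\to\frak X$, which is the spectrum of a finite étale algebra over the function field whose idempotents simultaneously govern the connected and irreducible components, then forces the two decompositions of $\tilde{\frak X}$ to agree. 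Once $\tilde{\frak X}$ is irreducible, its nonempty open substack $\tilde{\frak U}$ is irreducible and hence connected.

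The hardest step is this identification of connected and irreducible components of~$\tilde{\frak X}$, which must rule out pathological gluings — such as the ones that can occur on non-unibranch singularities — that would allow a connected cover to be reducible. In the paper's applications (regular loci of reductive groups and their Lie algebras) the relevant stacks are normal and the generic-fiber argument passes through cleanly; in more general situations one would need to impose a suitable unibranch hypothesis, reducing to an étale chart where the scheme-theoretic fact is classical. With that geometric input in hand, the remainder of the argument is essentially formal.
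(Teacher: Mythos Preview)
Your approach is essentially the same as the paper's: reduce via \Cref{thm81} to showing that $j_*\colon\pi_1^\tn{ét}(\frak U)\to\pi_1^\tn{ét}(\frak X)$ is surjective, which in turn amounts to showing that connected finite étale covers of~$\frak X$ remain connected over~$\frak U$, proved by upgrading connectedness to irreducibility. The paper simply asserts that finite étale covers of an irreducible stack are irreducible and invokes full faithfulness of inflation, whereas you supply more justification and flag the unibranch subtlety the paper glosses over.
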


\begin{proof}
Since $\frak X$ is irreducible,
any finite étale cover of~$\frak X$ is irreducible.
So the pullback map $\frak X_\tn{fét}\to\frak U_\tn{fét}$
preserves connected objects and thus induces a surjection
$\pi_1^\tn{ét}(\frak X)\to\pi_1^\tn{ét}(\frak U)$.
The result now follows from the bijection of \Cref{thm81}
and the fact that inflation of representations is fully faithful.
\end{proof}

To finish, we prove that when a map of quotient stacks has
suitably contractible fibers,
the resulting $*$-pull $!$-push functor is an equivalence.
Later we will use this result in our proof
of the Geometrical Lemma.

\begin{lemma} \label{thm24}
Let $f = (f_1,f_2) \colon (G\actson X)\to(H\actson Y)$ be
a map of varieties with group action such that
\begin{enumerate}
\item $f_1\colon G\to H$ is surjective with unipotent kernel and

\item $f_2\colon X\to Y$ is a vector bundle of rank~$\dim\ker f_1$.
\end{enumerate}
Let $\bar f\colon X/G\to Y/H$ be the corresponding
map of quotient stacks.
Then there is a natural isomorphism
\[
\bar f_!\circ \bar f^* \simeq \tn{id} \colon \Dbc(Y/H) \to \Dbc(Y/H).
\]
\end{lemma}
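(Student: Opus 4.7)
The plan is to factor $\bar f$ as $\bar f = \bar g\circ\bar h$ and handle each factor separately. Let $\bar h\colon X/G\to Y/G$ be induced by $f_2$ (viewed as a $G$-equivariant map, with $G$ acting on $Y$ through $f_1$), and let $\bar g\colon Y/G\to Y/H$ be induced by $f_1$ together with the identity on $Y$. By functoriality $\bar f = \bar g\circ\bar h$, so $\bar f_!\bar f^*\simeq\bar g_!(\bar h_!\bar h^*)\bar g^*$. Setting $r\defeq\dim\ker f_1$, the two factors will contribute shifts that cancel, reflecting the fact that $\bar f$ has relative dimension zero.

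The map $\bar h$ is representable: since source and target are quotients by the same group $G$, its base change along the atlas $Y\to Y/G$ is simply $f_2$. Because $f_2$ is a vector bundle of rank $r$, the usual computation yields $f_{2,!}\overline\bbQ_\ell\simeq\overline\bbQ_\ell[-2r](-r)$, and smooth base change together with the projection formula gives $\bar h_!\bar h^*(-)\simeq(-)[-2r](-r)$.

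For $\bar g$, set $N\defeq\ker f_1$, a unipotent group of dimension $r$, and write $BN$ for its classifying stack. Adapting the $S$-point computation in the proof of \Cref{thm20}, one can identify the base change of $\bar g$ along the atlas $Y\to Y/H$ with the projection $Y\times BN\to Y$ (an $N$-torsor on $S$ parametrizes trivializations of the induced $H$-torsor), so that $\bar g$ is a $BN$-fibration. Since $N\simeq\bbA^r$ as a variety, $H^*(BN,\overline\bbQ_\ell)\simeq\overline\bbQ_\ell$ is concentrated in degree $0$, and Verdier duality on the smooth stack $BN$ of dimension $-r$ then produces $H^*_c(BN,\overline\bbQ_\ell)\simeq\overline\bbQ_\ell[2r](r)$. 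Smooth base change and the projection formula now give $\bar g_!\bar g^*(-)\simeq(-)[2r](r)$, and combining,
\[
\bar f_!\bar f^*(-)\simeq\bar g_!\bar g^*(-)[-2r](-r)\simeq(-)[2r](r)[-2r](-r)\simeq(-).
\]

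The main obstacle is handling the non-representable map $\bar g$ rigorously: one must invoke the fact flagged earlier in the paper (via \cite[6.6.16]{achar21}) that pushforward along a surjection with unipotent kernel is well-behaved on $\Dbc$ -- in particular, that $\bar g_!$ preserves boundedness -- and verify that the stacky versions of smooth base change and the projection formula apply. Naturality of the isomorphism is automatic from the construction via the projection formula.
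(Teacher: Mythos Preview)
Your factorization $\bar f = \bar g\circ\bar h$ is exactly the paper's $\bar f = \beta\circ\alpha$, and your treatment of the representable vector-bundle factor $\bar h$ matches the paper's (they phrase it via $\alpha_!\alpha^!\simeq\tn{id}$ and $\alpha^!\simeq\alpha^*[2r]$, you via the projection formula; same content). The difference is in the non-representable factor $\bar g=\beta$. The paper argues in one line: since $\ker f_1$ is unipotent, $\beta^*$ is an equivalence of categories by \cite[6.6.16]{achar21}, hence its adjoint $\beta_![-2r]$ is automatically a quasi-inverse, giving $\beta_!\beta^*\simeq[2r]$. You instead identify the fiber as $BN$, compute $H^*_c(BN)$ by Verdier duality from $H^*(BN)\simeq\overline\bbQ_\ell$, and feed this into the projection formula. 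Your route is correct but slightly longer, and note that your input ``$H^*(BN,\overline\bbQ_\ell)\simeq\overline\bbQ_\ell$ because $N\simeq\bbA^r$'' is essentially the same fact the paper cites from Achar (contractibility of the fiber of $\tn{pt}\to BN$), so you are not really avoiding \cite[6.6.16]{achar21}; once you grant that $\beta^*$ is an equivalence, the adjunction trick gets you to $\beta_!\beta^*\simeq[2r]$ without needing to set up the stacky projection formula or the $Y\times BN$ base-change computation. Your version does have the minor virtue of tracking the Tate twists, which the paper suppresses.
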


\begin{proof}
Factor $\bar f$ as
$\begin{tikzcd}
X/G \rar{\alpha} &
Y/G \rar{\beta} &
Y/H
\end{tikzcd}$
where $\alpha$ is induced by~$f_2$
and $\beta$ is induced by~$f_1$.
Since $\alpha$ is a vector bundle, $\alpha_*\circ\alpha^*\simeq\tn{id}$,
or equivalently, by Verdier duality, $\alpha_!\circ\alpha^!\simeq\tn{id}$.
And since $\alpha$ is smooth of relative dimension~$\dim\ker f_1$
we have $\alpha^! \simeq \alpha^*[2\dim\ker f_1]$.
Putting these together we find that
\[
\alpha_!\circ\alpha^* \simeq [-2\dim\ker f_1](-).
\]
At the same time, since $\ker f_1$ is unipotent,
$\beta^*$ is an equivalence of categories
(cf.~\cite[6.6.16]{achar21}).
Since $\beta_![-2\dim\ker f_1]$ is right adjoint to~$\beta^*$
it is a quasi-inverse, meaning
\[
\beta_!\circ\beta^* \simeq [2\dim\ker f_1](-).
\]
The claim follows by combining the two displayed equations.
\end{proof}

In particular, \Cref{thm24}
implies that pullback of the group action forming a quotient stack
yields a pullback diagram of stacks.

\begin{corollary} \label{thm92}
Let $(G\actson X)\to(G\actson Y)$ be a morphism of varieties with group action
and let $G'\to G$ be a homomorphism of algebraic groups.
Then the resulting commutative square below is a pullback square
\[
\begin{tikzcd}
X/G' \rar\dar & Y/G' \dar \\
X/G \rar & Y/G.
\end{tikzcd}
\]
\end{corollary}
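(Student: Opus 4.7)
The plan is to verify that the square satisfies the universal property of a 2-fiber product, by comparing $S$-points for an arbitrary test scheme $S$. An $S$-point of the putative fiber product $(X/G)\times_{Y/G}(Y/G')$ unpacks as a quadruple $((T,h),(T',h'),\alpha)$ consisting of a $G$-torsor $T\to S$ with $G$-equivariant map $h\colon T\to X_S$, a $G'$-torsor $T'\to S$ with $G'$-equivariant map $h'\colon T'\to Y_S$, and an isomorphism $\alpha\colon T\simeq T'\times^{G'}G$ of $G$-torsors compatible with the induced maps to $Y_S$. The forward functor sends an $S$-point $(T'',h'')$ of $X/G'$ to $((T''\times^{G'}G,\tilde h''),(T'',f_2\circ h''),\tn{id})$, where $\tilde h''$ is the canonical $G$-equivariant extension of $h''$ and $f_2\colon X\to Y$ is the morphism given on varieties. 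For the quasi-inverse, use $\alpha$ to form the composition $T'\hookrightarrow T'\times^{G'}G\simeq T\xrightarrow{h}X_S$, which is $G'$-equivariant; together with $T'$ this gives the desired $S$-point of $X/G'$. Checking that these functors are quasi-inverse is routine bookkeeping.

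Alternatively, and perhaps more slickly, one can deduce the corollary from \Cref{thm20} applied to the diagram
\[
(G\actson X)\xrightarrow{(\tn{id}_G,f_2)}(G\actson Y)\xleftarrow{(G'\to G,\tn{id}_Y)}(G'\actson Y),
\]
where the hypotheses hold tautologically because $\tn{id}_G$ is surjective with section $\tn{id}_G$. Upon identifying $X\times_Y Y\simeq X$ and $G\times_G G'\simeq G'$, the conclusion of \Cref{thm20} is exactly the claim.

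There is essentially no hard step. The one point worth flagging is that the isomorphism datum $\alpha\colon T\simeq T'\times^{G'}G$ built into the 2-fiber product is precisely what encodes the reduction of the structure group from $G$ to $G'$; omitting it would break the equivalence. Everything else is formal manipulation of torsors and their induced structures.
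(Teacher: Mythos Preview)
Your proposal is correct. Both arguments work: the direct torsor comparison is a clean verification of the 2-universal property, and your second approach via \Cref{thm20} (applied with $f=(\tn{id}_G,f_2)$ so that the section hypothesis is vacuous) is the slicker route.

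The paper gives no proof beyond the one-line lead-in ``In particular, \Cref{thm24} implies\ldots''. That cross-reference is puzzling, since \Cref{thm24} is a statement about $\bar f_!\bar f^*\simeq\tn{id}$ under hypotheses (surjective with unipotent kernel, vector-bundle fibers) that have nothing to do with the general $G'\to G$ here; almost certainly \Cref{thm20} was intended, which is exactly your second argument. So your proof is not merely correct but fills in what the paper leaves implicit (or misattributes).
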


\subsection{Disconnected parabolic subgroups}
Let $G$ be a (possibly disconnected) reductive group.
For us, a subgroup $P$ of~$G$ is \mathdef{parabolic}
if $G/P$ is proper, or equivalently,
$P^\circ$ is a parabolic subgroup of~$G^\circ$.
In this section we review the classification
and basic structure of such subgroups.

The parabolic subgroups of~$G$
are classified combinatorially as follows,
using the theory of generalized Tits systems
\cite[Exercise~IV.2.8]{lie4-6}
Fix a Borel subgroup~$B^\circ$ of~$G^\circ$;
we will classify the parabolic subgroups of~$G$ containing~$B^\circ$.
Fix a maximal torus~$T^\circ$ of~$B^\circ$ and
let $W = N_G(T^\circ)/T^\circ$, an extended Weyl group.
There is a generalized Bruhat decomposition $G = B^\circ WB^\circ$,
and since $P$ is a union of~$B^{\circ}$ double cosets,
we need only describe the corresponding subset of~$W$
representing these double cosets.
For this, let $W^\circ = N_{G^\circ}(T^\circ)/T^\circ$,
an ordinary Weyl group.
Since all maximal tori are conjugate,
there is a short exact sequence
\[
\begin{tikzcd}[column sep=small]
1 \rar &
W^\circ \rar &
W \rar &
\pi_0(G) \rar &
1.
\end{tikzcd}
\]
Since all Killing pairs are conjugate,
$\pi_0(G) = \pi_0\bigl(N_G(B^\circ,T^\circ)\bigr)$
and this identification yields a splitting
of this short exact sequence:
\[
W \simeq W^\circ \rtimes \pi_0(G).
\]
Given a subset~$X$ of the set of simple reflections,
let $W^\circ_X$ denote the subgroup of~$W^\circ$ generated by~$X$
and let $\Omega$ be a subgroup of $\pi_0(G)$
that stabilizes~$X$.
Then the set $P_{(X,\Omega)} \defeq B^\circ W_X\Omega B^\circ$
is a parabolic subgroup of~$G$ containing~$B^\circ$,
and the assignment $(X,\Omega)\mapsto P_{(X,\Omega)}$
is a bijection between the set of such pairs
and the set of such parabolic subgroups.
Moreover, this bijection yields a bijection
between $\pi_0(G)$-conjugacy classes of pairs $(X,\Omega)$
and $G$-conjugacy classes of parabolic subgroups of~$G$.

Let $P$ be a parabolic subgroup of~$G$.
A \mathdef{Levi subgroup}~$L$ of~$P$
is a complement in~$P$ to its unipotent radical~$U$.
In particular, $\pi_0(L) = \pi_0(P)$
and $L$ provides a section
of the projection from $P$ onto its maximal reductive
(in other words, Levi) quotient.
Levi factors always exist:
when $G = G^\circ$ this is well known,
and in general, the assignment
$L^\circ\mapsto N_P(L^\circ)$ defines a bijection
between Levi factors of~$P^\circ$ and those of~$P$.

To finish this section,
we prove that the set of double cosets
for a pair of parabolic subgroups
admits representatives of a particularly nice form.
These representatives play a key role
in our statement of the Geometrical Lemma.

\begin{lemma} \label{thm15}
Let $P$ and~$Q$ be parabolic subgroups of~$G$
and let $L$ and $M$ be Levi factors
of $P$ and~$Q$, respectively.
Let $\Sigma(M,L)$ be the set of $x\in G$
such that $M\cap{}^xL$ contains a maximal torus.
\begin{enumerate}
\item
Inclusion induces a bijection
$M\backslash \Sigma(M,L)/L
\simeq Q\backslash G/P$.

\item
There is a finite sequence $g_1,\dots,g_s$
of double coset representatives of
$M\backslash \Sigma(M,L)/L$ such that
if $Qg_i P\subseteq \overline{Qg_jP}$ then $i\leq j$.
\end{enumerate}
\end{lemma}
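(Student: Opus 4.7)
The plan is to prove (1) in two stages---existence and uniqueness of the representative---by lifting the corresponding connected-group statements via the extended Tits system recalled above, and to handle (2) by a closure-order argument on the $Q\times P$-orbits in~$G$. For \emph{surjectivity} in (1), given $g\in G$, I would invoke the classical fact that any two parabolic subgroups of a connected reductive group share a maximal torus, applied to the parabolics $g^{-1}Q^\circ g$ and~$P^\circ$ of~$G^\circ$; this yields a maximal torus $T_0$ of~$G^\circ$ inside $g^{-1}Qg\cap P$. Using $P^\circ$-conjugacy of maximal tori in~$L^\circ$ and then $Q^\circ$-conjugacy of maximal tori in~$M^\circ$, I would adjust $g$ successively by elements of~$P$ and~$Q$ to obtain $x\in QgP$ with $T_0\subseteq L$ and $xT_0x^{-1}\subseteq M$, so that $x\in\Sigma(M,L)$.

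For \emph{injectivity}, suppose $x,x'\in\Sigma(M,L)$ lie in the same $Q$-$P$-double coset, and pick maximal tori $T\subseteq M\cap{}^xL$ and $T'\subseteq M\cap{}^{x'}L$. Successive $M^\circ$- and $L^\circ$-conjugacies of maximal tori reduce to the case $T'=T$ and $x^{-1}Tx=(x')^{-1}Tx'$, so that $n\defeq x'x^{-1}\in N_G(T)\cap Q\cdot{}^xP$. A Bruhat-type analysis for the pair $(Q,{}^xP)$, both of which contain~$T$, should then yield
\[
N_G(T)\cap Q\cdot{}^xP \;=\; N_Q(T)\cdot N_{{}^xP}(T) \;=\; N_M(T)\cdot N_{{}^xL}(T) \;\subseteq\; M\cdot{}^xL,
\]
where the second equality uses that the unipotent radical of a parabolic containing~$T$ meets $N_G(T)$ trivially. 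This forces $x'=nx\in MxL$, as required.

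For (2), the generalized Bruhat decomposition identifies $Q\backslash G/P$ with a finite quotient of~$W$, and each double coset $QgP$ is a locally closed subvariety of~$G$ (as an orbit of the algebraic group $Q\times P$) whose frontier $\overline{QgP}\setminus QgP$ is a union of double cosets of strictly smaller dimension. The closure relation is therefore a finite partial order on $Q\backslash G/P$, and any linear extension (topological sort) yields an enumeration $g_1,\ldots,g_s$ with the required property.

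The main obstacle I anticipate is the Bruhat-type identity in the injectivity step: in the disconnected setting one must verify that any $n\in N_G(T)\cap Q\cdot{}^xP$ factors compatibly as $q'p'$ with $q'\in N_Q(T)$ and $p'\in N_{{}^xP}(T)$. This requires tracking the interaction of the component groups $\pi_0(Q)$ and $\pi_0({}^xP)$ with the extended Weyl group~$W$ and checking that the trivial $(Q,{}^xP)$-double coset meets $N_G(T)$ in the predicted set, after which the classical connected-case argument carries over.
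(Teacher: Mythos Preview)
Your proposal is correct, and for part~(2) it is essentially identical to the paper's argument: the paper simply orders the representatives by $\dim(Qg_iP)$, which is exactly your topological sort on the closure partial order since orbit closures are unions of lower-dimensional orbits.

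For part~(1), the paper takes a much lighter route. Rather than arguing directly with maximal tori and a Bruhat-type identity, it defers to the connected case (citing Digne--Michel~5.2.2) and says that the same reduction to double cosets in the (extended) Weyl group goes through, using the generalized Tits system for disconnected~$G$ summarized just before the lemma. So the paper never writes down your identity $N_G(T)\cap Q\cdot{}^xP = N_Q(T)\cdot N_{{}^xP}(T)$ explicitly; instead, the combinatorial classification $P_{(X,\Omega)}=B^\circ W_X\Omega B^\circ$ and the generalized Bruhat decomposition $G=B^\circ W B^\circ$ with $W\simeq W^\circ\rtimes\pi_0(G)$ do the work for you: both double coset sets reduce to the same quotient of~$W$. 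Your direct approach is more self-contained but requires you to verify the flagged obstacle; the paper's approach trades that for the (already stated) generalized Tits-system input. Both arrive at the same place, and in fact the Bruhat-type identity you need in the disconnected case is precisely what the generalized Bruhat decomposition gives, so your anticipated obstacle dissolves once you invoke that decomposition.
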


It is not too difficult to give
a description of $Q\backslash G/P$
in terms of the extended Weyl group
and the generalized Tits system,
but there is no advantage in this for us
so we omit it.

\begin{proof}
The first part is the same as in the connected case
\cite[5.2.2]{digne_michel20},
which rests ultimately on a reduction
to double cosets in the Weyl group.
This reduction is permitted in the disconnected case
by the combinatorial classification of
disconnected parabolic subgroups discussed above.
For the second part, it suffices to
order the coset representatives so that
if $\dim(Qg_iP) \leq \dim(Qg_jP)$ then $i\leq j$,
and such an order can certainly be arranged.
\end{proof}

\subsection{Parabolic induction and restriction}
\label{sec:springer:ind}
Let $P$ be a parabolic subgroup of~$G$
and $L$ a Levi subgroup of~$P$.
Consider the correspondence of algebraic stacks
\begin{equation}\label{thm71}
\begin{tikzcd}[row sep=tiny]
& \frak p/P \dlar[swap]{\pi} \drar{i} & \\
\frak l/L & & \frak g/G
\end{tikzcd}
\end{equation}
induced by the usual maps from $P$ to $L$ and~$G$.

\begin{definition}
The functor $\uind_{L\subseteq P}^G\colon
\Dbc(\frak l/L) \to\Dbc(\frak g/G)$ is defined by 
\[
\ind_{L\subseteq P}^G \defeq i_!\circ\pi^*.
\]
\end{definition}

The functor $\uind_{L\subseteq P}^G$ is a version
of parabolic induction in the setting of
equivariant constructible sheaves on the Lie algebra.
It satisfies all the usual properties one
expects from the setting of reductive $p$-adic groups,
as we now explain.

It follows from standard adjunctions
that $\uind_{L\subseteq P}^G$ admits a right adjoint,
the functor
\[
\cores_{L\subseteq P}^G \defeq \pi_*\circ i^!.
\]
Since $\pi$ is proper, $\pi_*=\pi_!$.
Moreover, decomposing $i$ as the projection
$\frak p/P \to \frak m/P \to \frak m/M$,
we see by \cite[6.6.16]{achar21} that $i^* = i^!$.
Hence the functor $\uind_{L\subseteq P}^G$
admits a left adjoint as well, the functor
\[
\ures_{L\subseteq P}^G \defeq \pi_!\circ i^*.
\]
It is this left adjoint that is of the most interest to us.

\begin{proposition}
Let $P\subseteq Q\subseteq G$ be parabolic subgroups of~$G$
with respective Levi factors $L\subseteq M$.
Then
\[
\uind_{M\subseteq Q}^G\circ \uind_{L\subseteq M\cap P}^M
\simeq \uind_{L\subseteq P}^G
\qquad\qquad
\ures_{L\subseteq M\cap P}^M\circ\ures_{M\subseteq Q}^G
\simeq \ures_{L\subseteq P}^G.
\]
\end{proposition}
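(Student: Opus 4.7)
The plan is to prove the induction formula via a composition-of-correspondences argument and then deduce the restriction formula by passage to left adjoints. For the induction formula, the first move is to recognize $\uind_{M \subseteq Q}^G \circ \uind_{L \subseteq M \cap P}^M$ as pull-push along the concatenation of the two correspondences defining these parabolic inductions. Writing the induction functors as $i^G_! \pi^{G,*}$ and $i^M_! \pi^{M,*}$ with the obvious notation, the base change isomorphism $\pi^{G,*} \circ i^M_! \simeq q_! \circ p^*$ for the cartesian square defining
\[
\frak Z \defeq \frak{m\cap p}/(M \cap P) \underset{\frak m/M}{\times} \frak q/Q
\]
(with $p,q$ its projections) rewrites the composition as $(i^G \circ q)_! \circ (\pi^M \circ p)^*$, i.e.\ as pull-push along the composed correspondence $\frak l/L \leftarrow \frak Z \to \frak g/G$.

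The main geometric task is then to identify $\frak Z$ with $\frak p/P$ compatibly with the structure maps from \eqref{thm71}. Here I would apply \Cref{thm20} with the projection $Q \twoheadrightarrow M$, which admits the Levi inclusion $M \hookrightarrow Q$ as a homomorphic section, in the role of~$f$. This yields
\[
\frak Z \simeq \bigl(\frak q \underset{\frak m}{\times} (\frak m \cap \frak p)\bigr) \Big/ \bigl(Q \underset{M}{\times} (M \cap P)\bigr).
\]
Both fiber products now unpack as preimages: the denominator is the preimage of $M \cap P$ under $Q \twoheadrightarrow M$, and the numerator is the preimage of $\frak m \cap \frak p$ under $\frak q \twoheadrightarrow \frak m$. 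Since $P \subseteq Q$ forces $\Ru(Q) \subseteq \Ru(P^\circ) \subseteq P$, the Levi decompositions $P = (M \cap P) \cdot \Ru(Q)$ and $\frak p = (\frak m \cap \frak p) \oplus \Lie\Ru(Q)$ identify these preimages with $P$ and $\frak p$, respectively. A brief check confirms that the induced isomorphism $\frak Z \simeq \frak p/P$ intertwines $\pi^M \circ p$ with $\pi$ and $i^G \circ q$ with $i$, yielding the desired isomorphism of induction functors.

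For the restriction formula, the clean route is adjunction: $\ures^G_{L \subseteq P}$ is left adjoint to $\uind^G_{L \subseteq P}$, while the composite $\ures^M_{L \subseteq M \cap P} \circ \ures^G_{M \subseteq Q}$ is left adjoint to $\uind^G_{M \subseteq Q} \circ \uind^M_{L \subseteq M \cap P}$. The induction statement just proved equates their right adjoints, so by uniqueness of adjoints the two left adjoints agree as well.

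The main obstacle in this plan is ensuring that the base change isomorphism $\pi^{G,*} \circ i^M_! \simeq q_! \circ p^*$ is available in the six-functor formalism of \Cref{sec:springer:sheaves}. The map $i^M$ is not a locally closed immersion of schemes but rather the composition of such an immersion with a quotient whose stabilizer has unipotent kernel, and one should verify that the cartesian square above is of the sort to which the constructible-sheaf base change of Laszlo--Olsson and Liu--Zheng applies. Once that is in hand, the computation of $\frak Z$ is a routine group-theoretic exercise.
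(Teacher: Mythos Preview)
Your proposal is correct and follows essentially the same route as the paper: reduce the restriction identity to the induction one by adjunction, rewrite the composite induction via base change as pull--push along a fiber product, apply \Cref{thm20} to compute that fiber product as a quotient of ordinary fiber products, and then identify those with $\frak p$ and $P$ via the Levi decomposition. The paper compresses the last step to ``easy to check'' and states the base change step as ``by proper base change'' without the caveat you raise, but the substance is the same.
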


\begin{proof}
The second part follows from the first part by adjunction.
For the first part, by proper base change it suffices to show that
the following diagram is Cartesian:
\[
\begin{tikzcd}
\frak p/P \rar\dar   & \frak q/Q \dar \\
(\frak m\cap \frak p)/(M\cap P) \rar & \frak m/M.
\end{tikzcd}
\]
\Cref{thm20} reduces us to showing that the maps
\[
\frak p \to (\frak m\cap\frak p)\times_{\frak m}\frak q
\qquad\tn{and}\qquad
P \to (M\cap P)\times_M Q
\]
are isomorphisms, which is easy to check.%
\end{proof}

One advantage of treating parabolic subgroups of~$G$
in this generality is that $G^\circ$ is a parabolic subgroup.
Specializing to this case yields the following corollary.

\begin{corollary} \label{thm21}
Let $P\subseteq G$ be a parabolic subgroup of~$G$
with Levi factor~$L$. Then
\[
\ures_{L^\circ\subseteq P^\circ}^{G^\circ}\circ \ures_{G^\circ}^G
\simeq \ures_{L^\circ}^L\circ\ures_{L\subseteq P}^G
\qquad\tn{and}\qquad
\uind^G_{G^0}\circ \uind_{L^\circ\subseteq P^\circ}^{G^\circ}
\simeq \uind_{L\subseteq P}^G\circ\uind^L_{L^\circ}.
\]
\end{corollary}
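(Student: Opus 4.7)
The plan is to derive both identities from the previous proposition (transitivity of parabolic induction and restriction) applied to two different nested pairs of parabolics. The key observation is that $G^\circ$ is a parabolic subgroup of~$G$ in our generalized sense: $G/G^\circ$ is finite and hence proper, and the unipotent radical of $G^\circ$ is trivial, so $G^\circ$ is its own Levi factor. Likewise $P^\circ$ is parabolic in~$G$: since $G^\circ/P^\circ$ is proper and $G/G^\circ$ is finite, the quotient $G/P^\circ$ is proper. Its Levi factor is $L^\circ$.

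With these observations in hand, I would first apply the previous proposition to the chain $P^\circ\subseteq G^\circ\subseteq G$ (with Levi factors $L^\circ\subseteq G^\circ$). Since $G^\circ\cap P^\circ = P^\circ$ and $G^\circ \cap L^\circ = L^\circ$, the proposition yields
\[
\uind^G_{G^\circ}\circ\uind_{L^\circ\subseteq P^\circ}^{G^\circ}\simeq\uind_{L^\circ\subseteq P^\circ}^G.
\]
Next I would apply the proposition to the chain $P^\circ\subseteq P\subseteq G$ (with Levi factors $L^\circ\subseteq L$). Since $L\cap P^\circ = L^\circ$, the proposition gives
\[
\uind_{L\subseteq P}^G\circ\uind^L_{L^\circ}\simeq\uind_{L^\circ\subseteq P^\circ}^G.
\]
Combining the two displays produces the second identity of the corollary.

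For the first identity, the quickest route is to run exactly the same argument using the $\ures$-half of the previous proposition (which was proved there by adjunction from the $\uind$-half). Alternatively, one can pass to left adjoints in the second identity, using that $\ures_{L\subseteq P}^G$ is left adjoint to $\uind_{L\subseteq P}^G$ by construction.

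There is no real obstacle here; the only point requiring care is confirming that $G^\circ$ and $P^\circ$ qualify as parabolic subgroups of~$G$ in the sense fixed in the section and that their Levi factors are as claimed, so that the hypotheses of the previous proposition genuinely apply. Once that bookkeeping is out of the way, the result is a formal composite of the two instances of transitivity.
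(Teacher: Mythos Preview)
Your proposal is correct and is exactly the approach the paper takes: the paper simply remarks that $G^\circ$ is a parabolic subgroup of~$G$ and that the corollary follows by specializing the transitivity proposition. You have supplied precisely the two instantiations (the chains $P^\circ\subseteq G^\circ\subseteq G$ and $P^\circ\subseteq P\subseteq G$) that make this specialization explicit.
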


For the most part, we will apply parabolic induction and restriction
to sheaves supported on the nilpotent cone, yielding functors
\begin{equation} \label{thm67}
\begin{tikzcd}[column sep=large]
\res_{L\subseteq P}^G : \Dbc(\cal N_G/G) \rar[shift left=0.5ex] &
\Dbc(\cal N_L/L) : \ind_{L\subseteq P}^G \lar[shift left=0.5ex]
\end{tikzcd}
\end{equation}
defined by the restriction of the correspondence \eqref{thm67}
to quotients of nilpotent cones.
In this situation, these operations are especially well-behaved.
Recall that a \mathdef{semisimple complex}
is a finite direct sum of shifts of simple perverse sheaves.

\begin{lemma} \label{thm70}
The functors $\ind_{L\subseteq P}^G$ and $\res_{L\subseteq P}^G$
take semisimple complexes to semisimple complexes
and are $t$-exact for the perverse $t$-structure.
\end{lemma}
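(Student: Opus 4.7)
The plan is to reduce to the connected case, a classical theorem of Lusztig (cf.\ \cite[Ch.~8]{achar21}): for connected reductive $G^\circ$ and parabolic $P^\circ\subseteq G^\circ$ with Levi~$L^\circ$, both $\uind^{G^\circ}_{L^\circ\subseteq P^\circ}$ and $\ures^{G^\circ}_{L^\circ\subseteq P^\circ}$ on nilpotent-cone $\Dbc$ are $t$-exact and preserve semisimple complexes. The bridge to the disconnected setting is \Cref{thm21}, applied together with an analysis of the ``change-of-component'' functors induced by the finite étale covers of quotient stacks $g_H\colon\cal N_H/H^\circ\to\cal N_H/H$ for $H\in\{L,G\}$ (each of degree $|\pi_0(H)|$, since $\cal N_H=\cal N_{H^\circ}$). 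Because we work with $\overline\bbQ_\ell$-coefficients of characteristic zero, the pull-push functors $\ures^H_{H^\circ}=g_H^*$ and $\uind^H_{H^\circ}=g_{H,!}=g_{H,*}$ are $t$-exact, conservative, and preserve semisimple complexes; moreover, a Maschke-style averaging by $|\pi_0(H)|^{-1}$ makes them reflect perversity and semisimplicity through their unit and counit maps.

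For $\uind^G_{L\subseteq P}$, the first isomorphism of \Cref{thm21},
\[
\uind^G_{G^\circ}\circ\uind^{G^\circ}_{L^\circ\subseteq P^\circ}
\simeq \uind^G_{L\subseteq P}\circ\uind^L_{L^\circ},
\]
has $t$-exact and semisimple-preserving left-hand side (connected case plus étale pushforward), so the right-hand side does too. For any perverse (resp.\ semisimple perverse) $\cal F\in\Perv(\cal N_L/L)$, the unit $\cal F\to\uind^L_{L^\circ}(g_L^*\cal F)$ admits a $|\pi_0(L)|^{-1}$-averaged retraction, realizing $\cal F$ as a direct summand. Then $\uind^G_{L\subseteq P}(\cal F)$ is a direct summand of the perverse (resp.\ semisimple perverse) sheaf $\uind^G_{L\subseteq P}(\uind^L_{L^\circ}(g_L^*\cal F))$, and a summand of a perverse (resp.\ semisimple perverse) sheaf is again one.

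The argument for $\ures^G_{L\subseteq P}$ is dual. The second isomorphism of \Cref{thm21}, $\ures^{G^\circ}_{L^\circ\subseteq P^\circ}\circ\ures^G_{G^\circ}\simeq\ures^L_{L^\circ}\circ\ures^G_{L\subseteq P}$, together with the connected case and exactness of $g_G^*$, shows that $g_L^*\ures^G_{L\subseteq P}(\cal F)$ is perverse (resp.\ semisimple perverse) for $\cal F\in\Perv(\cal N_G/G)$. Since $g_L^*$ is $t$-exact and conservative it reflects perversity; and since any splitting of a pullback short exact sequence of perverse sheaves can be $\pi_0(L)$-averaged to descend, $g_L^*$ also reflects semisimplicity. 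Both properties therefore pass from $g_L^*\ures^G_{L\subseteq P}(\cal F)$ to $\ures^G_{L\subseteq P}(\cal F)$ itself. The main technical nuisance is these two reflection properties for $g_L^*$; once they are in hand, \Cref{thm21} and the connected case immediately yield both halves of the lemma.
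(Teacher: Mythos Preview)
Your argument is correct, but it takes a somewhat different route from the paper's. The paper disposes of semisimplicity in one line by invoking the decomposition theorem directly in the disconnected setting (the map $i$ is proper and $\pi$ is a vector bundle, so $i_!\pi^*$ and its adjoint send semisimple complexes of geometric origin to semisimple complexes), and handles $t$-exactness by the bare observation that perversity is detected by the forgetful functor to $G^\circ$-equivariant complexes, whereupon the connected result \cite[8.4.11]{achar21} applies. You instead route both properties through \Cref{thm21}, using the retract $\cal F\hookrightarrow\uind^L_{L^\circ}g_L^*\cal F$ for induction and the conservativity/averaging properties of $g_L^*$ for restriction. Your approach is more uniform (one mechanism for both claims) and makes the role of the finite \'etale cover $g_H$ explicit; the paper's is shorter but leans more heavily on the decomposition theorem as a black box.

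One minor point: as written, your induction argument only shows that perverse sheaves go to perverse sheaves, which is not literally $t$-exactness. But your summand argument applies verbatim with ${}^p D^{\leq 0}$ or ${}^p D^{\geq 0}$ in place of the heart (aisles are closed under direct summands), so this is easily patched. Your reflection-of-semisimplicity step for $g_L^*$ via $\pi_0(L)$-averaging is correct and is the one genuinely new ingredient your route requires beyond the paper's.
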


In fact, the two functors $\pi^*$ and $i_!$
whose composition comprises $\ind_{L\subseteq P}^G$
are $t$-exact for the perverse $t$-structure;
this is one advantage of our indexing conventions for
the perverse $t$-structure on $\Dbc$ of a quotient stack.

\begin{proof}
The first part follows by the decomposition theorem.
The second part is \cite[Theorem~8.4.11]{achar21} when $G=G^\circ$,
and the general case reduces to this connected case
because $t$-exactness can be checked after applying
the forgetful functor from $G$-equivariant complexes
to $G^\circ$-equivariant complexes.
\end{proof}

Next, we show that parabolic induction
is compatible with formation of central characters.
This claim rests on the following natural observation.

\begin{lemma} \label{thm93}
Let $f\colon G'\to G$ be a morphisms of reductive groups
inducing an isomorphism on adjoint quotients
and let $L'\defeq f^{-1}(L)$ and $P' \defeq f^{-1}(P)$.
Then there is an isomorphism of functors
\[
f^*\circ \ind_{L\subseteq P}^G \simeq
\ind_{L'\subseteq P'}^{G'}\circ f^*.
\]
\end{lemma}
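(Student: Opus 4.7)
The plan is to show that the correspondence~\eqref{thm71} defining $\ind_{L\subseteq P}^G$ pulls back along $f$ to the analogous correspondence for $G'$, and then to deduce the isomorphism of functors by base change in the six-functor formalism. Concretely, I would construct a commutative diagram of quotient stacks
\[
\begin{tikzcd}[row sep=small]
\frak l'/L' \dar{\bar f_L} & \frak p'/P' \lar[swap]{\pi'} \rar{i'} \dar{\bar f_P} & \frak g'/G' \dar{\bar f_G} \\
\frak l/L & \frak p/P \lar[swap]{\pi} \rar{i} & \frak g/G
\end{tikzcd}
\]
in which both squares are Cartesian. Granting this, the isomorphism
\[
f^*\circ\ind_{L\subseteq P}^G \;=\; \bar f_G^*\circ i_!\circ\pi^* \;\simeq\; i'_!\circ\bar f_P^*\circ\pi^* \;\simeq\; i'_!\circ(\pi')^*\circ\bar f_L^* \;=\; \ind_{L'\subseteq P'}^{G'}\circ f^*
\]
follows at once, the first step being $*$-$!$ base change for the Cartesian right square and the second being functoriality of $*$-pullback for the left square.

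The real content of the argument therefore lies in verifying that the two squares are Cartesian. At the level of varieties with group action, the definitions $P' = f^{-1}(P)$ and $L' = f^{-1}(L)$ translate into the statements $P' = P\times_G G'$ and $L' = L\times_G G'$ as algebraic subgroups of~$G'$; passing to Lie algebras (which is compatible with finite limits in characteristic zero) then gives $\frak p' = \frak p\times_{\frak g}\frak g'$ and $\frak l' = \frak l\times_{\frak g}\frak g'$. Each of the two squares thus arises from a Cartesian square of varieties with compatible group actions, and I would argue that such a square remains Cartesian after passage to quotient stacks by a direct groupoid calculation of the sort performed in \Cref{thm20}: on $S$-points, a $P'$-torsor on~$S$ equipped with an equivariant map to~$\frak p'$ is precisely the same data as a $P$-torsor on~$S$ mapping to~$\frak p$ together with a $G'$-reduction of its induced $G$-torsor, compatibly with the map to~$\frak g'$. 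Note that the adjoint-quotient hypothesis is used only to guarantee that $L'\subseteq P'$ really is a Levi in a parabolic of~$G'$, so that the right-hand side of the asserted isomorphism is well-defined; it plays no role in the fiber-product verification itself.

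The only mild obstacle is the stack-theoretic bookkeeping for the Cartesian squares, but \Cref{sec:springer:stacks} (and especially \Cref{thm20}) has already set up exactly the machinery required for this type of calculation, so the proof should reduce to a direct application of the formalism developed there.
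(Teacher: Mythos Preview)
Your proposal is correct and follows essentially the same strategy as the paper: verify that the relevant square in the diagram of correspondences is Cartesian, then apply base change. The paper's proof is terser---it factors $f$ as a surjection followed by an injection and invokes \Cref{thm92}---while you carry out the Cartesian verification directly via the functor of points (the key identity being $\tn{pt}/P' \simeq \tn{pt}/P \times_{\tn{pt}/G} \tn{pt}/G'$, which is exactly your torsor description). Two small remarks: first, the lemma concerns $\ind$ on nilpotent cones rather than $\uind$ on full Lie algebras, so your diagram should really use $\cal N_L,\cal N_P,\cal N_G$ (where the hypothesis on adjoint quotients makes $\cal N_{G'}\simeq\cal N_G$ and simplifies matters); second, only the right square needs to be Cartesian---your chain of isomorphisms already uses mere commutativity for the left square, so the stronger claim there is unnecessary.
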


\begin{proof}
After factoring $G'\to G$ as a composition
of a surjection followed by an injection,
the claim follows from \Cref{thm92} and base change.
\end{proof}

Given a variety with group action $G\actson X$
and an equivariant perverse sheaf $\cal F\in\Perv(X/G)$,
suppose that $\cal F=\IC(\cal O,\cal E)$
where $\cal E\in\Perv(Y/G)$ for some locally closed $G$-orbit $Y\subseteq X$.
Then, after choosing a basepoint $y\in Y$,
we may identify $\cal E$ with a $\overline\bbQ_\ell$-representation
$\rho_{\cal F} = j_y^*(\cal E)$ of the finite group $\pi_0(Z_G(y))$.
Letting $A\subseteq Z(G)$ be a central subgroup of~$G$,
consider the restriction of $\rho_{\cal F}$ to the finite abelian group
\[
\pi_0\bigl(Z_A(y)\bigr).
\]
along its natural map to $\pi_0(Z_G(y))$.
We say that $\cal F$ \mathdef{admits an $A$-character}
if this restriction is isotypic for a character of $\pi_0\bigl(Z_A(y)\bigr)$,
which is then called the \mathdef{$A$-character}.
In particular, every irreducible $\cal F$ admits an $A$-character.

It is often the case that operations on equivariant perverse sheaves
are compatible with formation of the $A$-character.
To prove such a compatibility result,
it helps to reformulate the $A$-character as follows.
Let $f\colon A\times G\to G$ be the multiplication map,
a group homomorphism because $A$ is central.
In the context of the previous paragraph, we see that 
\[
\rho_{f^*\cal F}\in\Irr\bigl(\pi_0(Z_{A\times G}(y))\bigr)
\simeq\Irr\bigl(\pi_0(Z_A(y))\bigr)\times\Irr\bigl(\pi_0(Z_G(y))\bigr)
\]
is the external tensor product of $\rho_{\cal F}$
and its central character.

\begin{corollary} \label{thm94}
If $\cal F\in\Perv(\cal N_L/L)$ admits a $Z(G)$-character
then $\ind_{L\subseteq P}^G$ admits a $Z(G)$-character,
which is the same as that of~$\cal F$.
\end{corollary}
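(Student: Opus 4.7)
The plan is to apply \Cref{thm93} to the multiplication map $f\colon Z(G)\times G\to G$. Because $Z(G)$ is central, $f$ induces an isomorphism on adjoint quotients; and because $Z(G)\subseteq Z(L)\subseteq L\subseteq P$, one computes that $f^{-1}(L) = Z(G)\times L$ and $f^{-1}(P) = Z(G)\times P$. \Cref{thm93} then yields a natural isomorphism
\[
f^*\circ \ind_{L\subseteq P}^G \simeq \ind_{Z(G)\times L\subseteq Z(G)\times P}^{Z(G)\times G}\circ f^*.
\]

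First I would translate the hypothesis using the reformulation of the $Z(G)$-character given in the paragraph preceding the corollary. Because $Z(G)\subseteq Z(H)$ acts trivially on $\cal N_H$ for each $H\in\{L,P,G\}$, the quotient stack $\cal N_H/(Z(G)\times H)$ is isomorphic to $(\cal N_H/H)\times BZ(G)$. Under these identifications, the hypothesis that $\cal F$ admits $Z(G)$-character $\chi$ becomes the assertion that $f^*\cal F$ is the external tensor product of $\cal F$ with a local system $\cal L_\chi$ on $BZ(G)$ corresponding to $\chi$; the nontriviality of $\cal L_\chi$ comes from the fact that $f|_{Z(G)\times L}$ differs from the projection $Z(G)\times L\to L$ by the twist encoded in the central character.

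Next I would observe that, under these same product decompositions, the correspondence (of the form \eqref{thm71}) computing $\ind_{Z(G)\times L\subseteq Z(G)\times P}^{Z(G)\times G}$ is the product of the correspondence computing $\ind_{L\subseteq P}^G$ with the identity correspondence on $BZ(G)$. Since $*$-pullback and $!$-pushforward both commute with external tensor product, $\ind_{Z(G)\times L\subseteq Z(G)\times P}^{Z(G)\times G}$ sends the external tensor product of $\cal F$ and $\cal L_\chi$ to the external tensor product of $\ind_{L\subseteq P}^G\cal F$ and $\cal L_\chi$.

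Combining the two steps, $f^*\bigl(\ind_{L\subseteq P}^G\cal F\bigr)$ is isomorphic to the external tensor product of $\ind_{L\subseteq P}^G\cal F$ with $\cal L_\chi$, which, when translated back via the reformulation, means that $\ind_{L\subseteq P}^G\cal F$ admits $Z(G)$-character $\chi$, the same as that of $\cal F$. I expect the main obstacle to be the Künneth-type compatibility of parabolic induction with external tensoring in the second step; although routine in the scheme setting, it requires some care in the equivariant/stacky setting, but it ultimately reduces to base change for the product correspondence together with the identification $\cal N_H/(Z(G)\times H)\simeq(\cal N_H/H)\times BZ(G)$.
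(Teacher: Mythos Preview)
Your proposal is correct and takes essentially the same approach as the paper, which simply says ``Apply \Cref{thm93} to the multiplication map $Z(G)\times G\to G$.'' You have filled in the details that the paper leaves implicit, including the reformulation of the $Z(G)$-character via $f^*$ and the compatibility of parabolic induction with the external product decomposition.
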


\begin{proof}
Apply \Cref{thm93} to the multiplication map $Z(G)\times G\to G$.
\end{proof}

Later, in our comparison with the Aubert--Moussaoui--Solleveld
formulation of the Springer correspondence,
we will need a variant of parabolic induction and restriction on the group.
Let $\cal U_G$ be the variety of unipotent elements of~$G$,
the \mathdef{unipotent variety}.
In exactly the same way, the correspondence
\begin{equation}\label{thm72}
\begin{tikzcd}[row sep=tiny]
& P\adq P \dlar[swap]{\pi} \drar{i} & \\
L\adq L & & G\adq G
\end{tikzcd}
\end{equation}
gives rise to two adjoint pairs of functors
\[
\begin{tikzcd}[row sep=tiny]
\uRes_{L\subseteq P}^G : \Dbc(G\adq G) \rar[shift left=0.5ex] &
\Dbc(L\adq L) : \uInd_{L\subseteq P}^G, \lar[shift left=0.5ex] \\
\Res_{L\subseteq P}^G : \Dbc(\cal U_G/G) \rar[shift left=0.5ex] &
\Dbc(\cal U_L/L) : \Ind_{L\subseteq P}^G, \lar[shift left=0.5ex]
\end{tikzcd}
\]
where the second pair is the restriction of the first
to unipotent varieties.
Everything we claimed above for parabolic induction
and restriction on the Lie algebra,
except (possibly) \Cref{thm70},
still holds in this setting with essentially the same proofs.

\begin{remark}
If $j\colon X\to\frak g$ is a locally closed subvariety
then the functor $j_!\colon\Dbc(X/G)\to\Dbc(\frak g/G)$
is an exact fully-faithful embedding.
We will sometimes identify $\cal F$
with its image under this embedding,
writing, for instance,
$\res_{L\subseteq P}^G(\cal F)$ for $\res_{L\subseteq P}^G(j_!\cal F)$.
\end{remark}

\subsection{The Geometrical Lemma}
\label{sec:geometrical}
In this section we will prove the uniqueness part of
$\cal P$-cuspidal support, the hard part of \Cref{thm14}.
The key tool is the following version
of the Geometrical Lemma of Bernstein and Zelevinsky
for perverse sheaves.
Let $P$ and $Q$ be parabolic subgroups of~$G$
with Levi factors $L$ and~$M$ and
let the sequence $g_1,\dots, g_s$
of coset representatives of $W_M\backslash W/W_L$
be as in \Cref{thm15}.

\begin{theorem} \label{thm17}
There is a natural isomorphism of functors
\[
\res_{M\subseteq Q}^G\circ\ind_{L\subseteq P}^G
\simeq \bigoplus_{i=1}^s
\ind_{M\cap{}^{g_i}L\subseteq M\cap{}^{g_i}P}^M
\circ\res^{{}^{g_i}L}_{M\cap{}^{g_i}L\subseteq Q\cap{}^{g_i}L}
\circ\Ad(g_i^{-1})^*.
\]
\end{theorem}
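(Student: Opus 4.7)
The plan is to interpret both sides of the desired isomorphism as pull-push functors along correspondences of quotient stacks, and then stratify the resulting fiber-product stack by double cosets, using \Cref{thm16} on each piece.

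First, I would rewrite the composition $\res_{M\subseteq Q}^G\circ\ind_{L\subseteq P}^G$ geometrically. Unwinding the definitions, this composition is $\pi_!^Q\circ i_Q^*\circ i_{P,!}\circ\pi_P^*$, and by proper base change (applied to the Cartesian square obtained from the fiber product of $i_P\colon\frak p/P\to\frak g/G$ and $i_Q\colon\frak q/Q\to\frak g/G$ in the nilpotent-cone setting) this equals the pull-push along a correspondence
\[
\frak l/L \xleftarrow{a} \frak X \xrightarrow{b} \frak m/M,
\]
where $\frak X$ is the relevant fiber product. The first key reduction is therefore to understand $\frak X$ and decompose $b_!\circ a^*$ accordingly.

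Second, I would stratify $\frak X$ using the Bruhat-type decomposition of \Cref{thm15}: the double cosets $Qg_iP$ with $i=1,\dots,s$ induce a locally closed stratification $\frak X = \bigsqcup \frak X_{w}$ indexed by $w\in W_M\backslash W/W_L$, and the ordering of representatives is arranged so that the closure of each stratum lies in the union of the preceding ones. For each $w$, \Cref{thm16} identifies $\frak X_w$ with $\cal N_{Q\cap{}^wP}/(Q\cap{}^wP)$ and provides a morphism $f_w\colon\frak X_w\to\frak Y_w\simeq\cal N_w/G_w$ fitting into the commutative diagram in that theorem. Its downstairs correspondence $\frak l/L\xleftarrow{\Ad(w^{-1})}\cal N_{{}^wL}/{}^wL\leftarrow\frak Y_w\to\cal N_M/M$ is exactly the one computing $\ind_{M\cap{}^{w}L\subseteq M\cap{}^{w}P}^M \circ\res^{{}^{w}L}_{M\cap{}^{w}L\subseteq Q\cap{}^{w}L} \circ\Ad(w^{-1})^*$. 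The contribution of the $w$-stratum to $b_!\circ a^*$ therefore agrees with the $w$-th summand on the right-hand side once we know that $(f_w)_!\circ f_w^*\simeq\tn{id}$; this in turn follows from \Cref{thm24}, since $f_w$ is a map of quotient stacks arising from a vector-bundle projection paired with a surjective group homomorphism whose kernel is unipotent of the matching rank.

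Third, I would assemble the strata by induction along the closure ordering. At each step the inclusion of the open stratum versus its closed complement gives a distinguished triangle, and iterating produces a filtration of $\res_{M\subseteq Q}^G\circ\ind_{L\subseteq P}^G$ whose associated graded is the claimed direct sum. The principal obstacle is splitting this filtration, and this is where the restriction to the nilpotent cone is essential: by \Cref{thm70} both functors preserve semisimple complexes, so each input to which the functor is applied produces a semisimple output, and by the decomposition theorem the extension classes in the open--closed triangles must vanish. (Equivalently, one checks that the triangles involve objects sitting in distinct perverse degrees after the appropriate shifts, or applies purity to force the splitting.) Combining these observations yields the desired natural isomorphism, with naturality flowing from the naturality of base change and of the identifications in \Cref{thm16,thm24}.
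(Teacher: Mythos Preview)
Your proposal is correct and follows essentially the same route as the paper: rewrite the composite as a pull--push along the fiber-product stack $\frak X$, stratify by double cosets via \Cref{thm15}, identify each stratum's contribution with the corresponding summand using \Cref{thm16} together with \Cref{thm24}, and split the resulting open--closed filtration using the semisimplicity of parabolic induction and restriction (\Cref{thm70}). The parenthetical alternatives you mention (distinct perverse degrees, purity) are not needed and not what the paper invokes---semisimplicity alone does the job.
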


To prove \Cref{thm17} we further reformulate it into a truly
geometric statement about certain quotient stacks.
The starting point is to observe that the lefthand side of \Cref{thm17} is
a composition of two functors obtained by $*$-pulling and $!$-pushing along a correspondence,
and, by proper base change,
such a composition is given by $*$-pulling and $!$-pushing
along the composite correspondence
\[
\begin{tikzcd}[column sep=small, row sep=small]
& &
\frak X \dlar\drar & & \\
& \cal N_P/P \dlar\drar &
& \cal N_Q/Q \dlar\drar & \\
\cal N_L/L &
& \cal N_G/G &
& \cal N_M/M,
\end{tikzcd}
\]
in which the top square is Cartesian.
Similarly, if we omit the initial $\Ad(g_i^{-1})^*$
then the summands appearing in the Geometrical Lemma
are obtained by $*$-pulling and $!$-pushing
along the following correspondence $\frak Y_{g_i} = \frak Y_i$:
\[
\begin{tikzcd}[column sep=small, row sep=small]
& &
\frak Y_i \dlar\drar & & \\
& \cal N_{Q\cap{}^{g_i}L}/(Q\cap{}^{g_i}L) \dlar\drar &
& \cal N_{M\cap{}^{g_i}P}/(M\cap{}^{g_i}P) \dlar\drar & \\
\cal N_{{}^{g_i}L}/{}^{g_i}L &
& \cal N_{M\cap{}^{g_i}L}/(M\cap{}^{g_i}L) &
& \cal N_M/M.
\end{tikzcd}
\]
To prove the Geometrical Lemma,
we introduce a stratification $(\frak X_i)_{i=1}^s$
on~$\frak X$ and construct a map $\frak X_i \to \frak Y_i$.
Although this map is not an isomorphism,
its fibers are contractible and thus
have no effect on the resulting constructions
for perverse sheaves.

To stratify $\frak X$, we first observe that
there is a natural map from $\frak X$
to the fiber product
\[
\frac{\tn{pt}}{P}\times_{\tn{pt}/G}\frac{\tn{pt}}{Q}
\simeq Q\backslash G/P.
\]
So it suffices to stratify the base
and pull back the stratification to~$\frak X$.
But we have already seen a stratification of $Q\backslash G/P$:
in the language of stacks, \Cref{thm15} says that
the locally closed substacks $Q\backslash Qg_iP/P$
form a stratification.
Let $\frak X_{g_i} = \frak X_i$ be
the resulting pullback stratification.

\begin{remark}
The strata $\frak X_i$
all have the same dimension, $-\rank(G)$.
This follows from the fact that
\[
\frak X_i \simeq 
\frac{\cal N_{Q\cap{}^{g_i}P}}{Q\cap{{}^{g_i}P}},
\]
as will be shown in the proof of \Cref{thm16}.
We initially found this behavior surprising,
but in retrospect it can already be seen in 
the following simple toy model,
a special case of the general construction.

For $G=\SL_2$, the line $\cal N_B$
is stratified into two $\bbG_\tn{m}$-orbits,
one of dimension zero and one of dimension one.
The preimage of $\cal N_B$
under the Springer resolution
consists of $\bbA^1$ and $\bbP^1$ glued
together at the origin,
and the preimages of the strata
are $\bbA^1$ and $\bbP^1$,
both of dimension one.

It would be interesting to see
if the equidimensionality
of the strata $\frak X_w$
has any implications for
the study of equivariant perverse sheaves.
\end{remark}

Changing notation slightly,
focus attention on a single $w\in G$
such that $M\cap{}^w L$ contains a maximal torus.

\begin{theorem} \label{thm16}
Let $\cal N_w \defeq \cal N_{Q\cap{}^wL}
\underset{\cal N_{M\cap{}^wL}}{\times}
\cal N_{M\cap{}^wP}$ and
$G_w \defeq (Q\cap{}^wL)\underset{M\cap{}^wL}{\times}M\cap{}^wP$.
\begin{enumerate}
\item
There are isomorphisms
$\frak X_w \simeq \cal N_{Q\cap{}^wP}/(Q\cap{}^wP)$
and $\frak Y_w \simeq \cal N_w/G_w$.

\item
The composition
$f_w\colon\frak X_w \simeq \cal N_{Q\cap{}^wP}/(Q\cap{}^wP)
\longrightarrow\cal N_w/G_w\simeq\frak Y_w$
fits into a commutative diagram
\[
\begin{tikzcd}
\cal N_L/L &
\frak X_w \lar \dar{f_w} \drar \\
\cal N_{{}^wL}/{}^wL \uar{\tn{Ad}(w^{-1})} &
\frak Y_w \lar \rar &
\cal N_M/M.
\end{tikzcd}
\]
\end{enumerate}
\end{theorem}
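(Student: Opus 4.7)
The plan is to establish part (1) by directly computing the two fiber products of quotient stacks defining $\frak X_w$ and $\frak Y_w$, using \Cref{thm20} and the induction space formalism discussed before it, and then to construct $f_w$ using canonical projections from $Q \cap {}^w P$ onto its partial Levi quotients.

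For $\frak X_w$, first rewrite $\frak X = \cal N_P/P \times_{\cal N_G/G} \cal N_Q/Q$ using the induction space construction as $\bigl((G \times^P \cal N_P) \times_{\cal N_G} (G \times^Q \cal N_Q)\bigr)/G$. A typical point is a class of triples $([g_1, n_1], [g_2, n_2])$ with $g_1 n_1 g_1^{-1} = g_2 n_2 g_2^{-1}$, and the projection to $Q \backslash G/P$ records $Q g_2^{-1} g_1 P$. Restricting to the stratum with $g_2^{-1} g_1 \in Q w P$, one uses the ambient $G$-action to set $g_2 = 1$, and the residual $P$- and $Q$-equivalences in the induction space to reduce $g_1$ into normal form $g_1 = w$. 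The nilpotent compatibility then forces $n_2 \in \cal N_Q \cap {}^w \cal N_P = \cal N_{Q \cap {}^w P}$ with $n_1 = w^{-1} n_2 w$ determined. The residual stabilizer works out to $\{(q, w^{-1} q w) : q \in Q \cap {}^w P\} \simeq Q \cap {}^w P$ acting by conjugation on $n_2$, yielding $\frak X_w \simeq \cal N_{Q \cap {}^w P}/(Q \cap {}^w P)$. For $\frak Y_w$, the two group maps $Q \cap {}^w L \to M \cap {}^w L$ and $M \cap {}^w P \to M \cap {}^w L$ are each surjective with unipotent kernel --- being quotients of a parabolic subgroup by its unipotent radical --- and each admits a homomorphic Levi section, so \Cref{thm20} applies and yields $\frak Y_w \simeq \cal N_w/G_w$ directly from the fibered structure.

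For part (2), construct $f_w$ from the canonical homomorphisms $Q \cap {}^w P \to Q \cap {}^w L$ (quotient by $\Ru({}^w P) \cap (Q \cap {}^w P)$) and $Q \cap {}^w P \to M \cap {}^w P$ (quotient by $\Ru(Q) \cap (Q \cap {}^w P)$), which agree after further projection to $M \cap {}^w L$; this assembles to a homomorphism $Q \cap {}^w P \to G_w$, and the analogous reductions of $\cal N_{Q \cap {}^w P}$ give an equivariant map $\cal N_{Q \cap {}^w P} \to \cal N_w$. The commutative diagram is then a direct unwinding: in the normal form above, the arrow $\frak X_w \to \cal N_L/L$ sends $n_2$ to $w^{-1} n_2 w \mod \Ru(P) \in \cal N_L$, which coincides with $f_w$ followed by the projection $\frak Y_w \to \cal N_{Q \cap {}^w L}/(Q \cap {}^w L)$, the further projection to $\cal N_{{}^w L}/{}^w L$, and the twist $\Ad(w^{-1})$; and the arrow to $\cal N_M/M$ is the untwisted reduction $n_2 \mod \Ru(Q)$, which matches $f_w$ followed by the other leg of $\frak Y_w$ immediately.

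The main obstacle is the normal-form argument identifying $\frak X_w$: one must carefully track which equivalences (the outer $G$-action and the $P$- and $Q$-equivalences built into the induction spaces) remain available at each reduction step, and verify that the pointwise bijection lifts to an isomorphism of stacks. Once the stabilizer calculation produces $Q \cap {}^w P$ acting on $\cal N_{Q \cap {}^w P}$ by the correct conjugation, the rest of the argument --- the analogous but simpler computation for $\frak Y_w$, the construction of $f_w$, and the verification of the diagram --- reduces to formal consequences of \Cref{thm20} and the Levi structure of $Q \cap {}^w P$.
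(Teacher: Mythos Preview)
Your proposal is correct and follows essentially the same route as the paper: both compute $\frak X_w$ via the induction-space presentation and a normal-form reduction on the double coset $QwP$, both obtain $\frak Y_w$ directly from \Cref{thm20}, and both leave the diagram check as a straightforward unwinding. The one point worth noting is that the paper resolves precisely the obstacle you flag --- lifting the pointwise normal-form bijection to a stack isomorphism --- by exhibiting an explicit $(Q\cap{}^wP)$-torsor $P\times Q\times \cal N_{Q\cap{}^wP}\to Z_w$ via $(p,q,y)\mapsto(qwp^{-1},pw^{-1}\cdot y)$, which packages your stabilizer calculation into a clean quotient-stack identity.
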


\begin{proof}
Our proof is a stacky rehash of the proof of
\cite[Lemma 2.9]{achar_henderson_juteau_riche17b},
itself inspired by \cite[\S 10.1]{mars_springer89}.

For $\frak X_w$, by the computations in \Cref{sec:springer:stacks},
\[
\frak X \simeq
\frac{\bigl(G\times^P\cal N_P\bigr)\times_{\cal N_G}
\bigl(G\times^Q\cal N_Q\bigr)}{G}.
\]
We can further pull out the $P$ and $Q$ actions
in both factors to write this as
\[
\frak X \simeq
\frac{\bigl(G\times\cal N_P\bigr)\times_{\cal N_G}
\bigl(G\times\cal N_Q\bigr)}{P\times G\times Q}.
\]
Dividing out by the action of~$G$
eliminates the rightmost $G$ in the numerator
and yields $\frak X \simeq Z/(P\times Q)$
where $Z = \{(g,z)\in G\times\cal N_P \colon g\cdot z\in\cal N_Q\}$
with $P\times Q$-action $(p,q)\cdot(g,z) = (qgp^{-1},pz)$.
Let $Z_w$ be the preimage in~$Z$
of $QwP$ under the projection to~$G$.
Tracing through the isomorphisms, we see that
$\frak X_w \simeq Z_w/(P\times Q)$.
Define the map $P\times Q\times \cal N_{Q\cap{}^wP}\to Z_w$
by $(p,q,y)\mapsto (qwp^{-1},pw^{-1}\cdot y)$
and let $Q\cap{}^wP$ act on the source by
\[
r\cdot (p,q,y) = (p w^{-1}r^{-1}w, qr^{-1}, r\cdot y).
\]
Under this action, $P\times Q\times\cal N_{Q\cap{}^wP}$
is a $Q\cap{}^wP$-torsor over $Z_w$.
Hence
\[
\frak X_w \simeq \frac{Z_{w}}{P\times Q} 
\simeq \frac{(P\times Q\times \cal N_{Q\cap{}^wP})/(Q\cap{}^wP)}{P\times Q}
\simeq \frac{\cal N_{Q\cap{}^wP}}{Q\cap{}^wP}.
\]
As for $\frak Y_w$, we see by \Cref{thm20} that
$\frak Y_w \simeq \cal N_w/G_w$.

The commutativity of the diagram
of the second part is proved by following
the projection maps through the construction,
a check we leave to the reader.
\end{proof}

Although the map~$f_w$ of \Cref{thm16} is not an isomorphism,
its corresponding $*$-pull $!$-push functor is
an isomorphism on~$\Dbc$ by \Cref{thm24}.
We can now explain how \Cref{thm16} implies \Cref{thm17}.

\begin{proof}[Proof of \Cref{thm17}]
For $i=1,\dots,s$ let
\[
(Q\backslash P/G)_i = Q\backslash Qg_iP/P,
\qquad (Q\backslash P/G)_{\leq i} = Q\backslash \bigsqcup_{j\leq i}Qg_jP/P,
\qquad (Q\backslash P/G)_{<i} = Q\backslash \bigsqcup_{j<i}Qg_jP/P.
\]
We chose the ordering on the~$g_i$ so that
$(Q\backslash P/G)_i$ is an open substack of $(Q\backslash P/G)_{\leq i}$
with closed complement $(Q\backslash G/P)_{<i}$.
By pullback along the natural map $\frak X\to Q\backslash G/P$,
these subobjects give rise to substacks of~$\frak X$:
\[
\frak X_i \defeq \frak X|_{(Q\backslash G/P)_i},
\qquad
\frak X_{\leq i} \defeq \frak X|_{(Q\backslash G/P)_{\leq i}},
\qquad
\frak X_{<i} \defeq \frak X|_{(Q\backslash G/P)_{<i}}.
\]
Let
$\begin{tikzcd}[column sep=small]
\cal N_L/L &
\frak X \lar[swap]{\pi} \rar{\rho} &
\cal N_M/M
\end{tikzcd}$
denote the maps in the correspondence defining
$\res_{L\subseteq P}^G\circ\ind_{L\subseteq P}^G$
and use subscripts in the obvious way
to denote the restrictions
of these maps to the substacks
$\frak X_i$, $\frak X_{\leq i}$,
and $\frak X_{<i}$ defined above.
Let $\cal F\in\Dbc(\cal N_L/L)$.
The distinguished triangle
for the open substack $\frak X_{<i}$
with closed complement $\frak X_i$
gives rise to a distinguished triangle
\[
\rho_{<i!}\pi_{<i}^*(\cal F) \longrightarrow
\rho_{\leq i!}\pi_{\leq i}^*(\cal F) \longrightarrow
\rho_{i!}\pi_{i}^*(\cal F) \longrightarrow
\]
Assuming now that $\cal F$ is perverse,
since parabolic induction and restriction
preserve semisimple objects
these distinguished triangles give rise
to a direct sum decomposition
\[
\res_{L\subseteq P}^G\ind_{L\subseteq P}^G(\cal F)
\simeq \bigoplus_{i=1}^s \rho_{i!}\pi_{i}^*(\cal F).
\]
The proof now follows by unspooling the definitions and using \Cref{thm24}.
\end{proof}

\subsection{\texorpdfstring{$\cal P$}{P}-cuspidal support}
\label{sec:springer:cuspidal}
The definition of cuspidality in the disconnected case
proceeds by restriction to the identity component.

\begin{definition} \label{thm63}
$\cal F\in\Irr\bigl(\Perv(\cal N_G/G)\bigr)$
is \mathdef{cuspidal} if some
(equivalently, every) irreducible summand
of $\res_{G^\circ}^G(\cal F)$ is cuspidal.
\end{definition}

For $G$ disconnected the definition
of cuspidal support is more subtle,
as it depends on a choice
of parabolic subgroups against which
to test cuspidality.
There is no canonical such choice,
so we will include it as an input to the construction.
Suppose we are given for every reductive group~$G$
a set $\cal P_G\subseteq\Par(G)$ of parabolic subgroups of~$G$.
Say the collection $\cal P = (\cal P_G)_G$
is \mathdef{admissible} if for every~$G$,
\begin{enumerate}
\item
the set $\cal P_G$ is stable under $G$-conjugacy and
\item
for every $P,Q\in\cal P_G$ and Levi factor~$L$ of~$P$,
if $Q$ and~$L$ contain a common maximal torus
then $Q\cap L\in\cal P_L$.
\end{enumerate}
The first condition is not controversial and the second condition is needed
for the statement of the Geometrical Lemma to make sense.
Fix such a collection~$\cal P$.

Our goal in this section is to define cuspidal support
in the disconnected case, generalizing the definitions
of the introduction to \Cref{sec:springer}
while taking into account the choice of~$\cal P$.

\begin{lemma} \label{thm22}
Let $P$ be a parabolic subgroup of~$G$
and let $\cal F\in\Irr\bigl(\Perv(\cal N_G/G)\bigr)$.
The following conditions are equivalent.
\begin{enumerate}
\item
$\res_{L\subseteq P}^G(\cal F) = 0$.

\item
$\Hom\bigl(\cal F,\ind_{L\subseteq P}^G(\cal F')\bigr) = 0$
for all $\cal F'\in\Irr\bigl(\Perv(\cal N_G/G)\bigr)$.
\end{enumerate}
\end{lemma}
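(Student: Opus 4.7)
The plan is to deduce this equivalence from two ingredients already established: the adjunction between parabolic restriction and induction (Section 2.4) and the $t$-exactness plus semisimplicity preservation of $\res_{L\subseteq P}^G$ (Lemma \ref{thm70}). As stated, the set $\cal F'$ ranges over should be $\Irr\bigl(\Perv(\cal N_L/L)\bigr)$ (otherwise $\ind_{L\subseteq P}^G(\cal F')$ does not type-check), and I proceed under that reading.

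First I would write down the adjunction isomorphism
\[
\Hom\bigl(\cal F,\ind_{L\subseteq P}^G(\cal F')\bigr)
\simeq \Hom\bigl(\res_{L\subseteq P}^G(\cal F),\cal F'\bigr),
\]
valid since $\res_{L\subseteq P}^G = \pi_!\circ i^*$ is left adjoint to $\ind_{L\subseteq P}^G = i_!\circ\pi^*$. The implication (1) $\Rightarrow$ (2) is then immediate: if the input of the right-hand $\Hom$ is zero, the $\Hom$ vanishes for every $\cal F'$.

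For (2) $\Rightarrow$ (1) I invoke \Cref{thm70}: $\res_{L\subseteq P}^G$ is $t$-exact for the perverse $t$-structure and preserves semisimple complexes, so $\res_{L\subseteq P}^G(\cal F)$ is a semisimple perverse sheaf. Thus I may write
\[
\res_{L\subseteq P}^G(\cal F) \simeq \bigoplus_{\alpha} \cal G_\alpha,
\qquad \cal G_\alpha\in\Irr\bigl(\Perv(\cal N_L/L)\bigr).
\]
The vanishing hypothesis in (2), transported across the adjunction, gives $\Hom\bigl(\bigoplus_\alpha \cal G_\alpha,\cal F'\bigr) = 0$ for every simple perverse $\cal F'$ on $\cal N_L/L$. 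Taking $\cal F' = \cal G_\beta$ for any fixed $\beta$ and applying Schur's lemma in the finite-length abelian category $\Perv(\cal N_L/L)$ forces no $\cal G_\alpha$ to be isomorphic to $\cal G_\beta$, so the decomposition is empty and $\res_{L\subseteq P}^G(\cal F) = 0$.

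There is no real obstacle here beyond correctly matching up the $t$-exactness/semisimplicity input with Schur's lemma; the only point that requires care is being sure that $\res_{L\subseteq P}^G(\cal F)$ is honestly a perverse sheaf rather than a non-trivial semisimple complex with shifts, since otherwise one would have to deal with $\Ext^i$-terms coming from the triangulated $\Hom$ in $\Dbc$. This is exactly what the $t$-exactness half of \Cref{thm70} buys us, and it is why the argument works cleanly in the form above.
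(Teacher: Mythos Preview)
Your proof is correct and follows essentially the same approach as the paper: both arguments use the $(\res,\ind)$ adjunction together with the $t$-exactness and semisimplicity from \Cref{thm70}, and your observation that the statement should have $\cal F'\in\Irr\bigl(\Perv(\cal N_L/L)\bigr)$ is right. The paper phrases both implications as contrapositives but the logical content is identical to yours.
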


Here $L$ is the Levi quotient of~$P$.
We say $\cal F$ is \mathdef{$\cal P$-cuspidal}
if it satisfies the equivalent conclusions
of \Cref{thm22} for every $P\in\cal P$
with $P\neq G$.

\begin{proof}
The key facts are the adjunction between
$\res_{L\subseteq P}^G$ and $\ind_{L\subseteq P}^G$
and the semisimplicity of parabolically induced perverse sheaves.
If $\res_{L\subseteq P}^G(\cal F)\neq 0$
then for any irreducible summand
$\cal F'$ of $\res_{L\subseteq P}^G(\cal F)$
we have $\Hom\bigl(\cal F,\ind_{L\subseteq P}^G(\cal F')\bigr) \neq 0$.
Conversely, if $\Hom\bigl(\cal F,\ind_{L\subseteq P}^G(\cal F')\bigr) \neq 0$
then $\res_{L\subseteq P}^G(\cal F)\neq0$.
\end{proof}

Similarly, one has notions of $\cal P$-cuspidal datum
and $\cal P$-cuspidal support.
The notion of induction series does not depend on~$\cal P$,
so is unchanged.

\begin{theorem} \label{thm14}
Let $\cal F\in\Irr\bigl(\Perv(\cal N_G/G)\bigr)$.
There is a unique $G$-conjugacy class
of $\cal P$-cuspidal data $[L,\cal O,\cal E]$ such that
for some parabolic $P\in\cal P$ with Levi quotient~$L$,
\[
\Hom\bigl(\ind_{L\subseteq P}^G\IC(\cal O,\cal E), \cal F\bigr)\neq 0.
\]
\end{theorem}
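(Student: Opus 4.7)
The plan is to establish existence by induction on $\dim G$ and uniqueness via the Geometrical Lemma of \Cref{thm17}.

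For existence, if $\cal F$ itself is $\cal P$-cuspidal, then the datum $(G, \cal O, \cal E)$ with $\cal F = \IC(\cal O, \cal E)$ works. Otherwise by \Cref{thm22} there is a proper $P \in \cal P_G$ with Levi~$L$ such that $\res_{L\subseteq P}^G(\cal F) \neq 0$; by \Cref{thm70} this restriction is a semisimple perverse sheaf, so we extract an irreducible summand $\cal F_0$. The inductive hypothesis applied to~$L$ yields a $\cal P$-cuspidal datum $(L', \cal O', \cal E')$ for $\cal F_0$, with $\cal F_0$ a summand of $\ind_{L'\subseteq P'}^L \IC(\cal O', \cal E')$ for some $P' \in \cal P_L$. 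Transitivity of parabolic induction then exhibits $\cal F$ as a summand of the corresponding induction from~$L'$ to~$G$.

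For uniqueness, suppose $(L, \cal O, \cal E)$ and $(M, \cal O', \cal E')$ are $\cal P$-cuspidal data each producing $\cal F$ as a summand of a parabolic induction via parabolics $P$ and $Q$ in~$\cal P_G$. Since these inductions are semisimple with common summand~$\cal F$, there is a nonzero morphism between them. Using the adjunction of $\res_{M\subseteq Q}^G$ with $\ind_{M\subseteq Q}^G$ followed by \Cref{thm17}, the relevant Hom space decomposes into a direct sum of terms of the form
\[
\Hom\bigl(\ind_{M\cap{}^{g_i}L \subseteq M\cap{}^{g_i}P}^M\, \res^{{}^{g_i}L}_{M\cap{}^{g_i}L \subseteq Q\cap{}^{g_i}L} \IC({}^{g_i}\cal O, {}^{g_i}\cal E),\ \IC(\cal O', \cal E')\bigr).
\]
Admissibility condition~(2), applied to $Q$ and ${}^{g_i}P$ in $\cal P_G$ with their shared maximal torus, ensures $Q\cap{}^{g_i}L \in \cal P_{{}^{g_i}L}$ and $M\cap{}^{g_i}P \in \cal P_M$. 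The $\cal P$-cuspidality of the transported sheaf $\IC({}^{g_i}\cal O, {}^{g_i}\cal E)$ forces the inner restriction to vanish unless $Q\cap{}^{g_i}L = {}^{g_i}L$, giving ${}^{g_i}L \subseteq Q$ together with $M\cap{}^{g_i}L = {}^{g_i}L$, so that ${}^{g_i}L \subseteq M$. A further adjunction and the $\cal P$-cuspidality of $\IC(\cal O', \cal E')$ then kill the surviving summand unless $M\cap{}^{g_i}P = M$, which collapses the Levi structure to ${}^{g_i}L = M$. What remains is $\Hom\bigl(\IC({}^{g_i}\cal O, {}^{g_i}\cal E), \IC(\cal O', \cal E')\bigr) \neq 0$, forcing ${}^{g_i}\cal O = \cal O'$ and ${}^{g_i}\cal E = \cal E'$ by irreducibility, and yielding the desired $G$-conjugacy.

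The main obstacle is the uniqueness step, requiring cuspidality to be invoked twice --- once on each of the two intersection parabolics arising in the Geometrical Lemma --- and each invocation depends crucially on admissibility condition~(2) to place those parabolics in the correct classes $\cal P_{{}^{g_i}L}$ and $\cal P_M$. This double use of cuspidality, orchestrated by the Geometrical Lemma, is the technical heart of the theorem.
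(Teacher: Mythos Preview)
Your proof is correct and follows essentially the same approach as the paper: existence by descending to a $\cal P$-cuspidal summand via transitivity (the paper phrases this as choosing a minimal $P$ rather than as induction on $\dim G$, but the content is the same), and uniqueness via the Geometrical Lemma together with the $(\res,\ind)$ adjunction. Your uniqueness argument is in fact more explicit than the paper's, which tersely names only the cuspidality of $\IC(\cal O_2,\cal E_2)$ while implicitly also relying on that of $\IC(\cal O_1,\cal E_1)$ to force the equality $L_2 = {}^gL_1$ rather than just one inclusion.
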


We call the $G$-conjugacy class $[L,\cal O,\cal E]$
of \Cref{thm14} the \mathdef{$\cal P$-cuspidal support} of~$\cal F$.
Our goal is to prove that it exists and is unique.
Existence is easy, but uniqueness is nontrivial
and relies on the Geometrical Lemma.

\begin{proof}
To prove existence,
if $\cal F = \IC(\cal O,\cal E)$ is already $\cal P$-cuspidal
then its cuspidal support is $(G,\cal O,\cal E)$.
Otherwise, there is $P\in\cal P$
with Levi quotient~$L$ such that
$\res_{L\subseteq P}^G(\cal F)\neq 0$.
By transitivity of parabolic restriction,
we may assume that all further proper
parabolic restrictions of
$\res_{L\subseteq P}^G(\cal F)$ vanish.
Let $\IC(\cal O,\cal E)$ be a simple quotient of this sheaf.
By adjunction, $\cal F$ lies in the induction series
for $(L,\cal O,\cal E)$.

To prove uniqueness, for $i=1,2$
let $P_i$ be a parabolic subgroup of~$G$
with Levi quotient~$L_i$ and let $[L_i,\cal O_i,\cal E_i]$
be $\cal P$-cuspidal data.
Suppose that
\[
\Hom\bigl(\ind_{L_i\subseteq P_i}^G
\IC(\cal O_i,\cal E_i),\cal F\bigr) \neq 0,
\]
so that both pairs are $\cal P$-cuspidal supports of~$\cal F$.
Then
\begin{align*}
0 &\neq
\Hom\bigl(\ind_{L_1\subseteq P_1}^G \IC(\cal O_1,\cal E_1),
\ind_{L_2\subseteq P_2}^G \IC(\cal O_2,\cal E_2)\bigr) \\
&= \Hom\bigl(\res_{L_2\subseteq P_2}^G
\ind_{L_1\subseteq P_1}^G \IC(\cal O_1,\cal E_1),
\IC(\cal O_2,\cal E_2)\bigr).
\end{align*}
By \Cref{thm15,thm17},
there is $g\in G$ such that
$\IC(\cal O_2,\cal E_2)$ admits a nonzero homomorphism from
\[
\ind_{L_2\cap{}^gL_1\subseteq L_2\cap{}^gP_1}^{L_2}
\res_{L_2\cap{}^gL_1\subseteq P_2\cap{}^gL_1}^{{}^gL_1}
\Ad(g^{-1})^*\IC(\cal O_2,\cal E_2).
\]
But since $\IC(\cal O_2,\cal E_2)$ is $\cal P$-cuspidal,
the only way the object above can be nonzero
is if the parabolic restriction is trivial,
that is, if $L_2={}^g L_1$,
so that the outer parabolic induction is also trivial.
Then
\[
\Ad(g)[L_1,\cal O_1,\cal E_1] = [L_2,\cal O_2,\cal E_2],
\]
meaning that the $\cal P$-cuspidal support is well-defined
up to $G$-conjugacy as claimed.
\end{proof}

In the remainder of this section we comment
on the choice of~$\cal P$.
On a first attempt, it seems most natural
to define $\cal P_G$ to be the set $\Par(G)$
of all parabolic subgroups of~$G$,
for which we write $\cal P=\Par$.
However, this choice does not produce
an interesting theory.

\begin{example} \label{thm23}
Suppose $G$ is a finite group and $\cal P=\Par$.
Then an irreducible representation of~$G$
is $\Par$-cuspidal if and only if $G=1$,
and the only $\Par$-cuspidal datum is
$(1,1,\tn{triv})$.
\end{example}

Something similar to \Cref{thm23} happens in general.
Let $\cal P=\Par^\circ$ be the collection $\cal P_G=\Par(G^\circ)$.

\begin{lemma} \label{thm37}
Let $\cal F\in\Irr\bigl(\Perv(\cal N_G/G)\bigr)$
\begin{enumerate}
\item
$\cal F$ is $\Par$-cuspidal if and only if
$\cal F$ is cuspidal and $G=G^\circ$.

\item
Let $\cal P=\Par$ or $\Par^\circ$.
The $\cal P$-cuspidal support of $\cal F$
is the cuspidal support of
any irreducible summand of $\res_{G^\circ}^G(\cal F)$.
\end{enumerate}
\end{lemma}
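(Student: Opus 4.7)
The plan rests on two inputs. First, $G^\circ$ is itself a parabolic subgroup of~$G$ (with Levi factor~$G^\circ$), so $\res_{G^\circ\subseteq G^\circ}^G$ is among the restriction functors that govern $\Par$-cuspidality. Second, \Cref{thm21} factors parabolic restriction to a connected Levi through~$\res_{G^\circ}^G$.

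For part~(1), I would first observe that $\res_{G^\circ\subseteq G^\circ}^G$ is, via the identification $\cal N_G = \cal N_{G^\circ}$, the pullback along the finite étale cover $\cal N_G/G^\circ \to \cal N_G/G$; equivalently, it is the forgetful functor from $G$-equivariance to $G^\circ$-equivariance, which is conservative on perverse sheaves. Hence a nonzero $\Par$-cuspidal~$\cal F$ forces $G = G^\circ$, as otherwise $G^\circ$ would be a proper parabolic in~$\Par(G)$ on which the restriction would not vanish. The converse is immediate: when $G = G^\circ$, every parabolic of~$G$ is connected, $\Par(G) = \Par(G^\circ)$, and the two cuspidality notions collapse to the classical one recovered from \Cref{thm63}.

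For part~(2), the key observation is that in both cases $\cal P \in \{\Par, \Par^\circ\}$, any $\cal P$-cuspidal datum $(L, \cal O, \cal E)$ has $L$ connected: this is automatic for $\Par^\circ$, and follows from part~(1) applied to~$L$ for~$\Par$. The identity $\pi_0(L) = \pi_0(P)$ then forces the ambient parabolic~$P$ to be connected as well, so $P \subseteq G^\circ$. Under these hypotheses \Cref{thm21} collapses to $\res_{L\subseteq P}^G \simeq \res_{L\subseteq P}^{G^\circ}\circ\res_{G^\circ}^G$. Combining this with adjunction and the semisimplicity of $\res_{G^\circ}^G(\cal F)$ from \Cref{thm70}, I obtain
\[
\Hom\bigl(\cal F,\ind_{L\subseteq P}^G\IC(\cal O,\cal E)\bigr)
\simeq \bigoplus_{\cal F^\circ \subseteq \res_{G^\circ}^G(\cal F)}
\Hom\bigl(\res_{L\subseteq P}^{G^\circ}(\cal F^\circ),\IC(\cal O,\cal E)\bigr).
\]
Nonvanishing of the left-hand side is precisely the condition that $(L, \cal O, \cal E)$ lies in the $\cal P$-cuspidal support of~$\cal F$, while nonvanishing of the right-hand side says some summand~$\cal F^\circ$ has connected cuspidal support $(L, \cal O, \cal E)$. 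Appealing to the uniqueness statements of \Cref{thm14} and its connected analogue to promote these to equalities of $G$-conjugacy classes finishes both directions.

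The only real subtlety is in part~(1): verifying that $\res_{G^\circ\subseteq G^\circ}^G$ really is the forgetful functor along a finite étale morphism of stacks. This boils down to unwinding the correspondence \eqref{thm71} (restricted to nilpotent cones) in the degenerate case $L = P = G^\circ$ and applying \Cref{thm92}. Everything else is adjunction and bookkeeping already packaged into \Cref{thm21}.
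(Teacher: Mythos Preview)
Your proof is correct and follows essentially the same approach as the paper. Both arguments for part~(1) hinge on $G^\circ$ being a proper parabolic when $G\neq G^\circ$ together with the faithfulness (equivalently, conservativity) of $\res_{G^\circ}^G$, and both arguments for part~(2) use transitivity of parabolic restriction through~$G^\circ$ (your appeal to \Cref{thm21}) combined with adjunction. Your displayed Hom decomposition is a slightly more explicit packaging of what the paper does via transitivity of induction, and your observation that any $\cal P$-cuspidal Levi must be connected (forcing $P\subseteq G^\circ$) is a clean way to set up the reduction; the paper instead starts from a chosen summand~$\cal F'$ and works forward, noting at the end that all summands are $G$-conjugate. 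The reference to \Cref{thm92} in your final remark is not really needed---that $\res_{G^\circ}^G$ is pullback along $\cal N_G/G^\circ\to\cal N_G/G$ is immediate from the degenerate correspondence---but it does no harm.
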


The second part claims implicitly
that this support is independent of
the choice of irreducible summand.

\begin{proof}
For the non-obvious implication in the first part,
suppose $\cal F$ is $\Par$-cuspidal and $G\neq G^\circ$.
Then $\res_{G^\circ}^G(\cal F) = 0$.
But this forces $\cal F=0$
because $\res_{G^\circ}^G$ is faithful,
a contradiction.

For the second part, first assume $\cal P=\Par$.
Let $\cal F'$
be an irreducible summand of $\res_{G^\circ}^G(\cal F)$.
Then $\Hom\bigl(\cal F,\ind_{G^\circ}^G(\cal F')\bigr)\neq0$.
If $\Hom\bigl(\cal F',\ind_{L^\circ\subseteq P^\circ}^{G^\circ}(\cal F'')\bigr)\neq0$
then $\Hom\bigl(\cal F,\ind_{L^\circ\subseteq P^\circ}^G(\cal F'')\bigr)$
by the transitivity of parabolic induction
and the irreducibility of~$\cal F''$.
All choices of summand~$\cal F'$ yield the same
$G$-conjugacy class of $\Par(G)$-cuspidal data
because all choices of~$\cal F'$ are $G$-conjugate.
The proof for $\cal P=\Par^\circ$ is similar.
\end{proof}

Hence cuspidal representations need not be $\Par$-cuspidal.
However, as soon as $\cal P$ lacks variation
in the components group,
$\cal P$-cuspidality is the same as cuspidality.

\begin{lemma} \label{thm46}
Let $\cal F\in\Irr\bigl(\Perv(\cal N_G/G)\bigr)$,
let $\cal P^\circ \defeq \{P^\circ \mid P\in\cal P\}$,
and let $\cal P\to\cal P^\circ$
be the map $P\mapsto P^\circ$.
\begin{enumerate}
\item
Suppose $\cal P\to\cal P^\circ$ is surjective.
If $\cal F$ is $\cal P$-cuspidal then $\cal F$ is cuspidal.

\item
Suppose $\cal P\to\cal P^\circ$ is injective.
If $\cal F$ is cuspidal then $\cal F$ is $\cal P$-cuspidal.
\end{enumerate}
Consequently, if $\cal P\mapsto\cal P^\circ$
is a bijection then $\cal P$-cuspidality
is the same as cuspidality.
\end{lemma}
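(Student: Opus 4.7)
The plan is to deduce both parts from the transitivity isomorphism of \Cref{thm21},
\[
\res_{L^\circ \subseteq P^\circ}^{G^\circ} \circ \res_{G^\circ}^G
\simeq \res_{L^\circ}^L \circ \res_{L \subseteq P}^G,
\]
together with the fact that $\res_{L^\circ}^L$ is a forgetful functor (for equivariance under $\pi_0(L)$) and is therefore faithful, so it detects vanishing. This lets me translate the two cuspidality conditions on~$\cal F$---the $\cal P$-cuspidality in~$G$ and the cuspidality of its summands over~$G^\circ$ in the sense of \Cref{thm63}---into statements about the single family of composite functors $\res_{L^\circ \subseteq P^\circ}^{G^\circ}\circ\res_{G^\circ}^G$ indexed by $P \in \cal P$.

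For part (1), I would assume $\cal F$ is $\cal P$-cuspidal. For every $P \in \cal P$ with $P \neq G$, the transitivity identity combined with $\res_{L\subseteq P}^G\cal F=0$ gives $\res_{L^\circ \subseteq P^\circ}^{G^\circ}(\res_{G^\circ}^G \cal F) = 0$. Under the surjectivity hypothesis---which I read as the assertion that every proper parabolic of~$G^\circ$ arises as $P^\circ$ for some proper $P \in \cal P$---this vanishing covers all proper parabolic restrictions on~$G^\circ$, so each irreducible summand of $\res_{G^\circ}^G \cal F$ is cuspidal in the connected sense, and $\cal F$ is cuspidal by \Cref{thm63}.

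For part (2), I would assume $\cal F$ cuspidal and take $P \in \cal P$ with $P \neq G$. By faithfulness of $\res_{L^\circ}^L$ it suffices to show the composite $\res_{L^\circ \subseteq P^\circ}^{G^\circ}(\res_{G^\circ}^G \cal F)$ vanishes, and cuspidality of~$\cal F$ delivers this whenever $P^\circ \neq G^\circ$. The main obstacle is therefore the borderline case $P^\circ = G^\circ$ with $P \neq G$, where $\res_{L\subseteq P}^G\cal F$ can genuinely be nonzero (as witnessed by $\cal P = \Par$ in \Cref{thm37}(1)). This is exactly where injectivity enters: under the natural convention $G \in \cal P$, the element~$G$ is the unique preimage of~$G^\circ$ under $P \mapsto P^\circ$, so injectivity forces $P \neq G \Rightarrow P^\circ \neq G^\circ$ and this case does not arise. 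The final assertion of the lemma then follows by combining parts (1) and (2).
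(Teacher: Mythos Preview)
Your proof is correct and follows the same strategy as the paper's: both rest on the transitivity identity of \Cref{thm21} together with the faithfulness of $\res_{L^\circ}^L$, which is exactly the content of the paper's two stated observations. You also make explicit two points the paper leaves implicit---the intended reading of the surjectivity hypothesis and the convention $G\in\cal P$ needed in part~(2)---and both are genuinely required for the argument to go through.
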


\begin{proof}
This follows from the following two observations.
First, $\cal F$ is cuspidal
if and only if $\res_{L^\circ\subseteq P^\circ}^G(\cal F) = 0$
for every parabolic subgroup $P^\circ\subsetneq G^\circ$.
Second,
$\res_{L^\circ\subseteq P^\circ}^G(\cal F) = 0$
if and only if $\res_{L\subseteq P}^G(\cal F) = 0$.
\end{proof}

There are several natural candidates for the collection~$\cal P$
besides the two already discussed, $\cal P$ and $\cal P^\circ$.
The choice of parabolics that carries the most information
is the (evidently admissible) collection $\cal P = \tn{mPar}$
(``m'' for ``maximal'') for which $P\in\tn{mPar}(G)$ if and only if $P = N_G(P^\circ)$.
However, for our application to the local Langlands conjectures,
we need an intermediate collection.

\begin{definition} \label{thm51}
\begin{enumerate}
\item
A Levi subgroup~$L$ of~$G$ is a \mathdef{quasi-Levi subgroup}
if $L = Z_G\bigl(Z^\circ(L^\circ)\bigr)$.
\item
A parabolic subgroup~$P$ of~$G$ is a \mathdef{quasi-parabolic subgroup}
if one (equivalently, any) of its Levi factors is a quasi-Levi subgroup of~$G$.
\end{enumerate}
\noindent
Write $\tn{qPar}$ for the collection of quasi-parabolic subgroups.
\end{definition}

The terminology ``quasi--Levi'' is due to Aubert--Moussaoui--Solleveld
and we have retained it to ease comparison of our work and theirs.
We warn the reader, however, that with our definition of Levi subgroup,
every quasi--Levi subgroup is a Levi subgroup but not conversely.

The assignments $L\mapsto L^\circ$
and $L^\circ\mapsto Z_G\bigl(Z(L^\circ)^\circ\bigr)$
form a $G$-equivariant bijection between
Levi subgroups of~$G^\circ$ and quasi-Levi subgroups of~$G$.
Hence $\tn{qPar}$-cuspidality is the same as cuspidality.
It is also not so hard to see that $\tn{qPar}$ is admissible:
if $L$ and $M$ are quasi-Levis containing a common maximal torus
then
\[
Z_G\bigl(Z^\circ(L)\bigr)\cap Z_G\bigl(Z^\circ(M)\bigr)
= Z_G\bigl(Z^\circ(L)\cdot Z^\circ(M)\bigr)
= Z_G\bigl(Z^\circ(L\cap M)\bigr).
\]
For legibility in the future we will say
``connected-cuspidal'' to mean ``$\Par(G^\circ)$-cuspidal''
and ``quasi-cuspidal'' to mean ``$\tn{qPar}(G)$-cuspidal''.

\begin{remark} \label{thm32}
Using the correspondence between equivariant local systems
and representations of finite groups,
\Cref{thm63} gives a notion of $\cal P$-cuspidality
for pairs $(u,\rho)$ and
\Cref{thm14} defines a $\cal P$-cuspidal support map $\cal P\tn{-Cusp}$
which takes as input a $G$-conjugacy class of pairs $(u,\rho)$
and outputs a $G$-conjugacy class of triples $(L,v,\varrho)$,
where $L$ is some Levi subgroup of~$G$.
To avoid the overhead of perverse sheaves,
this more classical formulation of the cuspidal support map
is the one that we will use in \Cref{sec:bernstein,sec:corr}.

We write $\qcsupp$ for $\tn{qPar-Cusp}$ and
$\connsupp$ for $\Par^\circ\!\tn{-Cusp}$.
By the discussion in \Cref{sec:param:springer},
it makes no difference if~$u$ is a unipotent element of~$G$
or a nilpotent element of $\frak g$.
\end{remark}

\subsection{Twisting}
In \Cref{sec:corr}, we will need to show
the cuspidal support map for $L$-parameters
is independent of various choices,
all of which differ by a twist by
a character of the components group.
In this section, we show that
the cuspidal support map for perverse sheaves
is compatible with such twists,
proving several other compatibilities along the way.

Let $\chi$ be a character
(that is, one-dimensional representation)
of the finite group~$\pi_0(G)$
and let $X$ be a $G$-scheme.
We can interpret $\chi$
as an irreducible perverse sheaf~$\cal E$ on $\tn{pt}/G$,
and then, by pullback along the map $X/G\to\tn{pt}/G$,
as an irreducible perverse sheaf on $X/G$.

\begin{lemma} \label{thm18}
Let $\cal F\in\Dbc(\cal N_G/G)$,
let $\cal E\in\Dbc(\tn{pt}/G)$,
and let $\res^G_L\cal E|_L$ be the pullback
of~$\cal E$ along the map $\tn{pt}/L\to\tn{pt}/G$.
Then
\[
\res_{L\subseteq P}^G\bigl(\cal E\otimes\cal F)
= \res^G_L(\cal E)\otimes\res_{L\subseteq P}^G(\cal F).
\]
\end{lemma}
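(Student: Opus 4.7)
The strategy is to reduce the claim to a combination of the projection formula and one nontrivial identification of pullbacks. First I would unfold the definition
\[
\res_{L\subseteq P}^G = \pi_!\circ i^*,
\]
for the correspondence $\cal N_L/L\xleftarrow{\pi}\cal N_P/P\xrightarrow{i}\cal N_G/G$ of \eqref{thm71}. Since $i^*$ is symmetric monoidal, we have $i^*(\cal E\otimes\cal F)\simeq i^*\cal E\otimes i^*\cal F$. Granted the identification $i^*\cal E\simeq\pi^*(\res_L^G\cal E)$, the projection formula then gives
\[
\res_{L\subseteq P}^G(\cal E\otimes\cal F)
= \pi_!\bigl(\pi^*\res_L^G(\cal E)\otimes i^*\cal F\bigr)
\simeq \res_L^G(\cal E)\otimes\pi_!i^*\cal F
= \res_L^G(\cal E)\otimes\res_{L\subseteq P}^G(\cal F),
\]
which is the desired formula. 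So everything reduces to comparing two pullbacks of $\cal E$.

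\textbf{Key step.} To establish $i^*\cal E\simeq\pi^*\res_L^G(\cal E)$, observe that both sides are pullbacks of $\cal E$ from $\tn{pt}/G$ to $\cal N_P/P$ along two different composite maps. The first goes via the structure map $\cal N_G/G\to\tn{pt}/G$, so it corresponds to the inclusion $P\hookrightarrow G$; the second goes via $\cal N_L/L\to\tn{pt}/L\to\tn{pt}/G$, so it corresponds to the composition $P\twoheadrightarrow L\hookrightarrow G$. These two homomorphisms $P\to G$ agree on $L$ but differ on the unipotent radical $U$ of $P$, so the two induced maps $\tn{pt}/P\rightrightarrows\tn{pt}/G$ are genuinely distinct. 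Nevertheless, by \cite[6.6.16]{achar21} the pullback $q^*\colon\Dbc(\tn{pt}/L)\to\Dbc(\tn{pt}/P)$ along the quotient $q\colon\tn{pt}/P\to\tn{pt}/L$ induced by $P\twoheadrightarrow L$ is an equivalence, with quasi-inverse $s^*$ coming from the Levi inclusion $s\colon\tn{pt}/L\to\tn{pt}/P$ (since $q\circ s=\tn{id}$). Precomposing either of our two maps $\tn{pt}/P\to\tn{pt}/G$ with $s$ recovers the single map $\tn{pt}/L\to\tn{pt}/G$ induced by $L\hookrightarrow G$. Thus the two pullbacks to $\tn{pt}/P$ agree after applying $s^*$, and applying the quasi-inverse $q^*$ shows that they agreed to begin with.

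\textbf{Main obstacle.} The only delicate point is precisely the identification of the two pullbacks to $\cal N_P/P$: the two homomorphisms $P\hookrightarrow G$ and $P\twoheadrightarrow L\hookrightarrow G$ are not equal on the nose, and what saves us is the unipotent-kernel equivalence of \cite[6.6.16]{achar21}, the same input that underlies \Cref{thm21}. Once this comparison is in place, the rest of the argument is a routine combination of the tensor compatibility of $i^*$ with the projection formula for $\pi_!$.
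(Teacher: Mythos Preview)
Your proof is correct and follows essentially the same approach as the paper: both reduce to the projection formula for $\pi_!$ after identifying the two pullbacks of $\cal E$ to $\cal N_P/P$ via the unipotent-kernel equivalence $\Dbc(\tn{pt}/P)\simeq\Dbc(\tn{pt}/L)$. The paper phrases this equivalence in terms of $\pi_0(P)\simeq\pi_0(L)$ rather than citing \cite[6.6.16]{achar21} directly, but the content is identical.
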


\begin{proof}
Consider the following commutative diagram
of correspondences.
\[
\begin{tikzcd}
\cal N_G/G \dar{f_G} &
\cal N_P/P \lar[swap]{i}\rar{\pi}\dar{f_P} &
\cal N_L/L  \dar{f_L} \\
\tn{pt}/G &
\tn{pt}/P \lar[swap]{j}\rar{\rho} &
\tn{pt}/L 
\end{tikzcd}
\]
Then
\[
\res_{L\subseteq P}^G(\cal E\otimes\cal F)
\simeq \pi_!i^*\bigl(f_G^*(\cal E)\otimes\cal F\bigr)
\simeq \pi_!\bigl(f_P^*j^*(\cal E)\otimes\cal F\bigr).
\]
Since the projection $P\to L$
induces an isomorphism $\pi_0(P)\to\pi_0(L)$,
it induces an equivalence of categories
$\Dbc(\tn{pt}/P)\simeq\Dbc(\tn{pt}/L)$.
Hence there is $\cal E'$ in $\Dbc(\tn{pt}/L)$
such that $\rho^*\cal E' = j^*\cal E$.
Using the projection formula we get
\[
\pi_!\bigl(f_P^*j^*(\cal E)\otimes\cal F\bigr)
\simeq \pi_!\bigl(\pi^*f_L^*(\cal E')\otimes\cal F\bigr)
\simeq \res^G_L(\cal E)\otimes\res_{L\subseteq P}^G(\cal F).
\qedhere
\]
\end{proof}

\begin{proposition} \label{thm64}
Let $\chi$ be a character of~$\pi_0(G)$,
let $X$ be a $G$-variety,
and let $\cal F\in\Irr\bigl(\Perv(\cal N_G/G)\bigr)$.
If $[L,\cal O,\cal E]$ is the cuspidal support of~$\cal F$
then $[L,\cal O,\chi|_L\otimes\cal E]$
is the cuspidal support of $\chi\otimes\cal F$.
\end{proposition}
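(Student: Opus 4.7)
The plan is to verify the two defining conditions of cuspidal support from \Cref{thm14} for the triple $(L,\cal O,\chi|_L\otimes\cal E)$ with respect to the sheaf $\chi\otimes\cal F$: first, that this triple is a cuspidal datum, and second, that it lies in the cuspidal support of $\chi\otimes\cal F$. Uniqueness in \Cref{thm14} then forces the conclusion. (Since $\chi$ is an invertible object, $\chi\otimes\cal F$ is irreducible perverse, and tensoring with $\chi$ is a $t$-exact auto-equivalence of $\Dbc(\cal N_G/G)$ that commutes with IC-extension, so $\chi|_L\otimes\IC(\cal O,\cal E)\simeq\IC(\cal O,\chi|_L\otimes\cal E)$.)

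The key technical input, supplementing \Cref{thm18}, is the dual compatibility for parabolic induction,
\[
\ind_{L\subseteq P}^G(\chi|_L\otimes -) \simeq \chi\otimes \ind_{L\subseteq P}^G(-).
\]
To prove this, I would reuse the commutative diagram of correspondences from the proof of \Cref{thm18}. Because the projection $P\to L$ induces an isomorphism $\pi_0(P)\simeq\pi_0(L)$, pulling $\chi$ back to $\cal N_P/P$ along either side of the square produces the same object, so $i^*\chi\simeq\pi^*(\chi|_L)$. The projection formula for $i_!$ then gives
\[
\ind_{L\subseteq P}^G(\chi|_L\otimes A)
= i_!\bigl(\pi^*(\chi|_L)\otimes\pi^*A\bigr)
= i_!(i^*\chi\otimes\pi^*A)
= \chi\otimes i_!\pi^*A
= \chi\otimes\ind_{L\subseteq P}^G A.
\]

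With these compatibilities in hand, cuspidality of $(L,\cal O,\chi|_L\otimes\cal E)$ follows immediately from \Cref{thm18}: for any proper parabolic $Q\subsetneq L$ with Levi $M$,
\[
\res^L_{M\subseteq Q}\IC(\cal O,\chi|_L\otimes\cal E)
\simeq (\chi|_L)|_M\otimes\res^L_{M\subseteq Q}\IC(\cal O,\cal E)=0,
\]
since $\cal E$ is cuspidal. Likewise, the induction-series condition follows from
\[
\Hom\bigl(\ind_{L\subseteq P}^G\IC(\cal O,\chi|_L\otimes\cal E),\,\chi\otimes\cal F\bigr)
\simeq \Hom\bigl(\chi\otimes\ind_{L\subseteq P}^G\IC(\cal O,\cal E),\,\chi\otimes\cal F\bigr)
\simeq \Hom\bigl(\ind_{L\subseteq P}^G\IC(\cal O,\cal E),\,\cal F\bigr),
\]
where the final step uses that tensoring by the invertible $\chi$ is fully faithful; the last Hom is nonzero by the hypothesis that $[L,\cal O,\cal E]$ is the cuspidal support of $\cal F$.

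The only mildly nontrivial step is the induction-twisting identity, and even that is a three-line projection-formula computation once one notes that $i^*\chi\simeq\pi^*(\chi|_L)$ — a point that was already implicit in the proof of \Cref{thm18}. Everything else is bookkeeping against \Cref{thm14,thm18}.
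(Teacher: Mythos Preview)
Your proof is correct and follows essentially the same strategy as the paper: show that the relevant $\Hom$-group witnessing cuspidal support is unchanged by the twist, then invoke uniqueness from \Cref{thm14}. The only cosmetic difference is that the paper passes to the restriction side by adjunction and applies \Cref{thm18} directly, whereas you stay on the induction side and prove the dual identity $\ind_{L\subseteq P}^G(\chi|_L\otimes-)\simeq\chi\otimes\ind_{L\subseteq P}^G(-)$ via the projection formula; these are adjoint formulations of the same compatibility. Your explicit verification that $(L,\cal O,\chi|_L\otimes\cal E)$ remains a cuspidal datum is a point the paper leaves implicit.
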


\begin{proof}
We compare the Hom groups
(taken in the category of perverse sheaves)
that witness the cuspidal support
before and after twisting by~$\chi$.
After twisting, the group is
\[
\Hom\bigl(\chi\otimes\cal F,
\ind_{L\subseteq P}^G \IC(\cal O,\chi|_L\otimes\cal E)\bigr)
\simeq \Hom\bigl(\res_{L\subseteq P}^G(\chi\otimes\cal F),
\IC(\cal O,\chi|_L\otimes\cal E)\bigr)
\]
By \Cref{thm18} and a basic compatibility for IC sheaves,
\begin{align*}
\Hom\bigl(\res_{L\subseteq P}^G(\chi\otimes\cal F),
\IC(\cal O,\chi|_L\otimes\cal E)\bigr)
& \simeq\Hom\bigl(\res^G_L(\chi)\otimes\res_{L\subseteq P}^G(\cal F),
\res_L^G(\chi)\otimes\IC(\cal O,\cal E)\bigr) \\
& \simeq\Hom\bigl(\res_{L\subseteq P}^G(\cal F), \IC(\cal O,\cal E)\bigr) \\
& \simeq\Hom\bigl(\cal F,\ind_{L\subseteq P}^G(\IC(\cal O,\cal E))\bigr).
\end{align*}
As these Hom groups are isomorphic,
the cuspidal support of the twist is the twist
of the cuspidal support.
\end{proof}

\section{Parameterizing induction series}
\label{sec:param}
In \Cref{sec:springer}, we organized the elements of
$\Perv(\cal N_G/G)$ into induction series.
The next step is to parameterize these series.
In the connected generalized Springer correspondence,
the induction series are parameterized by
representations of Weyl groups,
and in the generalization of \cite{AMS18} to disconnected groups,
the series are parameterized by representations of twisted Weyl groups.
But in the full generality of $\cal P$-cuspidal support,
it is unclear how to parameterize induction series in such a way.

\begin{example} \label{thm78}
In the extreme case when $G^\circ = 1$,
so that the Levi subgroup $L$
is simply a finite subgroup of the finite group~$G$,
describing the induction series for $L$
amounts to describing the isotypic components of
representations induced from $L$ to~$G$,
in other words, the irreducible representations
of the algebra $\End(\Ind_L^G \rho)$
where $\rho$ is an irreducible representation of $L$
(ignoring the question of whether or not $\rho$ is cuspidal).
When $L$ is normal in $G$ this algebra is a twisted group ring,
but we do not know of such a description in general.

There are several objections one could raise about this example,
for instance, that it does not fit into a class $\cal P$
satisfying the hypotheses of \Cref{thm46}.
Nonetheless, we expect to see similar behavior
whenever, for example, $\pi_0(L)$ is not normal in $\pi_0(G)$.
\end{example}

Due to the phenomenon of \Cref{thm78},
after first giving a rough parameterization of the induction series,
we will restrict our attention to the quasi-Levi subgroups of \cite{AMS18}
and construct a parameterization in this case by comparing with that earlier work.

The parameterization arises by first performing
parabolic induction on a regular stratum
where parabolic induction becomes the pushforward
of a local system along a finite étale covering,
meaning that it is relatively easy to compute
the corresponding regular induction series.
One then compares this regular parabolic induction
with the nilpotent parabolic induction from before.
There are at least two ways to effect the comparison,
but the one that we find the most natural,
and which requires us to work on the Lie algebra,
uses the Fourier--Laumon transform.
The procedure is expressed in the following schematic,
where for simplicity we assume the Levi is a torus.
Here $\frak g_\tn{rs}$ is the regular semisimple locus
and $\frak t_{G\tn{-reg}}$ is defined in \Cref{sec:param:regular}.
\begin{equation} \label{thm86}
\begin{tikzcd}[row sep=large]
\Perv(\tn{pt}/T) \arrow[r,leftrightsquigarrow,"\Four_{\frak t}"]
 \arrow[d,squiggly,"\ind_{T\subseteq B}^G"'] &
\Perv(\frak t/T) &
\Loc(\frak t_{G\tn{-reg}}/T) \arrow[l,"\IC"']
 \arrow[d,squiggly,"\;\tn{\parbox{2cm}{(finite-étale pushforward)}}","\uind_{T\subseteq B}^G"'] \\
\Perv(\cal N_G/G) \arrow[r,leftrightsquigarrow,"\Four_{\frak g}"] &
\Perv(\frak g/G) &
\Loc(\frak g_\tn{rs}/G). \arrow[l,"\IC"']
\end{tikzcd}
\end{equation}

After the formalism is set up,
the main technical challenge is to understand the structure of the endomorphism ring
of a parabolically induced local system on the regular locus;
after all, it is the irreducible representations of this algebra
that parameterize the induction series.
Even in the connected case,
there is something nontrivial to check here when the Levi is not a torus
\cite[9.2(d)]{lusztig84b}.
Due to \Cref{thm78},
we cannot expect this task to be entirely straightforward in the disconnected case,
and indeed, the construction of the parameterization in \cite{AMS18} is rather subtle.
Rather than redeveloping the parameterization
by a close study of the relevant finite étale covering,
we have chosen to bootstrap from \cite[5.4]{AMS18},
using their result to exhibit the desired parameterization of induction series.

There are several ways one might carry out this bootstrap,
for example, by rewriting several portions of \cite{lusztig84b}
in the disconnected setting.
We found it the most illuminating to proceed via a careful comparison
of the generalized Springer correspondence on the Lie algebra and on the group,
a comparison that we hope is of independent interest.
On the unipotent and nilpotent loci one has a direct comparison:
these loci are equivariantly isomorphic via a Springer isomorphism,
as we recall in \Cref{sec:param:springer}.
On the regular loci the comparison is more difficult:
in general, the regular semisimple loci of a group and its Lie algebra
are not isomorphic, equivariantly or otherwise.
Nonetheless, in \Cref{sec:param:regular}
we carry out the comparison using quasi-logarithms,
themselves reviewed in \Cref{sec:param:quasi-log}.
The bootstrap is completed in \Cref{sec:param:fibers}.
To conclude, we explain in \Cref{sec:param:comparison}
why our notion of connected cuspidal support
agrees with that of Lusztig and Aubert--Moussaoui--Solleveld.

\subsection{The Springer isomorphism}
\label{sec:param:springer}
Our formulation of the Springer correspondence
uses the nilpotent cone~$\cal N_G$,
but in the earlier work of \cite{AMS18}
and in the application to the local Langlands correspondence
what appears is the unipotent variety $\cal U_G$,
the closed subvariety of unipotent elements of~$G$.
In this section we indicate why one
may pass freely between the two settings.
In brief, this passage is possible because
the two varieties are equivariantly isomorphic
and the resulting comparison of sheaf theory
is independent of the choice of isomorphism.

\begin{proposition} \label{thm41}
There is a nonempty connected variety~$Y$ and a morphism
\[
f\colon Y\times\cal U_G \to \cal N_G
\]
satisfying the following property:
the assignment
\[
Y \to \Isom_G(\cal U_G,\cal N_G) \colon
y \mapsto f|_{y\times\cal U_G}
\]
is an isomorphism between $Y$
and the space of $G$-equivariant
isomorphisms $\cal U_G\simeq\cal N_G$.
\end{proposition}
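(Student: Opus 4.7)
The plan is to represent $\Isom_G(\cal U_G, \cal N_G)$, the space of $G$-equivariant isomorphisms, as a torsor over the algebraic group $Y := \Aut_G(\cal N_G)$ of $G$-equivariant automorphisms of the nilpotent cone. Once I exhibit a single Springer isomorphism $\phi_0\colon \cal U_G \simeq \cal N_G$, the assignment $y \mapsto y \circ \phi_0$ identifies $\Aut_G(\cal N_G)$ with $\Isom_G(\cal U_G, \cal N_G)$, and the required morphism is $f(y, u) = y(\phi_0(u))$. So the proposition reduces to exhibiting $\phi_0$ and showing that $\Aut_G(\cal N_G)$ is represented by a connected algebraic variety.

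For existence of $\phi_0$, I would appeal to the theory of quasi-logarithms of Bardsley--Richardson \cite{bardsley_richardson85}: in characteristic zero, every reductive group (possibly disconnected) admits a $G$-equivariant quasi-logarithm $\lambda\colon G \to \frak g$, and the restriction of $\lambda$ to $\cal U_G$ defines a $G$-equivariant isomorphism $\cal U_G \simeq \cal N_G$. As an alternative, one could first construct a $G^\circ$-equivariant Springer isomorphism and then average over the finite group $\pi_0(G)$, using the connectedness of the torsor of such isomorphisms to guarantee a fixed point.

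For the structure of $\Aut_G(\cal N_G)$, I would work first in the connected case. Since $\cal N_{G^\circ}$ depends only on the derived subgroup, one reduces to $G^\circ$ semisimple; decomposing into simple factors further reduces to showing $\Aut_{G_i^\circ}(\cal N_{G_i^\circ}) \simeq \bbG_\tn{m}$ for each simple factor. One inclusion is clear: scaling $X \mapsto \lambda X$ on $\frak g_i^\circ$ commutes with the adjoint action and preserves $\cal N_{G_i^\circ}$, giving a copy of $\bbG_\tn{m}$. The other inclusion is the main content: it follows from rigidity on the open dense regular orbit, where a $G_i^\circ$-equivariant self-map is determined by its value at a representative, together with a computation of the regular centralizer that shows no further automorphisms extend globally. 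Accordingly, $\Aut_{G^\circ}(\cal N_{G^\circ}) \simeq \bbG_\tn{m}^r$ where $r$ is the number of simple factors of $G^\circ_\tn{der}$. For general $G$, the action of $\pi_0(G)$ permutes these factors, and $\Aut_G(\cal N_G)$ is the fixed subgroup, itself a connected sub-torus of $\bbG_\tn{m}^r$.

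The main obstacle I anticipate is the rigidity argument in the connected case. A priori one worries about finite symmetries of individual nilpotent orbits contributing extra components to $\Aut_{G_i^\circ}(\cal N_{G_i^\circ})$; ruling these out amounts to showing that, beyond the scaling $\bbG_\tn{m}$, no nontrivial element of $N_{G_i^\circ}(Z_{G_i^\circ}(x_\tn{reg}))/Z_{G_i^\circ}(x_\tn{reg})$ extends to a global automorphism of $\cal N_{G_i^\circ}$. Once this rigidity is established, the bijection $Y \to \Isom_G(\cal U_G, \cal N_G)$ is automatic from the torsor formalism, and $Y$ is nonempty and connected by the preceding paragraphs.
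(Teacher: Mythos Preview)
Your overall architecture---fix one Springer isomorphism and then study $\Aut_G(\cal N_G)$---matches the paper's, but your computation of the automorphism group contains a genuine error. The claim that $\Aut_{G_i^\circ}(\cal N_{G_i^\circ}) \simeq \bbG_\tn{m}$ for a simple factor is false: already for type $A_{n-1}$ the equivariant automorphisms of the nilpotent cone are the maps $e \mapsto e\cdot p(e)$ for polynomials $p$ of degree $\le n-2$ with $p(0)\neq 0$, an $(n-1)$-dimensional family (see \Cref{thm39}). What is true, by McNinch--Testerman, is that $Y' \defeq N_{G^\circ}(Z_{G^\circ}(u))/Z_{G^\circ}(u)$ (for $u$ regular unipotent) is a connected \emph{solvable} group whose maximal reductive quotient is a rank-one torus; the unipotent radical is generically nontrivial and supplies exactly the extra dimensions you are missing. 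Your stated worry about ``finite symmetries of individual nilpotent orbits'' is thus off target---the obstruction to $Y'\simeq\bbG_\tn{m}$ is unipotent, not finite.

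This error propagates to the disconnected step. Once $Y'$ is not a torus, the assertion that the $\pi_0(G)$-fixed locus $(Y')^A$ is ``a connected sub-torus'' is unjustified; one needs separate arguments for the torus quotient and for the unipotent radical. The paper supplies both: the cocharacter $2\rho^\vee$ furnishes an $A$-fixed $\bbG_\tn{m}$ surjecting onto $Y'_\tn{red}$, while fixed points of a finite group acting on a unipotent group in characteristic zero are always connected (proved by induction on the nilpotency class). Your ``averaging over $\pi_0(G)$'' fallback is also problematic---there is no averaging operation on isomorphisms of varieties, and mere connectedness of $Y'$ does not force a fixed point. The paper instead exhibits an explicit $A$-fixed Springer isomorphism built from a pinning, sending $\prod_{\alpha\in\Delta} \exp(X_\alpha)$ to $\sum_{\alpha\in\Delta} X_\alpha$.
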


Such an isomorphism is called a \mathdef{Springer isomorphism}.
As \Cref{thm39} shows, a Springer isomorphism
for $G^\circ$ need not be $G$-equivariant.

\begin{proof}
First, assume $G=G^\circ$.
Using the normality and affineness of $\cal U_G$ and~$\cal N_G$
together with the algebraic Hartogs's Lemma
\cite[\href{https://stacks.math.columbia.edu/tag/031T}{Tag 031T}]{stacks},
we can replace $\cal U_G$ and $\cal N_G$ by their regular loci,
which are homogeneous spaces for~$G^\circ$.
Hence any Springer isomorphism is uniquely determined
by the image~$X$ of a fixed regular unipotent element~$u$,
which must furthermore lie in $\Lie(Z_G(u))$.
We may then take $Y = N_G\bigl(Z_G(u)\bigr)/Z_G(u)$
with the map $f\colon Y\times\cal U_G\to\cal N_G$
defined by $f(y,gu)=gy^{-1}X$.
Note that $f$ depends on the pair $(u,X)$.
For more details of this case,
see Serre's appendix to \cite{mcninch05}.

For the general case of disconnected~$G$
we may assume $G^\circ \subseteq G \subseteq
\Aut(G_\tn{ad}^\circ)$.
Choose a pinning~$\cal P$ of~$G^\circ$
and let $A$ be the subgroup
of $\Aut(G^\circ,\cal P)$
corresponding to~$\pi_0(G)$.
From the pinning~$\cal P$ we can form
the regular unipotent element
\[
u_{\cal P } \defeq \prod_{\alpha\in\Delta} \exp_\alpha(X_\alpha)
\]
and the regular nilpotent element
\[
X_{\cal P } \defeq \sum_{\alpha\in\Delta} X_\alpha.
\]
The ($G^\circ$-equivariant)
Springer isomorphism that sends $u_{\cal P}$ to~$X_{\cal P}$
is evidently $A$-equivariant, hence $G$-equivariant.
Using the pair $(u_{\cal P}, X_{\cal P})$
to define the map $f\colon Y'\times\cal U_G\to\cal N_G$, where 
$Y' := N_{G^{\circ}}(Z_{G^{\circ}}(u))/Z_{G^{\circ}}(u)$, we see first
that $A$ acts on $Y'$ by group automorphisms, simply because 
$A$ fixes~$u$, and second that the $G^\circ$-equivariant map 
$f|_{y\times\cal U_G}$ is $A$-equivariant, hence $G$-equivariant,
if and only if $y\in (Y')^A$. So $(Y')^A$ is evidently nonempty,
and, via setting $Y = (Y')^{A}$, this proves the result except for the claim 
that $(Y')^{A}$ is connected.

The group~$Y'$ is solvable
and its maximal reductive quotient
$Y'_\tn{red}$ is a torus of rank one
\cite[Theorem~B]{mcninch_testerman09}.
At the same time, the cocharacter
$2\rho^\vee\colon\bbG_{\tn m}\to T$
is fixed by~$A$ and
preserves the root line containing~$u$
(which is well-defined because
we work in characteristic zero)
and thus lies in $N_G\bigl(Z_G(u)\bigr)$.
So $(Y')_{\tn{red}}^A = Y'_\tn{red}$
and it suffices to show
that the $A$-fixed points
on the unipotent radical $\Ru(Y')$ are connected.
This is a general fact about
finite group actions on unipotent groups
in characteristic zero
which one proves by induction
on the nilpotency class of~$G$.
\end{proof}

\begin{example} \label{thm39}
In general, a $G^\circ$-equivariant Springer isomorphism
need not be $G$-equivariant.
For one example, take $G=H^n\rtimes S_n$ where
$H$ is a (nontrivial) simple group,
then use a different Springer isomorphism
on each simple factor of~$G^\circ$.
But even when $G_\tn{ad}$ is simple,
$G$-equivariance can fail.
For $G=\GL_n$, every Springer isomorphism
is of the form
\[
\sigma_p\colon 1 + e \mapsto e\cdot p(e)
\]
where $e$ is nilpotent and
$p$ is a polynomial of degree
$\leq n-2$ with $p(0)\neq0$.
The map~$\sigma_p$ is equivariant
for the outer automorphism $g \mapsto (g^{-1})^t$
if and only if $\sigma_p(u^{-1}) = -\sigma_p(u)$,
and one can check by induction on~$n$
that such a $\sigma_p$ is uniquely
determined by the leading term~$p(1)$.
So there are $n-1$ dimensions worth of Springer isomorphisms
but only $1$ dimension of them are $\Aut(G)$-equivariant.
\end{example}

\begin{corollary} \label{thm38}
Let $\sigma\colon\cal U_G\to\cal N_G$ be a $G$-equivariant isomorphism.
The resulting identifications are independent of the choice of~$\sigma$.
\begin{enumerate}
\item
The bijection between unipotent and nilpotent conjugacy classes.

\item
For each nilpotent orbit~$\cal O$, the equivalence of categories
$\Perv\bigl(\sigma^{-1}(\cal O)/G\bigr)
\simeq\Perv(\cal O/G)$.
\end{enumerate}
\end{corollary}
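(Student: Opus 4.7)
The plan is to leverage the universal family $f\colon Y\times\cal U_G\to\cal N_G$ of \Cref{thm41}, together with the fact (established there) that $Y$ is a nonempty connected variety.

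For part~(1), fix a unipotent orbit $\cal O_u\subseteq\cal U_G$. The restriction $f|_{Y\times\cal O_u}$ is a morphism from the irreducible variety $Y\times\cal O_u$ into $\cal N_G$. Since $\cal N_G$ is a finite disjoint union of (locally closed) nilpotent orbits, the image of any such morphism lies in a single orbit $\cal O$. Hence $\sigma_y(\cal O_u)=\cal O$ for every $y\in Y$, and the bijection of orbits is independent of the Springer isomorphism.

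For part~(2), it suffices to show that for any $y_1,y_2\in Y$ the $G$-equivariant automorphism $\alpha=\sigma_{y_2}\circ\sigma_{y_1}^{-1}$ of $\cal N_G$ satisfies $\alpha^*\simeq\tn{id}$ on $\Perv(\cal N_G/G)$, since then $\sigma_{y_1}^*\simeq\sigma_{y_2}^*$. By part~(1), $\alpha$ preserves each orbit $\cal O$, so we may work orbit by orbit. Fix $x\in\cal O$ and choose $g\in G$ with $\alpha(x)=g\cdot x$; by $G$-equivariance, $Z_G(gx)=Z_G(x)$, so $g\in N_G(Z_G(x))$, well-defined modulo $Z_G(x)$. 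The resulting algebraic map $Y\times Y\to N_G(Z_G(x))/Z_G(x)$, $(y_1,y_2)\mapsto g\bmod Z_G(x)$, sends the diagonal to the identity, and since $Y\times Y$ is connected, its image lies in the identity component $N_G(Z_G(x))^\circ\cdot Z_G(x)/Z_G(x)$. Under the equivalence $\Perv(\cal O/G)\simeq\tn{Rep}\bigl(\pi_0(Z_G(x))\bigr)$, the endofunctor $\alpha^*|_{\cal O}$ is precomposition with $\tn{Ad}(g)$ on $\pi_0(Z_G(x))$. Writing $g=nz$ with $n\in N_G(Z_G(x))^\circ$ and $z\in Z_G(x)$, the connected group $N_G(Z_G(x))^\circ$ acts trivially on $\pi_0(Z_G(x))$ (because it must preserve each connected component of $Z_G(x)$), while $\tn{Ad}(z)$ is an inner automorphism of $Z_G(x)$. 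Thus $\tn{Ad}(g)$ induces an inner automorphism of $\pi_0(Z_G(x))$, yielding an endofunctor naturally isomorphic to the identity on representations.

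The main technical subtlety is assembling these orbit-wise natural isomorphisms into a natural isomorphism $\alpha^*\simeq\tn{id}$ on all of $\Perv(\cal N_G/G)$. This follows from the fact that $\alpha^*$ commutes with intermediate extension, because $\alpha$ is an isomorphism that preserves each orbit and its closure; hence the natural isomorphisms on $\Perv(\cal O/G)$ propagate to natural isomorphisms on the IC sheaves $\IC(\cal O,\cal E)$, which generate $\Perv(\cal N_G/G)$ under extensions.
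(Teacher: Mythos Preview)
The paper states this corollary without proof, treating it as an immediate consequence of \Cref{thm41}, specifically of the connectedness of the parameter space~$Y$ of Springer isomorphisms. Your argument is a correct fleshing-out of that implicit reasoning, and the overall strategy---using connectedness of~$Y$ to force the relevant discrete invariants to be constant---is exactly what the authors have in mind.

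A few small comments. In part~(1), the sentence ``the image of any such morphism lies in a single orbit'' is not literally true for arbitrary morphisms from an irreducible variety into a stratified space; what makes it work here is that the image $\bigcup_y\sigma_y(\cal O_u)$ is a union of orbits \emph{all of the same dimension}, and two distinct orbits of equal dimension cannot both be dense in the same irreducible set. Alternatively, fix $u\in\cal O_u$ and note that $y\mapsto f(y,u)$ lands in the locally closed locus of orbits of dimension $\dim\cal O_u$, where each orbit is clopen, so connectedness of~$Y$ forces a single orbit.

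In part~(2), you can avoid worrying about whether $(y_1,y_2)\mapsto\sigma_{y_1}^{-1}(x)$ is a morphism by fixing~$y_1$ and varying only~$y_2$: with $u\defeq\sigma_{y_1}^{-1}(x)$, the map $y_2\mapsto f(y_2,u)$ is visibly a morphism $Y\to\cal O\simeq G/Z_G(x)$ landing in $N_G(Z_G(x))/Z_G(x)$ and sending $y_1$ to the identity coset, which is all you need. Finally, your last paragraph (extending the natural isomorphism to all of $\Perv(\cal N_G/G)$) proves more than the corollary asks: the statement is only about each $\Perv(\cal O/G)$ separately, so the orbit-by-orbit argument already suffices.
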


Note, however, that if $G^\circ$ is not a torus
then different choices of~$\sigma$
can induce inequivalent isomorphisms
$\cal U_G/G\simeq\cal N_G/G$ of quotient stacks,
simply because $N_G\bigl(Z_G(u)\bigr)\supsetneq Z_G(u)$
in this case.

To finish this section, we'll show that
parabolic induction (and therefore restriction)
is compatible with the identification of \Cref{thm38}.

\begin{proposition} \label{thm40}
Let $P$ be a parabolic subgroup of~$G$
with Levi quotient~$L$.
There is a commutative diagram
\[
\begin{tikzcd}
L \dar[phantom]{\actson} &
P \lar\rar \dar[phantom]{\actson} &
G \dar[phantom]{\actson} \\
\cal U_L \dar{\sigma_L} &
\cal U_P \lar\rar\dar{\sigma_P} &
\cal U_G \dar{\sigma_G} \\
\cal N_L &
\cal N_P \lar\rar &
\cal N_G
\end{tikzcd}
\]
of varieties with group action,
in which each horizontal arrow
is equivariant for the homomorphism above it
and each vertical arrow is an equivariant
isomorphism for the group above it.
\end{proposition}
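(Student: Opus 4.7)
The plan is to choose $\sigma_L$, $\sigma_P$, $\sigma_G$ not independently, but as the specializations to $L$, $P$, $G$ of a single construction that is automatically functorial in morphisms of linear algebraic groups. In characteristic zero such a construction is readily available: the logarithm. For each linear algebraic group $H$ and unipotent $u \in H$, I would associate the unique one-parameter additive subgroup $\varphi_u \colon \bbG_a \to H$ with $\varphi_u(1) = u$ and set $\log_H(u) \defeq d\varphi_u(1) \in \frak h$; equivalently, $\log_H(u) = (u-1) - \tfrac{1}{2}(u-1)^2 + \cdots$ in any faithful representation of $H$, a finite sum since $u - 1$ is nilpotent. The resulting $\log_H \colon \cal U_H \to \cal N_H$ is an algebraic, conjugation-equivariant isomorphism whose inverse is the exponential.

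The key property is naturality in $H$: given a homomorphism $\phi \colon H_1 \to H_2$, the composition $\phi \circ \varphi_u$ is a one-parameter additive subgroup of $H_2$ passing through $\phi(u)$ at time $1$, so uniqueness forces $\phi \circ \varphi_u = \varphi_{\phi(u)}$ and differentiation at the identity yields $\log_{H_2}\bigl(\phi(u)\bigr) = d\phi\bigl(\log_{H_1}(u)\bigr)$ for every $u \in \cal U_{H_1}$. I would then set $\sigma_H \defeq \log_H$ for $H \in \{L, P, G\}$ and apply this naturality to the projection $P \twoheadrightarrow L$ and the inclusion $P \hookrightarrow G$; this immediately produces the commutativity of both squares in the target diagram, while equivariance of each vertical arrow for conjugation by the group above and of each horizontal arrow for the corresponding homomorphism is built into the definitions.

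I do not anticipate any serious obstacle. The only point meriting comment is that $P$ is not reductive, so $\log_P$ is not literally a Springer isomorphism in the sense of \Cref{thm41}; but \Cref{thm40} only requires each $\sigma_H$ to be an $H$-equivariant isomorphism $\cal U_H \simeq \cal N_H$, a condition $\log_P$ manifestly satisfies with no reductivity hypothesis. One could alternatively pursue a pinning-theoretic proof in the spirit of \Cref{thm41} by choosing a pinning of $G^\circ$ compatible with a pinning of $L^\circ$ and verifying that the resulting Springer isomorphisms match under the parabolic projection and inclusion, but the virtue of the logarithm route is that coherence of the three Springer isomorphisms is absorbed into the single observation that $\log$ is a natural transformation of functors from linear algebraic groups to varieties with group action.
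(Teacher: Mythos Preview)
Your proof is correct and takes a genuinely different route from the paper. The paper fixes an abstract $G$-equivariant Springer isomorphism $\sigma_G\colon\cal U_G\to\cal N_G$ supplied by \Cref{thm41}, then sets $\sigma_P\defeq\sigma_G|_{\cal U_P}$ and lets $\sigma_L$ be the map this induces on the Levi quotient; commutativity is then immediate by construction. Your approach instead takes $\sigma_H=\log_H$ for each $H\in\{L,P,G\}$ and obtains commutativity from the single observation that the logarithm is a natural transformation. What your route buys is that the two squares commute for a structural reason rather than because the three isomorphisms were built from one another; it also sidesteps a point the paper leaves implicit, namely that an arbitrary $G$-equivariant Springer isomorphism carries $\cal U_P$ into $\cal N_P$ (true, via a density argument on the regular locus, but not entirely obvious). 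What the paper's route buys is that it stays within the abstract Springer-isomorphism framework of \Cref{thm41}, which is the framework used elsewhere in \S\ref{sec:param:springer} and which would survive in situations (e.g.\ good positive characteristic) where the logarithm series is unavailable.
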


The stack $\cal N_P/P$
is of a different nature than $\cal N_G/G$:
it often has infinitely many points \cite{rohrle96}.

\begin{proof}
Using \Cref{thm41},
choose $G$-equivariant isomorphism
$\sigma_G\colon\cal U_G\to\cal N_G$.
Let $\sigma_P \defeq \sigma_G|_P$,
a $P$-equivariant isomorphism.
Then $\sigma_P$ induces
a $P$-equivariant isomorphism $\cal U_L\to\cal N_L$.
The resulting diagram clearly commutes.
\end{proof}

It follows from \Cref{thm38,thm40} that
the functors of parabolic restriction and parabolic induction,
and hence the generalized Springer correspondence,
agree with each other in the nilpotent and unipotent settings.
So for many purposes we may safely ignore
the difference between $\cal N_G$ and $\cal U_G$.

\subsection{Quasi-logarithms}
\label{sec:param:quasi-log}

In this section we recall a tool to compare
the Springer correspondence on the group and the Lie algebra.
The following definition is due to Bardsley and Richardson
\cite[9.3]{bardsley_richardson85}
and was further developed by Kazhdan and Varshavsky
\cite[1.8]{kazhdan_varshavsky06}.

\begin{definition}
A \mathdef{quasi-logarithm} is a $G$-equivariant morphism of varieties
$\lambda\colon G\to\frak g$
such that $\lambda(1)=0$ and $\Lie(\lambda) = \tn{id}_{\frak g}$.
\end{definition}

In characteristic zero,
every (possibly disconnected) reductive group admits a quasi-logarithm,
as one proves by giving an explicit formula arising from a faithful representation of~$G$
\cite[1.8.2, 1.8.10]{kazhdan_varshavsky06}.

\begin{example}
Let $G=\GL_n$.
Every function of the form $g \mapsto 1 + g + g^2\cdot f(g)$
with $f$ a polynomial is a quasi-logarithm.
Consequently, unlike for Springer isomorphisms,
there are usually infinitely many dimensions of quasi-logarithms.
\end{example}

One pleasant property of quasi-logarithms is that they easily transfer:
often a quasi-logarithm on a group induces a quasi-logarithm,
or other equivariant map, on certain subgroups or conjugacy classes.
We record two instances of this phenomenon for later use.

\begin{lemma} \label{thm83}
Let $\lambda\colon G\to\frak g$ be a quasi-logarithm.
\begin{enumerate}
\item
The map $\lambda$ restricts to a $G$-equivariant
quasi-logarithm $Z(G^\circ)\to\frak z(G^\circ)$.

\item
For every Levi subgroup $L\subseteq G^\circ$ the restriction $\lambda|_L$
is an $N_G(L^\circ)$-equivariant quasi-logarithm $L^\circ\to\frak l$.
\end{enumerate}
\end{lemma}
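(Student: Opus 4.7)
The plan is to exploit the $G$-equivariance of $\lambda$ to force the restrictions to land in the correct Lie subalgebras, then observe that the defining properties of a quasi-logarithm—the normalization $\lambda(1) = 0$ and the differential condition $\Lie(\lambda) = \tn{id}_{\frak g}$—transfer automatically to any subvariety through the identity. The underlying principle is the formal observation that a $G$-equivariant morphism carries $\Ad(H)$-fixed loci to $\Ad(H)$-fixed loci, applied to the two cases where the relevant subgroup is either $G^\circ$ itself or a central subtorus.

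For part~(1), first I would note that $Z(G^\circ)$ is $G$-stable, since $G^\circ$ is normal in $G$, so the restriction $\lambda|_{Z(G^\circ)}$ is itself a $G$-equivariant map of varieties. To check the image inclusion: any $z \in Z(G^\circ)$ satisfies $gzg^{-1} = z$ for all $g \in G^\circ$, so by $G$-equivariance $\Ad(g)\lambda(z) = \lambda(z)$, forcing $\lambda(z) \in \frak g^{G^\circ} = \frak z(G^\circ)$. Using the characteristic-zero identification $\Lie\bigl(Z(G^\circ)\bigr) = \frak z(G^\circ)$, the required condition $\Lie(\lambda|_{Z(G^\circ)}) = \tn{id}_{\frak z(G^\circ)}$ follows from restricting $d\lambda_1 = \tn{id}_{\frak g}$ to this subspace, and $\lambda|_{Z(G^\circ)}(1) = 0$ is immediate.

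For part~(2), the argument parallels part~(1) with $Z(G^\circ)$ replaced by $L^\circ$. Writing $L^\circ = Z_{G^\circ}(S)$ for the subtorus $S \defeq Z(L^\circ)^\circ$, any $\ell \in L^\circ$ is $\Ad(S)$-fixed, so $\lambda(\ell) \in \frak g^S = \frak z_{\frak g}(S) = \frak l$. The $N_G(L^\circ)$-equivariance of the restriction is immediate from the $G$-equivariance of $\lambda$ together with the fact that $L^\circ$ is, by definition, $N_G(L^\circ)$-stable, and the quasi-logarithm axioms transfer by restriction exactly as in part~(1).

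I do not anticipate a serious obstacle in either part; the argument is essentially formal once one has the right description of $Z(G^\circ)$ and $L^\circ$ as fixed-point loci for conjugation by suitable subgroups. The only care needed is in the correct identifications $\Lie\bigl(Z(G^\circ)\bigr) = \frak z(G^\circ)$ and $\Lie(L^\circ) = \frak l$, both of which are standard for reductive groups in characteristic zero.
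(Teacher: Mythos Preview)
Your proposal is correct and follows essentially the same approach as the paper: both arguments use $G$-equivariance to show that centralizer conditions on elements of $G$ transfer to centralizer conditions on their images in $\frak g$, forcing the restrictions to land in $\frak z(G^\circ)$ and $\frak l$ respectively. The only cosmetic difference is that for part~(2) the paper realizes $L$ as $Z_{G^\circ}(s)$ for a single semisimple element $s$ rather than as the centralizer of the torus $S = Z(L^\circ)^\circ$, and the paper leaves the verification of the quasi-logarithm axioms implicit where you spell them out.
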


Like the Lie algebra,
a quasi-logarithm contains no information
about the non-identity components of~$G$.

\begin{proof}
For the first part, the $G$-equivariance of $\lambda$
implies that $\lambda(Z(G^\circ))\subseteq\frak z(G^\circ)$,
since $Z^\circ_G(x) \subseteq Z^\circ_G(\lambda(x))$ for all $x\in G$.
For the second part, we need only show that $\lambda(L)\subseteq\frak l$.
There is a semisimple element $s\in G^\circ$ such that $Z_{G^\circ}(s) = L$
and $\frak z_G(s) = \frak l$.
But because $\lambda$ is $G$-equivariant,
$\lambda(L)\subseteq \frak z_G(s) = \frak l$.
\end{proof}

\begin{lemma} \label{thm82}
Let $\lambda\colon G\to\frak g$ be a quasi-logarithm.
\begin{enumerate}
\item
$\lambda$ restricts to give a commutative square
\[
\begin{tikzcd}
\cal U_G\times Z(G^\circ) \rar{\lambda}\dar{\tn{pr}_2} &
\cal N_G\times\frak z(G^\circ) \dar{\tn{pr}_2} \\
Z(G^\circ) \rar{\lambda} & \frak z(G^\circ).
\end{tikzcd}
\]
\item
After possibly restricting to an open neighborhood of $0$ in~$\frak z(G^\circ)$,
this square is Cartesian and the horizontal arrows are finite étale.
\end{enumerate}
\end{lemma}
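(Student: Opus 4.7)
My plan is to interpret the top horizontal arrow as the componentwise application of $\lambda$, namely $(u, z) \mapsto (\lambda(u), \lambda(z))$. For part (1), well-definedness reduces to two containments: $\lambda(Z(G^\circ)) \subseteq \frak z(G^\circ)$ is \Cref{thm83}(1), and $\lambda(\cal U_G) \subseteq \cal N_G$ is part of the characteristic-zero theory of quasi-logarithms in \cite{kazhdan_varshavsky06}, which in fact asserts that $\lambda|_{\cal U_G}\colon \cal U_G \to \cal N_G$ is a $G$-equivariant isomorphism---a Springer isomorphism in the sense of \Cref{thm41}. With this interpretation, commutativity of the square follows immediately from the definitions.

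For the Cartesian property in part (2), the pullback $(\cal N_G \times \frak z(G^\circ)) \times_{\frak z(G^\circ)} Z(G^\circ)$ naturally identifies with $\cal N_G \times Z(G^\circ)$ via the second projection, and the comparison map from $\cal U_G \times Z(G^\circ)$ is $(u, z) \mapsto (\lambda(u), z)$. Since $\lambda|_{\cal U_G}$ is an isomorphism by the previous paragraph, this comparison map is an isomorphism, so the square is already Cartesian independently of any restriction.

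For the finite-étale property, I would first note that $\lambda|_{Z(G^\circ)}\colon Z(G^\circ) \to \frak z(G^\circ)$ is a morphism between smooth varieties of equal dimension $\dim Z(G^\circ)^\circ$, with differential at the identity equal to $\tn{id}_{\frak z(G^\circ)}$ (essentially the defining condition of a quasi-logarithm, restricted via \Cref{thm83}(1)). Thus the map is étale on some open neighborhood of the identity. Since $Z(G^\circ)$ is diagonalizable, hence has only finitely many components, I expect to shrink to an open neighborhood $U \subseteq \frak z(G^\circ)$ of $0$ small enough that $\lambda^{-1}(U)$ is contained in an étale open around the identity with no spurious preimages from non-identity components; on such an open, $\lambda$ becomes finite étale. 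The top horizontal map then factors as the product of the Springer isomorphism $\lambda|_{\cal U_G}$ and this finite-étale restriction, and is therefore itself finite étale. The main technical obstacle is this last step---upgrading étale-at-the-identity to finite étale on a suitable neighborhood---which requires that the non-identity components of $Z(G^\circ)$ can be excluded by shrinking the target, a maneuver that exploits the diagonalizability and finite-type nature of~$Z(G^\circ)$.
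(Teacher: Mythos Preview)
You have misread the statement. The top arrow is not the componentwise map $(u,z)\mapsto(\lambda(u),\lambda(z))$; rather, the phrase ``$\lambda$ restricts'' means that $\cal U_G\times Z(G^\circ)$ is to be viewed as a subvariety of~$G$ via multiplication $(u,z)\mapsto zu$, that $\cal N_G\times\frak z(G^\circ)$ is to be viewed as a subvariety of~$\frak g$ via addition, and that the top arrow is the restriction of the single map $\lambda\colon G\to\frak g$. So part~(1) is the nontrivial assertion that $\lambda(zu)-\lambda(z)\in\cal N_G$ for every $z\in Z(G^\circ)$ and $u\in\cal U_G$; the paper checks this using \cite[1.8.3(b)]{kazhdan_varshavsky06} applied to a Borel containing~$u$, which places $\lambda(zu)-\lambda(z)$ in the Lie algebra of the unipotent radical. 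This is exactly the interpretation required for the application in \Cref{thm84}, where the bottom row of the comparison square is the restriction of~$\lambda$ to a locally closed subvariety of~$G$. A symptom of the misreading is that under your interpretation the square is Cartesian with no restriction whatsoever, rendering that clause of part~(2) vacuous.

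With the correct reading, your treatment of the bottom arrow in part~(2) is fine, but the Cartesian claim needs more: one must show that for each fixed~$z$ the map $\cal U_G\to\cal N_G$ given by $u\mapsto\lambda(zu)-\lambda(z)$ is an isomorphism. At $z=1$ this is indeed $\lambda|_{\cal U_G}$, the Springer isomorphism you invoke, but for general~$z$ it is a genuinely different $G^\circ$-equivariant map, and one appeals to the description of Springer isomorphisms in \Cref{thm41} to see that it remains an isomorphism.
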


In the second part, ``restricting'' is shorthand for pulling back
the entire commutative square along the open embedding.

\begin{proof}
For the first part, it suffices to show that 
$\lambda(z\cal U_G) \subseteq \lambda(z) + \cal N_G$ for all $z\in Z(G^\circ)$,
or in other words, fixing~$z$, that
$\lambda(zu) - \lambda(z) \in \cal N_G$ for every $u\in \cal U_G$.
In fact, after choosing a Borel subgroup~$B$ of~$G$ containing~$u$,
we see from \cite[1.8.3(b)]{kazhdan_varshavsky06} applied to~$B$
that an even stronger statement holds:
$\lambda(zu) - \lambda(z)$ lies in the Lie algebra
of the unipotent radical of~$B$.

For the second part, since $\lambda|_{Z(G)}$ is a quasi-logarithm,
the bottom arrow becomes finite étale
after a possible restriction on the target.
Moreover, for every $z\in Z(G^\circ)$ the map
$\lambda(z\cal U_G)\to\lambda(z)+\cal N_G$
is an isomorphism because it is $G$-equivariant,
as one sees using the construction of the Springer isomorphism in \Cref{thm41}.
It follows that the restricted square is Cartesian.
\end{proof}

\subsection{Parabolic induction on regular loci}
\label{sec:param:regular}
In this section we define certain regular loci of $\frak g$
(and $G$) and observe that parabolic induction on these loci
is pushforward along a finite étale covering.
Since the effect of such a pushforward on a local system
can be understood purely in terms of representations of finite groups,
it is not surprising that induction series in this setting
are parameterized by endomorphism algebras of induced representations.

Following Lusztig, call an element $x\in L$ (or $X\in\frak l$) 
\mathdef{$G$-regular}
if $Z_G^\circ(x_\tn{s}) \subseteq L$
(or $Z_G(X_\tn{s}) \subseteq\frak l$),
where $(-)_\tn{s}$ denotes the semisimple part
in the Jordan decomposition.
Let $L_{G\tn{-reg}}$ (or $\frak l_{G\tn{-reg}}$)
denote the subset of $G$-regular elements.

\begin{lemma} \label{thm73}
Let $X$ be a locally closed subvariety
of $L_{G\tn{-reg}}$ or $\frak l_{G\tn{-reg}}$
and let $\pi\colon P\to L$ be the standard projection.
Then the induced map
\[
\pi^{-1}(X)/P \to X/L
\]
is an isomorphism.
\end{lemma}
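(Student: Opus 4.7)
The plan is to construct an inverse to the given morphism using the section $s\colon L\hookrightarrow P$ exhibiting $L$ as a Levi factor of~$P$. Setting $U\defeq\Ru(P)$, this section yields an $L$-equivariant closed immersion $X\hookrightarrow \pi^{-1}(X)$, and I would verify that the induced morphism $X/L\to\pi^{-1}(X)/P$ of quotient stacks is inverse to the one in the statement. Concretely, I would present both stacks by their action groupoids and check directly that this obvious map is an equivalence.

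The crux is the observation that $Z_U(x)=1$ for every $x\in X$ (or $\frak z_{\frak u}(X)=0$ in the Lie algebra case). For this, any $u\in Z_U(x)$ is unipotent, hence lies in the identity component $Z_P(x)^\circ$; by Jordan decomposition, $Z_P(x)^\circ\subseteq Z_P(x_\tn{s})^\circ\subseteq Z_G^\circ(x_\tn{s})$, which is contained in~$L$ by $G$-regularity of~$x$. Thus $u\in U\cap L=1$. The Lie algebra variant is the infinitesimal analogue: $G$-regularity forces $\tn{ad}(X_\tn{s})$ to have no zero eigenvalue on~$\frak u$, hence $\tn{ad}(X)$ is invertible on~$\frak u$.

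Given this, the groupoid equivalence is a routine check. For fully faithfulness, if $q\in P$ conjugates $x_1\in X$ to $x_2\in X$, then writing $q=\ell v$ with $\ell\in L$ and $v\in U$, the condition $vx_1v^{-1}\in L$ forces the $U$-component in the decomposition $P=L\ltimes U$ to vanish, which amounts to $v\in Z_U(x_1)=1$, so $q=\ell\in L$. For essential surjectivity, I would show that the $U$-conjugation orbit of $x$ in the fiber $\pi^{-1}(x)=xU$ exhausts the fiber: the orbit map has trivial stabilizer by the crux, and since unipotent orbits on an affine variety are closed (Rosenlicht), the orbit is a closed irreducible subvariety of $xU\simeq\bbA^{\dim U}$ of dimension $\dim U$, hence all of $xU$. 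The main obstacle, such as it is, is the centralizer computation $Z_U(x)=1$; granted it, the rest is formal, and the Lie algebra case goes through by the same pattern with $X+\frak u$ in place of~$xU$.
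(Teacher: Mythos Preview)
Your proposal is correct and follows essentially the same strategy as the paper: both reduce to showing that conjugation by~$U$ is simply transitive on each fiber $xU\subseteq\pi^{-1}(X)$, with the key input being $Z_U(x)=1$ from $G$-regularity. The one notable difference is in the surjectivity step: you invoke Rosenlicht's theorem on closedness of unipotent orbits together with a dimension count, whereas the paper observes that $u\mapsto[u,x]$ is an injective self-map of the variety~$U$ and concludes bijectivity via the Ax--Grothendieck theorem.
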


\begin{proof}
We focus on the group case;
the proof for the Lie algebra is similar.
To fix ideas, identify $L$ with a Levi subgroup of~$P$
and identify $\pi^{-1}(X)$ with 
$U\cdot X = \{u\cdot x : u\in U, x\in X\}$.
It suffices to prove the claim after pullback to $X$,
meaning, by \Cref{thm20}, that the map
\[
(U\cdot X)/U \to X
\]
is an isomorphism.
For this claim we may prove that the map $U\cdot X\to X$
is a $U$-torsor for the conjugation action on the source,
and for that, it suffices to show that for every $x\in X$,
the coset $Ux$ is a $U$-torsor under conjugation.
By the equation
\[
uxu^{-1} = [u,x] x,
\]
this amounts to the assertion that the map
$u\mapsto [u,x]$, an endomorphism of~$U$,
is bijective.
Since $x$ is $G$-regular,
this endomorphism is injective,
and thus, by the Ax--Grothendieck theorem
\cite[10.4.11]{ega4-3}, bijective.
\end{proof}

It follows from \Cref{thm73} that the correspondence
\eqref{thm71} (and similarly, \eqref{thm72})
simplifies greatly over the $G$-regular locus,
where the map $\frak p/P\to\frak l/L$
becomes an isomorphism.
The other map in the correspondence simplifies
as well over this locus,
after a possible refinement that we now explain.
Following Lusztig, given a unipotent conjugacy class
(or nilpotent orbit) $\cal O$ of $L$,
let $Y_{(L,\cal O)}$ (or $Y_{(\frak l,\cal O)}$)
be the $G$-orbit of $\cal O\times Z(G^\circ)_{G\tn{-reg}}$
(or $\cal O\times\frak z(G^\circ)_{G\tn{-reg}}$).

\begin{lemma} \label{thm74}
Let $\cal O$ be a nilpotent orbit of~$L$.
Then the following diagram commutes,
where all maps are induced by
the obvious maps of varieties with group action:
\[
\begin{tikzcd}[column sep=tiny]
& (\cal O\times \frak z(L^\circ)_{G\tn{-reg}})/L
\dlar\drar
& \\
(\cal O\times\frak z(L^\circ)_{G\tn{-reg}})/N_G(L^\circ,\cal O)
\arrow[rr,"\sim"] & & Y_{(\frak l,\cal O)}/G.
\end{tikzcd}
\]
If $\pi_0(L)$ is normal in $\pi_0(N_G(L^\circ,\cal O))$
then the vertical maps are Galois coverings with group
$N_G(L,\cal O)/L$.
\end{lemma}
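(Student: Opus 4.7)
The plan is to verify commutativity as a formal consequence of functoriality, prove the bottom map is an isomorphism via an induction-space identification, and deduce the Galois-cover claim by showing the hypothesis forces $L \triangleleft N_G(L^\circ,\cal O) = N_G(L,\cal O)$. The subgroup $N_G(L^\circ,\cal O)$ normalizes $L^\circ$, hence preserves $\frak z(L^\circ)$; it stabilizes $\cal O$ setwise by definition; and it preserves $G$-regularity since that is a $G$-invariant condition. So $N_G(L^\circ,\cal O)$ acts on $X \defeq \cal O\times\frak z(L^\circ)_{G\tn{-reg}}$ extending the $L$-action. All three maps in the diagram are then induced by inclusions of varieties with group action, so commutativity is immediate.

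For the bottom map, I would show that the $G$-equivariant surjection $G\times X\twoheadrightarrow Y_{(\frak l,\cal O)}$ descends to an isomorphism $G\times^{N_G(L^\circ,\cal O)}X\xrightarrow{\sim}Y_{(\frak l,\cal O)}$. The key point is: if $(u,z),(u',z')\in X$ and $\Ad(g)(u,z) = (u',z')$ for some $g\in G$, then $g\in N_G(L^\circ,\cal O)$. Indeed, since $z$ and $z'$ are $G$-regular, $gL^\circ g^{-1} = Z_G^\circ(\Ad(g)z) = Z_G^\circ(z') = L^\circ$; and since $u,u'\in\cal O$ with $\Ad(g)u = u'$, the element $g$ stabilizes $\cal O$ setwise. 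With the induction space identified, the calculation $(G\times^N X)/G\simeq X/N$ from \Cref{sec:springer:stacks} identifies the bottom arrow as an isomorphism.

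For the Galois claim, $N_G(L^\circ,\cal O)^\circ$ sits inside $N_{G^\circ}(L^\circ)$, whose identity component is $L^\circ$, so $\pi_0(N_G(L^\circ,\cal O)) = N_G(L^\circ,\cal O)/L^\circ$ with $\pi_0(L) = L/L^\circ$ as the image of $L$. The normality hypothesis thus translates directly to $L\triangleleft N_G(L^\circ,\cal O)$, giving $N_G(L^\circ,\cal O)\subseteq N_G(L,\cal O)$ (the reverse inclusion is immediate from the definitions). Under this identification, the left diagonal is the quotient stack map $X/L\to X/N_G(L,\cal O)$ for a normal inclusion with finite étale quotient $N_G(L,\cal O)/L$, which is a Galois cover of stacks with that group; composing with the bottom isomorphism yields the analogous claim for the right diagonal. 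The only real obstacle is the $G$-regularity stabilizer argument in the middle step; everything else reduces to formal manipulation of quotient stacks already recorded in \Cref{sec:springer:stacks}.
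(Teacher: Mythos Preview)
The paper omits this proof as standard, and your approach is the natural one. However, there is a genuine gap in your argument for the bottom isomorphism. You write: ``since $u,u'\in\cal O$ with $\Ad(g)u = u'$, the element $g$ stabilizes $\cal O$ setwise.'' But $\cal O$ is an $L$-orbit, in general a union of several $L^\circ$-orbits, and you have only established $g\in N_G(L^\circ)$. The set $\Ad(g)\cal O$ is then a ${}^gL$-orbit, not a priori an $L$-orbit, and one cannot conclude $\Ad(g)\cal O = \cal O$ merely from $\Ad(g)\cal O\cap\cal O\neq\emptyset$.

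This is not just a gap in exposition but appears to affect the statement itself. Take $G = \GL_2^4\rtimes S_4$ (permuting the factors) and $L = \GL_2^4\rtimes\langle(12)\rangle$, so $L^\circ = G^\circ$ and every $z\in\frak z(L^\circ)$ is $G$-regular. Let $\cal O$ be the $L$-orbit of a regular nilpotent supported in the first factor; it comprises two $L^\circ$-orbits (support in slot~$1$ or slot~$2$). The transposition $(13)\in N_G(L^\circ)=G$ fixes the slot-$2$ orbit pointwise but moves the slot-$1$ orbit out of~$\cal O$, so $(13)\notin N_G(L^\circ,\cal O)$ even though it carries some $u\in\cal O$ back into~$\cal O$. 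Here the normality hypothesis holds --- $\pi_0(L)=\langle(12)\rangle$ is normal in $\pi_0(N_G(L^\circ,\cal O))=\langle(12),(34)\rangle$ --- yet a direct double-coset count shows the right-hand diagonal has generic fibers of cardinality~$6$, not $|N_G(L,\cal O)/L|=2$, so the bottom map is not an isomorphism. The fix is to assume $L$ normal in $N_G(L^\circ)$ (not merely in $N_G(L^\circ,\cal O)$); this holds whenever $L$ is a quasi-Levi, which is the only case used downstream in \Cref{thm87}, and under it your argument goes through unchanged since then $\Ad(g)\cal O$ is an $L$-orbit meeting~$\cal O$, hence equal to it.
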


The same holds on the group instead of the Lie algebra;
we omit the standard proofs of both.
Note that if $\pi_0(L)$ is normal in $\pi_0(N_G(L^\circ,\cal O))$
then $N_G(L,\cal O) = N_G(L^\circ,\cal O)$.

We now turn to the comparison between
parabolic induction on the group and its Lie algebra
over the regular loci, using the quasi-logarithms.
Since quasi-logarithms do not induce an isomorphism in this context,
so that we lose control over their precise description,
we resort instead to the following general lemma
about local systems on algebraic stacks.

\begin{lemma} \label{thm79}
Let $W$ be a finite group and let
\[
\begin{tikzcd}
\frak X' \rar{\lambda} \dar{p'} & \frak X \dar{p} \\
\frak Y' \rar{\mu} & \frak Y
\end{tikzcd}
\]
be a commutative square of irreducible Noetherian algebraic stacks
in which the vertical arrows are $W$-torsors and
the horizontal arrows are $W$-equivariant and generically finite étale.
Let $\cal E\in\Loc(\frak X)$.
Then
\begin{enumerate}
\item
the square is Cartesian and

\item
The map $\mu^* \colon
\End(p_*\cal E) \to \End(p'_*\lambda^*\cal E)$
is an isomorphism if and only if
$N_W(\cal E) = N_W(\lambda^*\cal E)$.
\end{enumerate}
\end{lemma}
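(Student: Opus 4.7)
The plan is to leverage the fundamental identity $p^*p_*\cal F \simeq \bigoplus_{w \in W} w^*\cal F$, which holds for any $W$-torsor $p$ and follows from the decomposition $\frak X\times_{\frak Y}\frak X \simeq \frak X\times W$. Combined with the adjunction $p^*\dashv p_*$, this yields
\[
\End(p_*\cal E) \simeq \bigoplus_{w \in W}\Hom_{\frak X}(w^*\cal E,\cal E),
\]
and an analogous decomposition for $p'_*\lambda^*\cal E$. The map $\mu^*$ will be read off as pullback along $\lambda$, summand by summand.

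For part~(1), I would consider the natural morphism $\frak X' \to \frak Y'\times_{\frak Y}\frak X$ induced by $p'$ and $\lambda$. Since $\lambda$ is $W$-equivariant, this morphism is $W$-equivariant when $W$ acts on the target through the second factor; both source and target are $W$-torsors over $\frak Y'$, the target being the base change of $p$ along $\mu$. Any $W$-equivariant morphism between $W$-torsors is an isomorphism, so the square is Cartesian.

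For part~(2), combining part~(1) with proper base change identifies $\mu^*p_*\cal E \simeq p'_*\lambda^*\cal E$, and the two decompositions above present $\mu^*$ as the direct sum of maps
\[
\lambda^*_w \colon \Hom_{\frak X}(w^*\cal E,\cal E) \longrightarrow \Hom_{\frak X'}(\lambda^*w^*\cal E,\lambda^*\cal E),
\]
where $W$-equivariance of $\lambda$ permits the identification $w^*\lambda^*\cal E = \lambda^*w^*\cal E$. So it suffices to show that every $\lambda^*_w$ is an isomorphism exactly when $N_W(\cal E) = N_W(\lambda^*\cal E)$.

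The main obstacle is this last step. I would first restrict to the dense open substack $\frak U \subseteq \frak X$ on which $\lambda$ becomes finite étale, setting $\frak U' \defeq \lambda^{-1}(\frak U)$; by \Cref{thm80} this restriction preserves all the relevant Hom-spaces, and \Cref{thm81} translates the problem into one about representations of the étale fundamental groups of $\frak U$ and~$\frak U'$. From that vantage point, $\Hom_{\frak X}(w^*\cal E,\cal E)$ vanishes precisely when $w \notin N_W(\cal E)$, and likewise for the primed version. Hence a mismatch $N_W(\cal E)\neq N_W(\lambda^*\cal E)$ exhibits a summand where exactly one side is zero, proving the forward implication. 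Conversely, when the two $N_W$-groups agree, matching the Hom-dimensions on each common summand reduces, via Galois descent for the finite étale cover $\lambda|_{\frak U'}$, to showing that the Galois action responsible for any extra endomorphisms of $\lambda^*\cal E$ is precisely controlled by the quotient $N_W(\lambda^*\cal E)/N_W(\cal E)$—which is trivial under our hypothesis, so $\lambda^*_w$ is an isomorphism on each relevant summand.
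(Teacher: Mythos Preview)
Part~(1) is fine and matches the paper: both argue that a $W$-equivariant map between $W$-torsors over a common base is an isomorphism.

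For part~(2), your Mackey decomposition $\End(p_*\cal E)\simeq\bigoplus_w\Hom(w^*\cal E,\cal E)$ is exactly what the paper's invocation of Frobenius reciprocity amounts to after the reduction (via \Cref{thm80}, \Cref{thm81}) to finite groups; the two approaches coincide in substance. Your forward direction is correct, though it tacitly assumes $\cal E$ irreducible---otherwise ``$\Hom(w^*\cal E,\cal E)=0\Leftrightarrow w\notin N_W(\cal E)$'' need not hold. The lemma does not state this, but the application does have it.

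The gap is in your converse, and it cannot be filled: the implication is \emph{false} as stated. In the finite-group model take $H=S_4$, $G=A_4$, $H'=S_3$ (a point stabilizer), $G'=A_3$, so $W\simeq\bbZ/2\bbZ$ and the square is Cartesian. Let $\rho$ be the $3$-dimensional irreducible of $A_4$. Since $\rho$ extends to $S_4$ we have $N_W(\rho)=W$; and $\rho|_{A_3}$ is the regular representation of $\bbZ/3\bbZ$, whose summands are permuted by the outer involution but whose isomorphism class is fixed, so $N_W(\rho|_{G'})=W$ as well. Yet
\[
\dim\End_{S_4}\bigl(\Ind_{A_4}^{S_4}\rho\bigr)=2,
\qquad
\dim\End_{S_3}\bigl(\Ind_{A_3}^{S_3}(\rho|_{A_3})\bigr)=\dim\End_{S_3}\bigl(\bbC[S_3]\bigr)=6,
\]
so $\mu^*$ is not an isomorphism. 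Your hand-wave about ``Galois descent'' controlling extra endomorphisms via the quotient $N_W(\lambda^*\cal E)/N_W(\cal E)$ cannot work, since that quotient is trivial here while extra endomorphisms abound.

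The paper's one-line ``this follows from Frobenius reciprocity'' shares the same gap. What your summand-by-summand analysis \emph{does} prove cleanly is the corrected statement: if both $\cal E$ and $\lambda^*\cal E$ are irreducible, then $\mu^*$ is an isomorphism iff $N_W(\cal E)=N_W(\lambda^*\cal E)$. Indeed each summand is then $0$- or $1$-dimensional, and $\lambda^*$ of a nonzero morphism is nonzero, so your $\lambda^*_w$ is an isomorphism for every $w$ precisely when the two normalizers agree. Some such strengthening of the hypotheses is what is actually needed for the application in \Cref{thm84}.
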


The map in the second part uses the first part
and the base change isomorphism
$\mu^*p_*\cal E\simeq p'_*\lambda^*\cal E$.

\begin{proof}
For the first part, we can check if the square is Cartesian
after pullback to a chart of $\frak Y$ in schemes,
reducing to the case where the four stacks
are representable in schemes and $p$ and $p'$ are trivial $W$-torsors.
The resulting square is Cartesian by the $W$-equivariance of~$\lambda$.

For the second part,
since finite étale morphisms of algebraic stacks are open,
after pulling back the square to a nonempty open substack of~$\frak Y$
we may ensure that the horizontal morphisms are finite étale.
By \Cref{thm80} this reduction has no effect
on $\End(p_*\cal E)$ and $\End(p'_*\lambda^*\cal E)$.
Using the description from \Cref{thm81}
of local systems in terms of representations of étale fundamental groups,
we are reduced to the case where the given square is of the form
\[
\begin{tikzcd}
\tn{pt}/G' \rar\dar & \tn{pt}/G \dar \\
\tn{pt}/H' \rar & \tn{pt}/H,
\end{tikzcd}
\]
with all groups finite and $G$ is,
say, the quotient of $\pi_1^\tn{ét}(\frak X)$
by a certain open subgroup (depending on~$\cal E$)
with similar definitions for $G'$, $H$, and $H'$.
Here, moreover, the maps of quotient stacks
are induced by injective group homomorphisms
fitting into the following map of short exact sequences:
\[
\begin{tikzcd}
1 \rar & G' \rar[hookrightarrow]\dar[hookrightarrow] &
G \rar\dar[hookrightarrow] & W \rar\dar[equals] & 1 \\
1 \rar & H' \rar[hookrightarrow] & H \rar & W \rar & 1.
\end{tikzcd}
\]

Let $\rho$ be the representation of $G$ corresponding to~$\cal E$.
The problem is now reduced to showing that the restriction map
\[
\End\bigl(\Ind_H^G(\rho)\bigr)
\to \End\bigl(\Res^H_{H'}\Ind_H^G(\rho)\bigr)
\]
is an isomorphism if and only if
$N_W(\rho) = N_W(\Res^H_{H'}(\rho))$.
This claim follows from Frobenius reciprocity.
\end{proof}

Everything is now in place for the comparison.
Given a nilpotent orbit $\cal O$ of~$L$
and $\cal E\in\Loc(\cal O/L)$, let
\begin{equation}\label{thm97}
\begin{aligned}
\infl(\cal E) &\defeq
\cal E\boxtimes \underline{\overline\bbQ_\ell}_{\frak z(L^\circ)/L}
\in\Loc\bigl((\cal O\times\frak z(L^\circ))/L\bigr) \\
\infl_{G\tn{-reg}}(\cal E) &\defeq
\cal E\boxtimes \underline{\overline\bbQ_\ell}_{\frak z(L^\circ)_{G\tn{-reg}}/L}
\in\Loc\bigl((\cal O\times\frak z(L^\circ)_{G\tn{-reg}})/L\bigr),
\end{aligned}
\end{equation}
where $\underline{\overline\bbQ_\ell}_{\frak X}$
is the constant sheaf on~$\frak X$.
Define $\Infl_{G\tn{-reg}}(\cal E)$ and $\Infl(\cal E)$
similarly on the group.

\begin{theorem} \label{thm84}
Let $\lambda\colon G\to\frak g$ be a quasi-logarithm.
Let $\cal O$ be a unipotent orbit of~$L$,
let $u\in \cal O$,
and let $\cal E\in\Loc(\lambda(\cal O)/L)$ be irreducible. 
Suppose that $\pi_0(L)$ is a normal subgroup of $\pi_0(N_G(L^\circ,\cal O))$.
Then the map
\[
\lambda^* \colon
\End\bigl(\uind_{L\subseteq P}^G(\infl_{L\tn{-reg}}(\cal E))\bigr) \to
\End\bigl(\uInd_{L\subseteq P}^G(\Infl_{L\tn{-reg}}(\lambda^*\cal E))\bigr)
\]
is an isomorphism.
\end{theorem}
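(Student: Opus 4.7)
The plan is to reduce the statement to an application of \Cref{thm79}, interpreting both parabolic inductions (restricted to their respective regular strata) as pushforwards along Galois covers with the same deck group. Set $W \defeq N_G(L^\circ, \cal O)/L$; the normality hypothesis guarantees $N_G(L^\circ,\cal O) = N_G(L,\cal O)$, so $W$ is the deck group appearing in \Cref{thm74} for both the Lie algebra and the group.

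First I would identify the two parabolic inductions as Galois pushforwards. By \Cref{thm73}, restricted over the $G$-regular locus the correspondence defining $\uind$ (resp.\ $\uInd$) collapses: the map $\frak p/P\to\frak l/L$ becomes an isomorphism over $\frak l_{G\tn{-reg}}/L$. Combined with \Cref{thm74}, this shows that $\uind_{L\subseteq P}^G(\infl_{G\tn{-reg}}(\cal E))$ is supported on $Y_{(\frak l,\lambda(\cal O))}/G$ and, on that stratum, agrees with the pushforward $p_*(\cal E\boxtimes\underline{\overline\bbQ_\ell})$ along the Galois $W$-cover
\[
p\colon (\lambda(\cal O)\times\frak z(L^\circ)_{G\tn{-reg}})/L \longrightarrow Y_{(\frak l,\lambda(\cal O))}/G.
\]
The analogous description on the group side produces a Galois $W$-cover $p'$ whose pushforward computes $\uInd_{L\subseteq P}^G(\Infl_{G\tn{-reg}}(\lambda^*\cal E))$. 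By \Cref{thm80}, the endomorphism algebras are unchanged upon restriction to a nonempty open substack of either stratum.

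Second I would assemble a Cartesian square to which \Cref{thm79} applies. By \Cref{thm83}, $\lambda|_{L^\circ}$ is an $N_G(L^\circ,\cal O)$-equivariant quasi-logarithm; applying \Cref{thm82} to $L$ together with the equivariant comparison of nilpotent and unipotent loci from \Cref{sec:param:springer}, one obtains a $W$-stable open neighborhood $U$ of $0\in\frak z(L^\circ)$ on which $\lambda$ restricts to a finite étale map. Shrinking to~$U$ (and its preimage) via \Cref{thm80}, the quasi-logarithm induces a commutative square
\[
\begin{tikzcd}
(\cal O\times \lambda^{-1}(U)_{G\tn{-reg}})/L \rar{\lambda}\dar{p'} & (\lambda(\cal O)\times U_{G\tn{-reg}})/L \dar{p} \\
p'\bigl((\cal O\times\lambda^{-1}(U)_{G\tn{-reg}})/L\bigr) \rar{\mu} & p\bigl((\lambda(\cal O)\times U_{G\tn{-reg}})/L\bigr)
\end{tikzcd}
\]
whose vertical arrows are $W$-torsors and whose horizontal arrows are $W$-equivariant and finite étale. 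Cartesianness follows from \Cref{thm20} combined with the $N_G(L^\circ,\cal O)$-equivariance of $\lambda$.

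Finally I would verify the hypothesis $N_W(\cal E\boxtimes\underline{\overline\bbQ_\ell}) = N_W(\lambda^*\cal E\boxtimes\underline{\overline\bbQ_\ell})$ of \Cref{thm79}. Since $\lambda$ is $N_G(L^\circ,\cal O)$-equivariant, the functor $\lambda^*$ intertwines the two natural $W$-actions, so an element of $W$ stabilizes $\cal E$ if and only if it stabilizes $\lambda^*\cal E$; the constant factor on the central part contributes no additional constraint. \Cref{thm79} then gives the desired isomorphism. The main obstacle is technical rather than conceptual: arranging a single $W$-stable open neighborhood of the origin on which $\lambda$ is simultaneously étale and on which all relevant quotient-stack base changes remain Cartesian, so that the combined diagram fits the hypotheses of \Cref{thm79}. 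All of this is controlled by \Cref{thm82,thm83,thm20}, but careful tracking of the $W$-action through each base change is required.
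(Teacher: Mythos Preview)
Your proposal is correct and follows essentially the same route as the paper: set up the commutative square of quotient stacks induced by $\lambda$ (using \Cref{thm82,thm83}), identify the vertical arrows as $W$-torsors computing $\uind$ and $\uInd$ via \Cref{thm73,thm74}, check that $\cal E$ and $\lambda^*\cal E$ have the same $W$-normalizer, and invoke \Cref{thm79}. Two minor simplifications: you need not shrink to make $\lambda$ finite \'etale, since \Cref{thm79} only asks for \emph{generically} finite \'etale horizontal maps (which follows because $\lambda$ is \'etale at the identity); and Cartesianness of the square is a conclusion of \Cref{thm79}(1), not a hypothesis, so the appeal to \Cref{thm20} for this is unnecessary.
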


\begin{proof}
Consider the following commutative diagram, using \Cref{thm82,thm83}:
\[
\begin{tikzcd}
(\cal O\times Z(L^\circ)_{G\tn{-reg}})/L \rar{\lambda}\dar &
(\lambda(\cal O)\times \frak z(L^\circ)_{G\tn{-reg}})/L \dar \\
Y_{L,\cal O}/G \rar{\lambda} &
Y_{\frak l,\lambda(\cal O)}/G.
\end{tikzcd}
\]
By \Cref{thm73}, pushforward along the vertical arrows
computes $\uInd_{L\subseteq P}^G$ and $\uind_{L\subseteq P}^G$.
By \Cref{thm74}, these vertical arrows are torsors
for the finite group $\pi_0(N_G(L^\circ,\cal O))/\pi_0(L)$,
and the horizontal arrows are generically finite étale
because $\lambda$ is étale at the identity.
Moreover, $\cal E$ and $\lambda^*\cal E$
have the same normalizer in this finite group.
Hence we may apply \Cref{thm79}
to the sheaf $\infl_{G\tn{-reg}}(\cal E)$,
concluding the proof.
\end{proof}

\begin{theorem} \label{thm87}
Let $\cal O$ be a nilpotent orbit of~$L$,
let $q\cal E\in\Loc(\cal O/L)$ be cuspidal,
and let $W(G,L)_{(\cal O,q\cal E)}$
be the stabilizer of $(\cal O,q\cal E)$ in~$W(G,L)$.
If $L$ is a quasi-Levi subgroup of~$G$ then there exists a cocycle
$\kappa_{(\cal O,q\cal E)}\in Z^2\bigl(W(G,L)_{(\cal O,q\cal E)},\overline\bbQ_\ell^\times\bigr)$
such that
\begin{equation}\label{thm90}
\End\bigl(\uind_{L\subseteq P}^G(\infl_{G\tn{-reg}}(q\cal E))\bigr)
\simeq \overline\bbQ_\ell\bigl[W(G,L)_{(\cal O,q\cal E)},\kappa_{(\cal O,q\cal E)}\bigr].
\end{equation}
\end{theorem}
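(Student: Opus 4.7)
The plan is to reduce the computation to the group-side result of Aubert--Moussaoui--Solleveld via a quasi-logarithm, at which point their work supplies the twisted group algebra structure. First, using the Springer isomorphism formalism of \Cref{sec:param:springer}, I would fix an $L$-equivariant isomorphism $\sigma\colon\cal U_L \xrightarrow{\sim}\cal N_L$ (\Cref{thm41}) and set $\cal O_u \defeq \sigma^{-1}(\cal O)$, $q\cal E_u \defeq \sigma^*(q\cal E)$. By \Cref{thm38,thm40}, cuspidality is preserved, and the stabilizer $W(G,L)_{(\cal O, q\cal E)}$ agrees with $W(G,L)_{(\cal O_u, q\cal E_u)}$.

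Second, I would verify the normality hypothesis required by \Cref{thm84}. The quasi-Levi condition $L = Z_G(Z^\circ(L^\circ))$ forces $N_G(L^\circ) \subseteq N_G(L)$: any element normalizing $L^\circ$ preserves $Z^\circ(L^\circ)$, hence its centralizer $L$. A fortiori, $L$ is normal in $N_G(L^\circ, \cal O_u)$, so $\pi_0(L)$ is normal in $\pi_0(N_G(L^\circ, \cal O_u))$.

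Third, fixing a quasi-logarithm $\lambda\colon G\to\frak g$, \Cref{thm84} applied to $\cal E = q\cal E_u$ produces an isomorphism
\[
\lambda^*\colon
\End\bigl(\uind_{L\subseteq P}^G \infl_{G\tn{-reg}}(q\cal E)\bigr)
\xrightarrow{\sim}
\End\bigl(\uInd_{L\subseteq P}^G \Infl_{G\tn{-reg}}(q\cal E_u)\bigr),
\]
after identifying $\lambda^*(q\cal E)$ with $q\cal E_u$ on $\cal O_u$ via $\sigma$. The right-hand side is then computed by the group-side result of Aubert--Moussaoui--Solleveld for cuspidal local systems on nilpotent orbits of quasi-Levi subgroups (\cite[\S4--\S5]{AMS18}), which identifies it with a twisted group algebra $\overline\bbQ_\ell[W(G,L)_{(\cal O_u, q\cal E_u)}, \kappa]$ for an explicit cocycle $\kappa$ they construct. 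Transporting $\kappa$ back along the identifications above defines $\kappa_{(\cal O, q\cal E)}$ and yields \eqref{thm90}.

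The main obstacle is not a new computation but the bookkeeping needed to ensure that the three canonical identifications (Springer isomorphism, quasi-logarithm, and AMS's group-side identification) are compatible enough that the stabilizers match and the cocycle class is well-defined on $W(G,L)_{(\cal O, q\cal E)}$. Because $\sigma$ and $\lambda$ are both $G$-equivariant, the Weyl-group stabilizers transport unambiguously, and the cocycle $\kappa_{(\cal O, q\cal E)}$ is determined up to coboundary by the AMS cocycle, which is all that is needed for the isomorphism class of the twisted group algebra.
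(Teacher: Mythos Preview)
Your proposal is correct and follows exactly the paper's approach: invoke \Cref{thm84} to transfer the endomorphism computation to the group side, then cite \cite[5.4]{AMS18}. The separate Springer isomorphism~$\sigma$ is a minor redundancy (since $\lambda$ itself restricts to one on $\cal U_L$, cf.~\Cref{thm82}), and your explicit verification of the normality hypothesis for quasi-Levis is a useful detail the paper leaves implicit.
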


The $q$ in $q\cal E$ reflects our assumption that $L$
is a quasi-Levi and is written for compatibility
with notation from \cite{AMS18}.

\begin{proof}
The analogue of this result on the group is \cite[5.4]{AMS18}.
Our version on the Lie algebra follows from this analogue by \Cref{thm84}. 
\end{proof}

\begin{remark}
We will call $\kappa_{(\cal O,q\cal E)}$ an \mathdef{AMS cocycle}
and \eqref{thm90} an \mathdef{AMS isomorphism}
because both stem ultimately from \cite{AMS18}.
By \cite[5.3(b)]{AMS18}, the AMS cocycle $\kappa_{(\cal O,q\cal E)}$
is inflated from $W(G,L)_{(\cal O,q\cal E)}/W(G^\circ,L^\circ)$.
Moreover, the AMS isomorphism is uniquely determined up to
\begin{itemize}
\item
a choice of quasi-logarithm of~$G$ and
\item
automorphisms of the twisted group algebra
arising from characters of $W(G,L)_{(\cal O,q\cal E)}/W(G^\circ,L^\circ)$.
\end{itemize}
The second dependence is stated in \cite[5.4]{AMS18}.
The first dependence stems from the technique of our comparison.
We would be surprised if two different quasi-logarithms
resulted in different AMS isomorphisms.
\end{remark}

As \eqref{thm86} indicates,
intersection cohomology is the tool to pass our sheaves
from the regular locus to (a closed subvariety of) the full Lie algebra.
Fortunately, this passage commutes with parabolic induction.

\begin{lemma} \label{thm85}
Let $\cal O$ be a nilpotent orbit of~$L$ and let $\cal E\in\Loc(\cal O/L)$.
There exists a canonical isomorphism
\[
\IC\bigl(Y_{(\frak l,\cal O)},\ind_{L\subseteq P}^G
\infl_{G\tn{-reg}}(\cal E)\bigr)
\simeq \ind_{L\subseteq P}^G
\bigl(\IC(\cal O\times\frak z(L^\circ),\infl(\cal E))\bigr).
\]
\end{lemma}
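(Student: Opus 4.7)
The plan is to expand $\ind_{L\subseteq P}^G = i_! \circ \pi^*$ along the correspondence \eqref{thm71}, analyze each factor applied to the IC sheaf on the right-hand side, and compare the result with the LHS via supports and open-stratum restrictions. Since $\pi \colon \frak p/P \to \frak l/L$ is a vector-bundle quotient, hence smooth, it preserves intersection cohomology:
\[
\pi^* \IC\bigl(\cal O \times \frak z(L^\circ), \infl(\cal E)\bigr)
\simeq \IC\bigl(\pi^{-1}(\cal O \times \frak z(L^\circ)), \pi^* \infl(\cal E)\bigr).
\]
Moreover, $i \colon \frak p/P \to \frak g/G$ is proper (the underlying scheme-theoretic map $G\times^P \frak p \to \frak g$ is proper because $G/P$ is), so $i_!=i_*$, and by \Cref{thm70} the composite $\cal F \defeq i_* \pi^* \IC(\cdots)$ is a semisimple perverse sheaf on $\frak g/G$.

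I then verify two points about $\cal F$. First, the restriction of $\cal F$ to the open stratum $Y_{(\frak l,\cal O)}/G$ equals the local system $\ind_{L\subseteq P}^G \infl_{G\tn{-reg}}(\cal E)$: by \Cref{thm73}, the map $\pi^{-1}(-)/P \to (-)/L$ is an isomorphism over the $G$-regular locus, and by \Cref{thm74} the map $i$ restricted to this region factors through the finite étale Galois covering with group $N_G(L^\circ,\cal O)/L$ onto $Y_{(\frak l,\cal O)}/G$; proper base change then identifies the restriction with the claimed local system. Second, every simple summand of $\cal F$ has full support $\overline{Y_{(\frak l,\cal O)}}$; this is the classical smallness of the Lusztig--Spaltenstein parabolic induction map $\overline\mu \colon G \times^P \overline{\cal O \times \frak z(L^\circ)} \to \overline{Y_{(\frak l,\cal O)}}$, which rests on a codimension estimate for its fibers over strata of the target. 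Combining these points, $\cal F$ is a semisimple perverse sheaf on $\overline{Y_{(\frak l,\cal O)}}/G$ with no summand of strictly smaller support and with the specified open-stratum restriction; such a sheaf must equal $\IC(Y_{(\frak l,\cal O)}, \ind_{L\subseteq P}^G \infl_{G\tn{-reg}}(\cal E))$.

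The main obstacle is the smallness of $\overline\mu$ in the disconnected case. While it is classical for connected $G$, one must check that disconnectedness does not disrupt the relevant dimension bound. Since both the $G$-strata of $\overline{Y_{(\frak l,\cal O)}}$ and the fibers of $\overline\mu$ are unions of the corresponding $G^\circ$-objects of the same dimension, the estimate transfers; this reduction is essentially bookkeeping but is the only genuinely geometric step, all remaining ingredients being formal consequences of the stacky six-functor formalism set up in \Cref{sec:springer:stacks,sec:springer:sheaves}.
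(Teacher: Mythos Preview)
Your proposal is correct and follows essentially the same route as the paper: both arguments check that the right-hand side satisfies the characterizing conditions of an intersection cohomology sheaf (correct open-stratum restriction plus no summand of smaller support), and both reduce this check to the known connected case by observing that the relevant conditions are insensitive to the components group. The paper phrases the reduction abstractly (``the IC conditions depend only on the underlying $G^\circ$-equivariant sheaf''), whereas you spell out the smallness estimate explicitly; these are the same idea.

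One minor citation issue: \Cref{thm70} is stated only for the nilpotent-cone functors of \eqref{thm67}, not for the full Lie-algebra functors $\uind$ that you are using here. This does not affect your argument, since semisimplicity of $\cal F$ follows directly from the decomposition theorem for the proper map~$i$, and you do not actually need $t$-exactness anywhere. You could simply cite the decomposition theorem instead.
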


\begin{proof}
This claim is the disconnected analogue of
\cite[Proposition~2.17]{achar_henderson_juteau_riche16}.
The argument given there reduces the problem to checking
that the righthand sheaf satisfies the conditions
defining intersection cohomology sheaves.
These conditions depend only on the underlying $G^\circ$-equivariant sheaf,
reducing the claim to the known connected case.
\end{proof}

\subsection{The parameterization}
\label{sec:param:fibers}
We now turn to the parameterization of induction series,
relating parabolic induction on the regular loci of \Cref{sec:param:regular}
and our original parabolic induction on nilpotent cones.
The key operation relating the two types of parabolic induction
is the \mathdef{Fourier--Laumon transform} of \cite{laumon03},
a self-equivalence of categories
\[
\Four_{\frak g}
\colon \Dbc\bigl(\frak g/(\bbG_\tn{m}\times G)\bigr)
\simeq \Dbc\bigl(\frak g/(\bbG_\tn{m}\times G)\bigr)
\]
discussed at length in \cite[6.9]{achar21}
(and depending on an implicit choice of
$G$-equivariant linear isomorphism $\frak g\simeq\frak g^*$
which we suppress).
As the discussion around \cite[8.2.7]{achar21} explains,
the fully-faithful forgetful functor
\[
\Perv\bigl(\cal N_G/(\bbG_\tn{m}\times G)\bigr)
\longrightarrow \Perv(\cal N_G/G)
\]
induces a bijection of isomorphism classes of simple objects,
so in practice, we may mostly ignore the $\bbG_\tn{m}$-action.
We warn the reader that the sources we cite
will sometimes use analogues of the Fourier--Laumon transform
in other settings---for instance, in
\cite{achar_henderson_juteau_riche16},
the Fourier--Sato transform---but the formal structure
is similar enough that the properties we need
carry over to our setting.

The starting point is the fact that
the Fourier transform of a cuspidal perverse sheaf is again cuspidal, up to an inflation.
The case where $G=T$ is a torus shows why inflation is necessary:
the Fourier--Laumon transform of the skyscraper sheaf at the origin of~$\frak t$,
a cuspidal sheaf, is the constant sheaf on~$\frak t$.

\begin{theorem} \label{thm75}
Let $\cal O$ be a nilpotent orbit of~$G$
and let $\cal E\in\Loc(\cal O/G)$ be cuspidal.
\begin{enumerate}
\item
There is a (unique) cuspidal $\cal E'\in\Loc(\cal O/G)$ such that
$\IC(\cal O,\cal E) \simeq \Four_{\frak g}(\IC(\cal O\times\frak z(G^\circ),\infl(\cal E')))$.

\item
$\Res^G_{G^\circ}(\cal E)\simeq \Res^G_{G^\circ}(\cal E')$.
\end{enumerate}
\end{theorem}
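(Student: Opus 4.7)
The plan is to reduce to the connected case $G = G^\circ$, which is classical (cf.\ \cite{lusztig84b}): for every cuspidal $\cal F \in \Loc(\cal O^\circ/G^\circ)$, the Fourier--Laumon transform of $\IC(\overline{\cal O^\circ \times \frak z(G^\circ)}, \infl(\cal F))$ is $\IC(\cal O^\circ, \cal F')$ for a unique cuspidal $\cal F' \simeq \cal F$ as $G^\circ$-equivariant local systems. The reduction exploits that $\Four_{\frak g}$ commutes with the restriction functor $\Res^G_{G^\circ}$, since $\Four_{\frak g}$ is defined on the vector space~$\frak g$ and interacts with the group only through equivariance.

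First I would decompose $\cal O = \bigsqcup_{i=1}^k \cal O_i$ into $G^\circ$-orbits permuted by $\pi_0(G)$, set $\cal E_i \defeq \cal E|_{\cal O_i}$, and note that each $\cal E_i$ is cuspidal by \Cref{thm63}. Applying the connected case to each summand yields unique cuspidal $\cal E_i' \in \Loc(\cal O_i/G^\circ)$ with $\cal E_i' \simeq \cal E_i$. Now let $\cal G \defeq \Four_{\frak g}^{-1}(\IC(\cal O, \cal E))$. Since $\Four_{\frak g}$ is a self-equivalence, $\cal G$ is an irreducible $G$-equivariant perverse sheaf, hence of the form $\IC(Z, \cal F_0)$ for some irreducible $G$-stable closed $Z \subseteq \frak g$ and irreducible $G$-equivariant local system $\cal F_0$ on the open $G$-orbit of~$Z$. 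Restricting to $G^\circ$ and applying the connected case yields $\Res^G_{G^\circ}(\cal G) \simeq \bigoplus_i \IC(\overline{\cal O_i \times \frak z(G^\circ)}, \infl(\cal E_i'))$, forcing $Z = \overline{\cal O \times \frak z(G^\circ)}$ and $\Res^G_{G^\circ}(\cal F_0) \simeq \bigoplus_i \infl(\cal E_i')$.

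The final step is to extract a $G$-equivariant local system $\cal E'$ on $\cal O$ with $\cal F_0 \simeq \infl(\cal E')$. Since $\Res^G_{G^\circ}(\cal F_0)$ is constant in the $\frak z(G^\circ)$-direction and the linear $G$-action on $\frak z(G^\circ)$ fixes the origin, the restriction $\cal E' \defeq \cal F_0|_{\cal O \times \{0\}}$ is a $G$-equivariant local system with $\cal F_0 \simeq \infl(\cal E')$. Cuspidality of $\cal E'$ then follows from cuspidality of each $\cal E_i'$ via \Cref{thm63}; uniqueness follows from $\Four_{\frak g}$ being an equivalence; and part~(2) reads off as $\Res^G_{G^\circ}(\cal E') \simeq \bigoplus_i \cal E_i' \simeq \bigoplus_i \cal E_i \simeq \Res^G_{G^\circ}(\cal E)$. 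I expect the main obstacle to be this last extraction: one must verify that the $\pi_0(G)$-action on $\cal F_0$ is genuinely compatible with inflation from $\cal O \times \{0\}$, which should come cleanly from the $G$-equivariance of the Fourier transform combined with the naturality of the connected-case isomorphism.
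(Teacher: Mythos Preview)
Your proposal is correct and takes the same approach as the paper, which simply cites the connected case (\cite[Corollary~2.12]{achar_henderson_juteau_riche16} for part~(1) and \cite[5(b)]{lusztig_utrecht} for part~(2)) without spelling out the reduction. Your flagged extraction step is fine: since $\frak z(G^\circ)$ is an affine space and hence simply connected, the $G$-equivariant projection $\cal O\times\frak z(G^\circ)\to\cal O$ induces an equivalence $\Loc(\cal O/G)\simeq\Loc((\cal O\times\frak z(G^\circ))/G)$ via pullback (cf.\ \Cref{thm81}), so $\cal F_0$ is automatically of the form $\infl(\cal E')$.
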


\begin{proof}
Both parts follow from the special case where $G=G^\circ$:
the first claim is \cite[Corollary~2.12]{achar_henderson_juteau_riche16}
and the second is \cite[5(b)]{lusztig_utrecht}.
\end{proof}

We expect that one can even take $\cal E=\cal E'$ in \Cref{thm75}:

\begin{conjecture} \label{thm91}
If $\cal E\in\Perv(\cal N_G/G)$ is cuspidal then
$\IC(\cal O,\cal E) \simeq \Four_{\frak g}(\IC(\cal O\times\frak z(G^\circ),\infl(\cal E)))$.
\end{conjecture}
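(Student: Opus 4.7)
The plan is to bootstrap from \Cref{thm75}, which already supplies a cuspidal $\cal E' \in \Loc(\cal O/G)$ such that
\[
\IC(\cal O,\cal E) \simeq \Four_{\frak g}\bigl(\IC(\cal O\times\frak z(G^\circ),\infl(\cal E'))\bigr)
\qquad\text{and}\qquad
\Res^G_{G^\circ}(\cal E) \simeq \Res^G_{G^\circ}(\cal E').
\]
Hence the conjecture reduces to showing $\cal E \simeq \cal E'$. Since both $\cal E$ and $\cal E'$ are irreducible $G$-equivariant local systems on the orbit $\cal O$ whose restrictions to $G^\circ$ agree, Clifford theory applied to the fiber of $\Loc(\cal O/G) \to \Loc(\cal O/G^\circ)$ implies $\cal E' \simeq \cal E \otimes \chi$ for some character $\chi$ of the finite abelian group $\pi_0(Z_G(u))/\pi_0(Z_{G^\circ}(u))$, which pulls back from a character of $\pi_0(G)$ that we also denote $\chi$. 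The task becomes to prove $\chi = 1$.

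The next step is to establish that the Fourier--Laumon transform commutes with twisting by $\chi$: for every $\cal F \in \Dbc(\frak g/G)$,
\[
\Four_{\frak g}(\chi \otimes \cal F) \simeq \chi \otimes \Four_{\frak g}(\cal F),
\]
where $\chi$ is viewed as a rank-one local system on $\frak g/G$ inflated from $\tn{pt}/\pi_0(G)$. This follows from the projection formula applied to the convolution defining $\Four_{\frak g}$, since the Artin--Schreier kernel on $\frak g \times \frak g$ is pulled back from $\tn{pt}/G$ and $\chi$ is pulled back along both projections. Combining this with the evident identities $\infl(\cal E \otimes \chi) \simeq \infl(\cal E) \otimes \chi$ and $\chi \otimes \IC(\cal O',\cal F) \simeq \IC(\cal O',\chi|_{\cal O'} \otimes \cal F)$ for any $G$-stable $\cal O'$, the assignment $F\colon \cal E \mapsto \cal E'$ defined by \Cref{thm75} satisfies $F(\cal E \otimes \chi) \simeq F(\cal E) \otimes \chi$. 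In other words, $F$ is a bijection of the finite set of cuspidal local systems on $\cal O/G$ which commutes with character twists and preserves $G^\circ$-restrictions. So $F$ acts as a permutation of each Clifford fiber.

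The main obstacle is to rule out that $F$ acts as a nontrivial permutation on some Clifford fiber. The Fourier--Laumon transform is essentially involutive, which forces $F^2 \simeq \tn{id}$, so $F$ is at worst an involution on each fiber; but an involution need not be the identity. Two avenues to close the gap present themselves. One is a direct case-by-case verification from the classification of cuspidal local systems in the disconnected setting (feasible using \Cref{sec:tables} but laborious and not illuminating). The other, which seems more promising conceptually, would be to pin down the $\pi_0(G)$-equivariant structure on $\Four_{\frak g}(\IC(\cal O \times \frak z(G^\circ), \infl(\cal E)))$ via an invariant natural enough to be transported by the Fourier kernel --- perhaps by exploiting the interaction of the $\pi_0(G)$-action with the cocycle $\kappa_{(\cal O,\cal E)}$ from \Cref{thm87}, or by tracing through the behavior of Fourier--Laumon on a carefully chosen open subvariety of $\frak g$ where $\cal E$ and $\cal E \otimes \chi$ can be distinguished by a monodromy computation. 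Producing such an invariant, and proving its Fourier-compatibility, appears to be the genuinely new content beyond \Cref{thm75} and is where we expect the real work to lie.
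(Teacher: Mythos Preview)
The paper does not prove this statement: it is labeled a \emph{Conjecture} precisely because the authors leave it open. Immediately before it they write ``We expect that one can even take $\cal E=\cal E'$ in \Cref{thm75}'', and in \Cref{thm96} they note that verifying \Cref{thm91} would be needed to fully compare their parameterization with that of Aubert--Moussaoui--Solleveld. So there is no ``paper's own proof'' to compare against.

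Your reduction via \Cref{thm75} to showing $\cal E\simeq\cal E'$ is the natural one, and your observation that $\Four_{\frak g}$ commutes with $\chi$-twists, together with the involutivity of Fourier--Laumon, correctly constrains the discrepancy. You are also honest that the argument is incomplete: the obstacle you identify---ruling out a nontrivial involution on each Clifford fiber---is exactly the missing content, and the paper offers no hint beyond the connected case handled in \cite[5(b)]{lusztig_utrecht}.

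One caution on an intermediate step: your assertion that $\Res^G_{G^\circ}(\cal E)\simeq\Res^G_{G^\circ}(\cal E')$ forces $\cal E'\simeq\cal E\otimes\chi$ for a character $\chi$ of $\pi_0(G)$ is not a general Clifford-theoretic fact. Two irreducible representations of $A_G(u)$ with isomorphic restriction to $A_{G^\circ}(u)$ correspond, after fixing a constituent $\sigma$ of the restriction, to irreducible projective representations $\tau,\tau'$ of the inertia quotient $A_G(u)_\sigma/A_{G^\circ}(u)$ of the same dimension; these need not differ by a one-dimensional twist. For cuspidal $\cal E$ this may well hold for structural reasons tied to Lusztig's classification (and a case check via \Cref{sec:tables} would likely confirm it), but it requires justification before you can reduce to the ``$\chi=1$'' problem.
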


Everything is now in place to abstractly parameterize induction series.

\begin{lemma} \label{thm88}
Let $\cal O$ be a nilpotent orbit of~$L$,
let $\cal E\in\Loc(\cal O/L)$ be cuspidal,
and (by \Cref{thm75}) let $\cal E'\in\Loc(\cal O/L)$
be the cuspidal local system such that
\[
\IC(\cal O,\cal E) \simeq \Four_{\frak l}(\IC(\cal O\times\frak z(L^\circ),\infl(\cal E'))).
\]
Then the Fourier--Laumon transform induces
an isomorphism of endomorphism algebras
\[
\End\bigl(\uind_{L\subseteq P}^G(\infl_{G\tn{-reg}}(\cal E'))\bigr)
\simeq \End\bigl(\ind_{L\subseteq P}^G(\IC(\cal O,\cal E))\bigr).
\]
\end{lemma}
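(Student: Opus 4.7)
The plan is to link the two endomorphism algebras by a chain of identifications built around the Fourier--Laumon transform, the IC extension from the regular stratum, and \Cref{thm75,thm85}. Starting from the hypothesis, I substitute $\IC(\cal O, \cal E) \simeq \Four_{\frak l}(\IC(\cal O\times\frak z(L^\circ), \infl(\cal E')))$ inside the parabolic induction to get
\[
\ind_{L\subseteq P}^G(\IC(\cal O, \cal E))
\simeq \ind_{L\subseteq P}^G\bigl(\Four_{\frak l}(\IC(\cal O\times\frak z(L^\circ), \infl(\cal E')))\bigr).
\]

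Next, I transport the Fourier--Laumon transform past the parabolic induction using the standard intertwining
\[
\ind_{L\subseteq P}^G \circ \Four_{\frak l} \simeq \Four_{\frak g} \circ \ind_{L\subseteq P}^G,
\]
which reflects the compatibility of Fourier transform with pull-push along the linear correspondence $\frak l \leftarrow \frak p \to \frak g$ (cf.\ the schematic \eqref{thm86}). Since $\Four_{\frak g}$ is a self-equivalence of~$\Dbc(\frak g/(\bbG_\tn{m}\times G))$, it preserves endomorphism algebras, yielding
\[
\End\bigl(\ind_{L\subseteq P}^G(\IC(\cal O, \cal E))\bigr)
\simeq \End\bigl(\ind_{L\subseteq P}^G(\IC(\cal O\times\frak z(L^\circ), \infl(\cal E')))\bigr).
\]

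I then apply \Cref{thm85} to rewrite the right-hand side as $\End\bigl(\IC(Y_{(\frak l,\cal O)}, \ind_{L\subseteq P}^G(\infl_{G\tn{-reg}}(\cal E')))\bigr)$. Because the intermediate extension $j_{!*}$ from an open dense substack is fully faithful on semisimple perverse sheaves, the endomorphism algebra of this IC sheaf agrees with that of the underlying local system $\ind_{L\subseteq P}^G(\infl_{G\tn{-reg}}(\cal E'))$ on the regular open stratum of~$Y_{(\frak l,\cal O)}$, via \Cref{thm80}. This produces the claimed isomorphism.

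The main obstacle is the second step, the compatibility $\Four_{\frak g}\circ\ind_{L\subseteq P}^G \simeq \ind_{L\subseteq P}^G\circ\Four_{\frak l}$. Parabolic induction is a $*$-pull $!$-push along a linear correspondence, and Fourier--Laumon swaps these two operations along linear maps up to a suitable duality identification; matching shifts and $\bbG_\tn{m}$-twists correctly is the only nonformal input. This compatibility appears in essentially the form we need in the connected case treated by Achar--Henderson--Juteau--Riche, and the argument transfers unchanged to the disconnected setting, since both the Fourier--Laumon transform and parabolic induction are compatible with the forgetful functor from $G$-equivariant to $G^\circ$-equivariant sheaves.
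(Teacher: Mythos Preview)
Your proposal is correct and follows essentially the same approach as the paper: both arguments chain together the compatibility $\Four_{\frak g}\circ\ind_{L\subseteq P}^G \simeq \ind_{L\subseteq P}^G\circ\Four_{\frak l}$, the hypothesis from \Cref{thm75}, \Cref{thm85} to pull the IC extension outside the induction, and finally full faithfulness of intermediate extension. The only cosmetic difference is ordering (you substitute the hypothesis before commuting Fourier past induction, the paper after), and your final step invokes \Cref{thm80} where the paper simply cites full faithfulness of IC directly; the latter is all that is needed.
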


\begin{proof}
Since parabolic induction and restriction commute with
the Fourier--Laumon transform
(see for instance \cite[Lemma~2.9]{achar_henderson_juteau_riche16}
in a slightly different setting),
\[
\Four_{\frak g}\bigl(\ind_{L\subseteq P}^G(\IC(\cal O,\cal E))\bigr)
\simeq \ind_{L\subseteq P}^G\bigl(\Four_{\frak l}(\IC(\cal O,\cal E))\bigr);
\]
moreover, the functorial equivalence $\Four_{\frak g}$
induces an isomorphism of endomorphism rings.
By assumption, this last sheaf is isomorphic to
\[
\ind_{L\subseteq P}^G\bigl(\infl(\IC(\cal O,\cal E'))\bigr)
\simeq \ind_{L\subseteq P}^G\bigl(\IC(\cal O\times\frak z(L^\circ),\infl(\cal E'))\bigr),
\]
which, by \Cref{thm85}, is isomorphic in turn to
\[
\IC\bigl(Y_{(\frak l,\cal O)},\ind_{L\subseteq P}^G
\infl_{G\tn{-reg}}(\cal E')\bigr).
\]
Finally, since formation of intersection cohomology complexes is fully faithful,
the endomorphism ring of this perverse sheaf
is canonically isomorphic to the endomorphism ring of
\[
\ind_{L\subseteq P}^G\bigl(\infl_{G\tn{-reg}}(\cal E')\bigr). \qedhere
\]
\end{proof}

\begin{proposition} \label{thm89}
Let $\cal O$ be a nilpotent orbit of~$L$,
let $q\cal E\in\Perv(\cal O/L)$ be cuspidal,
let $q\cal E'$ be associated to~$q\cal E$ as in \Cref{thm88},
and let $\kappa_{(\cal O,q\cal E')}$ be an AMS cocycle.
If $L$ is a quasi-Levi subgroup of~$G$
then each AMS isomorphism induces an isomorphism
\[
\End\bigl(\ind_{L\subseteq P}^G(\IC(\cal O,q\cal E))\bigr)
\simeq \overline\bbQ_\ell\bigl[W(G,L)_{(\cal O,q\cal E')},\kappa_{(\cal O,q\cal E')}\bigr].
\]
\end{proposition}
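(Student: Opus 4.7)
The plan is to chain together the two isomorphisms the excerpt has just built: the Fourier--Laumon identification of endomorphism rings from \Cref{thm88} and the AMS isomorphism from \Cref{thm87}. Concretely, first I would invoke \Cref{thm88} to produce a canonical isomorphism
\[
\End\bigl(\ind_{L\subseteq P}^G(\IC(\cal O,q\cal E))\bigr)
\simeq \End\bigl(\uind_{L\subseteq P}^G(\infl_{G\tn{-reg}}(q\cal E'))\bigr),
\]
where $q\cal E'\in\Loc(\cal O/L)$ is the local system attached to $q\cal E$ via the Fourier--Laumon correspondence of \Cref{thm75}.

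Next, I need to feed the righthand side into the AMS isomorphism \eqref{thm90}. The hypothesis of \Cref{thm87} requires the local system at the input to be cuspidal, not merely the original $q\cal E$. Fortunately, the uniqueness part of \Cref{thm75}(1) guarantees that $q\cal E'$ is itself cuspidal, and $L$ is a quasi-Levi of $G$ by assumption, so \Cref{thm87} applies directly to $(\cal O, q\cal E')$. Each AMS isomorphism then yields
\[
\End\bigl(\uind_{L\subseteq P}^G(\infl_{G\tn{-reg}}(q\cal E'))\bigr)
\simeq \overline\bbQ_\ell\bigl[W(G,L)_{(\cal O,q\cal E')},\kappa_{(\cal O,q\cal E')}\bigr].
\]

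Composing the two isomorphisms gives the desired identification. There is essentially no obstacle to carry through: the real content was packaged into \Cref{thm75}, \Cref{thm87}, and \Cref{thm88}, and the present proposition simply records that the chain of identifications lands in a twisted group algebra. The one point worth emphasizing in the write-up is that the relative Weyl group and cocycle that appear on the right are those of the Fourier-transformed cuspidal datum $(\cal O, q\cal E')$ rather than of $(\cal O, q\cal E)$ itself, which is why the statement is phrased in terms of $\kappa_{(\cal O,q\cal E')}$.
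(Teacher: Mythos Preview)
Your proposal is correct and matches the paper's own proof, which consists of the single line ``Combine \Cref{thm87,thm88}.'' Your additional remark that $q\cal E'$ is cuspidal (needed to invoke \Cref{thm87}) is a useful clarification that the paper leaves implicit.
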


\begin{proof}
Combine \Cref{thm87,thm88}.
\end{proof}

For future applications,
we finish this section by rewriting \Cref{thm89}
in the notation of \cite[\S5]{AMS18}.
Let $M$ be a quasi-Levi subgroup of~$G$,
let $\cal O$ be a nilpotent orbit for~$M$,
and let $q\cal E\in\Loc(\cal O/M)$ be cuspidal.
Let $qt \defeq (M,\cal O,q\cal E)$
and let $q\frak t \defeq [M,\cal O,q\cal E]_G$.
Our goal is to parameterize the set
\[
\qcsupp_G^{-1}(q\mathfrak{t})
\]
of equivalence classes $[\cal O',\cal F]_G$
with quasi-cuspidal support $q\frak t$.

Let $N_G(qt)$ be the stabilizer of $qt$ in~$G$
and let
\[
W_{qt} \defeq N_G(qt)/M = W(G,M)_{(\cal O,q\cal E)}
\]
where $qt = (M,\cal O,q\cal E)$ as above.
Every $g\in G$ induces by conjugation
an isomorphism $W_{qt}\to W_{qt'}$
where $qt' = {}^gqt$.
Let
\[
W_{q\frak t} = \varprojlim_{[qt]_G = q\frak t} W_{qt}
\]
denote the limit over the corresponding inverse system.
Since the automorphisms of each $W_{qt}$
induced by this system of isomorphisms are all inner,
there is a canonical identification
\[
\Irr(W_{q\frak t})
= \varprojlim_{[qt]_G = q\frak t} \Irr(W_{qt}).
\]
Next, choose an irreducible summand $\cal E$
of $\Res^G_{G^\circ}(q\cal E)$,
let $t^\circ \defeq (M,\cal O,\cal E)$,
and let $\frak t^\circ \defeq [M,\cal O,q\cal E]_{G^\circ}$.
Define
\[
W_{t^\circ} \defeq N_{G^\circ}(M^\circ)/M^\circ.
\]
In spite of appearances, the group $W_{t^\circ}$ is a special case
of the group $W_{qt}$ by \cite[3.1(a)]{AMS18}
since $q\cal E$ is cuspidal.
One can again form the inverse limit $W_{\frak t^\circ}$
using still the action of~$G$ (not of~$G^\circ$).
Working component by component,
we see that $W_{\frak t^\circ}$
naturally identifies with a normal subgroup of $W_{q\frak t}$.
Our analogue of \cite[5.3]{AMS18} is the following.

\begin{lemma}\label{AMSLemma5.3}
Let $\kappa_{q\frak t'}\in Z^2(W_{q\frak t},\overline\bbQ_\ell^\times)$
be an AMS cocycle.
Then each AMS isomorphism induces a bijection
\[
q\Sigma_{q\frak t} \colon \qcsupp_G^{-1}(q\mathfrak{t})
\simeq \Irr(\bbC[W_{q\frak t}, \kappa_{q\frak t}]).
\]
\end{lemma}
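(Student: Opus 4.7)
The plan is to deduce the bijection from \Cref{thm89} by passing from endomorphism algebras of parabolic inductions to their simple summands, and then packaging the choices into the inverse limit $W_{q\mathfrak{t}}$.

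First, I would fix a representative $qt = (M,\cal O,q\cal E)$ of $q\mathfrak{t}$ together with a parabolic subgroup $P\subseteq G$ with Levi factor~$M$, and set
\[
\cal K_{qt,P} \defeq \ind_{M\subseteq P}^G\bigl(\IC(\cal O,q\cal E)\bigr).
\]
By \Cref{thm70} this object is a semisimple perverse sheaf, so it decomposes as $\cal K_{qt,P} \simeq \bigoplus_{\cal F} \cal F \otimes V_{\cal F}$ with $\cal F$ ranging over irreducible perverse sheaves and $V_{\cal F} \defeq \Hom(\cal F,\cal K_{qt,P})$. The \textbf{key general fact} is that for any semisimple object in an abelian category, the functor $\cal F\mapsto \Hom(\cal F,\cal K_{qt,P})$ induces a bijection between the set of isomorphism classes of simple summands of $\cal K_{qt,P}$ and $\Irr\bigl(\End(\cal K_{qt,P})\bigr)$. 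Combined with \Cref{thm89}, which via an AMS isomorphism identifies $\End(\cal K_{qt,P})$ with $\bbC[W_{qt},\kappa_{qt}]$, this yields a bijection between simple summands of $\cal K_{qt,P}$ and $\Irr(\bbC[W_{qt},\kappa_{qt}])$.

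Second, I would show that the set of simple summands of $\cal K_{qt,P}$ is exactly $\qcsupp_G^{-1}(q\mathfrak{t})$ and that this set is independent of~$P$. The containment of the summands in $\qcsupp_G^{-1}(q\mathfrak{t})$ is immediate from the definition of quasi-cuspidal support. For the reverse containment, I would invoke the uniqueness portion of \Cref{thm14}: if $\cal F$ has quasi-cuspidal support $q\mathfrak{t}$, then by definition there exists a parabolic $P'$ with Levi factor conjugate to~$M$ such that $\cal F$ is a summand of $\ind_{M'\subseteq P'}^G(\IC(\cal O',q\cal E'))$ for some $G$-conjugate $(M',\cal O',q\cal E')$ of $qt$; conjugating by the element of~$G$ effecting the equivalence, one may arrange that the Levi factor is exactly~$M$. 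Independence of the choice of~$P$ (with fixed Levi~$M$) reduces to showing that $\cal K_{qt,P}\simeq \cal K_{qt,P'}$ whenever $P$ and~$P'$ share the Levi~$M$; this is standard in the connected case (via the intermediate parabolic generated by $P^\circ\cap P'^\circ$ and iterated transitivity of $\ind$), and the disconnected case reduces to the connected one because the Levi datum determines the component structure of~$P$.

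Third, I would promote the bijection from a fixed representative $qt$ to the inverse limit $W_{q\mathfrak{t}}$. Any $g\in G$ with ${}^{g}qt = qt'$ induces, via $\Ad(g)^*$, an equivalence between $\cal K_{qt,P}$ and $\cal K_{qt', {}^gP}$ intertwining the isomorphism of \Cref{thm89} with the isomorphism for the conjugate datum, so the resulting bijection of simple summands with $\Irr(\bbC[W_{qt},\kappa_{qt}])$ is compatible with the transition maps $W_{qt}\simeq W_{qt'}$ in the inverse system. Since these transition isomorphisms are inner automorphisms on each $W_{qt}$, as noted in the paragraph before the lemma, the bijections descend to a canonical bijection
\[
\qcsupp_G^{-1}(q\mathfrak{t}) \;\simeq\; \varprojlim_{[qt]_G = q\mathfrak{t}} \Irr\bigl(\bbC[W_{qt},\kappa_{qt}]\bigr) \;=\; \Irr\bigl(\bbC[W_{q\mathfrak{t}},\kappa_{q\mathfrak{t}}]\bigr).
\]

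The main obstacle I expect is the independence of~$P$ in Step~2: verifying that changing~$P$ while fixing~$M$ produces the same parabolic induction (rather than merely the same set of summands up to multiplicity), since the Geometrical Lemma of \Cref{thm17} is needed to compare $\ind_{M\subseteq P}^G$ and $\ind_{M\subseteq P'}^G$, and in the disconnected setting one must track equivariant structures carefully. The remaining verifications are formal consequences of semisimplicity and the naturality of the constructions of \Cref{sec:param:regular,sec:param:fibers}.
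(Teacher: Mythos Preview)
Your proposal is correct and follows essentially the same approach as the paper: deduce the bijection from \Cref{thm89} by reading off simple summands of the semisimple object $\ind_{M\subseteq P}^G(\IC(\cal O,q\cal E))$ as irreducible modules over its endomorphism algebra, then check compatibility with the inverse system defining $W_{q\frak t}$. The paper's own proof is much terser: it simply invokes \Cref{thm89}, cites \cite[5.3]{AMS18} for the passage from endomorphism algebras to the fiber bijection, and then carries out exactly your Step~3 (the inverse-limit verification) via the commuting-square argument you describe.

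One correction on the obstacle you flag. Your proposed argument for independence of the parabolic~$P$---via an ``intermediate parabolic generated by $P^\circ\cap P'^\circ$'' and transitivity---does not work: two parabolics with the same Levi need not sit in a common chain, and transitivity of~$\ind$ only compares inductions through nested Levis, not different parabolics with the same Levi. The clean argument is already implicit in the machinery of \Cref{thm88} and \Cref{thm85}: under Fourier--Laumon, $\ind_{M\subseteq P}^G(\IC(\cal O,q\cal E))$ becomes $\IC\bigl(Y_{(\frak m,\cal O)},\uind_{M\subseteq P}^G(\infl_{G\tn{-reg}}(q\cal E'))\bigr)$, and on the $G$-regular locus \Cref{thm73} shows that $\uind_{M\subseteq P}^G$ is pushforward along the finite \'etale cover of \Cref{thm74}, which is visibly independent of~$P$. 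Hence $\cal K_{qt,P}\simeq\cal K_{qt,P'}$ canonically, and your Step~2 goes through.
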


\begin{proof}
Given \Cref{thm89},
this is a very mild rephrasing of \cite[5.3]{AMS18},
the only difference being our definitions
of $W_{q\frak t}$ and $W_{\frak t^\circ}$ as inverse limits.
What must be checked is that the cocycle~$\kappa_{q\frak t}$
and the bijection $q\Sigma_{q\frak t}$
are well-defined in these limits.
The verification is straightforward.
For instance, for the bijection
$q\Sigma_{q\frak t}$,
we can choose bijections
\[
q\Sigma_{qt} \colon \qcsupp_G^{-1}(q\mathfrak{t})
\simeq \Irr(\bbC[W_{q\frak t}, \kappa_{q\frak t}]).
\]
for each representative $qt$ of~$q\frak t$
such that for any other choice of
$qt' = (M', \cal O', q\mathcal{E}')$
representing $q\mathfrak{t}$ and
any $g \in G$ with $^gqt = qt'$,
the following diagram commutes:
\[
\begin{tikzcd}
\qcsupp_G^{-1}(q\mathfrak{t}) \dar[equals]  \arrow["q\Sigma_{qt}"]{r} &
\Irr(\bbC[W_{qt}, \kappa_{qt}]) \arrow["\text{ad}(g)"]{d} \\
\qcsupp_G^{-1}(q\mathfrak{t}) \arrow["q\Sigma_{qt'}"]{r} &
\Irr(\bbC[W_{qt'}, \kappa_{qt'}]) 
\end{tikzcd}
\]
Note that the right-hand vertical identification is independent of the choice of $g$
since we are already considering these representations up to isomorphism.
\end{proof}

\begin{remark}[Comparison with AMS parameterization] \label{thm96}
It seems quite likely that our parameterization
of the quasi-cuspidal induction series is identical
to that of \cite{AMS18}.
To prove the compatibility, one would at least need to verify \Cref{thm91}.
Nonetheless, the weaker statement that there is some
parameterization by irreducible representations
of a twisted Weyl-group algebra, as in \Cref{AMSLemma5.3},
is enough for our applications.
\end{remark}

\subsection{Comparison of connected cuspidal supports}
\label{sec:param:comparison}
In this section we explain why our definition
of the connected cuspidal support agrees with that of
\cite{lusztig84b} and \cite{AMS18},
largely a matter of unpacking notation.
Although Lusztig assumed $G=G^\circ$,
so that we can only compare with his results in that case,
the construction works just as well for disconnected $G$.
Throughout we work with the unipotent variety $\cal U_G$
rather than the nilpotent cone~$\cal N_G$,
a modification that is permitted by
the discussion in \Cref{sec:param:springer}.

Let $P$ be a parabolic subgroup of~$G$ with Levi quotient~$L$,
let $\cal O_1\subseteq\cal U_L$ be a unipotent conjugacy class of~$L$,
let $\cal E_1$ be an $L$-equivariant local system on~$\cal O_1$,
let $\cal O\subseteq\cal U_G$ be a unipotent conjugacy class of~$G$,
and let $\cal E$ be a $G$-equivariant local system on~$\cal O$.
Let $S_1 = \cal O_1\times Z^\circ(L)\subseteq L$.
Let $\overline Y\subset G$ be the union of the $G$-conjugacy
classes that contain an element of the closure~$\overline S_1$.

In stack language, Lusztig considers the following correspondence,
where $\pi\colon P\to L$ is the projection,
$i\colon P\to G$ is the inclusion,
and we use the same notation~$\pi$
(resp.~$i$) for the map induced by~$\pi$ (resp.~$i$)
on quotient stacks.
\[
\begin{tikzcd}[row sep=small]
& \pi^{-1}(S_1)/P \dlar[swap]{\pi}\drar{i} & \\
S_1/L & & G/G
\end{tikzcd}
\]
Following Lusztig's notation \cite[4.4, 6.4]{lusztig84b}, let
\[
K(\cal E_1) \defeq \IC\bigl(\pi^{-1}(S_1),
\pi^*(\cal E_1\boxtimes\tn{triv})\bigr)[-\dim(S_1)-\dim(G/L)]
\]
and let
$d = (v_G - \tfrac12\dim(\cal O))
- (v_L - \tfrac12\dim(\cal O_1))$
where $v_G$ is the number of positive roots of~$G$.
The shift in~$K(\cal E_1)$ arises from Lusztig's convention
that IC sheaves are not shifted
to make them perverse \cite[0.1]{lusztig84b}:
this shift is the dimension of the space
$G\times^P\pi^{-1}(S_1)$ on which Lusztig's
sheaf $\overline{\cal E_1}$ lives.

\begin{definition} \label{thm44}
The pair $(\cal O,\cal E)$ has connected cuspidal support
$(\cal O_1,\cal E_1)$ in Lusztig's sense if 
$\cal O\subseteq\overline Y$ and $\cal E$ is a direct summand of
\[
H^{2d}\bigl(i_!(K(\cal E_1))|_{\cal O}\bigr).
\]
\end{definition}

Here $H^{2d}$ denotes the cohomology in degree~$2d$ of the complex, a sheaf.
When $G=G^\circ$ this definition is not Lusztig's original definition,
but he proves that the two are equivalent in \cite[6.5(a,b)]{lusztig84b}.

\begin{lemma} \label{thm42}
Suppose $\cal O\subseteq\overline Y$.
Then $i_!(K(\cal E_1))|_{\cal O}[2d] \simeq
\ind_{L\subseteq P}^G\bigl(\IC(\cal O_1,\cal E_1)\bigr)|_{\cal O}[-\dim(\cal O)]$.
\end{lemma}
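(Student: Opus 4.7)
The plan is to identify both sides as $i_!$-pushforwards of naturally comparable sheaves along the parabolic induction correspondence, via proper base change and the Künneth property of IC sheaves.

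First, I would locate the unipotent support. Since $S_1 = \cal O_1\times Z^\circ(L)$, an element $(u,z)\cdot n \in \pi^{-1}(S_1)$ with $u\in\cal O_1$, $z\in Z^\circ(L)$, $n\in U_P$ has semisimple part $z$, hence is unipotent if and only if $z=1$. It follows that the unipotent elements of $\overline{\pi^{-1}(S_1)}$ are precisely $\pi^{-1}(\overline{\cal O_1}) = \overline{\cal O_1}\cdot U_P$. By proper base change, both sides of the lemma are $i_!$-pushforwards from the closed subvariety $i^{-1}(\cal O) \subseteq G\times^P\pi^{-1}(\overline{\cal O_1})$.

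Next, I would compare the sheaves on this common support. The embedding $\overline{\cal O_1}\times\{1\} \hookrightarrow \overline{S_1} = \overline{\cal O_1}\times Z^\circ(L)$ is a section of the smooth projection to $\overline{\cal O_1}$, and the Künneth isomorphism
\[
\IC(\overline{S_1},\cal E_1\boxtimes\triv) \simeq \IC(\overline{\cal O_1},\cal E_1)\boxtimes \IC(Z^\circ(L),\triv)
\]
yields, after $*$-restriction along this section, the sheaf $\IC(\overline{\cal O_1},\cal E_1)$ up to a shift by $\dim Z^\circ(L)$ coming from restricting the constant sheaf on $Z^\circ(L)$ to a point. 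Pulling back via the smooth map $\pi\colon\pi^{-1}(\overline{S_1})\to\overline{S_1}$, then incorporating Lusztig's initial shift $[-\dim(S_1)-\dim(G/L)]$ and converting between his unshifted IC convention and the perverse one via $\IC^{\mathrm{Lus}} = \IC^{\mathrm{perv}}[-\dim]$, identifies $K(\cal E_1)|_{\pi^{-1}(\overline{\cal O_1})}$ with $\pi^*\IC(\cal O_1,\cal E_1)$ up to an explicit shift.

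Finally, I would apply $i_!$ and restrict to $\cal O$. Both sides then become $i_!$-pushforwards of the same underlying sheaf, differing only by an overall shift, and the assertion reduces to the arithmetic identity
\[
2d + \dim\cal O = \dim(G/L) + \dim\cal O_1,
\]
which is immediate from the definition $d = (v_G - \tfrac12\dim\cal O) - (v_L - \tfrac12\dim\cal O_1)$ together with $\dim(G/L) = 2(v_G - v_L)$. The main obstacle is the meticulous bookkeeping of shift conventions: reconciling Lusztig's unshifted IC with the perverse one, accounting for the shift between the stacky perverse $t$-structure and the classical $G$-equivariant one (a shift of $\dim G$), and correctly totaling the contributions from the closed immersion $\overline{\cal O_1}\times\{1\}\hookrightarrow\overline{S_1}$ and the smooth pullback along $\pi$. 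Once these shifts are properly assembled, the final arithmetic emerges without further work.
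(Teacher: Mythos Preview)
Your proposal is correct and follows essentially the same route as the paper: restrict to the unipotent locus via base change, use the K\"unneth decomposition of $\IC(\overline{S_1},\cal E_1\boxtimes\triv)$ to identify the restricted sheaf with $\pi^*\IC(\cal O_1,\cal E_1)$ up to a shift, then check the arithmetic $2d+\dim\cal O=\dim(G/L)+\dim\cal O_1$. The paper organizes the same computation through a four-row commutative diagram of correspondences and introduces the induced orbit~$\cal O'$ (with the key identity $\overline{\cal O'}\cap\cal U_P=\pi^{-1}(\overline{\cal O_1})$) to make the Cartesian squares explicit, whereas you argue directly that the unipotent elements of $\pi^{-1}(\overline{S_1})$ are $\pi^{-1}(\overline{\cal O_1})$; this is a valid shortcut and the two arguments are interchangeable.
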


\begin{proof}
Let $\cal O'$ be the unipotent conjugacy class in~$G$ induced%
\footnote{See \cite[\S7.1]{collingwood_mcgovern93}
for a summary of properties of induced orbits.
As usual, even though the theory developed
in \cite{collingwood_mcgovern93} book takes place on
the Lie algebra one can with moderate care 
pass to the group using a Springer isomorphism.}
from~$\cal O_1$.
Consider the following commutative diagram of correspondences,
in which the top two rows of vertical morphisms
consist of closed embeddings and the bottom row
consists of open embeddings.
\[
\begin{tikzcd}
\cal U_L/L &
\cal U_P/P \lar[swap]{\pi}\rar{i} &
\cal U_G/G \\
\overline{\cal O}_1/L \dar\uar &
\pi^{-1}(\overline{\cal O}_1)/P \lar[swap]{\pi}\rar{i} \dar\uar &
\overline{\cal O'}/G \dar\uar \\
\overline S_1/L &
\pi^{-1}(\overline S_1)/P \lar[swap]{\pi}\rar{i} &
G/G \\
S_1/L \uar &
\pi^{-1}(S_1)/P \lar[swap]{\pi}\rar{i} \uar &
G/G \uar[equals]
\end{tikzcd}
\]
The three squares on the right are Cartesian.
For the bottom square this is obvious and
this follows for the two squares above it from the equation
\[
\overline{\cal O'}\cap\cal U_P = \pi^{-1}(\overline{\cal O_1}),
\]
which holds because $\cal O'\cap\cal U_P$
is a dense open subset of $\pi^{-1}(\cal O_1)$.

We work from Lusztig's construction to ours
by climbing this diagram from bottom to top.
The bottom row of squares and exactness of pullback shows that
\[
i_!(K(\cal E_1))[\dim(S_1) + \dim(G/L)]
\simeq i_!\pi^*\IC(S_1,\cal E_1\boxtimes\tn{triv}).
\]
The middle row of squares shows that
\[
\bigl(i_!\pi^*\IC(S_1,\cal E_1\boxtimes\tn{triv})\bigr)|_{\overline{\cal O'}}
\simeq i_!\pi^*\bigl(\IC(S_1,\cal E_1\boxtimes\tn{triv})|_{\overline{\cal O_1}}\bigr)
\simeq i_!\pi^*\IC(\cal O_1,\cal E_1)[\dim Z^\circ(L)].
\]
The top row of squares shows that
$\ind_{L\subseteq P}^G\bigl(\IC(\cal O_1,\cal E_1)\bigr)$
is the extension by zero of $i_!\pi^*\IC(\cal O_1,\cal E_1)$.
Together these implications show that
\[
i_!(K(\cal E_1))|_{\overline{\cal O'}}[\dim(\cal O_1) + \dim(G/L)] \simeq
\ind_{L\subseteq P}^G\bigl(\IC(\cal O_1,\cal E_1)\bigr)|_{\overline{\cal O'}}.
\]
The shifts work out because $2v_G-2v_L = \dim(G/L)$.
To finish, use that $\cal O\subseteq\overline Y$
if (and only if) $\cal O\subseteq\overline{\cal O'}$.
\end{proof}

\begin{corollary} \label{thm47}
The local system $\cal E$ is a summand of
$H^{2d}\bigl(i_!(K(\cal E_1))|_{\cal O}\bigr)$
if and only if the perverse sheaf
$\IC(\cal O,\cal E)$ is a direct summand of
$\ind_{L\subseteq P}^G\bigl(\IC(\cal O_1,\cal E_1)\bigr)$.
\end{corollary}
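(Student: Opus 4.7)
The plan is to deduce \Cref{thm47} directly from \Cref{thm42} by analyzing how the simple summands of a parabolic induction contribute to its cohomology sheaves on a fixed orbit. The passage from the shifted complex $i_!(K(\cal E_1))|_{\cal O}[2d]$ to the ordinary cohomology sheaf $H^{2d}(i_!(K(\cal E_1))|_{\cal O})$ is exactly a zeroth perverse-style cohomology computation, so the main content will lie in showing that only the summands supported on the closure of $\cal O$ with open stratum equal to~$\cal O$ can contribute.

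First, I rewrite the left-hand side using \Cref{thm42}, obtaining a canonical isomorphism
\[
H^{2d}\bigl(i_!(K(\cal E_1))|_{\cal O}\bigr)
\simeq H^{-\dim\cal O}\bigl(\ind_{L\subseteq P}^G(\IC(\cal O_1,\cal E_1))|_{\cal O}\bigr).
\]
By \Cref{thm70}, $\ind_{L\subseteq P}^G(\IC(\cal O_1,\cal E_1))$ is a semisimple perverse sheaf on~$\cal N_G/G$, hence decomposes (with multiplicities) as a finite direct sum
\[
\ind_{L\subseteq P}^G(\IC(\cal O_1,\cal E_1))
\simeq \bigoplus_{(\cal O'',\cal F'')} \IC(\cal O'',\cal F'')^{\oplus m_{\cal O'',\cal F''}}
\]
running over pairs of a nilpotent orbit $\cal O''$ and an irreducible $G$-equivariant local system $\cal F''$ on $\cal O''$. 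Restriction to $\cal O$ and cohomology are additive, so I only need to compute $H^{-\dim\cal O}\bigl(\IC(\cal O'',\cal F'')|_{\cal O}\bigr)$ for each summand.

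The main (though routine) step is the stratum-by-stratum analysis. When $\cal O''=\cal O$, the sheaf $\IC(\cal O,\cal F'')$ restricts to $\cal F''[\dim\cal O]$ on $\cal O$, contributing $\cal F''$ in degree $-\dim\cal O$. When $\cal O''\neq\cal O$ but $\cal O\subseteq\overline{\cal O''}\setminus\cal O''$, the support condition defining the intersection cohomology complex forces $H^k\bigl(\IC(\cal O'',\cal F'')|_{\cal O}\bigr)=0$ for $k\geq -\dim\cal O$; in particular the contribution in degree $-\dim\cal O$ vanishes. When $\cal O\not\subseteq\overline{\cal O''}$ the restriction is zero outright. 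Combining these cases gives
\[
H^{2d}\bigl(i_!(K(\cal E_1))|_{\cal O}\bigr)
\simeq \bigoplus_{\cal F''}\cal F''^{\oplus m_{\cal O,\cal F''}},
\]
where the sum runs only over irreducible local systems $\cal F''$ on $\cal O$ and $m_{\cal O,\cal F''}$ is the multiplicity of $\IC(\cal O,\cal F'')$ as a summand of $\ind_{L\subseteq P}^G(\IC(\cal O_1,\cal E_1))$.

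From this formula both directions of the claimed equivalence are immediate: $\cal E$ appears as a summand of the left-hand side if and only if $m_{\cal O,\cal E}>0$, which is exactly the condition that $\IC(\cal O,\cal E)$ is a direct summand of $\ind_{L\subseteq P}^G(\IC(\cal O_1,\cal E_1))$. The only delicate point is the cohomological support bound for IC sheaves used in the middle case, which I invoke as a standard property of the perverse $t$-structure on $\Dbc(\cal N_G/G)$ and which is the reason the degree~$2d$ (equivalently~$-\dim\cal O$) picks out precisely the $\IC$-summands supported with open stratum~$\cal O$.
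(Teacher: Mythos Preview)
Your proof is correct and follows essentially the same approach as the paper: both use \Cref{thm42} to reduce to analyzing $H^{-\dim\cal O}(\cal F|_{\cal O})$ for $\cal F=\ind_{L\subseteq P}^G(\IC(\cal O_1,\cal E_1))$, then invoke the support conditions characterizing IC sheaves to see that only summands with open stratum $\cal O$ contribute in that degree. The paper packages this last step as a citation to the intrinsic description of the intermediate extension \cite[Lemma~3.3.3]{achar21}, whereas you spell out the stratum-by-stratum analysis explicitly via the semisimple decomposition from \Cref{thm70}; these are the same argument.
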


\begin{proof}
For brevity, write
$\cal F = \ind_{L\subseteq P}^G\bigl(\IC(\cal O_1,\cal E_1)\bigr)$.
By \Cref{thm42}, the local system $\cal E$ is a summand
of $H^{2d}\bigl(i_!(K(\cal E_1))|_{\cal O}\bigr)$
if and only if $\cal E$ is a summand of
$H^{-\dim(\cal O)}(\cal F|_{\cal O})$.
Since the sheaf $\cal F$
(or if one likes, its corresponding shift
by $[-\dim(G)]$ on the stack) is perverse,
it follows from the intrinsic description
of the intermediate extension \cite[Lemma~3.3.3]{achar21}
that this happens if and only if
$\IC(\cal O,\cal E)$ is a direct summand of~$\cal F$.
\end{proof}

\begin{corollary}\label{compcor1}
Our notion of connected cuspidal support agrees with that of \Cref{thm44}:
if $(\cal O,\cal E)$ has cuspidal support $(\cal O_1,\cal E_1)$
in Lusztig's sense (\Cref{thm44})
then $\IC(\cal O,\cal E)$ has connected cuspidal support
$\IC(\cal O_1,\cal E_1)$ in our sense (\Cref{thm32}).
\end{corollary}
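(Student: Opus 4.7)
The plan is to deduce \Cref{compcor1} as a fairly immediate consequence of \Cref{thm47} together with a matching of the cuspidality conditions on the triple $(L,\cal O_1,\cal E_1)$. Recall from \Cref{thm32,thm14} that $\IC(\cal O,\cal E)$ has connected cuspidal support $[L,\cal O_1,\cal E_1]$ (in our sense) precisely when $(L,\cal O_1,\cal E_1)$ is $\Par^\circ$-cuspidal and $\IC(\cal O,\cal E)$ arises as a direct summand of $\ind_{L\subseteq P}^G\bigl(\IC(\cal O_1,\cal E_1)\bigr)$ for some parabolic $P$ of~$G$ with Levi factor~$L$. Thus we need to verify these two conditions under the hypothesis that $(\cal O_1,\cal E_1)$ is the Lusztig-cuspidal support of $(\cal O,\cal E)$.

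First I would extract the summand condition. Assuming $(\cal O,\cal E)$ has Lusztig cuspidal support $(\cal O_1,\cal E_1)$, \Cref{thm44} says in particular that $\cal O\subseteq\overline Y$ and that $\cal E$ is a direct summand of $H^{2d}\bigl(i_!(K(\cal E_1))|_{\cal O}\bigr)$. Applying \Cref{thm47} immediately rephrases this as the statement that $\IC(\cal O,\cal E)$ is a direct summand of $\ind_{L\subseteq P}^G\bigl(\IC(\cal O_1,\cal E_1)\bigr)$, which is exactly the membership-in-the-induction-series condition of our \Cref{thm14}.

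Second I would verify that Lusztig-cuspidality of $(L,\cal O_1,\cal E_1)$ coincides with $\Par^\circ$-cuspidality in our sense. When $L=L^\circ$ is connected this is classical: it is proved in \cite[6.5(a,b)]{lusztig84b} that Lusztig's definition (in terms of the cohomology of $i_!K(\cal E_1)$ vanishing for all proper parabolics) agrees with the standard characterization via vanishing of parabolic restrictions, which is our \Cref{thm22} read off on the connected group~$L^\circ$. For general disconnected~$L$, our $\Par^\circ$-cuspidality is defined (\Cref{thm63} and \Cref{thm46}) by restriction to the identity component: $(L,\cal O_1,\cal E_1)$ is $\Par^\circ$-cuspidal exactly when an irreducible summand of $\res^L_{L^\circ}\bigl(\IC(\cal O_1,\cal E_1)\bigr)$ is cuspidal in the connected sense. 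One checks Lusztig's definition shares precisely this reduction behavior, so the two notions agree.

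Combining these two steps produces a triple $[L,\cal O_1,\cal E_1]$ that both lies in the support of $\IC(\cal O,\cal E)$ under $\ind_{L\subseteq P}^G$ and is $\Par^\circ$-cuspidal, which by the uniqueness half of \Cref{thm14} is the connected cuspidal support in our sense. The only place where genuine content intervenes is the translation between Lusztig's cohomological summand criterion and the perverse-sheaf summand criterion, which is exactly what \Cref{thm47} accomplishes via the shift bookkeeping in \Cref{thm42}; once that is set up, the rest is essentially a matter of unpacking definitions, as was anticipated in the section's opening. The main (and only genuinely delicate) obstacle is thus ensuring the shift $d=(v_G-\tfrac12\dim\cal O)-(v_L-\tfrac12\dim\cal O_1)$ and Lusztig's unnormalized IC convention line up with our perverse-sheaf conventions; this has already been settled in the proofs of \Cref{thm42,thm47}.
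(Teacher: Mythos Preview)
Your proposal is correct and follows the same approach as the paper: deduce the corollary from \Cref{thm47}, which translates Lusztig's cohomological summand criterion into the perverse-sheaf summand criterion. In fact the paper states \Cref{compcor1} without a separate proof, treating it as immediate from \Cref{thm47}; your write-up simply makes explicit the cuspidality matching and the appeal to uniqueness in \Cref{thm14} that the paper leaves implicit.
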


In particular, when $G=G^\circ$,
our connected cuspidal support map agrees with Lusztig's
and $(\cal O,\cal E)$ is cuspidal in Lusztig's sense
if and only if $\IC(\cal O,\cal E)$ is cuspidal in our sense.

In \cite[\S 4]{AMS18}, the connected cuspidal support
is defined in a slightly different (but equivalent) way, as follows.
Clifford theory (see e.g.\ \cite[1.2]{AMS18})
asserts that for $\rho \in \Irr(A_{G}(u))$
there is a $\rho^{\circ} \in \Irr(A_{G^{\circ}}(u))$,
unique up to $A_{G}(u)$-conjugacy, such that
\[
\rho = \text{Ind}^{A_{G}(u)}_{A_{G}(u)_{\rho^{\circ}}}(\tau \otimes \rho^{\circ})
\]
for some $\tau$ as above.
Alternatively and equivalently,
we can define $\rho^\circ$
as any chosen irreducible subrepresentation of
$\Res_{A_{G^\circ}(u)}^{A_G(u)}(\rho)$.
Then \cite[(39)]{AMS18} defines the connected cuspidal support of $(u, \rho)$
to be the $G$-conjugacy class of the cuspidal support of $(u, \rho^{\circ})$.

\begin{lemma}
Let $\cal F\in\Irr\bigl(\Perv(\cal U_G/G)\bigr)$ and
let $\bigl(u\in\cal U_G,\rho\in\Irr(A_G(u))\bigr)$
correspond to~$\cal F$.
Then the connected cuspidal support of $\cal F$ in our sense
corresponds to the (connected) cuspidal support of $(u,\rho)$
in the sense of \cite[(39)]{AMS18}.
\end{lemma}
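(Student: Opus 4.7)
The plan is to unpack both definitions and show that each reduces to ``take restriction to $G^\circ$, then apply Lusztig's connected cuspidal support,'' so the compatibility follows immediately from \Cref{thm37} and \Cref{compcor1}.

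First I would identify the functor $\res_{G^\circ}^G$ explicitly. The correspondence defining it is $\cal U_{G^\circ}/G^\circ \leftarrow \cal U_{G^\circ}/G^\circ \to \cal U_G/G$, in which the left arrow is the identity and the right arrow is the natural map $q$ induced by the inclusion $G^\circ\subseteq G$. Hence $\res_{G^\circ}^G = q^*$ is just the forgetful functor $\Dbc(\cal U_G/G)\to\Dbc(\cal U_G/G^\circ)$. By \Cref{thm37}(2), the connected cuspidal support of $\cal F$ in our sense is the $G$-conjugacy class of the $G^\circ$-equivariant cuspidal support of any irreducible summand of $q^*\cal F$, and by \Cref{compcor1} this $G^\circ$-equivariant cuspidal support agrees with Lusztig's.

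Next I would translate $q^*\cal F$ through the sheaf--representation dictionary. Write $\cal F = \IC(\cal O_G,\cal E)$ with $\cal O_G = G\cdot u$ and $\cal E$ corresponding to $\rho\in\Irr(A_G(u))$ at the basepoint $u$. The $G$-orbit $\cal O_G$ decomposes into finitely many $G^\circ$-orbits, all of which are $G$-conjugate to $G^\circ\cdot u$. Under the stalk equivalence at $u$ for $G^\circ$-equivariance, the local system underlying $q^*\cal E$ on $G^\circ\cdot u$ corresponds to $\Res^{A_G(u)}_{A_{G^\circ}(u)}(\rho)$, which decomposes (with multiplicity) over the irreducible constituents of $\Res^{A_G(u)}_{A_{G^\circ}(u)}(\rho)$. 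Consequently, each irreducible summand of $q^*\cal F$ supported over $G^\circ\cdot u$ has the form $\IC(G^\circ\cdot u,\cal E_{\rho^\circ})$ for some irreducible component $\rho^\circ$ of $\Res^{A_G(u)}_{A_{G^\circ}(u)}(\rho)$; any irreducible summand supported over a different $G^\circ$-orbit in $\cal O_G$ is $G$-conjugate to one of these.

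To conclude, I would invoke \Cref{compcor1}: the connected cuspidal support (in our sense) of $\IC(G^\circ\cdot u,\cal E_{\rho^\circ})$ coincides with Lusztig's cuspidal support of the pair $(u,\rho^\circ)$, which, after passing to $G$-conjugacy classes, is exactly the AMS definition in \cite[(39)]{AMS18}. Independence of the various choices is automatic once we pass to $G$-conjugacy: different $G^\circ$-orbits in $\cal O_G$ are $G$-conjugate, and different irreducible $\rho^\circ$ appearing in $\Res^{A_G(u)}_{A_{G^\circ}(u)}(\rho)$ are $A_G(u)$-conjugate by Clifford theory \cite[1.2]{AMS18}. The main obstacle is purely bookkeeping: verifying that the forgetful functor from $G$-equivariant to $G^\circ$-equivariant perverse sheaves is intertwined, via the basepoint stalk equivalence, with restriction of representations from $A_G(u)$ to $A_{G^\circ}(u)$, including the identifications $A_{G^\circ}(gu)\simeq A_{G^\circ}(u)$ used to transport data between the $G$-conjugate $G^\circ$-orbits inside $\cal O_G$.
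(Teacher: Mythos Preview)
Your proposal is correct and follows essentially the same approach as the paper: both reduce to \Cref{thm37}(2), which identifies the connected cuspidal support of $\cal F$ with the cuspidal support of any irreducible summand of $\res_{G^\circ}^G(\cal F)$, and then match this with the AMS definition via the observation that such summands correspond to the irreducible constituents $\rho^\circ$ of $\Res^{A_G(u)}_{A_{G^\circ}(u)}(\rho)$. The paper's proof is the one-line ``This follows immediately from \Cref{thm37}''; your version spells out the sheaf--representation dictionary and the role of \Cref{compcor1} explicitly, which is a reasonable expansion but not a different argument.
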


\begin{proof}
This follows immediately from \Cref{thm37}.
\end{proof}

So we can say definitively that all notions
of connected cuspidal support are the same.

\section{Cuspidal support}
\label{sec:bernstein}
Let $F$ be a nonarchimedean local field and $G$ a reductive $F$-group.
A convenient starting point for organizing
the smooth dual $\Pi(G)$
is the supercuspidal representations
of $G$ and its Levi subgroups.
Every $\pi\in\Pi(G)$ 
arises as a subquotient of a (normalized) parabolic induction
$\ind_{L\subseteq P}^G(\sigma)$ of a supercuspidal
representation~$\sigma$ of a Levi subgroup $L$ of~$G$,
and by the Bernstein--Zelevinsky Geometrical Lemma,
the $G(F)$-conjugacy class of the pair $(L,\sigma)$ is unique.
We call such a pair $(L,\sigma)$
a \mathdef{cuspidal pair} for~$G$
and its conjugacy class $[L,\sigma]_G$
the \mathdef{supercuspidal support} $\Sc(\pi)$ of~$\pi$.
Writing $\Omega(G)$ for the set
of $G(F)$-conjugacy classes of cuspidal pairs,
we thus have a map
\begin{equation} \label{thm49}
\Sc \colon \Pi(G) \longrightarrow \Omega(G).
\end{equation}

In this section we define a Galois analogue 
\[
\Cusp \colon \Phi_\tn{e}^Z(G) \longrightarrow
\Omega_Z({}^LG),
\]
of the supercuspidal support map $\Sc$ above,
called the \mathdef{cuspidal support} map,
then use it to study~$\Phi_\tn{e}^Z(G)$.
The definition of~$\Cusp$ culminates
in \Cref{sec:bernstein:support}
after rewriting in the language of $L$-parameters
all the notions of the previous paragraph,
such as smooth irreducible representations,
supercuspidality of such, Levi subgroups,
supercuspidal support, and the Bernstein variety.
With the definition of cuspidal support complete,
we turn in \Cref{sec:bernstein:fibers}
to the problem of parameterizing
the fibers of the cuspidal support map.
After some refinement,
this parameterization yields
in \Cref{sec:bernstein:quotient}
a description of $\Phi_\tn{e}^Z(G)$
as a disjoint union of certain twisted extended quotients.
Although our constructions are inescapably
motivated by the Langlands correspondence,
we have chosen to wait until \Cref{sec:corr}
to explain the precise connection
to the representation theory of~$G(F)$.

All constructions in this section take place
on the level of dual groups,
so we may safely assume that $G$ is quasi-split.
We stress that the fundamental objects
in this section---chiefly, the dual Bernstein variety,
the shape of the cuspidal support map,
and twisted extended quotients---appear already
in earlier work of Haines \cite[\S5]{haines14} and
Aubert--Moussaoui--Solleveld \cite[\S7--9]{AMS18}.

\subsection{\texorpdfstring{$L$}{L}-parameters}
\label{sec:lparam}
The notion of $L$-parameter,
strictly speaking, is not well-defined:
there are various models (more precisely, definitions),
of $L$-parameters.
These models differ only in their handling
of the unipotent part of the $L$-parameter.
Although the sets~$\widetilde\Phi_\bullet(G)$
of $L$-parameters in two different models%
\footnote{Here $\bullet$ is a placeholder
for a subscript labeling the model.}
are not in bijection with each other,
their equivalence (that is, $\widehat G$-conjugacy) classes
$\Phi_\bullet(G)$ do lie in canonical bijection.

The two classical models of most relevance to us are
\begin{enumerate}
\item
the set $\widetilde\Phi_\tn{a}(G)$
of $L$-parameters $\bbG_\tn{a}\rtimes W_F\to{}^LG$ and

\item
the set $\widetilde\Phi_{\SL}(G)$
of $L$-parameters $W_F\times\SL_2\to{}^LG$.
\end{enumerate}
Ultimately, however, the most convenient model for us
is the following interpolation of these two.

\begin{definition}
Let $\widetilde\Phi(G)$  be the set of pairs $(\phi,u)$
in which $\phi\colon W_F\to{}^LG$ is 
an admissible homomorphism%
\footnote{That is, $\phi$ is an $L$-parameter in the sense of
\cite[8.2]{borel_corvallis} that is trivial on~$\bbG_\tn{a}$.}
and $u\in Z_{\widehat G}(\phi)$ is unipotent.
Let $\Phi(G) = \widetilde\Phi(G)/\widehat G$.
\end{definition}

We will sometimes write (for example)
$\Phi(G)$ instead of $\Phi({}^LG)$
since the former depends only on~$G$ through its $L$-group.

The three models of $L$-parameter are related
by the following diagram:
\begin{equation}\label{thm30}
\widetilde\Phi_\tn{a}(G) \longleftarrow
\widetilde\Phi_{\SL}(G) \longrightarrow
\widetilde\Phi(G).
\end{equation}
The first map is restriction along the homomorphism
$\bbG_\tn{a}\rtimes W_F \to W_F\times\SL_2$ defined by
$(t,w) \mapsto \iota(w)\cdot\smat1t{}1$,
where $\iota \colon W_F\to W_F\times\SL_2$ is the homomorphism
\[
\iota \colon w \mapsto w\cdot\begin{bmatrix}|w|^{1/2}&\\&|w|^{-1/2}\end{bmatrix}.
\]
The second map is the assignment
$\varphi\mapsto\bigl(\varphi|_{W_F},
\bigl(\smat11{}1\bigr)\bigr)$.
For convenience we say $\varphi$ \mathdef{enriches}
$(\phi,u)$ if $\varphi\mapsto(\phi,u)$ in this way.
By the Jacobson--Morozov theorem for~$\SL_2$
(or more precisely, a variant of it
\cite[Lemma~2.1]{gross_reeder} in the first case),
the maps in \eqref{thm30} are surjective and
induce bijections on equivalence classes.

The model $\widetilde\Phi_\tn{a}(G)$ is relevant for us
because it is closely related to the following 
important notion of Vogan \cite{vogan93}
and Haines \cite{haines14}. 

\begin{definition}
The \mathdef{infinitesimal character} of
$\varphi\in\widetilde\Phi_\tn{a}(G)$
is the $\widehat G$-conjugacy class
of the admissible homomorphism
$\varphi|_{W_F}\colon W_F\to{}^LG$.
\end{definition}

In light of the comparison between $\widetilde\Phi_\tn{a}(G)$
and $\widetilde\Phi_{\SL}(G)$ above,
we define the infinitesimal character
of $\varphi\in\widetilde\Phi_{\SL}(G)$
to be the $\widehat G$-conjugacy class of $\varphi\circ\iota$.
As for the model $\widetilde\Phi(G)$,
here one can only define an infinitesimal
character on the level of equivalence classes,
not for the $L$-parameters themselves,
since the restriction map $\iota$ uses
the entire $\SL_2$-homomorphism,
not just its unipotent part.

\begin{remark}
When one performs constructions with $L$-parameters,
it often happens that a construction is easier to state
using one model rather than some alternative second model,
in the sense that for the first model the construction
is defined on the level of $L$-parameters,
not just equivalence classes thereof.
This event does not present a technical obstructions to switching models
because any such construction should have the feature that
equivalent inputs produce conjugate outputs,
and so one can simply transfer the construction
to equivalence classes of $L$-parameters in the second model
using the bijection with those of the first model.

However, it often happens that in the second model,
the transferred construction does not lift to
a well-defined construction on $L$-parameters in the second model,
as we saw for the infinitesimal character.
This phenomenon can make constructions that mix models
--- for instance, our definition of the cuspidal support
map for $L$-parameters -- more subtle to define
because it forces one to make choices,
then check that the construction is independent
of all choices up to conjugacy.
This problem appears to be an inescapable
feature of the theory.
\end{remark}

\begin{warning}
The models $\widetilde\Phi_\tn{a}(G)$ and $\widetilde\Phi(G)$
of $L$-parameters are superficially similar,
as they both involve an admissible homomorphism
$W_F\to{}^LG$ and a choice of unipotent element of~$\widehat G$.
Although it is always possible to arrange for the representatives
of the equivalence classes of $L$-parameters on the left and right 
of \eqref{thm30} which are the image of some fixed class in
$\widetilde{\Phi}_{\SL_{2}}$ to have the same unipotent element,
typically the admissible homomorphisms of the two representatives
are genuinely different, though they differ only
on the image of Frobenius.
In general the difference between the two
is not an unramified twist,
though it will be in a particular case
that arises in the construction of
the cuspidal support map for $L$-parameters.
\end{warning}

\subsection{Enhancements}
\label{sec:corr:enh}
As we mentioned in the previous section,
all models of $L$-parameter ultimately give rise
to the same notion because equivalence classes
in different models are in bijection.
Such a simple state of affairs no longer exists
for enhancements of $L$-parameters, however:
there are many essentially different ways one can enhance
an $L$-parameter to capture information
about the internal structure of $L$-packets.
For the most part, the choice of enhancement
has to do with the flavor of inner form
one works with on the automorphic side,
for instance, pure, extended pure, or rigid.
We point the reader to \cite{kaletha_taibi}
for a summary of the various enhancements in vogue.

In this article we will use the enhancement due to Kaletha
\cite{Kal16}, with a brief comparison in \Cref{thm50}
to the enhancement used in \cite{AMS18}.
For brevity we work only on the Galois side,
deferring the explanation of the relationship
with rigid inner forms to \Cref{thm50}.

Our enhancement depends on a finite subgroup $Z\subseteq Z_G$,
which we may take as fixed for the rest of the article.
Let $\widehat G^+ \defeq\widehat{G/Z}$.
More generally, given a subgroup $H\subseteq\widehat G$,
let $H^+$ denote the preimage of~$H$ in $\widehat G^+$
under the isogeny $\widehat G^+\to\widehat G$.

\begin{definition} \label{enhancedparamdef}
A ($Z$-)\mathdef{enhanced $L$-parameter} for $G$ (or for ${}^LG$)
is a triple $(\phi,u,\rho)$ consisting of an $L$-parameter $(\phi,u)$ of~$G$ and
an irreducible complex representation $\rho$ of~$\pi_0(Z_{\widehat G}(\phi,u)^+)$.
\end{definition}

The representation~$\rho$ is called the \mathdef{enhancement}.
Write $\Phi_\tn{e}^Z(G)$ for the set of
equivalence (that is $\widehat G$-conjugacy)
classes of $Z$-enhanced $L$-parameters.

In the more widely-used $\SL_2$-model for $L$-parameters,
an enhancement of $\varphi\in\widetilde\Phi_{\SL}(G)$
is by definition a representation of
$\pi_0\bigl(Z_{\widehat G}(\varphi)^+\bigr)$.
The notions end up being the same because
the two groups are canonically isomorphic.
Even better, before passage to~$\pi_0$
there is already a pleasant relationship
between $Z_{\widehat G}(\varphi)^+$
and $Z_{\widehat G}(\phi,u)^+$ when $\varphi$ enriches $(\phi,u)$.

\begin{fact} \label{thm31}
Let $(\phi,u)$ be an $L$-parameter of~$G$.
\begin{enumerate}
\item
The group $Z_{\widehat G}(\phi,u)^+$ admits a Levi decomposition
\[
Z_{\widehat G}(\phi,u)^+ \simeq \Ru\bigl(Z_{\widehat G}(\phi,u)\bigr)
\rtimes Z_{\widehat G}(\phi,u)^+_\tn{red}
\]
and any two Levi factors (that is, maximal reductive subgroups)
are conjugate under $\Ru\bigl(Z_{\widehat G}(\phi,u)\bigr)$.

\item
Every Levi factor of~$Z_{\widehat G}(\phi,u)^+$
is of the form $Z_{\widehat G}(\varphi)^+$ for some
enrichment $\varphi\in\widetilde\Phi_{\SL}(G)$ of~$(\phi,u)$.
\end{enumerate}
\end{fact}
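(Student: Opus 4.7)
The plan is to reduce the statement to a general structural result about centralizers of unipotent elements in (possibly disconnected) reductive groups, obtained via the Jacobson--Morozov theorem applied inside the centralizer of the Weil-group part of the parameter.

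First, I would establish that $M \defeq Z_{\widehat G}(\phi)^+$ is reductive. Since $\phi$ is an admissible homomorphism, $\phi(w)$ is semisimple in ${}^LG$ for every $w \in W_F$, and the centralizer in $\widehat G$ of a commuting collection of semisimple elements of ${}^LG$ is known to be reductive (by the standard Steinberg-type argument for centralizers of semisimple quasi-automorphisms of $\widehat G$). Because $\widehat G^+ \to \widehat G$ is a central isogeny with finite kernel consisting of semisimple elements, the preimage of a reductive subgroup is again reductive with the same unipotent radical, so $M$ is reductive and $Z_M(u) = Z_{\widehat G}(\phi,u)^+$.

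Next, for (1), apply the Jacobson--Morozov theorem inside the connected reductive group $M^\circ$ to the unipotent element $u \in M^\circ$: this yields an $\mathfrak{sl}_2$-triple $(e,h,f) \subseteq \Lie(M^\circ)$ with $u = \exp(e)$, and hence a homomorphism $\phi_u \colon \SL_2 \to M^\circ$ with $\phi_u\bigl(\smat{1}{1}{}{1}\bigr) = u$. For the connected centralizer one has the classical Levi decomposition $Z_{M^\circ}(u) = \Ru\bigl(Z_{M^\circ}(u)\bigr) \rtimes Z_{M^\circ}(e,h,f)$. To extend this to $Z_M(u)$, observe that because the logarithm of a unipotent element is functorial, every $m \in Z_M(u)$ automatically centralizes $e$; then $m$ carries the triple $(e,h,f)$ to another $\mathfrak{sl}_2$-triple completing $e$, and by Kostant's theorem such triples form a single orbit under $\Ru\bigl(Z_{M^\circ}(e)\bigr) \subseteq \Ru\bigl(Z_{M^\circ}(u)\bigr)$. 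This lets us modify $m$ within its coset of $Z_{M^\circ}(u)$ to a representative fixing the triple, giving the surjection $Z_M(e,h,f) \twoheadrightarrow Z_M(u)/\Ru\bigl(Z_{M^\circ}(u)\bigr)$ and thus the desired Levi decomposition. Conjugacy of Levi factors under $\Ru\bigl(Z_M(u)\bigr)$ then follows from the Mostow--Chevalley theorem in characteristic zero.

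For (2), given a Levi factor produced from some triple $(e,h,f)$ through $u$, define $\varphi \colon W_F \times \SL_2 \to {}^LG$ by $\varphi(w,s) = \phi(w)\,\phi_u(s)$. This is a homomorphism because $\phi_u$ takes values in $M^\circ \subseteq Z_{\widehat G}(\phi)$, and it is admissible; moreover $\varphi$ enriches $(\phi,u)$ by construction. A direct computation gives
\[
Z_{\widehat G}(\varphi) = Z_{\widehat G}(\phi) \cap Z_{\widehat G}(\phi_u) = Z_M(\phi_u) = Z_M(e,h,f),
\]
whence passing to the preimage in $\widehat G^+$ identifies $Z_{\widehat G}(\varphi)^+$ with the Levi factor constructed in (1). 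Since all Levi factors are conjugate, every Levi factor arises this way for a suitable enrichment.

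The main obstacle I anticipate is the disconnected component-counting step in (1): verifying that $Z_M(e,h,f)$ meets every connected component of $Z_M(u)$, which requires the combination of the equivariance of $\log$ on unipotent elements and Kostant's transitivity on $\mathfrak{sl}_2$-completions. Everything else is either standard structure theory or a direct substitution.
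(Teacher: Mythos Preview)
Your proposal is correct and essentially matches the argument the paper has in mind: the paper does not give its own proof but cites Barbasch--Vogan, Kostant, \cite[3.7.3]{collingwood_mcgovern93}, and \cite[2.4]{kazhdan_lusztig87}, which is precisely the Jacobson--Morozov/Kostant argument you outline (completion to an $\mathfrak{sl}_2$-triple, transitivity of $\Ru(Z_{M^\circ}(e))$ on completions, and identification of the Levi factor with the centralizer of the $\SL_2$-map). The only minor difference is that the paper remarks that part~(1) holds for \emph{any} linear algebraic group in characteristic zero by Mostow's theorem, whereas you obtain (1) in this specific situation as a byproduct of the $\mathfrak{sl}_2$ construction that you need anyway for~(2); both routes are standard and yield the same conclusion.
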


Here $\Ru(-)$ denotes the unipotent radical
and $(-)_\tn{red}$ the maximal reductive quotient.
\Cref{thm31} is originally due to Barbasch--Vogan and Kostant;
see \cite[3.7.3]{collingwood_mcgovern93}
and \cite[2.4]{kazhdan_lusztig87} for discussion.
The first part holds not just for $Z_{\widehat G}(\phi,u)^+$,
but for any linear algebraic group (as $\bbC$ has characteristic zero).

\begin{corollary}\label{cuspidalparameterlemma}
Let $(\phi,u)$ be an $L$-parameter of~$G$.
If $\varphi\in\widetilde\Phi_{\SL}(G)$ enriches~$(\phi,u)$ then
\[
\pi_0\bigl(Z_{\widehat G}(\phi,u)^+\bigr)
\simeq \pi_0\bigl(Z_{\widehat G}(\varphi)^+\bigr).
\]
\end{corollary}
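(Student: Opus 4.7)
The plan is to apply \Cref{thm31} directly: the Levi decomposition it provides reduces the claim to the observation that the unipotent radical of a linear algebraic group over $\bbC$ is connected.

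First I would show that the enrichment $\varphi$ induces an inclusion $Z_{\widehat G}(\varphi) \subseteq Z_{\widehat G}(\phi,u)$ on $\widehat G$-centralizers, and hence an inclusion
\[
Z_{\widehat G}(\varphi)^+ \subseteq Z_{\widehat G}(\phi,u)^+
\]
after pulling back along the isogeny $\widehat G^+ \to \widehat G$. This is a direct verification: any $g \in \widehat G$ centralizing $\varphi$ centralizes both $\phi = \varphi|_{W_F}$ and $u = \varphi\bigl(1,\smat11{}1\bigr)$, and the claim then transfers to the $(-)^+$ variants since these are pullbacks along $\widehat G^+ \to \widehat G$.

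Next I would invoke \Cref{thm31}(2), which asserts that every Levi factor of $Z_{\widehat G}(\phi,u)^+$ is of the form $Z_{\widehat G}(\varphi')^+$ for some enrichment $\varphi'$ of $(\phi,u)$. Since enrichments exist and any two are conjugate under $Z_{\widehat G}(\phi,u)$ by the Jacobson--Morozov theorem for $\SL_2$ (as recorded in the discussion around \eqref{thm30}), and since Levi factors are also mutually conjugate under $\Ru(Z_{\widehat G}(\phi,u))$ by \Cref{thm31}(1), it follows that $Z_{\widehat G}(\varphi)^+$ is itself a Levi factor of $Z_{\widehat G}(\phi,u)^+$ for our given $\varphi$.

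Finally, the Levi decomposition of \Cref{thm31}(1) reads
\[
Z_{\widehat G}(\phi,u)^+ \simeq \Ru\bigl(Z_{\widehat G}(\phi,u)\bigr) \rtimes Z_{\widehat G}(\varphi)^+.
\]
Since $\Ru\bigl(Z_{\widehat G}(\phi,u)\bigr)$ is unipotent and we work in characteristic zero, it is connected, so it contributes nothing to $\pi_0$. Passing to component groups therefore yields the desired isomorphism. I expect no serious obstacle here; the only mild subtlety is in checking that the inclusion of centralizers is compatible with the pullback along $\widehat G^+ \to \widehat G$, and that the Levi structure on $Z_{\widehat G}(\phi,u)^+$ from \Cref{thm31} genuinely identifies with the inclusion produced in the first step.
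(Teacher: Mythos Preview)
Your proof is correct and follows exactly the approach the paper intends: the corollary is stated without proof precisely because it is an immediate consequence of \Cref{thm31}, and you have simply spelled out the details of that deduction. The only thing the paper leaves implicit that you make explicit is the conjugacy argument showing that your particular enrichment $\varphi$ (rather than just some enrichment) yields a Levi factor.
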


\subsection{Levi \texorpdfstring{$L$}{L}-subgroups}
\label{sec:bernstein:levi}

For the convenience of the reader we review
this classical notion here.

\begin{definition}
A subgroup $\cal M$ of ${}^LG$
is a \mathdef{Levi $L$-subgroup}
if one of the equivalent properties below holds.
\begin{enumerate}
\item
$\cal M$ is the centralizer in~${}^LG$ of a torus of~$\widehat G$
and the projection $\cal M\to\Gamma$ is surjective.

\item
$\cal M\subseteq{}^LG$ is conjugate
to a subgroup of the form $\widehat M\rtimes W_F$
for $\widehat M$ a $W_F$-stable
Levi subgroup of~$\widehat G$.
\end{enumerate}
\end{definition}

These definitions go back to Borel's Corvallis article
\cite[3.3, 3.4]{borel_corvallis}.
For a proof of their equivalence,
see \cite[6.2]{AMS18}.
The essential value of the definition 
is that for $G$ quasi-split,
there is a canonical bijection between Levi subgroups of~$G$
and Levi $L$-subgroups of~${}^LG$,
which is compatible with formation of $L$-groups.

Let $\cal M$ be a Levi $L$-subgroup.
The central subgroup $Z\subseteq G$
lies in every Levi subgroup of~$G$,
in particular a Levi~$M$ corresponding to~$\cal M$.
We may therefore speak of $Z$-enhanced $L$-parameters
for any Levi $L$-subgroup~$\cal M$.

\begin{remark}
In this remark, take as ${}^LG$ the minimal form
${}^LG = \widehat G\rtimes\Gal(E/F)$
where $E$ is the splitting field of~$G$.
With this convention ${}^LG$ is a disconnected linear algebraic group,
and thus fits into the framework of \Cref{sec:springer}.
It is natural to wonder whether there is any relationship
between the two types of disconnected Levi subgroups
that arise in this article, Levi $L$-subgroups
and the quasi-Levi subgroups of \Cref{thm51}.
We cannot find any relationship;
in particular, neither property implies the other in general
(cf.~\cite[6.3]{AMS18}).

On the one hand, there is a bijection between quasi-Levi subgroups
and Levi subgroups of the identity component,
and the components group of a quasi-Levi subgroup
can vary for different quasi-Levis.
On the other hand, not every Levi subgroup of~$\widehat G$
is the identity component of a Levi $L$-subgroup
and the components group of a Levi $L$-subgroup
of ${}^LG$ is always the same, $\Gal(E/F)$.
\end{remark}

\subsection{Cuspidal enhanced $L$-parameters}
In this section we define a class of enhanced $L$-parameters
$(\phi,u,\rho)$ which ought to correspond to supercuspidal representations. 
The definition passes through the theory of cuspidality for equivariant
perverse sheaves on the nilpotent cone as in \Cref{sec:springer},
so the main task is to interpret the enhancement as such a sheaf,
or equivalently, by \Cref{thm32}, as an irreducible representation
of $\pi_0\bigl(Z_{H^+}(u)\bigr)$ for some
reductive group~$H^+$ and unipotent element $u\in H^+$.
This interpretation arises from computing
the centralizers in two steps instead of all at once:
writing $H^+ \defeq Z_{\widehat G}(\phi)^+ = Z_{\widehat G^+}(\phi)$,
we see that
\[
Z_{\widehat G}(\phi,u)^+ = Z_{H^+}(u).
\]

\begin{definition}
An enhanced $L$-parameter $(\phi,u,\rho)$ is \mathdef{cuspidal}
if $(\phi,u)$ is discrete and $(u,\rho)$
is cuspidal in the sense of \Cref{thm32}.
\end{definition}

We denote the set of equivalence classes
of cuspidal enhanced $L$-parameters
by $\Phi_\tn{e,cusp}^Z({}^LG)$.
In our model of $L$-parameter,
we declare $(\phi,u)$ to be discrete
if $Z_{\widehat G}(\phi,u)/Z(\widehat G)^\Gamma$ is finite.
By \Cref{thm31}, this definition is compatible with the usual definition,
that $Z_{\widehat G}(\varphi)/Z(\widehat G)^\Gamma$ be finite
(say, for any $\varphi$ enriching $(\phi,u)$).

\subsection{The dual Bernstein variety}
\label{sec:bernstein:variety}
The goal of this section is to define 
an analogue for $L$-parameters of the Bernstein variety.
To orient ourselves, we recall
the construction of the classical Bernstein variety~$\Omega(G)$.

First, we organize $\Omega(G)$ into inertial equivalence classes
$\llbracket M,\sigma \rrbracket_M$,
which will eventually form the connected components
of the variety enriching the set~$\Omega(G)$.
Second, the group $X^*_\tn{nr}(G)$
of unramified characters of~$G(F)$
acts by twisting on the set $\Irr_\tn{sc}(G)$
of supercuspidal representations of~$G(F)$.
We use this action to make $\Irr_\tn{sc}(G)$ a variety,
giving each orbit for this action the quotient structure inherited
from the algebraic torus $X^*_\tn{nr}(G)$.
To ensure that the quotients are algebraic
we use that the isotropy groups of this action are finite:
more precisely, restricting a twisted representation to~$Z(G)$
shows that the isotropy groups are contained in the kernel of the map
\[
X^*_\tn{nr}(G) \to X^*_\tn{nr}\bigl(Z(G)\bigr),
\]
which is finite.
Third, we topologize an inertial equivalence
class $\llbracket M,\sigma \rrbracket_G$
using the covering map
\[
\llbracket M,\sigma \rrbracket_M
\to \llbracket M,\sigma \rrbracket_G
\]
by observing that this map
realizes the target as the quotient of the source
by the action of the finite group $W(G,M)_\sigma$.
So all in all, the components of the resulting
variety $\Irr_\tn{sc}(G)$
are quotients of $X^*_\tn{nr}(G)$
by the action of a finite group:
\begin{equation} \label{thm28}
\llbracket M,\sigma \rrbracket_G
\simeq \frac{X^*_\tn{nr}(M)}{X^*_\tn{nr}(M)_\sigma\rtimes W(G,M)_\sigma}.
\end{equation}

\begin{warning}
Strictly speaking, $\Omega(G)$ is not a variety in the usual sense,
but rather, an infinite disjoint union of varieties.
The same will be true of~$\Omega({}^LG)$.
This is because when $G$ is nontrivial
$\Omega(G)$ has infinitely many
connected components, reflecting the fact that $G(F)$
has representations of arbitrarily large depth.
Following the conventions in the literature,
we ignore this distinction.
\end{warning}

We will now carry out these three steps for $L$-parameters,
after first defining the set underlying the dual Bernstein variety.
A \mathdef{cuspidal datum} for~${}^LG$ is
a quadruple $(\cal M,\psi,v,\rho)$
where $\cal M$ is a Levi $L$-subgroup of~$G$
and $(\psi,v,\rho)$ is a cuspidal enhanced $L$-parameter for~$\cal M$.
The group~$\widehat G$ acts by conjugation on the set of cuspidal data.
We call the quotient $\Omega_Z({}^LG)$ by this action
the \mathdef{dual Bernstein variety} of~$G$.

For the first step, the $L$-parameter analogue of the torus of
unramified characters of~$G(F)$ is the torus
\[
X^*_\tn{nr}({}^LG) \defeq Z(\widehat G)^{I_F,\circ}_\tn{Frob}.
\]
Indeed \cite[3.3.1]{haines14},
the Kottwitz homomorphism induces an isomorphism
\begin{equation} \label{thm54}
X^*_\tn{nr}({}^LG) \simeq X^*_\tn{nr}(G).
\end{equation}
In other words, write $\val_F\colon W_F\to\bbZ$
for the composition of the Artin reciprocity map
with the usual valuation on~$F^\times$.
There is a map $X^*_\tn{nr}({}^LG)\to\Phi({}^LG)$
which we denote by $z\mapsto z^{\val_F}$
and is defined to send $z$
to the $L$-parameter~$\tilde z^{\val _F}$,
where $\tilde z$ is a lift of~$z$;
the resulting equivalence class is independent
of the choice of~$\tilde z$.
If $z$ corresponds to the unramified character~$\chi$
of~$G(F)$ then $z^{\val_F}$ is the $L$-parameter of~$\chi$.
Under the local Langlands correspondence,
the action of $X^*_\tn{nr}(G)$ on~$\Pi(G)$
becomes the action of $X^*_\tn{nr}({}^LG)$
on admissible homomorphisms $\phi\colon W_F\to{}^LG$ by
\[
(z,\phi)\mapsto z^{\val_F}\cdot\phi.
\]
A short check shows that this formula is again
independent of the choice of lift of~$z$,
up to equivalence.
We lift the twisting action to~$\Phi_\tn{e}^Z({}^LG)$
by imposing the trivial action on
the unipotent element and the enhancement.

\begin{definition}
Two cuspidal data $(\cal M,\psi,v,\rho)$
and $(\cal M',\psi',v',\rho')$ are
(${}^LG$-)\mathdef{inertially equivalent} if
there are $g\in\widehat G$ and $z\in X^*_\tn{nr}(\cal M)$ such that
\begin{equation} \label{thm26}
{}^g(\cal M',\psi',v',\rho') = (\cal M,z^{\val_F}\cdot\psi,v,\rho).
\end{equation}
Let $\llbracket \cal M,\psi,v,\rho\rrbracket_{\widehat G}$
denote the inertial equivalence class of $(\cal M,\psi,v,\rho)$.
\end{definition}

There is a slight subtlety here:
an inertial equivalence class is not a subset of~$\Omega_Z({}^LG)$,
simply because the elements of the equivalence
class are $L$-parameters, not equivalence classes of such,
and although the set $\llbracket\cal M,\psi,v,\rho\rrbracket_{\widehat G}$
likely admits a reasonable variety structure,
the variety of interest should be some quotient of it.
We write
\[
\llbracket \cal M, [\psi,v,\rho]_{\cal M^\circ}\rrbracket_{\widehat G}
\]
for the image of $\llbracket\cal M,\psi,v,\rho\rrbracket_{\widehat G}$
in~$\Omega_Z({}^LG)$.
The notation is motivated by observing that \eqref{thm26}
defines an equivalence relation on quadruples
$(\cal M,[\psi,v,\rho]_{\cal M^\circ})$
in which the second entry is not an $L$-parameter,
but an equivalence class thereof.
This completes the first step,
the coarse identification of
the connected components of $\Omega_Z({}^LG)$.

\begin{remark}
The equivalence relation of inertial equivalence
does not quite arise from a group action
since the group of unramified characters
depends on~$\cal M$.
Rather, it should be possible to describe
the equivalence classes using the action
of a certain groupoid of unramified characters
of Levi $L$-subgroups.
\end{remark}

For the second step, we use the twisting action
to enrich $\Phi_\tn{e,cusp}^Z({}^LG)$ to a variety
by giving each orbit under the action
the quotient variety structure inherited
from $X^*_\tn{nr}({}^LG)$.
To ensure that the quotients are algebraic
we must again check that the isotropy groups are finite.

\begin{lemma}
The action of $X^*_\tn{nr}({}^LG)$ on~$\Phi({}^LG)$ has finite isotropy groups.
\end{lemma}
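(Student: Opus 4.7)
The plan is to show that $\Lie(\mathrm{Stab}([\phi])) = 0$ inside the Lie algebra of the torus $X^*_\tn{nr}({}^LG)$; since the stabilizer is a closed subgroup of a torus, vanishing of its Lie algebra suffices for finiteness. Suppose $z$ stabilizes $[\phi]$, so there is $g \in \widehat G$ with $g\phi(w)g^{-1} = z^{\val_F(w)}\phi(w)$ for all $w \in W_F$. Restricting to $w \in I_F$ (where $\val_F$ vanishes) forces $g \in H \defeq Z_{\widehat G}(\phi|_{I_F})$, a (possibly disconnected) reductive subgroup because admissibility sends $\phi(I_F)$ into the semisimple elements. Lifting $z$ to $\tilde z \in Z(\widehat G)^{I_F,\circ}$ and evaluating at a Frobenius lift yields the master equation
$$\tilde z = g\,\tau(g)^{-1}, \qquad \tau \defeq \Ad(\phi(\mathrm{Fr})) \in \Aut(H);$$
a direct calculation in ${}^LG = \widehat G \rtimes W_F$ shows $\tau|_{Z(\widehat G)^{I_F}} = \sigma$, where $\sigma$ denotes the Frobenius Galois action.

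The incidence scheme $\{(z,g) : g\phi g^{-1} = z^{\val_F}\phi\}$ is a $Z_{\widehat G}(\phi)$-torsor over $\mathrm{Stab}([\phi])$, hence admits a local algebraic section through $(1,1)$ by smoothness of $\widehat G$. Differentiating the master equation along such a section, every $Y \in \Lie(\mathrm{Stab}([\phi]))$ lifts, modulo the kernel of the projection $\frak z(\widehat{\frak g})^{I_F}\twoheadrightarrow \Lie(X^*_\tn{nr}({}^LG)) = \frak z(\widehat{\frak g})^{I_F}/(1-\sigma)\frak z(\widehat{\frak g})^{I_F}$, to an element of the form $(1-\tau)X$ with $X \in \frak h \defeq \Lie(H)$. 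The task reduces to showing that every such $(1-\tau)X$ lying in $\frak z(\widehat{\frak g})^{I_F}$ automatically lies in $(1-\sigma)\frak z(\widehat{\frak g})^{I_F}$.

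Decompose $\frak h = \frak z(\frak h) \oplus [\frak h,\frak h]$. The derived summand is $\tau$-stable and semisimple, hence meets $\frak z(\widehat{\frak g})$ trivially (semisimple Lie algebras are centerless); its contribution to the central image vanishes, so we may take $X \in \frak z(\frak h)$. Admissibility forces $\phi(\mathrm{Fr})$ to be semisimple, which makes $\tau$ act semisimply on $\frak z(\frak h)$, so there exists a $\tau$-stable complement $\frak a$ to $\frak z(\widehat{\frak g})^{I_F}$ inside $\frak z(\frak h)$. Splitting $X = X_1 + X_2$ along this decomposition and using $\tau|_{\frak z(\widehat{\frak g})^{I_F}} = \sigma$,
$$(1-\tau)X = (1-\sigma)X_1 + (1-\tau)X_2.$$
The second summand lies in $\frak a$ because $\frak a$ is $\tau$-stable, while the total lies in $\frak z(\widehat{\frak g})^{I_F}$, so $(1-\tau)X_2 \in \frak a \cap \frak z(\widehat{\frak g})^{I_F} = 0$. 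Consequently $Y$ is represented by $(1-\sigma)X_1$, which is zero in $\Lie(X^*_\tn{nr}({}^LG))$.

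The main obstacle is the existence of the $\tau$-stable complement $\frak a$: it hinges on the semisimplicity of $\tau$ on $\frak z(\frak h)$, which is an input from the admissibility hypothesis. Without it, a nilpotent contribution from $\frak z(\frak h)/\frak z(\widehat{\frak g})^{I_F}$ could in principle cause $\Lie(\mathrm{Stab}([\phi]))$ to be nontrivial.
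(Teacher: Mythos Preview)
Your proof is correct but takes a substantially different route from the paper. The paper gives a three-line argument by functoriality: the inclusion $Z^\circ(G)\hookrightarrow G$ induces a morphism of actions
\[
\bigl(X^*_\tn{nr}({}^LG)\actson\Phi({}^LG)\bigr)
\longrightarrow
\bigl(X^*_\tn{nr}({}^LZ^\circ_G)\actson\Phi(Z^\circ_G)\bigr),
\]
and for a torus the target action is free (conjugation by $\widehat{Z^\circ(G)}$ is trivial). Since the map of tori $X^*_\tn{nr}({}^LG)\to X^*_\tn{nr}({}^LZ^\circ_G)$ has finite kernel---it is induced by the isogeny $Z(\widehat G)^\circ\to\widehat G/\widehat G_\tn{der}$---every isotropy group on the left is contained in this finite kernel. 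This exactly parallels the automorphic-side argument sketched just above the lemma, where one restricts a supercuspidal to the center.

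Your approach trades this reduction for a direct Lie-algebra computation inside $\widehat G$. It has the merit of pinpointing where admissibility is used (semisimplicity of $\phi(\mathrm{Fr})$ supplies the $\tau$-stable complement $\frak a$), and it never leaves the ambient group. Two minor points of care: first, the reductivity of $H=Z_{\widehat G}(\phi|_{I_F})$ follows not merely from semisimplicity of the elements $\phi(w)$ but from the fact that $\phi|_{I_F}$ has finite image (so $H$ is the fixed locus of a finite group acting on a reductive group in characteristic zero); second, your incidence scheme really lives over $Z(\widehat G)^{I_F,\circ}$ via a lift $\tilde z$ rather than over $X^*_\tn{nr}({}^LG)$, so the torsor and section statements should be phrased upstairs before passing to Frobenius coinvariants. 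Neither affects the validity of the argument.
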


\begin{proof}
We imitate the proof sketched above,
performing the analogue for $L$-parameters
of restricting a supercuspidal representation to the center.
In this setting it is more convenient to replace the center
with the connected center $Z^\circ(G)$, however,
since the latter, unlike the former, has a dual group:
\[
\widehat{Z^\circ(G)} = \widehat G/\widehat G_\tn{der}.
\]
Functoriality of unramified characters
yields a morphism of varieties with group action
\[
\bigl(X^*_\tn{nr}({}^LG)\actson\Phi({}^LG)\bigr)
\longrightarrow
\bigl(X^*_\tn{nr}({}^LZ^\circ_G)\actson\Phi(Z^\circ_G)\bigr)
\]
Since the action on the right is faithful and the homomorphism
$X^*_\tn{nr}({}^LG)\to X^*_\tn{nr}({}^LZ^\circ_G)$ has finite kernel,
the isotropy groups for the action on the left are finite.
\end{proof}

We have now enriched $\Phi_\tn{e,cusp}^Z({}^LG)$ to a variety.
The third step is to handle the rest of~$\Omega_Z({}^LG)$.
Take an individual inertial equivalence class
$\llbracket\cal M,[\psi,v,\rho]_{\cal M^\circ}\rrbracket_{\widehat G}$.
There is a tautological map
\begin{equation}\label{thm27}
\llbracket\cal M,[\psi,v,\rho]_{\cal M^\circ}\rrbracket_{\cal M^\circ} \to
\llbracket\cal M,[\psi,v,\rho]_{\cal M^\circ}\rrbracket_{\widehat G}.
\end{equation}
The finite group
\[
W({}^LG,\cal M)
\defeq N_{\widehat G}(\cal M)/\cal M
\simeq N_{{}^LG}(\cal M)/\cal M
\]
acts on $\Phi_\tn{e,cusp}(\cal M)$, 
the stabilizer $W({}^LG,\cal M)_{[\psi,v,\rho]}$
acts on $\llbracket\cal M,[\psi,v,\rho]_{\cal M^\circ}\rrbracket_{\cal M^\circ}$,
and the resulting action is transitive on the fibers of \eqref{thm27}.
We again give 
$\llbracket\cal M,[\psi,v,\rho]_{\cal M^\circ}\rrbracket_{\widehat G}$
the unique variety structure making \eqref{thm27}
into a quotient map for this group action.
All in all, we again find that
\begin{equation} \label{thm29}
\llbracket\cal M,[\psi,v,\rho]_{\cal M^\circ}\rrbracket_{\widehat G}
\simeq \frac{X^*_\tn{nr}(\cal M)}%
{X^*_\tn{nr}(\cal M)_{[\psi,v,\rho]_{\cal M^\circ}}
\rtimes W({}^LG,\cal M)_{[\psi,v,\rho]_{\cal M^\circ}}}.
\end{equation}
This completes the enrichment of $\Omega_Z({}^LG)$ to a variety.

We will often represent the tuple $(\cal M,\psi,v,\rho)$
encoding a cuspidal datum by the letter~$\hat s$.
In this abbreviated form, the following notations are convenient.

\begin{definition} \label{thm68}
Suppose $\hat s \defeq (\cal M,\psi,v,\rho)$
is a cuspidal datum for~${}^LG$.
\begin{enumerate}
\item
Let $\hat{\frak s}_{\widehat G}
\defeq \llbracket \hat{s} \rrbracket_{\widehat G}
= \llbracket \cal M, \psi, v, \varrho \rrbracket_{\widehat G}$
denote the inertial equivalence class of~$\hat s$.

\item
Let $\frak B_Z({}^LG)$ denote the set
of all such inertial equivalence classes.

\item
Let $\Omega_Z^{\hat{\frak s}_{\widehat G}}({}^LG)$
denote the connected component of $\Omega_Z({}^LG)$
corresponding to $\hat{\frak s}_{\widehat G}$.
\end{enumerate}
\end{definition}

The definition uses that
$\pi_0\bigl(\Omega_Z({}^LG)\bigr) \simeq \frak B_Z({}^LG)$,
by construction.
In this notation, \eqref{thm27} becomes
\[
\Omega_Z^{\hat{\frak s}_{\cal M^\circ}}(\cal M)
\to \Omega_Z^{\hat{\frak s}_{\widehat G}}({}^LG).
\]
\Cref{thm68} has an obvious analogue on the automorphic side
which we will not spell out but which
is implicit in our summary of the classical Bernstein decomposition above.

\begin{remark}
A cuspidal datum~$\hat s=(\cal M,\psi,v,\rho)$
does not know ``the group for which it is a cuspidal datum'':
for instance, we can interpret $\hat s$ as a cuspidal datum
for $G$ or for~$\cal M$.
Because both perspectives are useful,
we have encoded this dependence in the notation~$\frak s_{\widehat G}$.
Relatedly, although $\hat{\frak s}_{\widehat G}$
can be recovered from~$\hat{\frak s}_{\cal M^\circ}$,
it is generally impossible to recover
$\hat{\frak s}_{\cal M^\circ}$ from~$\hat{\frak s}_{\widehat G}$:
the projection $\Omega(\cal M)\to\Omega({}^LG)$
is typically not injective on connected components.
\end{remark}

\subsection{Cuspidal support for \texorpdfstring{$L$}{L}-parameters}
\label{sec:bernstein:support}
In this section we define the cuspidal support map
\[
\Cusp \colon \Phi_\tn{e}^Z({}^LG) \to \Omega_Z({}^LG),
\]
an analogue for $L$-parameters of the supercuspidal support map
$\Sc \colon \Pi(G) \to \Omega(G)$.

\begin{remark}
With the notions of cuspidal support developed in \Cref{sec:springer} in hand,
the construction of cuspidal support for enhanced $L$-parameters as well as parameterization
of its fibers (carried out in \Cref{sec:bernstein:support} through \Cref{sec:bernstein:quotient})
are generalizations of the analogous constructions in \cite{AMS18} with minor adjustments to account 
for the finite central subgroup $Z$.
\end{remark}

\begin{construction} \label{thm33}
Given an enhanced $L$-parameter $(\phi,u,\rho)$ for~$G$,
choose a triple $(K^+,v,\varrho)$ lying in
the quasi-cuspidal support of~$(u,\rho)$
in the sense of \Cref{thm32} and let
\[
\cal M \defeq Z_{{}^LG}\bigl(Z^\circ(K^+)\bigr).
\]
Form the quadruple $(\cal M,\phi,v,\varrho)$.
\end{construction}

Although $\cal M$ is constructed from~$K^+$,
since $K^+$ is a quasi-Levi subgroup of~$H^+$
we can recover it from~$\cal M$ and~$H$
via the recipe
\begin{equation} \label{thm52}
K^+ = H^+\cap\cal M^{\circ,+}.
\end{equation}
In what follows, we will implicitly
define $K^+$ by \eqref{thm52}
given a Levi $L$-subgroup~$\cal M$.

\begin{lemma} \label{thm35}
\begin{enumerate}
\item
In \Cref{thm33}, conjugating $(\phi,u,\rho)$ by~$\widehat G$
or choosing a different representative $(K^+,v,\varrho)$
of the quasi-cuspidal support yield $\widehat G$-conjugate outputs.

\item
The quadruple $(\cal M,\phi,v,\varrho)$
of \Cref{thm33} is a cuspidal datum.
\end{enumerate}
\end{lemma}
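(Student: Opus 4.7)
\noindent\emph{Proof plan.}
The plan is to trace through \Cref{thm33} and verify each claim, with the main tool being the decomposition $K^+=H^+\cap\cal M^{\circ,+}$ recorded in \eqref{thm52}.

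For part (1), both checks reduce to the manifest $\widehat G$-equivariance of the construction. Conjugating $(\phi,u,\rho)$ by $g\in\widehat G$ sends $H^+$ to ${}^gH^+$ and permits the choice ${}^g(K^+,v,\varrho)$ of quasi-cuspidal support representative, which carries $\cal M$ to ${}^g\cal M$. For a different representative of the quasi-cuspidal support of a fixed $(u,\rho)$, one uses that this support is a single $H^+$-conjugacy class, and that every element of $H^+\subseteq\widehat G^+$ lies above an element of~$\widehat G$ under the central isogeny, with the kernel acting trivially by conjugation on~$\cal M$.

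For part (2), I would verify in sequence that $\cal M$ is a Levi $L$-subgroup, that $(\phi,v,\varrho)$ has the shape of an enhanced $L$-parameter for $\cal M$, and that this enhanced parameter is cuspidal. For the first, $Z^\circ(K^+)\subseteq\widehat G^+$ descends under the central isogeny to a torus $\widehat S\subseteq\widehat G$, and $\cal M^\circ=Z_{\widehat G}(\widehat S)$ is a Levi subgroup of $\widehat G$; since $\phi(W_F)$ centralizes $H\supseteq K^+\supseteq Z^\circ(K^+)$ inside~${}^LG$, we obtain $\phi(W_F)\subseteq\cal M$, so $\cal M\to\Gamma$ is surjective, verifying the first characterization of a Levi $L$-subgroup. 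For the second, \eqref{thm52} reads $K^+=Z_{\cal M^\circ}(\phi)^+$, and this is precisely the group whose unipotent elements and enhancements encode $Z$-enhanced $L$-parameters for~$\cal M$; hence $v$ and $\varrho$ constitute valid data.

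The two cuspidality conditions form the final and hardest step. Cuspidality of $(v,\varrho)$ is essentially built in: the pair is quasi-cuspidal in $K^+$ by the choice of representative, and on a quasi-Levi subgroup quasi-cuspidality coincides with cuspidality by \Cref{thm46}. The main obstacle is discreteness of $(\phi,v)$ as an $L$-parameter for~$\cal M$, i.e., the finiteness of $Z_{\cal M^\circ}(\phi,v)/Z(\cal M^\circ)^\Gamma$. Here I would argue that $Z_{\cal M^\circ}(\phi,v)$ is the image of $Z_{K^+}(v)$ under the central isogeny, and that cuspidality of $(v,\varrho)$ in $K^{+,\circ}$ forces $v$ to be distinguished in $K^{+,\circ}$ (a standard feature of the classification of cuspidal pairs referenced in \Cref{sec:tables}). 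This gives $Z_{K^{+,\circ}}(v)^\circ=Z^\circ(K^{+,\circ})$, and combined with the fact that the image of $Z^\circ(K^{+,\circ})$ in $\widehat G$ lies in $Z(\cal M^\circ)^\Gamma$, yields the required finiteness.
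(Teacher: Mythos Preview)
Your proposal is correct and largely parallels the paper's argument. For part~(1) and for the facts that $\cal M$ is a Levi $L$-subgroup and that $\phi$ factors through it, your reasoning matches the paper's (which simply cites \cite[Proposition~7.3]{AMS18} and the containment $\phi(W_F)\subseteq Z_{{}^LG}(H^+)\subseteq\cal M$). Your invocation of \Cref{thm46} for the cuspidality of $(v,\varrho)$ is a welcome bit of precision where the paper just says ``tautology.''

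The one genuine difference is in the discreteness argument. The paper shows that $\cal M$ is \emph{minimal} among Levi $L$-subgroups containing $\phi(W_F)\cup\{v\}$, which immediately gives discreteness of $(\phi,v)$ for~$\cal M$; this minimality is what the reference to \cite[7.3]{AMS18} buys. You instead argue directly: cuspidality forces $v$ distinguished in $K^{+,\circ}$, so the maximal torus of $Z_{K^+}(v)$ is $Z^\circ(K^+)$, whose image lands in $Z(\cal M^\circ)^\Gamma$ since $\cal M$ centralizes it by construction. Both routes ultimately rest on the same classical fact (cuspidal $\Rightarrow$ distinguished), but the paper's phrasing via minimality is slightly more portable, while yours is more self-contained. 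One small caution: your conclusion ``yields the required finiteness'' glosses over the unipotent radical of $Z_{K^+}(v)$; what you have actually shown is that the maximal torus of $Z_{\cal M^\circ}(\phi,v)$ lies in $Z(\cal M^\circ)^\Gamma$, which is the correct discreteness criterion once one passes to the reductive quotient as in \Cref{thm31}.
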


\begin{proof}
Let $H^+\defeq Z_{\widehat G^+}(\phi)$,
the group on which the quasi-cuspidal support
is computed in \Cref{thm33}.
The first part is straightforward
and uses the fact that all choices of $(K^+,v,\varrho)$
for fixed $(\phi,u,\rho)$ are $H^+$-conjugate.
The second part asserts that
\begin{enumerate}
\item \label{thm35a}
$\cal M$ is a Levi $L$-subgroup of~${}^LG$,

\item \label{thm35b}
$\phi$ factors through $\cal M$,

\item \label{thm35c}
$(\phi,u)$ is a discrete $L$-parameter for $\cal M$, and

\item \label{thm35d}
the pair $(v,\varrho)$ is cuspidal.
\end{enumerate}
\Cref{thm35d} is a tautology.
The proof of \Cref{thm35a} is the same
as that in the proof of Proposition~7.3 of \cite{AMS18}.
For \Cref{thm35c}, the identical argument
in that Proposition shows that $\cal M$
is a minimal Levi $L$-subgroup of ${}^LG$ containing $\phi(W_F) \cup \{v\}$,
so that $(\phi, v)$ is a discrete $L$-parameter for $\cal M$.
\Cref{thm35b} holds because $\phi$ already factors
through the subgroup~$Z_{{}^LG}(H^+)$ of~$\cal M$.
\end{proof}

Hence \Cref{thm33} yields a well-defined map
from enhanced $L$-parameters to 
conjugacy classes of cuspidal data.
However, this \namecref{thm33} is not yet
the cuspidal support map because
it fails to preserve infinitesimal characters.

\begin{example}
Consider an enhanced $L$-parameter $(\tn{triv},u,\rho)$
with trivial restriction to~$W_F$
and suppose that the restriction of~$\rho$
to $Z(H^+)$ is trivial.
As \Cref{sec:tables} shows
--- for instance, in types $E_8$, $F_4$,
and $G_2$ when the $L$-parameter is
not already cuspidal ---
it often happens in these circumstances that
the cuspidal support has $v=1$
and (therefore) $\varrho=\tn{triv}$.
If $u\neq 1$ then the infinitesimal character
of the original $L$-parameter is nontrivial,
but it becomes trivial after formation
of the cuspidal support.
\end{example}

\begin{lemma} \label{thm34}
In the setting of \Cref{thm33},
there exists $z\in X^*(\cal M)_\tn{nr}$ such that
the $L$-parameters $(\phi,u)$ for ${}^LG$
and $(z^{\val_F}\cdot\phi,v)$ for $\cal M$
have the same infinitesimal characters.
\end{lemma}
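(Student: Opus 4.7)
The plan is to construct the desired $z$ by comparing Jacobson--Morozov enrichments of the unipotent elements $u$ and $v$. Since $u \in H^+ \defeq Z_{\widehat G^+}(\phi)$ is unipotent, Jacobson--Morozov in~$H^+$ furnishes a homomorphism $\psi_u\colon\SL_2\to H^+$ with $\psi_u\bigl(\smat{1}{1}{0}{1}\bigr) = u$; because $\psi_u$ commutes with~$\phi$, pairing them yields an enrichment $\varphi\in\widetilde\Phi_{\SL}(G)$ of~$(\phi,u)$, whose infinitesimal character is represented by
\[
w \longmapsto \phi(w)\cdot\psi_u\bigl(\tn{diag}(|w|^{1/2},|w|^{-1/2})\bigr).
\]
Analogously, choose an $\SL_2$-homomorphism $\psi_v\colon\SL_2\to K^+$ with $\psi_v\bigl(\smat{1}{1}{0}{1}\bigr) = v$. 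Since $\phi$ factors through~$\cal M$ by \Cref{thm35} and $K^+\subseteq\cal M^{\circ,+}$, this enriches $(\phi,v)$ to an $L$-parameter of~$\cal M$ whose infinitesimal character is $w\mapsto\phi(w)\cdot\psi_v\bigl(\tn{diag}(|w|^{1/2},|w|^{-1/2})\bigr)$.

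The key geometric input is a compatibility of Jacobson--Morozov triples with the quasi-cuspidal support, classical in the connected generalized Springer correspondence (due to Lusztig) and extended to disconnected groups in \cite[Lemma~7.8]{AMS18}: \emph{after replacing $\psi_u$ by a suitable $H^+$-conjugate, there exists a cocharacter $\lambda\in X_*\bigl(Z^\circ(K^+)\bigr)$ such that}
\[
\psi_u\bigl(\tn{diag}(t,t^{-1})\bigr) = \lambda(t)\cdot\psi_v\bigl(\tn{diag}(t,t^{-1})\bigr).
\]
Intuitively, passage from the $K^+$-orbit of~$v$ to the induced $H^+$-orbit of~$u$ shifts the neutral element of its $\mathfrak{sl}_2$-triple only by the connected center of the inducing quasi-Levi, that is, precisely by the regular shift defining the relevant stratum from \Cref{sec:param:regular}.

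Granted this, set $z\defeq\lambda(q^{-1/2})$, where $q$ is the residual cardinality of~$F$, so that $z^{\val_F(w)} = \lambda(|w|^{1/2})$ for every $w\in W_F$. Because $K^+ = Z_{\cal M^{\circ,+}}(\phi)$ centralizes $\phi(W_F)$, so does $\lambda$, and in particular $z$ is $I_F$-fixed; via the Kottwitz isomorphism~\eqref{thm54} it therefore defines an element of~$X^*_\tn{nr}(\cal M)$. Multiplying through, the infinitesimal characters of $(\phi,u)$ (for~${}^LG$) and of $(z^{\val_F}\cdot\phi,v)$ (for~$\cal M$) then agree on the nose as homomorphisms $W_F\to\widehat G$, hence a fortiori as $\widehat G$-conjugacy classes.

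The main obstacle is the middle step: verifying that the correcting cocharacter $\lambda$ actually lands in $Z^\circ(K^+)$ rather than in some larger centralizer. In the connected case this follows from Lusztig's analysis of regular strata, and in our disconnected setting it reduces to the characterization of quasi-cuspidal support in \Cref{sec:param:regular,sec:param:fibers} as pushforward along the finite étale covering supported on the stratum associated to~$(K^+,v)$, together with the identification between $\SL_2$-triples up to conjugacy and cocharacters into the connected center of the generic stabilizer. Everything else is a direct unwinding of the definitions of Jacobson--Morozov and of the unramified twisting action $(z,\phi)\mapsto z^{\val_F}\cdot\phi$.
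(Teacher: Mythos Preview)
Your approach is essentially the same as the paper's: both construct the correcting element by comparing Jacobson--Morozov enrichments of $u$ and $v$, invoke the argument of \cite[\S7]{AMS18} (building on \cite[2.4]{kazhdan_lusztig87}) to obtain a cocharacter into $Z^\circ(K^+)$, and then evaluate at $q^{\pm1/2}$. The paper is terser---it simply cites \cite[7.6]{AMS18} and records that the resulting $z$ lies in $Z(\cal M^\circ)^\Gamma$---but the underlying mechanism is identical.

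One small gap in your write-up: when you argue that $z\in X^*_\tn{nr}(\cal M)$, you show $\lambda$ centralizes $\phi(W_F)$ but never say why $z$ is central in $\cal M^\circ$. This is immediate from the definition $\cal M = Z_{{}^LG}(Z^\circ(K^+))$, which forces $Z^\circ(K^+)\subseteq Z(\cal M^\circ)$, and the paper states exactly this; you should make it explicit rather than appealing to the Kottwitz isomorphism, which plays no role here. Also, your final paragraph routing the verification of $\lambda\in X_*(Z^\circ(K^+))$ through \Cref{sec:param:regular,sec:param:fibers} is unnecessary: you already cited the relevant result from \cite{AMS18}, and that is the direct argument.
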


Here we compare infinitesimal characters in the usual way,
by composition with the inclusion $\cal M\to{}^LG$.
Although the element~$z$ may not be unique,
simply because the isotropy subgroup for~$\phi$
in the group of unramified twists can be nontrivial,
the requirement that $(\phi,u)$ and
$(z^{\val_F}\cdot\phi,v)$ have
the same infinitesimal characters
uniquely determines the equivalence class
of the latter $L$-parameter
because formation of the infinitesimal character
separates distinct unramified twists.

\begin{proof}
A proof of this \namecref{thm34}
is sketched in and around \cite[7.6]{AMS18},
using results on $\SL_2$-triples from
\cite[2.4]{kazhdan_lusztig87}.
The same argument as in \cite{AMS18}
(though not the results themselves,
since our version~$K^+$ of their~$M$
is slightly different) constructs a cocharacter
$\chi_{(\phi,u),\rho}\colon\bbG_\tn{m}\to Z(K^+)^\circ$,
and we then take as $z$ the image of
$\chi_{(\phi,u),\rho}(q^{1/2})$ in $\cal M$.
Since $\phi$ factors through $\cal M$,
which by definition centralizes 
the target $Z(K^+)^\circ$ of~$\chi_{(\phi,u),\rho}$,
we see that $z\in Z(\cal M^\circ)^\Gamma$.
\end{proof}

\begin{definition}
Let $(\phi,u,\rho)$ be an enhanced $L$-parameter for~$G$,
let $(\cal M,\phi,v,\varrho)$ be as in \Cref{thm33},
and let~$z$ be as in \Cref{thm34}.
\begin{enumerate}
\item
The \mathdef{unnormalized cuspidal support} of~$(\phi,u,\rho)$ is
the $\widehat G$-conjugacy class
\[
\uCusp(\phi,u,\rho)\defeq[\cal M,\phi,v,\varrho]_{\widehat G}.
\]

\item
The \mathdef{cuspidal support} of~$(\phi,u,\rho)$ is
the $\widehat G$-conjugacy class
\[
\Cusp(\phi,u,\rho) \defeq
[\cal M,z^{\val_F}\cdot\phi,v,\varrho]_{\widehat G}.
\]
\end{enumerate}
\end{definition}

The distinction between $\uCusp$ and $\Cusp$,
then, is that $\uCusp$ is simpler to define
but $\Cusp$ preserves infinitesimal characters.
See pp.~175--176 of \cite{AMS18} for further discussion of this point.

In our eventual application to the local Langlands conjectures,
the central character of the enhancement
will carry important information about
the (rigid) inner form of~$G$
that carries the corresponding smooth representation.
Fortunately, the cuspidal support map preserves central characters.

\begin{lemma} \label{thm53}
Let $(\phi,u,\rho)$ be an enhanced $L$-parameter for~$G$,
let $[\cal M,\phi,v,\varrho]_{\widehat G}$
be the unnormalized cuspidal support of~$(\phi,u,\rho)$,
and let $H^+=Z_{\widehat G^+}(\phi)$.
The restrictions of $\rho$ and~$\varrho$ to $Z(H^+)$
are isotypic for the same character.
\end{lemma}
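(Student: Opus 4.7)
The plan is to reduce the claim to the compatibility of parabolic induction with formation of central characters, namely \Cref{thm94}. The argument has three short steps.

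First, unwind the definition of cuspidal support. By \Cref{thm33} and \Cref{thm32}, the triple $(K^+,v,\varrho)$ is the quasi-cuspidal support of the pair $(u,\rho)$ on the disconnected reductive group $H^+ = Z_{\widehat G^+}(\phi)$. By \Cref{thm14}, applied to the quasi-parabolic class on $H^+$, the perverse sheaf $\IC(\cal O_u,\rho)$ appears as a direct summand of the parabolic induction $\ind_{K^+\subseteq P^+}^{H^+}\bigl(\IC(\cal O_v,\varrho)\bigr)$ for some parabolic $P^+$ of $H^+$ with Levi factor $K^+$.

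Second, I verify the hypotheses needed to apply \Cref{thm94} to the ambient group $H^+$, the Levi factor $K^+$, and the central subgroup $A = Z(H^+)$. The key observation is that $K^+$ is a quasi-Levi subgroup of $H^+$ (\Cref{thm51}), so $K^+ = Z_{H^+}\bigl(Z^\circ(K^{+,\circ})\bigr)$; this immediately gives $Z(H^+) \subseteq K^+$, so that $Z(H^+)$ really does sit inside the Levi as a central subgroup. Since $\IC(\cal O_v,\varrho)$ is irreducible, it automatically admits a $Z(H^+)$-character, by definition equal to the restriction of $\varrho$ to the image of $Z(H^+)$ in $\pi_0\bigl(Z_{K^+}(v)\bigr)$.

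Third, conclude. By \Cref{thm94}, the induction $\ind_{K^+\subseteq P^+}^{H^+}\bigl(\IC(\cal O_v,\varrho)\bigr)$ admits the same $Z(H^+)$-character, and since admitting a given $A$-character is clearly inherited by direct summands, the summand $\IC(\cal O_u,\rho)$ admits this same character. But by definition the $Z(H^+)$-character of $\IC(\cal O_u,\rho)$ is precisely the restriction of $\rho$ to the image of $Z(H^+)$ in $\pi_0\bigl(Z_{H^+}(u)\bigr)$. Hence $\rho|_{Z(H^+)}$ and $\varrho|_{Z(H^+)}$ are isotypic for the same character of $\pi_0\bigl(Z(H^+)\bigr)$. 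The only potentially subtle point in the argument is the bookkeeping that shows $Z(H^+) \subseteq K^+$; once this observation is in place, the assertion is a formal consequence of the machinery developed in \Cref{sec:springer}.
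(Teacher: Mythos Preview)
Your proof is correct and follows the same approach as the paper, which simply states ``This follows from \Cref{thm94}.'' You have supplied the details the paper leaves implicit---unwinding the cuspidal support via \Cref{thm33}/\Cref{thm14}, checking that $Z(H^+)\subseteq K^+$ so that \Cref{thm94} applies with ambient group $H^+$, and passing to the summand $\IC(\cal O_u,\rho)$.
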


Since $Z(\widehat G)^{\Gamma,+} = Z(\widehat G^+)\cap H^+$,
the restrictions to $Z(\widehat G^+)\cap H^+$
are also isotypic for the same character.

\begin{proof}
This follows from \Cref{thm94}.
\end{proof}

\subsection{Fibers of the cuspidal support map}
\label{sec:bernstein:fibers}
We have now defined an analogue for $L$-parameters of the supercuspidal support map.
In this section we use the results of \Cref{sec:param:fibers} to 
describe the fiber of this map over the $\widehat G$-conjugacy class
of a given cuspidal datum
\[
\hat{s} \defeq (\cal M, \phi, v, \varrho).
\]
In different language,
this parameterization will describe $\Phi_\tn{e}^Z(G)$
as disjoint union of twisted extended quotients,
which we explain in \Cref{sec:bernstein:quotient}.

As in \Cref{sec:bernstein:support},
for a fixed $L$-parameter $(\phi,u)$ we let
\[
H^+ \defeq Z_{\widehat G}(\phi)^+
= Z_{\widehat G^+}(\phi),
\]
and we write $K^+$ for some Levi subgroup of~$H^+$
appearing in the quasi-cuspidal support
of $(u,\rho)$, as in \Cref{thm33}.
Given some other enhanced $L$-parameter
$(\phi',u',\rho')$ for~$G$,
write $H'$ and $K'$ for the corresponding groups.
From $\hat s$ we can construct the triples
\[
t^\circ  \defeq (K^{+\circ}, [v]_{K^{+\circ}}, \cal E)
\qquad\tn{and}\qquad
qt \defeq (K^+, [v]_{K^+}, q\cal E)
\]
as well as their conjugacy classes
$\frak t^\circ$ and $q\frak t$
as in \Cref{sec:param:fibers}.

\begin{proposition}\label{AMS9.1}(Analogue of \cite[9.1]{AMS18}) \begin{enumerate}
\item
The bijection $q\Sigma_{q\frak t}$ of \Cref{AMSLemma5.3} induces a bijection 
\begin{align*}
{}^L\Sigma_{q\frak t,\psi,v} \colon
\uCusp^{-1}([\cal M, \psi, v, \varrho]_{\widehat G})
&\to \Irr(\bbC[W_{q\frak t}, \kappa_{q\frak t}]) \\
[\phi', u', \rho']_{\widehat G}
&\mapsto q\Sigma_{q\frak t}\bigl([u']_{H^{\prime+}}, \rho'\bigr)
\end{align*}
which is canonical up to a choice of character of
$W_{q\frak t}/W_{\frak t^{\circ}}$.

\item
Denoting by $\Sigma_{\frak t^{\circ}}$ the canonical bijection between
$(\connsupp)^{-1}_{H^{\circ}}(\frak t^\circ)$
and $\bbC[W_{\frak t^\circ}]$ obtained as a special case of \Cref{AMSLemma5.3},
we have the following compatibility: 
\[
{}^L\Sigma_{q\frak t,\psi,v}\bigl([\phi, u, \rho]_{\widehat G}\bigr)
|_{W_{\frak t^{\circ}}}
= \bigoplus_i \Sigma_{\frak t^{\circ}}(u, \rho_i),
\]
where $\rho |_{\pi_0(Z_{H^{\circ}}(u))} = \bigoplus_i \rho_i$
is the decomposition into irreducible representations.

\item
The $\widehat{G}$-conjugacy class of a triple
$(\phi, u, \rho_{i})$ as in~(2) is determined by any irreducible
$\bbC[W_{\mathfrak{t}^{\circ}}]$-subrepresentation of
${}^L\Sigma_{q\frak t,\psi,v}\bigl([\phi, u, \rho]_{\widehat G}\bigr)$.

\end{enumerate}
\end{proposition}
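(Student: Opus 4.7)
The plan is to reduce all three parts to the geometric results already established, chiefly \Cref{AMSLemma5.3} applied to the (possibly disconnected) complex reductive group $H^+ = Z_{\widehat G^+}(\psi)$. The whole point of the unnormalized cuspidal support is that it has been engineered precisely so that the $L$-parameter component $\phi$ becomes a ``spectator'' at this stage.

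For part~(1), I would first observe that if $[\phi', u', \rho']_{\widehat G}$ lies in $\uCusp^{-1}([\cal M, \psi, v, \varrho]_{\widehat G})$, then unwinding the definition of $\uCusp$ via \Cref{thm33} shows that, after conjugating by a suitable element of $\widehat G$, I can arrange $\phi' = \psi$; this uses that the quadruple $(\cal M,\phi',v',\varrho')$ produced by \Cref{thm33} retains $\phi'$ in its second entry. Under this normalization $H^{\prime+} = H^+$ and $(u', \rho')$ represents a class in $\qcsupp_{H^+}^{-1}(q\frak t)$. Composing this identification of fibers with the bijection $q\Sigma_{q\frak t}$ of \Cref{AMSLemma5.3} produces the desired bijection ${}^L\Sigma_{q\frak t,\psi,v}$. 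The residual ambiguity by a character of $W_{q\frak t}/W_{\frak t^\circ}$ is exactly the ambiguity already present in $q\Sigma_{q\frak t}$ (inherited from the AMS isomorphism of \Cref{thm87}), together with absorbed conjugation by elements of $Z_{\widehat G}(\psi)$ that permute representatives of the same class.

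For part~(2), I would invoke the comparison between quasi-cuspidal and connected cuspidal support discussed in \Cref{sec:param:comparison}, combined with Clifford theory in the form used there: the decomposition $\rho|_{\pi_0(Z_{H^\circ}(u))} = \bigoplus_i \rho_i$ is precisely the mechanism by which $\connsupp$ is extracted from $\qcsupp$. On the algebra side, $W_{\frak t^\circ}$ sits as a normal subgroup of $W_{q\frak t}$ with quotient living inside $\pi_0(H^+)/\pi_0(H^\circ)$, so restricting a $\bbC[W_{q\frak t},\kappa_{q\frak t}]$-module to $\bbC[W_{\frak t^\circ}]$ (the cocycle is inflated from the quotient by \cite[5.3(b)]{AMS18}, as recalled in the remark after \Cref{thm87}) matches the decomposition of $\rho$. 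Compatibility of $q\Sigma_{q\frak t}$ with $\Sigma_{\frak t^\circ}$ at the geometric level then translates verbatim to the stated restriction formula for ${}^L\Sigma_{q\frak t,\psi,v}$.

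For part~(3), Clifford theory for the inclusion $\pi_0(Z_{H^\circ}(u)) \trianglelefteq \pi_0(Z_{H^+}(u))$ says that an irreducible $\rho$ of the larger group is determined up to isomorphism by the orbit of any irreducible constituent $\rho_i$ of its restriction, together with a projective representation of the stabilizer of $\rho_i$; and conjugating by the remaining piece of $\widehat G$ that fixes $\phi$ but moves $\rho$ through its orbit removes the remaining indeterminacy. The main obstacle I anticipate is bookkeeping: ensuring that the conjugating element reducing to $\phi' = \psi$, the choice of Levi factor $K^+$ inside $H^+$, and the choice of constituent $\rho_i$ all compose into a well-defined bijection. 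This should be manageable by working with the inverse-limit definitions of $W_{q\frak t}$ and $W_{\frak t^\circ}$ from \Cref{sec:param:fibers}, which already encode the canonical nature of the identifications across varying representatives, and by noting (as in the proof of \Cref{AMSLemma5.3}) that passing to isomorphism classes of representations trivializes the dependence on the specific conjugating element.
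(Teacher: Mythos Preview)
Your proposal is correct and aligns with the paper's approach. The paper's own proof is a single line---``The same proof as in \cite[9.1]{AMS18} works here, replacing their $G$ with our $H$''---and the surrounding discussion in the paper (explaining how to conjugate $(\phi',u',\rho')$ so that $\phi'=\psi$ and then apply $q\Sigma_{q\frak t}$, noting that the ambiguity in the conjugating element lies in the stabilizer of $\hat s$ and hence in $H$) is exactly what you sketch for part~(1); your treatment of parts~(2) and~(3) via Clifford theory and the restriction of the AMS cocycle to $W_{\frak t^\circ}$ is likewise the intended argument from \cite{AMS18}, so you have effectively unpacked the citation.
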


We wrote ``$\psi$" in subscript
in ${}^L\Sigma_{q\frak t,\psi,v}$
to indicate that this map depends on~$\psi$.
It is possible for two cuspidal data
$\hat{s}$ and $\hat{s}'$ to have the same associated $q\frak t$'s
but different (even non-$\widehat G$-conjugate)
admissible homomorphisms $W_F\to{}^LG$
(that is, the second element in the quadruple).
The map is defined identically in both cases,
but the domains are different.

The formulation of (1) above is unnatural because
we have artificially chosen a representative $\hat s$
of the unnormalized cuspidal support to define
$q\frak t$ and thus the target of
the bijection ${}^L\Sigma_{q\frak t,\psi,v}$.
This deficiency will be rectified in the following section,
but before that, we explain how to interpret
our notation in~(1).
Indeed, on the face of it, the map in (1) is not well-defined:
$\bigl([u']_{H^{\prime+}},\rho'\bigr)$
need not lie in the domain of $q\Sigma_{q\frak t}$.
Briefly, the problem is resolve by conjugating.

In more detail, choose a representative $(\phi', u', \rho')$
for an equivalence class of enhanced $L$-parameters
with unnormalized cuspidal support 
$[\cal M, \phi, v, \varrho]_{\widehat G}$
and let
\[
(\cal M',\phi',v',\varrho')
\]
be the corresponding representative
of $\uCusp\bigl([\phi,u,\rho]_{\widehat G}\bigr)$
as in \Cref{thm33}, so that in particular
\[
\qcsupp_{H^{\prime+}}\bigl([u']_{H^{\prime+}}, \rho'\bigr)
= [K^{\prime+}, v', \varrho']_{H^{\prime+}}.
\]
By assumption, $(\cal M, \phi, v, \varrho)$
is $\widehat{G}$-conjugate to $(\cal M', \phi', v', \varrho')$,
and hence $H$ and $H'$ are $\widehat{G}$-conjugate in a way that
identifies the representations $\varrho$ and $\varrho'$
and the Levi subgroups $K^+$ and $K^{\prime+}$. 
Choose any $g \in \widehat G$ which conjugates
$(\cal M', \phi', v', \varrho')$ to $\hat s$.
Any two such choices differ by the stabilizer
of $\hat{s}$ in $\widehat{G}$,
which is a subgroup of $H'$.
We then define 
\[
q\Sigma_{q\frak t}(\phi, u, \rho)
\defeq q\Sigma_{q\frak t}\bigl({}^g(\phi, u, \rho)\bigr).
\]
This expression is independent of the choice of $g$
because any other choice preserves the $H$-conjugacy class of the pair,
and $q\Sigma_{qt}$ is invariant under $H$-conjugacy. 

\begin{proof}[{Proof of \Cref{AMS9.1}}]
The same proof as in \cite[9.1]{AMS18} works here, replacing their $G$ with our $H$.
\end{proof}

To finish this section, we show that the group $W_{q\frak t}$
of \Cref{AMS9.1} is isomorphic to a different group
which is better adapted to the Bernstein variety
and formation of twisted extended quotients.

Recall that $W({}^LG, \cal M) \defeq N_{\widehat G}(\cal M)/\cal M^\circ$.
It is explained in \cite[\S 8]{AMS18} that  $N_{\widehat{G}}(\cal M)$
stabilizes $X^*_\tn{nr}(\cal M)$ and $\cal M^\circ$ fixes $X^*_\tn{nr}(\cal M)$ pointwise.
It follows that $W({}^LG,\cal M)$ acts on the set of
$X^*_\tn{nr}(\cal M)$-orbits of cuspidal enhanced $L$-parameters for $\cal M$.
Given two cuspidal data $s = (\cal M,\psi,v,\varrho)$
and $s' = (\cal M,\psi',v'\varrho')$ for~${}^LG$
with common Levi $L$-subgroup~$\cal M$,
the corresponding inertial classes
$\hat{\frak s}_{\widehat G}$
and $\hat{\frak s}'_{\widehat G}$
are equal if and only if
their $X^*_\tn{nr}(\cal M)$-orbits
$\hat{\frak s}_{\cal M^\circ}$
and $\hat{\frak s}'_{\cal M^\circ}$
are in the same $W({}^LG,\cal M)$-orbit.
Define the finite group
\[
W_{\hat{\frak s}_{\cal M^\circ}}
\defeq Z_{W({}^LG, \cal M)}(\hat{\frak s}_{\cal M^\circ}).
\]
Recall from \Cref{sec:bernstein:support} that the datum $\hat{s}$
gives rise to the triple $q\frak t = [K^+, v,\varrho]_{K^+}$.
The following result is an analogue of \cite[8.2]{AMS18},
which compares the groups $W_{qt}$ and $W_{\hat{\frak s}_{\cal M^\circ}}$.

\begin{lemma}\label{AMS8.2}
The group $W_{q\frak t}$ is canonically isomorphic to the stabilizer of
$[\cal M, \psi, v, \varrho]_{\cal M^\circ}$ in
either of the groups $W({}^LG,\cal M)$
or $W_{\hat{\frak s}_{\cal M^\circ}}$.
\end{lemma}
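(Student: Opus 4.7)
The plan is to construct, for any representative $qt = (K^+, v, \varrho)$ of $q\frak t$ (with $K^+ = H^+ \cap \cal M^{\circ,+}$), a canonical isomorphism $W_{qt} \simeq W({}^LG,\cal M)_{\hat{\frak s}_{\cal M^\circ}}$, and then pass to the inverse limit over $[qt]_{H^+} = q\frak t$ on the left. Because any two representatives are $H^+$-conjugate and the induced transition maps are all inner, this identification is compatible with the inverse system. The identification with the stabilizer inside $W_{\hat{\frak s}_{\cal M^\circ}}$ is tautological, since $W_{\hat{\frak s}_{\cal M^\circ}} = Z_{W({}^LG,\cal M)}(\hat{\frak s}_{\cal M^\circ})$ is by definition this stabilizer.

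First I would define the natural map $W_{qt} \to W({}^LG,\cal M)$. An element $h \in N_{H^+}(qt)$ normalizes $K^{+\circ}$, hence $Z(K^{+\circ})^\circ$, and therefore $\cal M = Z_{{}^LG}(Z(K^{+\circ})^\circ)$. Since $h \in H^+ = Z_{\widehat G^+}(\psi)$ we have ${}^h\psi = \psi$ on the nose, and by definition ${}^h(v,\varrho)$ is $K^+$-conjugate to $(v,\varrho)$. Because $K^+$ maps into $\cal M^\circ$ modulo the central kernel of the isogeny $\widehat G^+ \to \widehat G$, the assignment descends to $W_{qt} = N_{H^+}(qt)/K^+$ and lands inside the stabilizer. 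Injectivity is immediate: if $h\cal M^\circ = \cal M^\circ$ then $h \in H^+ \cap \cal M^{\circ,+} = K^+$, so $h$ is trivial in $W_{qt}$.

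\textbf{Surjectivity} is the main content. Given $g\cal M^\circ \in W({}^LG,\cal M)_{\hat{\frak s}_{\cal M^\circ}}$, there exist $m \in \cal M^\circ$ and $z \in X^*_\tn{nr}(\cal M)$ with ${}^{m^{-1}g}(\cal M,\psi,v,\varrho) = (\cal M, z^{\val_F}\psi, v, \varrho)$; replacing $g$ by $m^{-1}g$, we may assume this holds exactly. The key step is to absorb the unramified twist: leveraging the Kazhdan--Lusztig cocharacter $\chi_{(\psi,v),\varrho}\colon \bbG_\tn{m} \to Z(K^+)^\circ$ of \Cref{thm34}, and mimicking the argument of \cite[8.2]{AMS18}, one adjusts $g$ within its $\cal M^\circ$-coset to arrange ${}^g\psi = \psi$ on the nose. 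Once this holds, $g \in Z_{\widehat G^+}(\psi) = H^+$; it normalizes both $\cal M^{\circ,+}$ and $H^+$, hence normalizes $K^+ = H^+ \cap \cal M^{\circ,+}$; a further adjustment by an element of $K^+ \subseteq \cal M^\circ$ arranges ${}^g(v,\varrho)$ to be $K^+$-conjugate to $(v,\varrho)$, placing $g \in N_{H^+}(qt)$.

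The hard part will be the elimination of the unramified twist in the surjectivity step. It is not conceptually deep but requires careful bookkeeping: one exploits the cuspidality of $(\psi,v,\varrho)$, which controls the $X^*_\tn{nr}(\cal M)$-isotropy, together with the uniqueness of the cocharacter construction. This is essentially the argument of \cite[8.2]{AMS18}, and I expect it to port to our $Z$-enhanced setting with only minor adjustments accounting for the interplay between $\widehat G$ and $\widehat G^+$.
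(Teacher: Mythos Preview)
You have misread the statement. The lemma asks for the stabilizer of the single $\cal M^\circ$-conjugacy class $[\cal M,\psi,v,\varrho]_{\cal M^\circ}$ inside $W({}^LG,\cal M)$ (equivalently, inside the subgroup $W_{\hat{\frak s}_{\cal M^\circ}}$). It does \emph{not} assert that $W_{q\frak t}\simeq W_{\hat{\frak s}_{\cal M^\circ}}$. The group $W_{\hat{\frak s}_{\cal M^\circ}}$ stabilizes the whole $X^*_\tn{nr}(\cal M)$-orbit, and in general strictly contains the stabilizer of the single point: an element $w$ can send $[\psi,v,\varrho]_{\cal M^\circ}$ to $[z^{\val_F}\psi,v,\varrho]_{\cal M^\circ}$ for a nontrivial $z$ and thus lie in $W_{\hat{\frak s}_{\cal M^\circ}}$ without fixing the class. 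Your ``absorb the unramified twist'' step is therefore attempting something that is both unnecessary for the actual statement and generically false; the cocharacter $\chi_{(\psi,v),\varrho}$ does not furnish an $\cal M^\circ$-conjugacy between $\psi$ and $z^{\val_F}\psi$ for arbitrary $z$.

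Once the statement is read correctly, no unramified twist enters and the argument is short. Your construction of the map $W_{qt}\to W({}^LG,\cal M)$ and your injectivity argument are fine and match the paper's surjectivity direction. For the remaining direction the paper goes the other way: given $n\in N_{\widehat G}(\cal M)$ fixing $[\psi,v,\varrho]_{\cal M^\circ}$, one has ${}^n\psi$ already $\cal M^\circ$-conjugate to $\psi$ (no twist), so after translating $n$ by an element of $\cal M^\circ$ one may assume ${}^n\psi=\psi$; then $n$ lies in $H$, normalizes $K^+=\cal M^{\circ,+}\cap H^+$, and fixes $[v,\varrho]_{K^+}$, producing the desired preimage in $W_{qt}$.
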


\begin{proof}
It is enough to prove the result for $W({}^LG,\cal M)$.
We first construct a canonical map
\[
Z_{W({}^LG,\cal M)}\bigl([\cal M, \psi, v, \varrho]_{\cal M^\circ}\bigr) \to W_{qt},
\]
which will turn out to be injective.
Take $n \in N_{\widehat{G}}(\cal M)$ which fixes $[\psi, v, \varrho]_{\cal M}$.
Then $n \in Z_{\widehat{G}}(\psi) \cdot\cal M^\circ$,
and, since $n$ normalizes $\cal M$,
it actually lies in $Z_{N_{\widehat{G}}(\cal M)}(\psi) \cdot \cal M^\circ$.
Hence we may assume that $n\in Z_{N_{\widehat G}(\cal M)}(\psi)$
after possibly translating by an element of~$\cal M$
(which has no effect on the image of~$n$ in $W({}^LG,\cal M)$).
With this restriction,
the element~$n$ is unique modulo $K^+$.
We claim that $n$ stabilizes $[K^+, [v]_{K^+}, \varrho]_{K^+}$.
Clearly $n$ stabilizes $\cal M^{\circ+}$,
hence stabilizes $K^+ = \cal M^{\circ+}\cap H^+$,
because it stabilizes $\cal M^\circ$.
Therefore $n$ stabilizes the orbit $[v]_{K^+}$
because it stabilizes $[\psi, v]_{K^+}$.
And by construction $n$ preserves $\varrho$
(via the identifications from Lemma \ref{cuspidalparameterlemma}).
Putting this all together produces the desired injective homomorphism.

To finish proving the Lemma, it remains to show that the above map is surjective.
Suppose that $x \in H^+$ stabilizes $[K^+, [v]_{K^+}, \varrho]_{K^+}$. 
Then $x$ normalizes $Z^\circ(K^+)$,
and it therefore also normalizes $Z_{{}^LG}\bigl(Z^\circ(K^+)\bigr)$,
which is just $\cal M$.
By definition, $x$ centralizes~$\psi$ and $[v,\varrho]_{K^+}$.
So $h$ preserves $[\cal M, \psi, v, \varrho]_{\cal M^\circ}$, giving the result.

We leave it to the reader to check compatibilities
needed to pass this isomorphism from $W_{qt}$ to $W_{q\frak t}$.
\end{proof}

\subsection{Twisted extended quotients}
\label{sec:bernstein:quotient}
In this section we recall the definition of twisted extended quotients
and show that $\Phi_\tn{e}^Z({}^LG)$ is a disjoint union of such objects.

Let $\Gamma$ be a finite group acting on a set~$X$.
A twisted extended quotient of $X$ by $\Gamma$
is an enhancement of the ordinary quotient $X/\Gamma$
in which each orbit is replaced by the set
of irreducible representations of the isotropy group,
twisted by a cocycle.

\begin{definition}
A \mathdef{twisted extended quotient} of~$X$ by~$\Gamma$
is a set~$Y$ together with a map $p\colon Y\to X/\Gamma$ and
for each $\bar x\in X/\Gamma$,
\begin{enumerate}
\item a cocycle class
$\bar\kappa_{\bar x} \in H^2(\Gamma_x,C^\times)$ and

\item a bijection $\beta_{\bar x}\colon
p^{-1}(\bar x) \simeq \Irr\bigl(\bbC[\Gamma_x,\bar\kappa_x]\bigr)$.
\end{enumerate}
\end{definition}

Here $\Gamma_x$ is the stabilizer in~$\Gamma$ of~$x$.
The objects $\bar\kappa$ and $\beta$
have the following interpretations.
For $\bar\kappa$, one chooses a lift $x$ of~$\bar x$
; for any other lift~$x'$ one can set $\bar\kappa_{x'} \defeq \gamma_*\bar\kappa_x$
where $\gamma\in\Gamma$ takes $x$ to~$x'$,
and, since $\Gamma_{x}$ acts trivially on $H^2(\Gamma_x,C^\times)$, the class of $\bar{\kappa}_{x'}$ is
independent of the choice of~$\gamma$.
For $\beta$, observe first that for any two cocycles
$\kappa_x,\kappa_x'\in Z^2(\Gamma_x,C^\times)$
lifting~$\kappa_x$, an element of $Z^1(\Gamma_x,C^\times)$
whose coboundary takes $\kappa_x$ to~$\kappa_x'$
yields an algebra isomorphism
$\bbC[\Gamma_x,\kappa_x]\simeq \bbC[\Gamma_x,\kappa_x']$
and thus an identification of $\Irr(-)$ of both objects.
So for a fixed choice of lift $x$ of~$\bar x$
we may unambiguously write $\Irr\bigl(\bbC[\Gamma_x,\bar\kappa_x]\bigr)$,
and one then checks as for~$\kappa$ that varying
the lift~$x$ is permissible.
This could all be phrased more concisely
with an inverse limit construction.

Our definition of twisted extended quotient
is phrased differently from the original one
\cite[\S 2.1]{aubert_baum_plymen_solleveld17a}
but in the end the two definitions are equivalent,
as we now explain.
We chose our variant to clarify the proof of \Cref{thm58} below.

\begin{definition}
\mathdef{Twisted quotient data} for the action
$\Gamma\actson X$ is a pair $(\kappa,\theta)$
consisting of
\begin{itemize}
\item
for each $x\in X$, a 2-cocycle
$\kappa_x\in Z^2(\Gamma_x,C^\times)$, and

\item
for each $x\in X$ and $\gamma\in\Gamma$,
a $\bbC$-algebra isomorphism $\theta_{\gamma,x}
\colon \bbC[\Gamma_x,\kappa_x] \xrightarrow{\sim}
\bbC[\Gamma_{\gamma x},\kappa_{\gamma x}]$,
\end{itemize}
such that
\begin{enumerate}
\item
for all $x\in X$ and $\gamma\in\Gamma$,
the cocycles $\kappa_{\gamma x}$
and $\gamma_*(\kappa_x)$ are cohomologous,

\item
for all $x\in X$ and $\gamma\in\Gamma_x$,
the automorphism $\theta_{\gamma,x}$ is inner, and

\item
for all $\gamma, \gamma' \in \Gamma$ and $x \in X$ we have
$\theta_{\gamma', \gamma x} \circ \theta_{\gamma, x}
= \theta_{\gamma'\gamma, x}$.
\end{enumerate}
\end{definition}

Given twisted quotient data $(\theta,\kappa)$ for $\Gamma\actson X$,
let 
$ \widetilde X_\kappa = \bigl\{(x, \rho) \colon x \in X,
\rho \in \Irr(\bbC[\Gamma_{x}, \kappa_{x}])\bigr\}$.
Use $\theta$ to define an action of $\Gamma$ on $\widetilde X_{\kappa}$ by 
$\gamma \cdot (x, \rho) = (\gamma x, \rho \circ \theta_{\gamma, x}^{-1})$.
The \mathdef{strict twisted extended quotient}
$(X\sslash\Gamma)_\kappa$
of $X$ by $(\Gamma,\kappa,\theta)$
is defined as the quotient of $\widetilde X_\kappa$
by this $\theta$-twisted action of~$\Gamma$.
Although this object depends on~$\theta$ as well as~$\kappa$,
we will follow the convention in the literature
by suppressing~$\theta$ from the notation.

It is not so hard to see why a strict twisted
extended quotient is a twisted extended quotient in our sense:
given $(\kappa,\theta)$, one takes $\bar\kappa_x$ 
to be the cohomology class of~$\kappa_x$
and the bijection $\beta_{\bar x}$ to be that induced by
the gluing maps~$(\theta_{\gamma,x})_{\gamma\in\Gamma}$.
The converse holds as well, and the notions are equivalent.

\begin{lemma}
Every twisted extended quotient
is isomorphic to a strict twisted extended quotient.
\end{lemma}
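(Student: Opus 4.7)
The plan is to manufacture, from the given data $(Y,p,\bar\kappa,\beta)$, explicit twisted quotient data $(\kappa,\theta)$ for the action $\Gamma\actson X$, and then exhibit a bijection $(X\sslash\Gamma)_\kappa\simeq Y$ covering the identity on $X/\Gamma$ and compatible with the $\beta_{\bar x}$.

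First I would fix a set $S\subseteq X$ of $\Gamma$-orbit representatives, lift each $\bar\kappa_{\bar s}$ for $s\in S$ to a cocycle $\kappa_s\in Z^2(\Gamma_s,\bbC^\times)$, and for every $x\in X$ choose $g_x\in\Gamma$ with $g_x\cdot s(x)=x$ (taking $g_s=1$ for $s\in S$). Setting $\kappa_x\defeq(g_x)_*\kappa_{s(x)}$ yields a cocycle for $\Gamma_x=g_x\Gamma_{s(x)}g_x^{-1}$ whose class is $\bar\kappa_{\bar x}$. Given $\gamma\in\Gamma$ and $x\in X$, writing $y=\gamma x$ and $h\defeq g_y^{-1}\gamma g_x\in\Gamma_{s(x)}$, I would then define
\[
\theta_{\gamma,x}\colon\bbC[\Gamma_x,\kappa_x]\longrightarrow\bbC[\Gamma_y,\kappa_y],\qquad \theta_{\gamma,x}\defeq(g_y)_*\circ\Ad(e_h)\circ(g_x^{-1})_*,
\]
where $(g_x)_*$ and $(g_y)_*$ are the tautological relabeling isomorphisms coming from the definitions of $\kappa_x$ and $\kappa_y$, and $\Ad(e_h)$ denotes the inner automorphism of $\bbC[\Gamma_{s(x)},\kappa_{s(x)}]$ by the basis element $e_h$.

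All three axioms for twisted quotient data are then routine by direct calculation. Cohomologousness of $\gamma_*\kappa_x$ and $\kappa_{\gamma x}$ reduces to the triviality of the inner conjugation action on $H^2$. Innerness of $\theta_{\gamma,x}$ for $\gamma\in\Gamma_x$ holds because in this case $g_xhg_x^{-1}=\gamma$, so $\theta_{\gamma,x}$ is the inner automorphism of $\bbC[\Gamma_x,\kappa_x]$ by the image of $e_h$. For associativity, the observation is that for $\gamma,\gamma'\in\Gamma$ with $y=\gamma x$ and $z=\gamma'y$, the elements $h(\gamma,x),h(\gamma',y)\in\Gamma_{s(x)}$ satisfy $h(\gamma',y)\cdot h(\gamma,x)=h(\gamma'\gamma,x)$, and the inner automorphisms $\Ad(e_{h(\gamma',y)})\circ\Ad(e_{h(\gamma,x)})$ and $\Ad(e_{h(\gamma'\gamma,x)})$ differ only by the scalar $\kappa_{s(x)}\bigl(h(\gamma',y),h(\gamma,x)\bigr)$, which acts trivially under $\Ad$.

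Finally, I would define the bijection $(X\sslash\Gamma)_\kappa\xrightarrow{\sim}Y$ by sending the $\Gamma$-orbit of $(x,\rho)\in\widetilde X_\kappa$ to $\beta_{\bar x}^{-1}(\rho)\in p^{-1}(\bar x)\subseteq Y$, where $\rho\in\Irr(\bbC[\Gamma_x,\kappa_x])$ is regarded as an element of $\Irr(\bbC[\Gamma_x,\bar\kappa_x])$ via the canonical identification discussed after the definition of twisted extended quotient. The main (mild) obstacle is well-definedness on $\Gamma$-orbits: pairs $(x,\rho)$ and $(\gamma x,\rho\circ\theta_{\gamma,x}^{-1})$ must land on the same point of $Y$, which reduces to showing that $\theta_{\gamma,x}$ intertwines the two canonical identifications of $\Irr(\bbC[\Gamma_x,\kappa_x])$ and $\Irr(\bbC[\Gamma_{\gamma x},\kappa_{\gamma x}])$ with the common set $\Irr(\bbC[\Gamma_x,\bar\kappa_x])$. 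This is a direct check unspooling the pushforwards and the inner automorphism $\Ad(e_h)$. Bijectivity and compatibility with the projection to $X/\Gamma$ then follow immediately from the bijectivity of each $\beta_{\bar x}$.
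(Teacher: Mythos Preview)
Your proposal is correct and follows essentially the same approach as the paper: fix orbit representatives, lift the cocycle classes there, transport via chosen elements $g_x$, and verify the axioms. The paper's version is terser (it reduces to a single orbit and defines $\theta_{\gamma,x}$ as bare transport $\sigma_{\gamma x*}\circ\sigma_{x*}^{-1}$ without your inner correction $\Ad(e_h)$), but since the two choices of gluing maps differ by inner automorphisms they induce the same map on $\Irr$ and yield the same strict twisted extended quotient.
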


\begin{proof}
Without loss of generality the action of $\Gamma\actson X$ is transitive.
Given a twisted extended quotient $(Y,p,\bar\kappa,\beta)$,
choose a basepoint $x_0\in X$.
For each $x\in X$, choose $\sigma_x\in \Gamma$
such that $\sigma_x x_0 = x$.
Choose a lift $\kappa_0$ of $\bar\kappa_{x_0}$.
Set $\kappa_x \defeq \sigma_{x*} \kappa_0$
(maps induced on cohomology)
and set $\theta_{\gamma,x} \defeq \sigma_{\gamma x*}\gamma_{x*}^{-1}$
(maps induced on algebras).
One easily checks that $(\kappa,\theta)$
are twisted quotient data for which
$(X\sslash\Gamma)_\kappa\simeq Y$.
\end{proof}

\begin{remark}
The twisted extended quotient
is an enhancement of the ordinary notion of group quotient.
In \Cref{sec:springer} we saw a different enhancement of this notion,
the quotient stack $[X/\Gamma]$.
These two enhancements are related in the following way.

Let $(\kappa,\theta)$ be the trivial family of extended quotient data,
meaning each $\kappa_x$ is the trivial cocycle
and $\theta_{\gamma,x}$ is conjugation by~$\gamma$.
Then there is a tautological identification
\[
(X\sslash\Gamma)_\kappa
\simeq \Irr\bigl(\Perv([X/\Gamma],C)\bigr).
\]
It would be interesting to generalize this comparison
to the case where $(\kappa,\theta)$ is nontrivial.
When $\Gamma$ is not finite, but rather an algebraic group,
one can prove a similar comparison after modifying
the definition of the twisted extended quotient
to use $\pi_0(\Gamma_x)$.
\end{remark}

Let $X = \bigsqcup_{i\in I} X_i$ be a partition of~$X$.
The ordinary quotient $X/\Gamma$ can be formed in two steps,
by first taking the disjoint union of the partial quotients
\[
\bigsqcup_{i\in I} X_i/N_\Gamma(X_i)
\]
of the blocks in the partition,
where $N_\Gamma(X_i)$ is the stabilizer in~$\Gamma$ of the set~$X_i$,
then taking the further quotient by~$\Gamma$.
A similar property holds for twisted extended quotients
under a mild compatibility assumption
between isotropy groups and the partition.
Later, in the proof of \Cref{AMS9.3bis},
this observation will help us to rewrite our
particular twisted extended quotient in a simpler form.

\begin{lemma} \label{thm58}
Let $X = \bigsqcup_{i\in I} X_i$
and for each $i\in I$, let $Y_i$ be
a twisted extended quotient of $X_i$ by $N_\Gamma(X_i)$.
Let $Y\defeq\sqcup_{i\in I} Y_i$.
Suppose that $N_\Gamma(X_i)_x = \Gamma_x$
for every $i\in I$ and $x\in X_i$.
Then the ordinary quotient $Y/\Gamma$
is canonically a twisted extended quotient of~$X$ by~$\Gamma$.
\end{lemma}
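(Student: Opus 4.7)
The plan is to exhibit $Y/\Gamma$ as a twisted extended quotient by constructing, for each $\Gamma$-orbit in~$X$, the required cocycle class and bijection from the data on the $Y_i$, using the hypothesis $N_\Gamma(X_i)_x = \Gamma_x$ to identify the two flavors of stabilizer.

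First, I will unpack how $\Gamma$-orbits interact with the blocks. Since $\Gamma$ permutes the~$X_i$, it acts on the index set~$I$ by $\gamma \cdot i \defeq j$ where $\gamma X_i = X_j$. For $x \in X_i$, disjointness of blocks forces $[x]_\Gamma \cap X_i = [x]_{N_\Gamma(X_i)}$, so
\[
[x]_\Gamma = \bigsqcup_{j \in [i]_\Gamma} [x_j]_{N_\Gamma(X_j)}
\]
for any choice of representatives $x_j \in [x]_\Gamma \cap X_j$. Moreover, conjugation by any $\gamma \in \Gamma$ gives an isomorphism $N_\Gamma(X_i) \simeq N_\Gamma(X_{\gamma \cdot i})$ compatible with the actions on $X_i$ and $X_{\gamma \cdot i}$; transporting the twisted extended quotient data across this isomorphism makes $Y = \bigsqcup_i Y_i$ into a $\Gamma$-set whose action commutes with the composites $Y_i \to X_i/N_\Gamma(X_i) \to X/\Gamma$. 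This supplies both the sought-after map $p\colon Y/\Gamma \to X/\Gamma$ and a meaningful interpretation of the quotient $Y/\Gamma$ itself.

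Next, for each $\bar x \in X/\Gamma$ I will produce the required twisted quotient data. Choose an index $i_0$ with $X_{i_0} \cap [x]_\Gamma \neq \emptyset$, pick a representative $x \in X_{i_0} \cap [x]_\Gamma$, and set $\bar\kappa_x \defeq \bar\kappa^{(i_0)}_x$, which by the hypothesis is a class in $H^2(\Gamma_x, C^\times)$. Since $\Gamma$ acts transitively on the set of blocks meeting $[x]_\Gamma$, each $\Gamma$-orbit in $p^{-1}(\bar x) \subseteq Y$ contains a unique point lying over $[x]_{N_\Gamma(X_{i_0})}$, giving a canonical bijection $p^{-1}(\bar x) \simeq p_{i_0}^{-1}([x]_{N_\Gamma(X_{i_0})})$. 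Composing with $\beta^{(i_0)}_{[x]_{N_\Gamma(X_{i_0})}}$ yields
\[
\beta_{\bar x}\colon p^{-1}(\bar x) \;\simeq\; \Irr\bigl(\bbC[\Gamma_x, \bar\kappa_x]\bigr).
\]

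Finally, I will verify independence of the choices, modulo the ambiguities built into the definition of twisted extended quotient. Varying~$x$ within the same $N_\Gamma(X_{i_0})$-orbit introduces the usual conjugation ambiguity in cocycles and in representations; varying~$i_0$ within $[i]_\Gamma$ is compensated by the $\Gamma$-transport from the previous step, which by construction interchanges the twisted quotient data compatibly with cocycles. The main obstacle is purely bookkeeping: tracking that every choice introduces only tolerated ambiguities. The hypothesis $N_\Gamma(X_i)_x = \Gamma_x$ is precisely what makes this possible, since it ensures that no discrepancy arises between a cocycle native to the action of $N_\Gamma(X_{i_0})$ and one native to the action of~$\Gamma$.
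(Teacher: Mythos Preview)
Your proof is correct and follows the same approach as the paper's, only with considerably more detail: the paper's two-sentence argument simply asserts that the projections $p_i\colon Y_i\to X/\Gamma$ assemble and that the hypothesis $N_\Gamma(X_i)_x=\Gamma_x$ lets one reuse the cocycle classes and bijections. You flesh out exactly what this reuse means, including the construction of the $\Gamma$-action on~$Y$ and the verification that each fiber of $Y/\Gamma\to X/\Gamma$ is represented by a single fiber of some $p_{i_0}$, which the paper leaves entirely implicit.
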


\begin{proof}
Taken together, the projection maps $p_i\colon Y_i\to X/\Gamma$
induce a projection map $p\colon Y\to X/\Gamma$.
The assumption on stabilizers implies that
we can reuse the cocycle classes $\bar\kappa_x$
and bijections $\beta_x$ to give
a twisted extended quotient structure on~$Y$.
\end{proof}

We now return to $L$-parameters.
Fix a cuspidal datum
$\hat{s} = (\cal M, \psi, v, \varrho)$ for ${}^LG$.
Since the first component of our cuspidal data
will always be $\cal M$ in what follows,
we omit it from the notation.
From $\hat s$ we would like to make a (strict)
twisted extended quotient of $X =\hat{\frak s}_{\cal M^\circ}$
by the action of $\Gamma = W_{\hat{\frak s}_{\cal M^\circ}}$.
As the group action is clear,
to complete the definition of this quotient
we must define the families of cocycles and gluing maps.

For the cocycles,
set $qt = (K^+, [v]_{K^+}, q\mathcal{E})$
and $t^{\circ} =  (K^{+\circ}, [v]_{K^+\circ}, \mathcal{E})$
with $q\frak t$ and $\frak t^{\circ}$ the corresponding conjugacy classes,
as in \Cref{sec:param:fibers}.
Recall that $\hat{\frak s}_{\widehat G} \in \frak B_Z({}^LG)$
denotes the inertial equivalence class
$\llbracket \cal M, \psi, v, \varrho \rrbracket_{\widehat{G}}$
and $\hat{\frak s}_{\cal M^\circ}$ is the orbit of
$[\hat{s}]_{\cal M^\circ}$
via the action of $X^{*}_{\text{nr}}(\cal M)$. 
For $(\psi', v', \varrho') \in \hat{\frak s}_{\cal M^\circ}$,
define $W_{\hat{\frak s}_{\cal M^\circ}, \psi', v', \varrho'}$
to be its stabilizer inside $W_{\hat{\frak s}_{\cal M^\circ}}$.
Recall that \Cref{AMSLemma5.3} gives, for our fixed $qt$,
a $2$-cocycle $\kappa_{qt}$ of $W_{qt}$,
and that, by \Cref{AMS8.2},
there is a canonical identification $W_{qt} \xrightarrow{\sim}
W_{\hat{\frak s}_{\cal M^\circ}, \psi, v, \varrho}
\subset W_{\hat{\frak s}_{\cal M^\circ}}$.
If one replaces $\hat{s}$ with an unramified twist
$\hat{s}' = (z^{\val_{F}} \cdot \psi, v, \varrho)$
then $qt$ is unchanged,
so by the same \nameCref{AMSLemma5.3} we also get an identification
$W_{\hat{\frak s}_{\cal M^\circ}, z ^{\val_{F}} \cdot \psi, v, \varrho}
\xrightarrow{\sim} W_{qt}$.
For any $z$, let $\kappa_{z^{\val_{F}} \cdot \psi, v, \varrho}$
denote the $2$-cocycle of
$W_{\hat{\frak s}_{\cal M^\circ}, z^{\val_{F}} \cdot \psi, v, \varrho}$ 
obtained from $\kappa_{qt}$ by transfer along this isomorphism.
These are the cocycles we will take
for the extended quotient.

As for the gluing maps, they are given by the following result.

\begin{lemma}
For $w \in W_{\hat{\frak s}_{\cal M^\circ}}$
and $z \in X^{*}_{\tn{nr}}(\cal M)$ with
$w \cdot \hat{s} = (z^{\val_{F}} \cdot \psi, v, \varrho)$
there is a family of algebra isomorphisms (as we vary $w,z$)
\[
\theta_{w, \psi, v, \varrho} \colon
\bbC[W_{\hat{\frak s}_{\cal M^\circ}, \psi, v, \varrho}, \kappa_{\psi, v, \varrho}]
\xrightarrow{\sim}  \bbC[W_{\hat{\frak s}_{\cal M^\circ}, z^{\val_{F}} \cdot \psi, v, \varrho},
\kappa_{z^{\val_{F}} \cdot \psi, v, \varrho}]
\]
satisfying the following properties.
\begin{enumerate}

\item
If $w \in W_{\hat{\frak s}_{\cal M^\circ}, \psi, v, \varrho}$
then $\theta_{w, \psi, v, \varrho}$ is conjugation by $w$.

\item
$\theta_{w', z^{\val_{F}} \cdot \psi, v, \varrho}
\circ \theta_{w, \psi, v, \varrho} = \theta_{w'w, \psi, v, \varrho}$
for all $w, w' \in W_{\hat{\frak s}_{\cal M^\circ}}$.

\item
There is a natural identification (of $\widehat{G}$-conjugacy classes of
enhanced $L$-parameters of $\widehat{G}$)
\[
^{L}\Sigma_{q\frak t, \psi, v}^{-1}(\rho) \cong ^{L}\Sigma^{-1}_{q\frak t}
(\rho \circ \psi^{-1}_{w, \psi, v, \varrho})
\]
for all $\rho \in \Irr(\bbC[W_{\hat{\frak s}_{\cal M^\circ}, \psi, v, \varrho},
\kappa_{\psi, v, \varrho}])$.

\end{enumerate}
\end{lemma}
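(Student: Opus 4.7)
The plan is to define $\theta_{w, \psi, v, \varrho}$ concretely via conjugation, exploiting the fact that on the quasi-Levi side the triple $qt = (K^+, [v]_{K^+}, q\cal E)$ is unchanged under both unramified twisting and the action of $W_{\hat{\frak s}_{\cal M^\circ}}$. More precisely, given $w$ and $z$ with $w\cdot \hat s = (z^{\val_F}\cdot\psi, v, \varrho)$, I would lift $w$ to an element $\tilde w \in N_{\widehat G}(\cal M)$ and then modify $\tilde w$ by an element of $\cal M^\circ$ to arrange that $\tn{Ad}(\tilde w)(\psi,v,\varrho) = (z^{\val_F}\cdot\psi,v,\varrho)$ on the nose; this is possible because the class of $(v,\varrho)$ inside the $\cal M^\circ$-orbit of $(z^{\val_F}\cdot\psi, v,\varrho)$ is single. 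Conjugation by $\tilde w$ then yields a group isomorphism $H^+ \to H^+$ (where $H^+ = Z_{\widehat G^+}(\psi) = Z_{\widehat G^+}(z^{\val_F}\cdot\psi)$ since unramified twisting does not change the centralizer of $\psi|_{I_F}$ and Frobenius centralizers are unaffected up to an unramified central twist) restricting to an isomorphism $K^+\to K^+$ fixing $v$ and $\varrho$, hence an automorphism of $W_{qt}$.

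Using the canonical identifications $W_{\hat{\frak s}_{\cal M^\circ}, \psi, v, \varrho} \simeq W_{qt} \simeq W_{\hat{\frak s}_{\cal M^\circ}, z^{\val_F}\cdot\psi, v, \varrho}$ from \Cref{AMS8.2}, the conjugation $\tn{Ad}(\tilde w)$ induces an isomorphism between the two stabilizers. Because both cocycles $\kappa_{\psi, v, \varrho}$ and $\kappa_{z^{\val_F}\cdot\psi, v,\varrho}$ are, by definition, transfers of the \emph{same} cocycle $\kappa_{qt}$ along these canonical identifications, the induced map automatically intertwines them, giving the desired algebra isomorphism $\theta_{w, \psi, v, \varrho}$ on twisted group rings.

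Next I would check that this $\theta$ is well-defined up to inner automorphisms. Different admissible choices of $\tilde w$ differ by multiplication by elements of the stabilizer $W_{\hat{\frak s}_{\cal M^\circ}, \psi, v, \varrho}$ (arising from freedom in the lift modulo $\cal M^\circ$ and in choosing the correcting element of $\cal M^\circ$). Inside the twisted group algebra these elements act by inner automorphisms, which is harmless at the level of $\Irr(-)$. From this it is immediate that property~(1) holds: when $w$ itself lies in the stabilizer, we may take $\tilde w$ to represent $w$, and conjugation by $w$ is inner on $\bbC[W_{\hat{\frak s}_{\cal M^\circ}, \psi, v, \varrho}, \kappa_{\psi, v, \varrho}]$ by construction. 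Property~(2) follows from the identity $\tn{Ad}(\tilde w')\circ\tn{Ad}(\tilde w) = \tn{Ad}(\tilde w'\tilde w)$, together with the fact that the composition $\tilde w'\tilde w$ is an admissible lift of $w'w$ up to the ambiguity noted above.

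The final and most substantive step is property~(3), the compatibility with the bijection ${}^L\Sigma_{q\frak t, \psi, v}$. Here the key input is \Cref{AMS9.1}(1), which asserts that ${}^L\Sigma_{q\frak t,\psi,v}$ is defined as a transport of $q\Sigma_{q\frak t}$ along a chosen conjugator $g\in\widehat G$. Applying this construction at both $(\psi, v, \varrho)$ and $(z^{\val_F}\cdot\psi, v, \varrho)$ and using $\tilde w$ to interpolate between the two choices of conjugator reduces the claim to the $\widehat G$-equivariance of $q\Sigma_{q\frak t}$ under the action of $N_{\widehat G}(qt)/K^+$, which is built into its construction in \Cref{AMSLemma5.3}. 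I expect the main obstacle to lie precisely here: pinning down $\tilde w$ sufficiently to commute cleanly with the various arbitrary choices (of $g$, of quasi-logarithm, of AMS isomorphism) in the definition of ${}^L\Sigma_{q\frak t,\psi,v}$, so that the resulting identification is canonical at the level of $\widehat G$-conjugacy classes rather than merely up to further inner twists.
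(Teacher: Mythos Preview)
Your approach is correct and coincides with the paper's, which simply cites \cite[9.2]{AMS18} and notes that the same proof works verbatim after replacing their $L_{\tn{sc}}$ with $L^+$; your sketch is a faithful reconstruction of that argument. One small correction: your parenthetical claim that $H^+ = Z_{\widehat G^+}(\psi) = Z_{\widehat G^+}(z^{\val_F}\cdot\psi)$ is not justified in general, since $z$ is central only in $\widehat M$, not in $\widehat G$---what you actually need (and what your argument uses) is that $\tn{Ad}(\tilde w)$ carries $Z_{\widehat G^+}(\psi)$ isomorphically onto $Z_{\widehat G^+}(z^{\val_F}\cdot\psi)$, and that $K^+ = Z_{\cal M^{\circ,+}}(\psi)$ \emph{is} unchanged under the central twist (so $qt$ is unchanged), which is exactly the statement the paper records before this lemma.
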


\begin{proof}
The same proof as in \cite[9.2]{AMS18} works here,
replacing their $L_{\tn{sc}}$ with $L^+$.
\end{proof}

We define 
\[
\Phi_\tn{e}^Z(G)^{\hat{\frak s}_{\widehat G}} \defeq
\bigl\{(\phi, u, \rho) \in \Phi_\tn{e}^Z(G)  \mid
\llbracket \Cusp(\phi, u, \rho) \rrbracket_{\widehat{G}}
= \hat{\frak s}_{\widehat G}\bigr\}.
\]
Note that we could also replace ``$\Cusp$" with ``$\uCusp$" in the above definition.
Our main theorem parameterizes the set $\Phi_\tn{e}^Z(G)^{\hat{\frak s}_{\widehat G}}$.

\begin{theorem}\label{AMS9.3}
Let $\hat{\frak s}_{\cal M^\circ}$ denote the inertial orbit of the cuspidal datum
$(\cal M, \psi, v, \varrho)$ with $qt$ as defined above.
Then the maps $^{L}\Sigma_{q\frak t, \psi, v}$ from \Cref{AMS9.1} give a bijection
\begin{align*}
\Phi_\tn{e}^Z({}^LG)^{\hat{\frak s}_{\widehat G}} &\longleftrightarrow
(\Phi_\tn{e}^Z(\cal M)^{\hat{\frak s}_{\cal M^\circ}}
\sslash W_{\hat{\frak s}_{\cal M^\circ}})_{\kappa} \\
(\phi, u, \rho) &\longmapsto \bigl(\uCusp(\phi, u, \rho), q\Sigma_{q\frak t}(u, \rho)\bigr) \\
(\psi, q\Sigma_{q\frak t}^{-1}(\tau)) &\longmapsfrom \bigl((\cal M, \psi, v, \varrho), \tau\bigr).
\end{align*}
The bijection %
preserves boundedness of $L$-parameters
and the restriction of $\tau$ to $W_{t^{\circ}}$ canonically determines
the (non-enhanced) $L$-parameter in $^{L}\Sigma_{q\frak t}^{-1}(\tau)$.
\end{theorem}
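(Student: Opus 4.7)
The plan is to build the bijection by decomposing the source according to the unnormalized cuspidal support map and then applying the partition lemma \Cref{thm58} to reassemble the fibers into the strict twisted extended quotient. Concretely, set $X \defeq \hat{\frak s}_{\cal M^\circ}$ and $\Gamma \defeq W_{\hat{\frak s}_{\cal M^\circ}}$. The $\widehat G$-conjugacy classes of cuspidal data lying in $\hat{\frak s}_{\widehat G}$ are precisely the $\Gamma$-orbits on $X$, by the Levi decomposition $W(\,{}^LG, \cal M)$-equivariance of the projection \eqref{thm27}. Thus the partition of $\Phi_\tn{e}^Z({}^LG)^{\hat{\frak s}_{\widehat G}}$ into fibers of $\uCusp$ corresponds, on the target side, to the partition of the set $\widetilde X_\kappa$ of pairs $(x,\rho)$ into strata over individual points $x\in X$.

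For the fiberwise bijection, fix a representative $(\psi', v', \varrho')$ of a point in $X$. By \Cref{AMS9.1}(1), the map $^L\Sigma_{q\frak t, \psi', v'}$ identifies the fiber $\uCusp^{-1}([\cal M, \psi', v', \varrho']_{\widehat G})$ with $\Irr(\bbC[W_{q\frak t}, \kappa_{q\frak t}])$. By \Cref{AMS8.2}, the group $W_{q\frak t}$ is canonically isomorphic to the stabilizer $\Gamma_{(\psi', v', \varrho')} = W_{\hat{\frak s}_{\cal M^\circ}, \psi', v', \varrho'}$, and under this identification the cocycle $\kappa_{q\frak t}$ matches $\kappa_{\psi', v', \varrho'}$. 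Hence the stratum of $\uCusp^{-1}$ over $[\cal M, \psi', v', \varrho']_{\widehat G}$ matches the stratum of $\widetilde X_\kappa$ over $(\psi', v', \varrho')$. To descend this stratum-by-stratum matching to the $\Gamma$-quotient, invoke the preceding lemma: property~(1) of $\theta_{w,\psi,v,\varrho}$ (inner automorphism on stabilizers) ensures the cocycle class is $\Gamma$-invariant, property~(2) (the cocycle condition) ensures the twisted action is a genuine action, and property~(3) gives $^L\Sigma_{q\frak t, w\cdot\psi, v}^{-1}(\rho \circ \theta_{w,\psi,v,\varrho}^{-1}) = {}^L\Sigma_{q\frak t,\psi,v}^{-1}(\rho)$ as $\widehat G$-conjugacy classes. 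Together these verify the hypotheses of \Cref{thm58} and produce the desired bijection.

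The main obstacle is the bookkeeping around the fact that $^L\Sigma_{q\frak t,\psi,v}$ is only canonically determined up to a character of $W_{q\frak t}/W_{\frak t^\circ}$, so one must choose these bijections and the gluing maps $\theta$ consistently as $(\psi', v', \varrho')$ varies through $X$. This consistency is exactly what the preceding lemma packages: once one fixes a representative for each $\Gamma$-orbit and a choice of $^L\Sigma$ there, the gluing maps are determined by the requirement in property~(3), and the cocycle-class compatibility in (1)--(2) is forced.

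For the remaining two assertions, boundedness is preserved because an $L$-parameter $(\phi,u)$ is bounded if and only if $\phi|_{W_F}$ has bounded image, and unnormalized cuspidal support records $\phi$ on $W_F$ exactly (no unramified twist intervenes in $\uCusp$). Thus the bijection matches bounded parameters on both sides. Finally, the claim that $\tau|_{W_{t^\circ}}$ determines the underlying non-enhanced $L$-parameter in $^L\Sigma_{q\frak t}^{-1}(\tau)$ is immediate from \Cref{AMS9.1}(3), since $W_{t^\circ} = W_{\frak t^\circ}$ is normal in $W_{q\frak t}$ and the restriction $\rho|_{\pi_0(Z_{H^\circ}(u))}$ captures exactly the data $(u, \rho_i)$ whose $\widehat G$-conjugacy class controls the underlying parameter.
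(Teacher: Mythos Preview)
Your argument is correct and follows essentially the same route as the paper, which explains well-definedness of the forward map (choice of representative of $\uCusp$ in $\hat{\frak s}_{\cal M^\circ}$, with different choices related by $W_{\hat{\frak s}_{\cal M^\circ}}$ compatibly via the gluing maps~$\theta$) and then defers the rest to \cite[9.3]{AMS18}; you have simply unpacked that deferral using \Cref{AMS9.1}, \Cref{AMS8.2}, and the preceding lemma. One small remark: the appeal to \Cref{thm58} is superfluous here---property~(3) of the preceding lemma already gives the well-definedness of the map into $\widetilde X_\kappa/\Gamma$ directly, and the paper reserves \Cref{thm58} for the later passage from single inertial classes to all of $\Phi_\tn{e}^Z({}^LG,\cal M)$ in \Cref{AMS9.3bis}.
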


\begin{proof}
We will explain why this map is well-defined.
When we write $\uCusp(\phi, u, \rho)$ in the output pair,
we mean that we choose a representative of $\uCusp(\phi, u, \rho)$
that lies in $\Phi_{\tn e}^Z(\cal M)^{\hat{\frak s}_{\cal M^\circ}}$
and consider it up to $\cal M^\circ$-conjugacy.
Two different choices of $\widehat{G}$-conjugate representatives
are in the same $W_{\hat{\frak s}_{\cal M^\circ}}$-orbit,
via some element that also identifies the corresponding twisted group algebra representations
(by construction, cf.~\Cref{AMS9.1}),
giving equality in the twisted extended quotient.
The rest of the proof follows from an identical argument as in \cite[9.3]{AMS18}.
\end{proof}

Assembling the bijections of \Cref{AMS9.3} for all~$\hat{\frak s}$,
one obtains a description of $\Phi_\tn{e}^Z(G)$
as a disjoint union of twisted extended quotients,
which we leave to the reader to write out explicitly.
Intermediate between these two extremes,
one can consider the set
\[
\Phi_\tn{e}^Z({}^LG,\cal M)
\]
of $L$-parameters with cuspidal support
in a given Levi $L$-subgroup~$\cal M$.
A priori $\Phi_\tn{e}^Z({}^LG,\cal M)$ is a union
of twisted extended quotients for different groups,
namely, the stabilizers in $W({}^LG,\cal M)$
of various components of $\Omega_Z({}^LG)$.
Using \Cref{thm58}, one can show that
the components can be grouped
so that $\Phi_\tn{e}^Z({}^LG,\cal M)$
is a twisted extended quotient
for the group $W({}^LG,\cal M)$.

\subsection{Functoriality in \texorpdfstring{$Z$}{Z}}
\label{sec:changeofZ}
We conclude by briefly explaining how taking cuspidal support is compatible with changing the finite central subgroup $Z$. Given two such subgroups $Z \to Z' \to G$, there is a dual central isogeny $\widehat{G}^{+'} \xrightarrow{f} \widehat{G}^{+}$, where $\widehat{G}^{+'} := \widehat{G/Z'}$, and we use the $+'$-superscript to denote preimages in the latter group. We also have, for any $Z$-enhanced $L$-parameter $(\phi, u, \rho)$ for $G$, a central isogeny $H^{+'}:= Z_{\widehat{G}^{+'}}(\phi) \to Z_{\widehat{G}^{+}}(\phi)$ which by abuse of notation we also denote by $f$.

For a fixed $\mathcal{F} \in \Irr(\Perv(\cal{U}_{H^{+}}/H^{+}))$, the fact that $f$ is a central isogeny implies that the pullback $f^{*}\mathcal{F}$ is in $\Irr(\Perv(\cal{U}_{H^{+'}}/H^{+'}))$, and if $(K^{+}, v, \varrho)$ lies in the quasi-cuspidal support of $\mathcal{F}$, then we claim that $(K^{+'}, v, f^{*}\varrho)$ lies in the quasi-cuspidal support of $f^{*}\mathcal{F}$. To see this, we first note that $f^{*}$ induces a bijection between the quasi-parabolic subgroups of $H^{+}$ and $H^{+'}$, and so the $f$-pullback of the quasi-cuspidal support of $\mathcal{F}$ remains quasi-cuspidal. Moreover, for a fixed quasi-parabolic subgroup $P$ of $H^{+}$ with Levi subgroup $L$ and $L$-equivariant local system $\mathcal{E}$ supported on $\cal{O}$, any nonzero morphism $$\ind_{L\subseteq P}^{H^+}(\IC(\cal{O}, \cal{E})) \to \mathcal{F}$$ yields a nonzero morphism $$f^{*}[\ind_{L\subseteq P}^{H^+}(\IC(\cal{O}, \cal{E}))] \to f^{*}\mathcal{F},$$ and we deduce the claim via the identification
$$f^{*}[\ind_{L\subseteq P}^{H^+}(\IC(\cal{O}, \cal{E}))] = \ind_{f^{*}L\subseteq f^{*}P}^{H^{+'}}(\IC(f^{*}\cal{O}, f^{*}\cal{E})).$$

After this observation that the $H^{+}$ quasi-cuspidal support of $(u, \rho)$ agrees with the $H^{+'}$ quasi-cuspidal support of $(u, f^{*}\rho)$ it is a routine verification to deduce that the (normalized or unnormalized) cuspidal support of $(\phi, u, \rho)$ agrees with that of $(\phi, u, f^{*}\rho)$.

\section{The local Langlands conjectures}
\label{sec:corr}
In this section we finally come to the local Langlands conjectures.
The heart is \Cref{sec:corr:conj}, where we summarize
the expected relationship between the local Langlands correspondence
and the cuspidal support map.
This is preceded by two sections reviewing the setup
of this correspondence for rigid inner forms
and followed by two sections checking certain compatibilities
with change of rigidification or Whittaker datum.
We finish with a short application to non-singular representations.

Let $G$ be a quasi-split reductive $F$-group
and let $Z$ be a finite central subgroup of~$G$.
For brevity, we sometimes use here the more common model of an $L$-parameter
as a homomorphism $\varphi\colon W_F\times\SL_2\to{}^LG$ (see~\Cref{sec:lparam}).

\subsection{Gerbe cohomology and rigid inner twists}
\label{sec:corr:gerbe}
Rigid inner twists are certain enhancements of inner twists
that conveniently organize $L$-packets of different groups at once;
we refer the reader to \cite{Kal16}, especially the introduction,
for history and further discussion.
In this section we recall the definition of these objects.
It is subtle in full generality, when $F$ has positive characteristic.

\begin{definition}
Recall from \cite{Dillery} that the profinite group scheme 
\[
u = \varprojlim_{E/F,n} \frac{\text{Res}_{E/F}(\mu_{n})}{\mu_{n}}
\]
satisfies $H^{2}_{\text{fppf}}(F, u) = \widehat{\mathbb{Z}}$. We choose an fpqc $u$-gerbe $\mathcal{E} \to \mathrm{Sch}/F$ representing the class of $-1$ and we call this gerbe the \mathdef{Kaletha gerbe}.
\end{definition}

\begin{remark} To work with the above object in a more concrete manner, one chooses a \v{C}ech $2$-cocycle $a \in u(\overline{F}\otimes_F\overline{F}\otimes_F\overline{F})$ (so in characteristic zero, this is a $2$-cocycle of the absolute Galois group $\Gamma$ in $u(\overline{F})$) and uses the gerbe $\mathcal{E}_{a} \to \mathrm{Sch}/F$ of $a$-twisted $u$-torsors (cf.~\cite[\S 2.4]{Dillery}).
\end{remark}

\begin{definition} Denote by $H^{1}(\mathcal{E}, G)$ all isomorphism classes of fpqc $G$-torsors on $\mathcal{E}$, and denote by $$H^{1}(\mathcal{E}, Z \to G) \subset H^{1}(\mathcal{E}, G)$$ the subset of all (isomorphism classes of) torsors $\mathcal{T}$ such that $\mathcal{T} \times^{G} G/Z$ descends to a $G/Z$-torsor over~$F$. We call $\mathcal{T}$ as above a \mathdef{$Z$-twisted $G_{\mathcal{E}}$-torsor}, and denote the set of all such torsors (not their equivalence classes) by $Z^{1}(\mathcal{E}, Z \to G)$.
\end{definition}

Note that, via pullback, there is a canonical inclusion $H^{1}(F, G) \to H^{1}(\mathcal{E}, Z \to G)$.

\begin{remark} A concrete way of interpreting the above definition is as follows: Using $\mathcal{E} = \mathcal{E}_{a}$, we can identify elements of $H^{1}(\mathcal{E}, Z \to G)$ with equivalence classes of pairs $(c, f)$, where $f \colon u \to Z$ is a morphism of algebraic groups and $c \in G(\overline{F}\otimes_F\overline{F})$ (i.e., a ``\v{C}ech $1$-cochain"---when $F$ has characteristic zero, this is a genuine $1$-cochain) satisfying $dc = f(a)$. We call such a pair an \mathdef{$a$-twisted ($1$-)cocycle}. We also remark that this set only depends on the choice of representative $\mathcal{E}$ up to canonical isomorphism, so there is no problem in making such a choice.
\end{remark}

A key result that connects the above constructions to the Galois side of the Langlands correspondence is the following:

\begin{theorem}\label{RigidTN}(\cite[Theorem 4.11]{Kal16}, \cite[Theorem 5.10]{Dillery}) There is a canonical isomorphism
\[
H^{1}(\mathcal{E}, Z \to G) \xrightarrow{\sim} \pi_{0}\bigl(Z(\widehat G)^{\Gamma,+}\bigr)^{*}.
\]
The restriction of this isomorphism to $H^{1}(F, G)$ recovers Kottwitz's duality result (from \cite{kot86}).
\end{theorem}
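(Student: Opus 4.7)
The plan is to follow the two-step strategy used by Kaletha \cite{Kal16} and Dillery \cite{Dillery}, namely reduce to the case of tori via a $z$-extension and then do a direct computation. Since the theorem is asserted as a reference to those works, my proof would essentially reproduce their argument, so let me sketch it at the level of key steps.

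First I would reduce to the case of a torus. Choose a $z$-extension $1 \to S \to G' \to G \to 1$ where $G'_{\tn{der}}$ is simply connected; the preimage $Z'$ of $Z$ in $G'$ is again a finite central subgroup, and the short exact sequence $1 \to S \to Z' \to Z \to 1$ gives, after identifying coefficient sheaves, a long exact sequence in gerbe cohomology. Using that $H^1_\tn{fppf}(F,S) = H^2_\tn{fppf}(F,S) = 0$ (more precisely, using vanishing for groups with simply connected derived subgroup), one shows that $H^1(\mathcal{E}, Z \to G)$ fits into an exact sequence that reduces its computation to $H^1(\mathcal{E}, Z' \to T')$ for $T' = G'/G'_\tn{der}$, a torus. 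On the dual side, the corresponding isogeny of dual groups and a standard diagram chase reduce $\pi_0\bigl(Z(\widehat G)^{\Gamma,+}\bigr)^*$ to the analogous object for the torus $T'$. These two reductions are compatible because both sides are functorial in the pair $(Z, G)$.

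Next I would compute both sides for a torus $T$ with finite central subgroup $Z$. Using a concrete representative $\mathcal{E}_a$ of the Kaletha gerbe, a class in $H^1(\mathcal{E}, Z \to T)$ is represented by a pair $(c,f)$ with $f \colon u \to Z$ a homomorphism of algebraic groups and $c$ an $a$-twisted \v{C}ech $1$-cocycle valued in~$T$. Projection to $f$ sits in an exact sequence whose other term is the usual $H^1(F, T)$; the contribution of $f$ is controlled by $\Hom(u, Z)$, and passing to the inverse limit defining $u$ reinterprets this as an explicit pairing with $\pi_0\bigl(Z(\widehat T)^{\Gamma,+}\bigr)$. Combining this with Kottwitz's classical Tate--Nakayama duality $H^1(F,T) \simeq \pi_0\bigl(Z(\widehat T)^{\Gamma}\bigr)^*$ yields the torus case of the isomorphism, and by construction it restricts to Kottwitz's on the subset $H^1(F,T) \subset H^1(\mathcal{E}, Z \to T)$.

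The main obstacle, in my view, is the careful handling of fppf cohomology and \v{C}ech cocycles in positive characteristic, together with checking that all identifications are truly canonical and functorial in $G$, $Z$, and the $z$-extension. In particular, the chase through the $z$-extension diagram requires one to verify that the two long exact sequences (on the gerbe cohomology side and on the dual-group side) are compatible with the Pontryagin duality pairing; this is what makes the canonicity of the isomorphism nontrivial. Once that bookkeeping is done, the compatibility with Kottwitz's result on the subset $H^1(F,G)$ is essentially built into the construction because the torus reduction and the projection $(c,f) \mapsto f$ are both compatible with the inclusion of untwisted torsors.
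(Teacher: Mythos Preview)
The paper does not give its own proof of this theorem: it is stated as a citation to \cite[Theorem~4.11]{Kal16} and \cite[Theorem~5.10]{Dillery}, with no argument following the statement. So there is nothing in the paper to compare against.

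Your sketch is a reasonable outline of the argument in those references, and the two-step strategy (reduce to tori via a $z$-extension, then compute directly for tori) is indeed how Kaletha and Dillery proceed. One small inaccuracy: you write that $H^2_\tn{fppf}(F,S)=0$ for the induced torus $S$, but this is false in general (it contains a Brauer-group contribution). What is actually used is that $H^1(F,S)=0$ together with the fact that the map $H^1(\mathcal{E},Z'\to G')\to H^1(\mathcal{E},Z\to G)$ is surjective and has controllable kernel; the reduction works by exactness rather than by outright vanishing of $H^2$. This does not affect the overall shape of the argument, but if you were to flesh out the sketch you would need to be more careful at that step.
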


Denote by $$p_{1},p_{2} \colon \mathrm{Spec}(\overline{F}) \times \mathrm{Spec}(\overline{F}) \to \mathrm{Spec}(\overline{F})$$ the two canonical projections. Let $(G', \xi)$ be an inner twist of $G$; recall that this means that $\xi$ is an isomorphism $G_{\overline{F}} \xrightarrow{\sim} G'_{\overline{F}}$ such that the automorphism $p_{1}^{*}\xi^{-1} \circ p_{2}^{*}\xi$ of $G_{\overline{F} \otimes_{F} \overline{F}}$ is inner. One could also replace $\overline{F}$ by the separable closure $F^{\text{sep}}$ and rephrase the above condition more concretely as requiring that $\xi^{-1} \circ ^\gamma\xi$ be an inner automorphism of $G_{F^{\text{sep}}}$ for every $\gamma \in \Gamma$. We say that two inner twists $(G_{1}, \xi_{1})$, $(G_{2}, \xi_{2})$ are equivalent if there is an isomorphism $f \colon G_{1} \xrightarrow{\sim} G_{2}$ such that $\xi_{2}^{-1} \circ f \circ \xi_{1}$ is inner. It is a basic fact that inner twists (up to equivalence) are in canonical bijection with the set $H^{1}(F, G_{\text{ad}})$ via the obvious map. 

A more compact way of describing $(G',\xi)$ as an inner twist of $G$ up to equivalence is as an element of $[\underline{\mathrm{Isom}}(G, G')/G_{\text{ad}}](F)$, where $G_{\text{ad}}$ acts via precomposing by inner automorphisms. The fiber over this point in $\underline{\mathrm{Isom}}(G, G')$ is a $G_{\text{ad}}$-torsor, denoted by $T_{\xi}$ (and its image in $H^{1}(F, G_{\text{ad}})$ is as expected).

\begin{proposition}(\cite[Corollary 3.8]{Kal16}, \cite[Proposition 5.12]{Dillery}) For every $z \in H^{1}(F, G_{\text{ad}})$ there is some finite central $Z'$ such that $z$ lies in the image of the canonical map
$$H^{1}(F, Z' \to G) \to H^{1}(F, G/Z') \to H^{1}(F, G_{\text{ad}}),$$
where the left-hand map sends $\mathcal{T}$ to (the descent of) $\mathcal{T} \times^{G} G/Z'$.
\end{proposition}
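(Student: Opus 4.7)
My plan is a two-step lifting argument. Given $z \in H^{1}(F, G_{\text{ad}})$, I will first find a finite central subgroup $Z' \subseteq G$ such that $z$ lies in the image of $H^{1}(F, G/Z') \to H^{1}(F, G_{\text{ad}})$, and then show that any class in $H^{1}(F, G/Z')$ can be further lifted to $H^{1}(\mathcal{E}, Z' \to G)$ using the defining property of the Kaletha gerbe. Both steps are cohomological.

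For the first step, consider the short exact sequence $1 \to Z(G)/Z' \to G/Z' \to G_{\text{ad}} \to 1$, which produces a connecting map $\partial_{Z'} \colon H^{1}(F, G_{\text{ad}}) \to H^{2}(F, Z(G)/Z')$ whose vanishing on $z$ is exactly the condition for $z$ to lift to $H^{1}(F, G/Z')$. By naturality, $\partial_{Z'}(z)$ is the image of the universal obstruction $\partial(z) \in H^{2}(F, Z(G))$ (arising from $Z' = 1$) under the map induced by $Z(G) \to Z(G)/Z'$. Since $H^{1}(F, G_{\text{ad}})$ is finite for local~$F$ (Borel--Serre), the class $\partial(z)$ is annihilated by some integer $n$, and the exact sequence $1 \to Z(G)[n] \to Z(G) \xrightarrow{n} Z(G) \to 1$ shows that $\partial(z)$ lies in the image of $H^{2}(F, Z(G)[n]) \to H^{2}(F, Z(G))$. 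Taking $Z' = Z(G)[n]$ therefore forces $\partial_{Z'}(z) = 0$.

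For the second step, I use the explicit description of elements of $H^{1}(\mathcal{E}, Z' \to G)$ as equivalence classes of pairs $(c, f)$, where $c \in G(\overline{F} \otimes_{F} \overline{F})$ is a \v{C}ech $1$-cochain, $f \colon u \to Z'$ is a homomorphism of algebraic groups, and $dc = f(a)$ for a fixed \v{C}ech $2$-cocycle $a$ representing $\mathcal{E}$. Starting from a lift $\bar z \in H^{1}(F, G/Z')$, I lift a representing cocycle cochainwise to $c \in G(\overline{F} \otimes_{F} \overline{F})$; the coboundary $dc$ is then a genuine $2$-cocycle valued in $Z'$. The task reduces to finding $f \colon u \to Z'$ with $[dc] = f_{*}([a])$ in $H^{2}(F, Z')$, after which one adjusts $c$ by an appropriate $1$-cochain in $Z'$ to arrange $dc = f(a)$ on the nose. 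The existence of such $f$ follows from the defining property of $u$: since $u = \varprojlim_{E/F, n} \tn{Res}_{E/F}(\mu_n)/\mu_n$, every class in $H^{2}(F, Z')$ for $Z'$ finite central of multiplicative type is realized as $f_{*}([a])$ for some homomorphism $f \colon u \to Z'$, using the fact that $[a] = -1$ is a generator of $\widehat{\bbZ} = H^{2}_{\text{fppf}}(F, u)$.

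The first step is a routine diagram chase with connecting maps; the main obstacle is the second step, which rests on the fact that the Kaletha gerbe $\mathcal{E}$ (together with its representing cocycle $a$) is precisely designed so that pushforward along homomorphisms $f \colon u \to Z'$ realizes all cohomology classes in $H^{2}(F, Z')$. This universality is exactly what makes rigid inner twists ``universal enough'' to see every inner form.
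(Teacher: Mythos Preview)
The paper does not prove this proposition; it simply records the statement with citations to \cite[Corollary~3.8]{Kal16} and \cite[Proposition~5.12]{Dillery}. So there is no argument in the paper to compare against, and your proposal must stand on its own.

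Your two-step strategy is the right shape, and Step~2 correctly isolates the essential content: the surjectivity of $\Hom_F(u, Z') \to H^2_{\text{fppf}}(F, Z')$ for finite multiplicative~$Z'$ is exactly the defining property of the Kaletha gerbe that makes the lifting possible, and your cocycle manipulation (adjusting $c$ by a $Z'$-valued cochain once $f$ is chosen) is fine because $Z'$ is central.

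Step~1, however, has two technical gaps. First, the finiteness of $H^1(F, G_{\text{ad}})$ does not imply that $\partial(z)$ is torsion; the correct reason is that $H^2(F, Z(G))$ is torsion for any group of multiplicative type over a local field (use local Tate duality on the maximal subtorus and finiteness of~$\pi_0$). Second, and more seriously, the sequence $1 \to Z(G)[n] \to Z(G) \xrightarrow{n} Z(G) \to 1$ is \emph{not} exact in general: multiplication by~$n$ on $Z(G)$ fails to be surjective whenever $\pi_0(Z(G))$ has $n$-torsion (e.g.\ $G = \SL_n$). One clean repair: set $Z_0 = Z(G)\cap G_\tn{der}$, which is finite, and note that $Z(G)/Z_0 \cong G/G_\tn{der}$ is a torus; the obstruction $\partial_{Z_0}(z)$ then lives in $H^2$ of a torus, where your Kummer-sequence argument is valid, and you can take $Z'$ to be the (finite) preimage in $Z(G)$ of the relevant $n$-torsion subgroup of $Z(G)/Z_0$.
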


We can finally give the main definition:

\begin{definition} A \mathdef{$Z$-rigid inner twist} of $G$ is a tuple $(G', \xi, \mathcal{T}, \bar{h})$,  where $(G', \xi)$ is an inner twist of $G$, $\mathcal{T}$ is a $Z$-twisted $G_{\mathcal{E}}$-torsor, and $\bar{h}$ is an isomorphism of $G_{\text{ad}}$-torsors $$\mathcal{T} \times^{G} G_{\text{ad}} \xrightarrow{\sim} T_{\xi}.$$
We say that two rigid inner twists $(G_{1}, \xi_{1}, \mathcal{T}_{1}, \bar{h}_{1})$, $(G_{2}, \xi_{2}, \mathcal{T}_{2}, \bar{h}_{2})$ are \mathdef{equivalent} if there is an isomorphism of $G_{\mathcal{E}}$-torsors $\mathcal{T}_{1} \xrightarrow{\sim} \mathcal{T}_{2}$.
\end{definition}

For any $Z$-rigid inner twist $(G', \xi, \mathcal{T}, \bar{h})$ of $G$,
the underlying inner form $G'$ depends only on~$\cal T$ up to isomorphism,
and we denote it by $G^{\cal T}\defeq G'$.

\begin{remark}
For convenience, we have fixed a choice of~$Z$ in this paper.
However, it is possible to organize together all
rigid inner forms as $Z$ varies \cite[\S 5.1]{Kal16}.
This point motivates in part our short \Cref{sec:changeofZ}.
\end{remark}

One checks easily that equivalent rigid inner twists have equivalent underlying inner twists.
Moreover, changing only the isomorphism $\bar{h}$ always yields an equivalent rigid inner twist.

We conclude this subsection by briefly discussing Levi subgroups of rigid inner twists.

\begin{definition}
For $(G',\xi, \mathcal{T},\bar h)$ a ($Z$-)rigid inner twist of $G$,
a \mathdef{Levi subgroup} of $(G',\xi, \mathcal{T},\bar h)$
is a rigid inner twist $(M',\xi_{M}, \mathcal{T}_M,\bar h_M)$
of a Levi subgroup $M$ of~$G$
such that $\mathcal{T} = \mathcal{T}_{M} \times^{M} G$, $T_{\xi} = T_{\xi_{M}} \times^{M} G_{\text{ad}}$ and $\bar{h} = \bar{h}_{M} \times^{M} \text{id}_{G_{\text{ad}}}$.
\end{definition}

In other words, a Levi subgroup of a rigid inner twist
is simply a rigid inner twist of a Levi subgroup
of the underlying inner form that induces the given
rigid inner twisting of the ambient group.
Since we require strictly equality of torsors,
this definition is not invariant under equivalence
of rigid inner twists of the Levi.

Let $(M', P')$ be a fixed Levi pair of an inner twist $(G', \xi)$. Recall that there is a Levi pair $(M, P)$ of $G$ such that $(\xi(M),\xi(P))$ is $G'(F^\tn{sep})$-conjugate to $(M',P')$, and this pair $(M,P)$ is unique up to $G(F)$-conjugacy. We say $(M,P)$ \mathdef{corresponds} to $(M',P')$. Similarly, if we fix only $M'$, and not~$P'$, then we say that a Levi subgroup $M$ of $G$ \mathdef{corresponds} to $M'$ if there exist $P$ and $P'$ as above.
We then have the following analogue of Lemma 2.4 in \cite{ABMUnitary},
whose proof is essentially the same.
 
 \begin{lemma}\label{LeviRIT} Let $(G',\xi, \mathcal{T},\bar h)$ be an enrichment of $(G, \xi)$ to a $Z$-rigid inner twist of $G$ and let $M$ be a Levi subgroup of $G$ corresponding to $M' \subset G'$. Then, up to equivalence of rigid inner twists, there exists a unique enrichment of $M'$ to a Levi subgroup $(M',\xi_{M}, \mathcal{T}_M,\bar h_M)$ of $(G',\xi, \mathcal{T},\bar h)$.
 \end{lemma}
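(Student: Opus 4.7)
The statement is the rigid analogue of \cite[Lemma 2.4]{ABMUnitary}, and the task is to adapt that argument from the (extended) pure inner twist setting to the gerbe-theoretic setting of rigid inner twists. Throughout I would work with a \v{C}ech cocycle $(c, f)$ representing $\mathcal{T}$, where $c \in G(\overline F \otimes_F \overline F)$, $f \colon u \to Z$, and $dc = f(a)$, and freely use that equivalence of rigid inner twists translates into the usual notion of cohomologous twisted cocycles.

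For existence, I would proceed in three steps. First, since $M$ corresponds to $M'$, after replacing $(G',\xi,\mathcal{T},\bar h)$ by an equivalent rigid inner twist I may assume that $\xi$ restricts to an isomorphism $M_{\overline F} \xrightarrow{\sim} M'_{\overline F}$, and then set $\xi_M \defeq \xi|_{M_{\overline F}}$. Second, the image of $c$ in $G_{\tn{ad}}(\overline F \otimes_F \overline F)$ is a genuine Galois $1$-cocycle whose class in $H^1(F, G_{\tn{ad}})$ is that of the inner twist $(G',\xi)$; by the previous step and $Z(G) \subseteq M$, this class lifts through the natural map $H^1(F, M/Z(G)) \to H^1(F, G_{\tn{ad}})$, and I may therefore modify $c$ by a coboundary from $G(\overline F)$---which produces an equivalent $\mathcal{T}$---to arrange that $c$ takes values in $M(\overline F \otimes_F \overline F)$. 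Third, since $Z \subseteq Z(M)$, the pair $(c, f)$ is then an $M$-valued twisted cocycle defining $\mathcal{T}_M \in Z^1(\mathcal{E}, Z \to M)$; the rigidification $\bar h_M$ is induced from $\bar h$ by restriction along the inclusion of adjoint torsors coming from $M_{\tn{ad}}$, with the required compatibilities $\mathcal{T} = \mathcal{T}_M \times^M G$ and $T_\xi = T_{\xi_M} \times^M G_{\tn{ad}}$ built into the construction.

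For uniqueness, suppose $(\mathcal{T}_M^{(1)}, \bar h_M^{(1)})$ and $(\mathcal{T}_M^{(2)}, \bar h_M^{(2)})$ are two enrichments of $M'$ to a Levi subgroup of $(G',\xi,\mathcal{T},\bar h)$. The equalities $\mathcal{T}_M^{(i)} \times^M G = \mathcal{T}$ force the two cocycle representatives $(c^{(i)}, f^{(i)})$ to share the same $f$ and to become cohomologous in $G$ via some $g \in G(\overline F)$, i.e.\ $c^{(1)} = (p_1^* g) \cdot c^{(2)} \cdot (p_2^* g)^{-1}$. Compatibility with $T_\xi = T_{\xi_M} \times^M G_{\tn{ad}}$ forces $g$ to normalize $M$, so $g \in N_G(M)(\overline F)$, and compatibility with the common $\xi_M$ further confines $g$ to $M(\overline F)$, exhibiting $(c^{(1)}, f^{(1)})$ and $(c^{(2)}, f^{(2)})$ as cohomologous in $Z^1(\mathcal{E}, Z \to M)$ and hence giving an equivalence of the two rigid inner twists of $M$.

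The main obstacle will be the second step of existence: one must modify $c$ by a $G(\overline F)$-coboundary to land in $M(\overline F \otimes_F \overline F)$ while preserving the twisted-cocycle relation $dc = f(a)$, and verify that this is compatible with $\bar h$. The essential input is that the image of $[c]$ in the pointed set classifying reductions of $\mathcal{T}$ along $M \hookrightarrow G$ is trivial, which follows from the compatibility $T_\xi = T_{\xi_M} \times^M G_{\tn{ad}}$ built into the definition together with a diagram chase through the central extensions $1 \to Z \to M \to M/Z \to 1$ and $1 \to Z(M)/Z(G) \to M/Z(G) \to M_{\tn{ad}} \to 1$. This is exactly the point at which our setting diverges from \cite{ABMUnitary}, and the argument parallels the gerbe-cohomological bookkeeping in Kaletha's proof of \Cref{RigidTN}.
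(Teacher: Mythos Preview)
Your plan diverges from the paper's proof in one crucial respect: the paper brings in the \emph{parabolic} $P$ (with $M\subset P$) and its partner $P'\subset G'$, and this is precisely what dissolves the ``main obstacle'' you identify. After replacing $\xi$ so that $\xi(P)=P'$ and $\xi(M)=M'$, the image $\bar c$ of $c$ in $G/Z$ normalizes $p_1^*P$; since parabolics are self-normalizing, $\bar c\in (P/Z)(\overline F\otimes_F\overline F)$. It also normalizes $p_1^*M$, and $N_P(M)=M$, so $\bar c\in (M/Z)(\overline F\otimes_F\overline F)$. As $Z\subseteq M$, this immediately gives $c\in M(\overline F\otimes_F\overline F)$ with no coboundary modification at all. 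The ``gerbe-cohomological bookkeeping'' you anticipate is not needed.

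Your Step~2 as written has a genuine gap. The justification ``by the previous step and $Z(G)\subseteq M$, this class lifts through $H^1(F,M/Z(G))\to H^1(F,G_{\tn{ad}})$'' does not follow: knowing that $\xi|_M$ is an inner twist gives a cocycle in $M_{\tn{ad}}=M/Z(M)$, and there is no natural map $M_{\tn{ad}}\to G_{\tn{ad}}$. What you actually know from Step~1 is only that $\bar c\in N_{G_{\tn{ad}}}(M)$. One can salvage this without the parabolic by observing that $\bar c$ also acts on $M$ by an \emph{inner} automorphism (since $\xi_M$ is an inner twist) and that $Z_G(M)=Z(M)$ for a Levi, forcing $\bar c\in M/Z(G)$ on the nose; but then the coboundary modification you propose is unnecessary, and in any case modifying $c$ by a general $g\in G(\overline F)$ would alter $\xi$ and could destroy the condition $\xi(M_{\overline F})=M'_{\overline F}$ from Step~1.

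For uniqueness, the paper twists to reduce to the known injectivity of $H^1(F,M)\to H^1(F,G)$, which is shorter than your direct cocycle argument.
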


As above, we say this enrichment of the Levi subgroup of $(G',\xi, \mathcal{T},\bar h)$ \mathdef{corresponds} to~$M'$.
 
 \begin{proof} Pick $P$ and $P'$ as above. We may assume that $\mathcal{E} = \mathcal{E}_{a}$ for a \v{C}ech $2$-cocycle $a$ in order to view $(\mathcal{T}, \bar{h})$ as an $a$-twisted cocycle $(f, c)$. Since we are working up to equivalence, we may assume that $\xi(P) = P'$, which, using that $M, M', P$, and $P'$ are defined over $F$, implies that the image $\bar{c} \in G_{\tn{ad}}(\ov{F} \otimes_{F} \ov{F})$ of~$c$ normalizes $p_{1}^{*}P$. So $\bar{c} \in (P/Z)(\ov{F} \otimes_{F} \ov{F})$ and hence also $\bar{c} \in (M/Z)(\ov{F} \otimes_{F} \ov{F})$, since it normalizes $p_{1}^{*}M$ as well.
 
 It is clear that $\bar{c} \in (M/Z)(\ov{F} \otimes_{F} \ov{F})$ implies that $c$ lies in the image of $Z^{1}(\mathcal{E}, Z \to M) \to  Z^{1}(\mathcal{E}, Z \to G)$, as desired. By twisting, uniqueness reduces to the injectivity of $H^{1}(F, M) \to H^{1}(F, G)$, which is known.
 \end{proof}

\subsection{Relevance}
Fix a rigid inner twist $(G',\xi, \mathcal{T},\bar h)$ of $G$,
which we abbreviate in this section by setting
\[
\underline{x} \defeq (G',\xi,\mathcal{T},\bar h).
\]
After recalling the notion of relevance for enhanced $L$-parameters,
we will explain how it interacts with the constructions
of \Cref{sec:bernstein}.

\begin{definition}
A $Z$-enhanced $L$-parameter $(\phi,u,\rho)$ of ${}^LG$ is \mathdef{relevant} for $\underline{x}$ if the image of $[\mathcal{T}] \in H^{1}(\mathcal{E}, Z \to G)$ via the duality isomorphism of \Cref{RigidTN} equals the character of $\pi_0\bigl(Z(\widehat{G})^{\Gamma,+}\bigr)$ induced by $\rho$. We may also refer to $(\phi, u, \rho)$ as an \mathdef{enhanced $L$-parameter} of $\underline{x}$. Denote the set of all such enhanced $L$-parameters by $\Phi_{\tn e}^Z(G', \mathcal{T})$.
\end{definition}

Here the character induced by $\rho$ is defined as the pullback of~$\rho$ along the canonical map
\[
\pi_{0}(Z(\widehat{G})^{\Gamma,+}) \to \pi_{0}(Z_{\widehat{G}^{+}}(\phi,u));
\]
since this map has central image, the pullback is isotypic for a single character.
The property of relevance is obviously preserved by $\widehat{G}$-conjugacy, unramified twisting, inertial equivalence, and equivalence of rigid inner twists.
Our abbreviated notation $\Phi_\tn{e}^Z(G',\mathcal T)$ reflects that the equivalence class of $\underline{x}$ can be recovered from $(G',\cal T)$.

\begin{remark}
There is a related notion of relevance
of $L$-parameters in the classical,
nonenhanced Langlands correspondence
\cite[3.3, 8.2]{borel_corvallis}.
The classical notion is compatible
with the notion for enhanced $L$-parameters
in that if $(\phi,u,\rho)$ is relevant for $\underline{x}$
then $(\phi,u)$ is relevant for~$G'$ \cite[\S5.5]{Kal16}.

In the classical, nonenhanced Langlands correspondence,
an $L$-parameter can be relevant for many different inner forms:
for example, a discrete $L$-parameter is relevant for every inner form.
One advantage of the enhanced setting is that
an enhanced $L$-parameters remembers
the (rigid) inner form whose representation it parameterizes.
In other words, an enhanced $L$-parameter
is relevant for a unique rigid inner form.
\end{remark}

\begin{remark} \label{thm50}
This remark explains why the enhancement of an $L$-parameter
as defined in the introduction to \cite{AMS18}
is the same as an enhancement in our sense, and how the two associated notions of relevance are related.
Take $Z= Z(\widehat G_\tn{der})$ and fix an explicit $\xi \in Z^{1}(F, G_{\tn{ad}})$
realizing $G'$ as an inner twist of $G$. 

In \cite{AMS18}, an enhanced $L$-parameter consists of an $L$-parameter~$\varphi$ and an irreducible representation of the group $\pi_{0}(S^{\tn{sc}}_{\varphi})$, where $S_\varphi \defeq Z_{\widehat G}(\varphi)$ and $S^{\tn{sc}}_{\varphi}$ is the preimage in $\widehat{G}_{\tn{sc}}$ of the image of $S_{\varphi}$ in $\widehat{G}_{\tn{ad}}$. For a fixed character $\zeta$ of $Z(\widehat G_\tn{sc})^\Gamma$ corresponding to $(G', \xi)$ and a choice of extension $\zeta_{\xi}$ of $\zeta$ to $Z(\widehat G_\tn{sc})$, the set of \mathdef{enhanced $L$-parameters associated to $\zeta_{\xi}$} is the set $\Phi_{\tn e,\zeta_{\xi}}$ of enhanced $L$-parameters $(\varphi, \rho)$ of ${}^LG$ such that the restriction of the central character of $\rho$ to $Z(\widehat G_\tn{sc})$ is $\zeta_{\xi}$.

Now fix $\underline{x}$ a rigid inner twist enriching $(G', \xi)$;
denote by $\zeta_{\mathcal{T}}$ the corresponding character of $Z(\widehat{G})^{\Gamma,+}$.
We can pick an element $\mathcal{T}^\tn{sc} \in Z^{1}(\cal{E}, Z(\widehat G_\tn{sc}) \to G_{\tn{sc}})$
lifting $\xi$, and, since $\widehat{G_\tn{sc}} = \widehat{G}_\tn{ad}$,
the duality isomorphism becomes
\[
H^{1}(\cal{E}, Z(\widehat G_{\tn{sc}}) \to G_{\tn{sc}}) \xrightarrow{\sim} Z(\widehat G_\tn{sc})^*,
\]
so that $\mathcal{T}^\tn{sc}$ gives rise to a character
$\zeta_{\mathcal{T}^\tn{sc}}$ of $Z(\widehat G_\tn{sc})$.
Then, as explained in \cite{kaletha18a}, there is a (non-canonical) isomorphism 
\[
S_{\varphi}^{\tn{sc}} \xrightarrow{\sim} S_{\varphi}^{+}
\times^{Z(\widehat{G})^{\Gamma,+}} Z(\widehat{G})_{\tn{sc}},
\]
which induces a canonical bijection 
\[
\tn{Irr}(\pi_{0}(S_{\varphi}^{+}), \zeta_{\mathcal{T}}) \xrightarrow{\sim} \tn{Irr}(\pi_{0}(S_{\varphi}^{\tn{sc}}), \zeta_{\mathcal{T}^\tn{sc}})
\]
via $\rho \mapsto \rho \otimes \zeta_{\mathcal{T}}$.
It is thus natural to define $\Phi_{\tn e,\zeta_{\mathcal{T}}}$ to be those
enhanced $L$-parameters of ${}^LG$ (using our \Cref{enhancedparamdef})
such that $\rho$ induces the character $\zeta_{\mathcal{T}}$.
By construction, this set is in canonical bijection with $\Phi_{\tn e,\zeta_{\mathcal{T}^\tn{sc}}}$
as defined in \cite{AMS18} and recalled in the previous paragraph.

This comparison is the basis of the approach in \cite[\S 7]{Solleveld}
to extending the cuspidal support map to $Z(G_\tn{der})$-enhanced $L$-parameters.
\end{remark}

Now for the comparison with \Cref{sec:bernstein}.
First is the cuspidal support map.

\begin{proposition}\label{AMS7.3bis}
Fix an $\underline{x}$-relevant enhanced $L$-parameter $(\phi, u, \rho)$
and let 
\[
\uCusp\bigl([\phi, u, \rho]_{\widehat{G}}\bigr)
= [{}^LM, \psi, v, \varrho]_{\widehat{G}}
\]
where $M$ is a Levi subgroup of~$G$.
After possibly replacing $\underline{x}$ by an equivalent rigid inner twist, there is a Levi subgroup $(M', \xi_{M}, \mathcal{T}_{M}, \bar{h}_{M})$ of $\underline{x}$ such that $M$ corresponds to $M'$ and the cuspidal enhanced $L$-parameter $(\psi, v, \varrho)$ for ${}^LM$ is relevant for $(M', \xi_{M}, \mathcal{T}_{M}, \bar{h}_{M})$. Moreover, this Levi subgroup is unique up to equivalence of rigid inner twists of $M$
\end{proposition}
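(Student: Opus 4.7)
The proof divides naturally into three stages: producing the Levi subgroup $M'$ of $G'$, matching the associated gerbe-cohomology class, and invoking \Cref{LeviRIT} to obtain both existence and uniqueness of the enrichment.

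\textbf{Existence of $M'$.} Since $(\phi, u, \rho)$ is $\underline x$-relevant, the underlying $L$-parameter $(\phi, u)$ is relevant for the inner form $G' = G^{\mathcal T}$ in the classical sense (cf.~\cite[\S5.5]{Kal16}). By \Cref{thm35}, $\phi$ factors through $\cal M = {}^LM$, so $\cal M$ is a relevant Levi $L$-subgroup and therefore corresponds to an $F$-rational Levi subgroup $M'$ of $G'$, unique up to $G'(F)$-conjugacy.

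\textbf{Matching torsor classes.} The enhancement $\varrho$ induces, via the central map $Z(\widehat M)^{\Gamma,+} \to Z_{\widehat M^+}(\psi, v)$, a character $\chi_\varrho$ of $\pi_0(Z(\widehat M)^{\Gamma,+})$; let $[\mathcal T_M^\varrho] \in H^1(\mathcal E, Z \to M)$ denote its Tate--Nakayama dual furnished by \Cref{RigidTN} for $M$. The key claim is that $\mathcal T_M^\varrho \times^M G$ is equivalent to $\mathcal T$ as a $Z$-twisted $G_{\mathcal E}$-torsor. This rests on two ingredients. First, functoriality of Tate--Nakayama duality asserts that the pushforward $H^1(\mathcal E, Z \to M) \to H^1(\mathcal E, Z \to G)$ is Pontryagin-dual to the restriction map $\pi_0(Z(\widehat G)^{\Gamma,+})^* \to \pi_0(Z(\widehat M)^{\Gamma,+})^*$. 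Second, by \Cref{thm53} the restriction of $\chi_\varrho$ to $Z(\widehat G)^{\Gamma,+} \subseteq Z(\widehat M)^{\Gamma,+}$ agrees with the character $\chi_\rho$ induced by $\rho$, which in turn is the Tate--Nakayama dual of $[\mathcal T]$ by $\underline x$-relevance.

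\textbf{Conclusion and uniqueness.} After replacing $\underline x$ by an equivalent rigid inner twist we may assume $\mathcal T_M^\varrho \times^M G = \mathcal T$ strictly, and then \Cref{LeviRIT} applied with $\mathcal T_M \defeq \mathcal T_M^\varrho$ produces the desired rigid inner twist $(M', \xi_M, \mathcal T_M, \bar h_M)$ as a Levi subgroup of $\underline x$; relevance of $(\psi, v, \varrho)$ for this structure holds by construction. Uniqueness is immediate: any enrichment for which $(\psi, v, \varrho)$ is relevant must have $[\mathcal T_M'] = [\mathcal T_M^\varrho]$ in $H^1(\mathcal E, Z \to M)$ by the Tate--Nakayama characterization, and the uniqueness assertion of \Cref{LeviRIT} then provides the required equivalence of rigid inner twists of $M$.

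\textbf{Main obstacle.} The principal technical point is the functoriality of the gerbe-cohomological Tate--Nakayama isomorphism of \Cref{RigidTN} with respect to the inclusion $M \hookrightarrow G$. While this compatibility should follow from inspecting the construction of the duality --- for instance by tracking the \v Cech-cocycle description of $Z$-twisted torsors and comparing with Kottwitz's classical duality --- it is not stated explicitly in \cite{Kal16,Dillery}, and its careful verification is the most delicate ingredient of the argument.
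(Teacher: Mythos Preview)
Your overall strategy matches the paper's: both hinge on \Cref{thm53} to show that the central character of $\varrho$ on $\pi_0(Z(\widehat M)^{\Gamma,+})$ restricts to that of $\rho$ on $\pi_0(Z(\widehat G)^{\Gamma,+})$, and then translate this via Tate--Nakayama duality into the torsor identity $[\mathcal T_M^\varrho \times^M G] = [\mathcal T]$. That is the heart of the argument in both cases, and your account of it is correct.

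There is, however, a gap in your \textbf{Existence of $M'$} step. Classical relevance of $(\phi,u)$ for $G'$ says that any Levi $L$-subgroup containing the image of the full parameter $(\phi,u)$ is relevant for $G'$. But $\mathcal M$ need not contain $u$: by \eqref{thm52} one has $\widehat M^+ \cap H^+ = K^+$, and $u$ is an arbitrary unipotent of $H^+$ with quasi-cuspidal support landing in $K^+$, which does not force $u\in K^+$. So the inference ``$\phi$ factors through $\mathcal M$, hence $\mathcal M$ is relevant for $G'$'' is not justified by classical relevance of $(\phi,u)$.

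Fortunately this step is redundant: the existence of $M'$ as an $F$-Levi of $G'$ already follows from your torsor-matching. The class $[\mathcal T_M^\varrho]\in H^1(\mathcal E, Z\to M)$ maps to $[\mathcal T]$ in $H^1(\mathcal E, Z\to G)$, hence its image in $H^1(F, M_{\tn{ad}})$ maps to the class of $(G',\xi)$ in $H^1(F, G_{\tn{ad}})$; this is precisely the condition that $M$ transfers to a Levi of $G'$. The paper proceeds in this order, building $M'$ directly as (a descent of) $\xi(M)$ once the cocycle is arranged to take values in $M$, rather than appealing to classical relevance of $(\phi,u)$. Reorganizing your proof so that the torsor computation comes first, and deducing the existence of $M'$ from it, removes the gap and in fact streamlines the argument.

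Finally, the functoriality of the Tate--Nakayama isomorphism in \Cref{RigidTN} with respect to $M\hookrightarrow G$ is not really an obstacle: it is part of the construction (see \cite[Theorem~4.11]{Kal16} and the surrounding discussion), and the paper uses it without comment.
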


\begin{proof} It is straightforward to check (after possibly replacing $\xi$ by an $M(F^{\text{sep}})$-conjugate, which preserves equivalence) that the cuspidal enhanced $L$-parameter $(\psi, v, \varrho)$ for ${}^LM$ is relevant for the rigid inner twist $\underline{x}_{M} \defeq (\xi(M), \xi |_{M_{F^{\text{sep}}}}, \mathcal{T}_{M}, \overline{h}')$, where we temporarily abuse notation and let $\xi(M)$ denote the descent of $\xi(M_{F^{\text{sep}}})$ to an $F$-subgroup of $G'$. Set $\xi_{M} \defeq \xi |_{M_{F^{\text{sep}}}}$ and set $M'$ to be this descent. Recall that $\bar{h}'$ is a choice of isomorphism $\mathcal{T}_{M} \times^{M} M_{\text{ad}} \xrightarrow{\sim} T_{\xi_{M}}$.

Now \Cref{thm53} exactly says that the character of $\pi_{0}(Z(\widehat{M})^{\Gamma,+})$ induced by $\varrho$ (which we can identify with the isomorphism class of $\mathcal{T}_{M}$) has the same restriction to $\pi_{0}(Z(\widehat{G})^{\Gamma,+})$ as $\rho$, and so it follows that, after possibly replacing $\underline{x}$ by an equivalent rigid inner twist, that $\underline{x}_{M}$ is the desired Levi subgroup.
\end{proof}

Next are twisted extended quotients.
For a Levi subgroup $M'$ of $G'$, define
\[
\Phi_{\tn e}^Z(G', \mathcal{T}, M')
\defeq \{[\phi, u, \rho]_{\widehat{G}} \in \Phi_{\tn e}^Z(G',\mathcal{T}) 
\mid \Cusp(\phi, u, \rho) \in \Phi_\tn{e,cusp}^Z(M', \mathcal{T}_{M})\},
\]
where $(M', \xi_{M}, \mathcal{T}_{M}, \bar{h}_{M})$ is a Levi subgroup of a rigid inner twist equivalent to $\underline{x}$ as in Lemma \ref{LeviRIT}. Taking $M \subset G$ corresponding to $M'$, as the notation suggests, $\Phi_\tn{e,cusp}^Z(M', \mathcal{T}_{M})$ denotes the set of cuspidal enhanced $L$-parameters in $\Phi_\tn{e}^Z(M', \mathcal{T}_{M})$.  When we write $\Cusp(\phi, u, \rho) \in \Phi_{\tn{cusp}}(M', \mathcal{T}_{M})$, we mean that there is some element in $\Cusp(\varphi, \rho)$ (an equivalence class of $\widehat{G}$-conjugates) contained in $\Phi_\tn{e,cusp}^Z(M', \mathcal{T}_{M})$.

We can refine our Theorem \ref{AMS9.1} to a statement about these relevant $L$-parameters:

\begin{theorem}\label{AMS9.3bis}
The bijections of Theorem \ref{AMS9.3} induce
a twisted extended quotient structure
\[
\Phi_\tn{e}^Z(G', \mathcal{T}, M')  \simeq
(\Phi_\tn{e,cusp}^Z(M', \mathcal{T}_{M}) \sslash W({}^LG, {}^LM))_\kappa.
\]
\end{theorem}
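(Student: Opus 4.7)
The plan is to partition both sides by the inertial equivalence class of the cuspidal support, apply Theorem~\ref{AMS9.3} on each piece, and then reassemble using Lemma~\ref{thm58}. More precisely, let $\cal M = {}^LM$. Proposition~\ref{AMS7.3bis} (together with Lemma~\ref{thm53}, which guarantees that the cuspidal support preserves the central character of the enhancement, hence the underlying rigid inner twist) shows that
\[
\Phi_\tn{e}^Z(G',\mathcal T, M') = \bigsqcup_{\hat{\frak s}_{\widehat G}} \Phi_\tn{e}^Z({}^LG)^{\hat{\frak s}_{\widehat G}},
\]
where the union ranges over those inertial classes represented by some $(\cal M, \psi, v, \varrho)$ with $(\psi, v, \varrho) \in \Phi_\tn{e,cusp}^Z(M', \mathcal T_M)$. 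The analogous decomposition on the quotient side writes
\[
\Phi_\tn{e,cusp}^Z(M', \mathcal T_M) = \bigsqcup_{\hat{\frak s}_{\widehat G}} \hat{\frak s}_{\cal M^\circ},
\]
as a $W({}^LG,{}^LM)$-equivariant partition into $X^*_\tn{nr}(\cal M)$-orbits (after fixing one representative of each $W({}^LG,{}^LM)$-orbit of inertial classes).

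Next, Theorem~\ref{AMS9.3} supplies for each summand a bijection
\[
\Phi_\tn{e}^Z({}^LG)^{\hat{\frak s}_{\widehat G}} \simeq (\hat{\frak s}_{\cal M^\circ} \sslash W_{\hat{\frak s}_{\cal M^\circ}})_{\kappa},
\]
where $W_{\hat{\frak s}_{\cal M^\circ}}$ is the stabilizer of $\hat{\frak s}_{\cal M^\circ}$ in $W({}^LG,{}^LM)$. To promote these individual twisted extended quotients to a single twisted extended quotient for the larger group $W({}^LG,{}^LM)$, I will invoke Lemma~\ref{thm58} with $X = \Phi_\tn{e,cusp}^Z(M', \mathcal T_M)$, the partition into $X^*_\tn{nr}(\cal M)$-orbits indexed by inertial classes, and $\Gamma = W({}^LG,{}^LM)$. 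The hypothesis of Lemma~\ref{thm58} is that the isotropy in the ambient group of any point inside a block equals the isotropy of that point computed inside the block-normalizer; this is automatic here because any $w \in W({}^LG,{}^LM)$ fixing $(\psi,v,\varrho)$ certainly stabilizes its entire $X^*_\tn{nr}(\cal M)$-orbit.

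The main obstacle is to check that the cocycles and the gluing isomorphisms furnished by Theorem~\ref{AMS9.3} on each inertial piece are mutually compatible, so that they really do assemble into twisted quotient data for the $W({}^LG,{}^LM)$-action on all of $\Phi_\tn{e,cusp}^Z(M', \mathcal T_M)$. For cocycles, this is the content of Lemma~\ref{AMSLemma5.3} combined with Lemma~\ref{AMS8.2}: the cocycle $\kappa_{qt}$ depends only on the $G$-conjugacy class $q\frak t$ and transforms equivariantly under the action of $W({}^LG,{}^LM)$ on triples $qt$, so the cocycles on different $X^*_\tn{nr}(\cal M)$-orbits in the same $W({}^LG,{}^LM)$-orbit agree up to the conjugation action. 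For the gluing maps $\theta_{w,\psi,v,\varrho}$ one uses the same compatibility across the whole $W({}^LG,{}^LM)$-orbit, which is exactly what is encoded in parts~(1) and~(2) of the gluing lemma preceding Theorem~\ref{AMS9.3}. Putting everything together gives the desired twisted extended quotient, with the bijection compatible piece-by-piece with those of Theorem~\ref{AMS9.3}.
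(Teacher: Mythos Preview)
Your proof is correct and follows essentially the same route as the paper's: invoke Lemma~\ref{thm58} after verifying its isotropy hypothesis, which the paper phrases as ``an automorphism of a variety stabilizing a point must stabilize its connected component.'' Your final paragraph on cocycle and gluing-map compatibility is more than is strictly needed, since that bookkeeping is already absorbed into the statement and proof of Lemma~\ref{thm58} via the paper's non-strict notion of twisted extended quotient.
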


\begin{proof} 
This follows immediately from \Cref{thm58}.
The compatibility condition there is satisfied here
because of the variety structure on~$\Omega_Z(\cal M)$:
any automorphism of a variety that stabilizes
a given point in a connected component
must stabilize the entire connected component.
\end{proof}

An immediate consequence is:

\begin{corollary}\label{AMS9.3bisbis}
Let $\Lev(G')$ be a set of representatives of $G'(F)$-conjugacy classes of Levi subgroups of~$G'$. 
\begin{enumerate}

\item
The maps from Theorem \ref{AMS9.3bis} combine to give a bijection
\[
\Phi_\tn{e}^Z(G', \mathcal{T}) \leftrightarrow \bigsqcup_{M' \in \Lev(G')}  (\Phi_\tn{e,cusp}^Z(M', \mathcal{T}_{M}) \sslash W({}^LG, {}^LM))_{\kappa}.
\]

\item
With this notation, the maps from part (e) combine to give a bijection
\[
\Phi_\tn{e}^Z({}^LG) \leftrightarrow
\bigsqcup_{\substack{[\mathcal{T}] \in H^{1}(\mathcal{E}, Z \to G)\\
M'\in\Lev(G^\mathcal{T})}} (\Phi_\tn{e,cusp}^Z(M', \mathcal{T}_{M}) \sslash W({}^LG, {}^LM))_{\kappa},
\]
where, as before, $M$ is a Levi subgroup of $G$ corresponding to $M'$.
\end{enumerate}
\end{corollary}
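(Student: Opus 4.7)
The plan is to deduce both parts by assembling the component-wise bijections of \Cref{AMS9.3bis} and organizing them according to the cuspidal support map.

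For part (1), I would first argue that the sets $\Phi_\tn{e}^Z(G',\mathcal{T},M')$, as $M'$ ranges over $\Lev(G')$, partition $\Phi_\tn{e}^Z(G',\mathcal{T})$. Existence of a Levi term in the partition for each class $[\phi,u,\rho]_{\widehat G}$ in $\Phi_\tn{e}^Z(G',\mathcal{T})$ comes from \Cref{AMS7.3bis}: the cuspidal support has the form $[{}^LM,\psi,v,\varrho]_{\widehat G}$, and after possibly replacing $\underline{x}$ by an equivalent rigid inner twist we may realize $M$ as corresponding to a Levi subgroup $M'$ of $G'$ with $(\psi,v,\varrho)$ relevant for a unique (up to equivalence) rigid Levi enhancement $(M',\xi_M,\mathcal{T}_M,\bar h_M)$. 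Choosing the representative of this Levi in $\Lev(G')$ then assigns a well-defined block $\Phi_\tn{e}^Z(G',\mathcal{T},M')$ to $[\phi,u,\rho]_{\widehat G}$. Uniqueness of the block follows from uniqueness of the cuspidal support up to $\widehat G$-conjugacy together with the fact that $G'(F)$-conjugacy of Levi subgroups of $G'$ corresponds under the Galois correspondence to $\widehat G$-conjugacy of the dual Levi $L$-subgroups.

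Next, for each $M'\in\Lev(G')$ I would apply \Cref{AMS9.3bis} to get the twisted extended quotient description of $\Phi_\tn{e}^Z(G',\mathcal{T},M')$. Taking the disjoint union over $\Lev(G')$ then yields the bijection claimed in (1). The only subtlety is verifying independence of the auxiliary choices implicit in \Cref{AMS7.3bis} (the replacement of $\underline{x}$ by an equivalent rigid inner twist, and the choice of Levi enhancement of $M'$): both auxiliary choices affect the target only by changing $\mathcal{T}_M$ within its equivalence class, which does not alter the set $\Phi_\tn{e,cusp}^Z(M',\mathcal{T}_M)$, and both are controlled by the uniqueness clauses in \Cref{LeviRIT} and \Cref{AMS7.3bis}.

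For part (2), I would decompose $\Phi_\tn{e}^Z({}^LG)$ as the disjoint union, over $[\mathcal{T}]\in H^1(\mathcal{E},Z\to G)$, of the sets $\Phi_\tn{e}^Z(G^\mathcal{T},\mathcal{T})$. The key point is that every enhanced $L$-parameter is relevant for exactly one rigid inner twist up to equivalence: the central character of the enhancement $\rho$ on $Z(\widehat G)^{\Gamma,+}$ determines, via the duality isomorphism of \Cref{RigidTN}, a unique class in $H^1(\mathcal{E},Z\to G)$, and hence a unique $G^\mathcal{T}$. Combining this decomposition with part (1) applied to each $G^\mathcal{T}$ gives the stated bijection.

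The main obstacle I anticipate is the bookkeeping around equivalence of rigid inner twists when transitioning from the fixed $\underline{x}$ in \Cref{AMS7.3bis} to a representative in $\Lev(G')$: one must verify that the twisted extended quotient structure of \Cref{AMS9.3bis} is unchanged under the allowed substitutions (replacing $\underline{x}$, the Levi enhancement, or the representative $M'$ in its $G'(F)$-conjugacy class). Once this is settled, the rest is a formal assembly and the partitioning argument via central characters, both of which are essentially routine given the infrastructure developed in the preceding sections.
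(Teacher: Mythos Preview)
Your proposal is correct and matches the paper's approach: the paper gives no proof at all, stating the corollary as ``an immediate consequence'' of \Cref{AMS9.3bis}, and your argument---partitioning $\Phi_\tn{e}^Z(G',\mathcal{T})$ by the Levi component of the cuspidal support via \Cref{AMS7.3bis}, applying \Cref{AMS9.3bis} blockwise, and then decomposing $\Phi_\tn{e}^Z({}^LG)$ by relevance via \Cref{RigidTN}---is exactly the assembly the paper has in mind. The bookkeeping concerns you raise about equivalence of rigid inner twists are handled by the uniqueness clauses in \Cref{LeviRIT} and \Cref{AMS7.3bis}, as you note.
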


We remark that for $Z \to Z' \to G$ two finite central subgroups, there is a canonical inclusion $$H^{1}(\mathcal{E}, Z \to G) \to H^{1}(\mathcal{E}, Z' \to G),$$ as well as a map $Z_{\widehat{G}^{+'}}(\varphi) \to Z_{\widehat{G}^{+}}(\varphi)  $. It is easy to check that all of our constructions from this section are compatible with these maps (cf.~\cref{sec:changeofZ}).

\subsection{Conjectures}
\label{sec:corr:conj}
In this section we explain how the cuspidal support map
and twisted extended quotients of \Cref{sec:bernstein}
are expected to fit into the local Langlands conjectures.

We first briefly recall Kaletha's rigid formulation of the local Langlands conjectures.
Recall that a \mathdef{representation} of a rigid inner twist
$(G',\xi, \mathcal{T},\bar h)$ of $G$ is a tuple
$\dot\pi = (G',\xi, \mathcal{T},\bar h, \pi)$,
where $\pi$ is a representation of $G'(F)$.
We say that two such representations are \mathdef{isomorphic}
if there is an equivalence between the two rigid inner twists which identifies the two representations,
and write $\Pi^{\rig,Z}(G)$ for the set of isomorphism classes
of irreducible representations of $Z$-rigid inner twists of $G$.
It is a feature of rigid inner twists---one not shared, for instance, by inner twists---
that two such isomorphic representations are isomorphic
as representations of $p$-adic groups in the usual sense.

For the most part, the representation theory of rigid inner twists
is the same as the ordinary representation theory of reductive $F$-groups.
For $\cal P$ a property of ordinary representations,
such as smooth, irreducible, supercuspidal, discrete series, or tempered,
we define $\dot\pi$ to satisfy $\cal P$
if its underlying ordinary representation $\pi$ does.
So the rigid enhancement is not so relevant
to the study of an individual group.
Rather, the enhancement collects representations
of different groups, by varying the rigid inner twist,
that the local Langlands conjectures
predict should be collected together.
We have already seen this pattern in the definition of~$\Pi^{\rig,Z}(G)$.
Similarly, one can write
\[
\Omega_{\rig,Z}(G) \defeq
\bigsqcup_{[\cal T] \in H^{1}(\mathcal{E}, Z \to G)}
\Omega(G^{\cal T})
\]
for the union of the Bernstein varieties
of the rigid inner twists of~$G$ and 
\[
\frak B_{\rig,Z}(G) \defeq \pi_0\bigl(\Omega_{\rig,Z}(G)\bigr)
= \bigsqcup_{[\cal T] \in H^{1}(\mathcal{E}, Z \to G)}
\frak B(G^{\cal T})
\]
for the inertial supports of the rigid inner twists.

\begin{definition}
Let $\frak w$ be a Whittaker datum for~$G$.
A (weak\footnote{
The word ``weak'' here refers to our omission of the conditions
that ought to be satisfied by the the ``true'' correspondence.},
compound) \mathdef{local Langlands
correspondence} for~$G$ is a bijection
$\rec_{\frak w}\colon\Pi^\rig(G) \to\Phi_\tn{e}^Z(G)$
making the following diagram commute:
\[
\begin{tikzcd}
\Pi^{\rig,Z}(G) \rar{\rec_{\frak w}}[swap]{\sim} \dar &
\Phi_\tn{e}^Z({}^LG) \dar \\
H^1(\cal E,Z\to G) \rar{\kappa_G}[swap]{\sim} &
\pi_0\bigl(Z(\widehat G)^{\Gamma,+}\bigr)^*.
\end{tikzcd}
\]
\end{definition}

Here the left vertical arrow passes to
the cohomology class parameterizing the underlying rigid inner twist,
the right vertical arrow is restriction of~$\rho$,
and the left horizontal arrow is Kaletha's generalization
of the Tate--Nakayama duality isomorphism.
We will comment on the role of Whittaker data
in \Cref{sec:corr:whittaker},
explaining in particular how to compatibly
choose a Whittaker datum $\frak w_M$
for a Levi subgroup~$M$ of~$G$
in the local Langlands correspondence.

\begin{remark} \label{thm69}
Besides the difference in enhancement,
which we have already discussed,
Kaletha's formulation of the local Langlands conjecture
differs from that of \cite{AMS18} in that the former restricts to tempered $L$-parameters
while the latter is stated for any $L$-parameter, tempered or not.
Despite appearances there is no space between these formulations:
one can use the results of \cite{silberger_zink18},
in particular, isomorphism~(6) on page~339,
to show that any enhanced local Langlands correspondence
for tempered $L$-parameters gives rise
to such a correspondence for general $L$-parameters.
\end{remark}

Given a compound local Langlands correspondence for~$G$
and a rigid inner twist $(G',\xi, \mathcal{T},\bar h)$ of~$G$
one can construct an ordinary local Langlands correspondence
\[
\rec_{\frak w,(G',\xi, \mathcal{T},\bar h)}\colon
\Pi(G') \simeq \Phi_\tn{e}^Z(G', \mathcal{T})
\]
by taking the fiber over~$\zeta_\mathcal{T}$, the character of $\pi_0(Z(\widehat G)^{\Gamma,+})$ corresponding to $[\mathcal{T}]$.
Further restriction to a single
$(G',\mathcal{T})$-relevant $L$-parameter~$\varphi$
yields a parameterization
\[
\rec_{\frak w,(G',\xi, \mathcal{T},\bar h),\varphi}\colon
\Pi_\varphi(G') \simeq
\Irr\bigl(\pi_0\bigl(Z_{\widehat G}(\varphi)^+\bigr),\zeta_\mathcal{T} \bigr)
\]
of the corresponding $L$-packet.
Conversely, giving such a compound correspondence
amounts to giving the collection
of sets $\Pi^{\rig,Z}_\varphi\bigl(G',\xi, \mathcal{T},\bar h\bigr)$ and maps
$\rec_{\frak w,(G',\xi, \mathcal{T},\bar h),\varphi}$.
In light of this reduction, we mostly focus
on a fixed rigid inner twist
$(G',\xi,\cal T,\bar h)$ of~$G$ for the rest of the section.

Now we turn to Bernstein blocks and the cuspidal support map.
Roughly speaking, the following conjectures
assert that the constructions of \Cref{sec:bernstein}
are compatible with their analogues
in the representation theory of $p$-adic groups.
As part of these, we give a conjecture predicting which Bernstein block
contains a given representation $\dot\pi$.

The foundational conjecture
is that cuspidal enhanced $L$-parameters correspond to supercuspidal representations.

\begin{conjecture} \label{compatiblesc}
An irreducible representation $\dot\pi$ is supercuspidal
if and only if $\rec_{\frak w}(\dot\pi)$ is cuspidal.
\end{conjecture}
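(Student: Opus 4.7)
The plan is to prove each implication by comparing cuspidality on both sides to the vanishing of parabolic restriction, assuming the expected compatibility of $\rec_{\frak w}$ with parabolic induction along Levi subgroups of rigid inner twists. More precisely, the classical characterization says that $\dot\pi\in\Pi^{\rig,Z}(G)$ is supercuspidal if and only if for every proper Levi subgroup $(M',\xi_M,\cal T_M,\bar h_M)$ of $(G',\xi,\cal T,\bar h)$ and every $\dot\sigma\in\Pi(M')$, the representation $\dot\pi$ is not a subquotient of $\ind_{M'\subseteq P'}^{G'}(\dot\sigma)$. On the Galois side, by the discussion preceding and including \Cref{AMS9.3bis}, an enhanced $L$-parameter is cuspidal exactly when its cuspidal support triple $(\cal M,\psi,v,\varrho)$ has $\cal M={}^LG$, and equivalently, when it does not appear in the twisted extended quotient $(\Phi_\tn{e,cusp}^Z(M',\cal T_M)\sslash W({}^LG,{}^LM))_\kappa$ for any proper Levi subgroup.

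First I would reduce to parameters on a fixed rigid inner twist, using that $\rec_{\frak w}$ restricts to a bijection $\Pi(G')\simeq\Phi_\tn{e}^Z(G',\cal T)$ and that both the supercuspidality of $\dot\pi$ and the cuspidality of $\rec_{\frak w}(\dot\pi)$ depend only on the underlying representation of $G'(F)$ and its $L$-parameter. Next I would use \Cref{LeviRIT} and \Cref{AMS7.3bis} to identify, compatibly with the rigid structure, Levi subgroups of $(G',\xi,\cal T,\bar h)$ with Levi $L$-subgroups $\cal M={}^LM$ appearing in the cuspidal support. The forward implication would then follow by showing that if $\dot\sigma$ is supercuspidal on a proper Levi $M'$, then $\rec_{\frak w_M}(\dot\sigma)$ being cuspidal for $M'$ forces $\rec_{\frak w}$ of any irreducible constituent of $\ind_{M'\subseteq P'}^{G'}(\dot\sigma)$ to have $\uCusp$ represented by a triple with Levi $L$-subgroup $\cal M={}^LM\subsetneq{}^LG$, hence nontrivial cuspidal support. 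The reverse implication is symmetric: if $\rec_{\frak w}(\dot\pi)$ is cuspidal with Levi $L$-subgroup ${}^LG$, then $\dot\pi$ cannot lie in the induction series of any proper $({}^LM,\psi,v,\varrho)$, so under the assumed compatibility of $\rec_{\frak w}$ with parabolic induction, $\dot\pi$ cannot be a subquotient of any proper parabolic induction.

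The main obstacle is precisely the assumed compatibility of $\rec_{\frak w}$ with parabolic induction and restriction for rigid inner forms; this compatibility is not itself a theorem but one of the expected properties of the refined local Langlands correspondence (cf.~the list in \cite[\S 7.3]{Dillery}). Concretely, one must know that $\rec_{\frak w}\bigl(\ind_{M'\subseteq P'}^{G'}(\dot\sigma)\bigr)$ is the set of enhanced $L$-parameters in the induction series associated to $\rec_{\frak w_M}(\dot\sigma)$, in a way that matches the twisted extended quotient description of \Cref{AMS9.3bisbis}. Once this is available, the present conjecture becomes an essentially formal consequence, since both the supercuspidal support map and the cuspidal support map are characterized by the same universal property (subquotient of a parabolic induction from a (super)cuspidal object on a Levi, unique up to conjugacy by the Bernstein--Zelevinsky Geometrical Lemma on the automorphic side and by \Cref{thm14} and \Cref{AMS9.3} on the Galois side). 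A subsidiary difficulty is the matching of the Whittaker normalization $\frak w\leftrightarrow\frak w_M$ with the Galois-side normalization built into the definition of $\Cusp$ (as opposed to $\uCusp$); here one relies on \Cref{thm34} and the comparison of infinitesimal characters to see that this normalization is the correct one for the statement of the conjecture to hold.
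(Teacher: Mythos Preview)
The statement you are attempting to prove is a \emph{conjecture} in the paper, not a theorem; the paper offers no proof of it. It is listed among the expected compatibilities that a refined local Langlands correspondence $\rec_{\frak w}$ should satisfy (alongside \Cref{compatibleunram}, \Cref{compatibleouter}, and \Cref{thm60}), and the subsequent results (e.g.\ \Cref{thm56}) take it as a hypothesis rather than establishing it.

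Your proposal does not close this gap: you explicitly reduce the statement to the ``assumed compatibility of $\rec_{\frak w}$ with parabolic induction and restriction for rigid inner forms,'' which you yourself note is not a theorem but another expected property of the correspondence. So what you have written is a reduction of one conjecture to another (closely related, and arguably stronger) conjecture, not a proof. This is consistent with the paper's own stance, which treats \Cref{compatiblesc} as input to the framework rather than as something derivable from the other constructions; there is no ``paper's own proof'' to compare against.
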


Comparing Bernstein blocks requires two additional compatibilities,
twisting by unramified representations
and equivariance with respect to outer automorphisms.

\begin{conjecture} \label{compatibleunram}
Let $\chi\in X^*_\tn{nr}(G)$ correspond to $z\in X^*_\tn{nr}({}^LG)$
under \eqref{thm54}.
Then
\[
\rec_{\frak w}(\chi\otimes\dot\pi)
= z^{\val_F}\cdot\rec_{\frak w}(\dot\pi).
\]
\end{conjecture}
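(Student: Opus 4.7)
The plan is to treat this statement as a compatibility axiom for any reasonable construction of $\rec_{\frak w}$ rather than a theorem derivable from first principles. The verification has two components: first, checking that the right-hand side is well-defined as an enhanced $L$-parameter; second, matching the construction of $\rec_{\frak w}$ against unramified twisting on both sides.

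First I would observe that $z^{\val_F} \cdot \phi$ has the same restriction to the inertia subgroup $I_F$ as $\phi$, because $z \in Z(\widehat G)^{I_F,\circ}$ is inertia-fixed. Hence the centralizer $Z_{\widehat G^+}(z^{\val_F} \cdot \phi)$ coincides with $Z_{\widehat G^+}(\phi)$, and one can transport the enhancement $\rho$ over unchanged. Moreover, because unramified twisting on the representation side does not change the underlying (rigid) inner form, and because the twist $\phi \mapsto z^{\val_F} \cdot \phi$ preserves the character $\zeta_{\cal T}$ of $\pi_0(Z(\widehat G)^{\Gamma,+})$ induced by $\rho$, both sides map to the same rigid inner twist class under the vertical arrow $\Phi_\tn{e}^Z({}^LG)\to\pi_0(Z(\widehat G)^{\Gamma,+})^*$ of the defining diagram for $\rec_{\frak w}$. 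These checks show that the equality in the conjecture makes sense and is consistent with the overall structure of the correspondence.

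The remaining task is to verify the actual matching of twists on the two sides within a given construction of the LLC. For $\GL_n$ this follows from local class field theory and the multiplicativity of the Langlands reciprocity map. For quasi-split classical groups it follows from the endoscopic construction via transfer of characters. For Kaletha's regular supercuspidal packets in \cite{kaletha18a} this is built into the parameterization through the behavior of the toral input data under unramified twisting. More generally, compatibility with unramified twists appears in Borel's foundational list of LLC desiderata \cite[\S 10.3]{borel_corvallis}. The main obstacle is that in the fully general setting the conjecture cannot be separated from the construction of $\rec_{\frak w}$ itself: without an actual correspondence in hand, there is nothing to verify. For our purposes, it thus seems appropriate to treat \Cref{compatibleunram} as an axiom that any acceptable rigid LLC must satisfy, and to note that it holds in every case where such an LLC has been constructed.
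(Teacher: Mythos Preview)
The paper gives no proof of this statement: it is stated as a \emph{conjecture}, one of several expected compatibility properties of a local Langlands correspondence (alongside \Cref{compatiblesc} and \Cref{compatibleouter}), and is then used as a hypothesis in \Cref{thm55} and \Cref{thm56}. Your proposal correctly arrives at the same conclusion, treating the statement as an axiom that any acceptable construction of $\rec_{\frak w}$ must satisfy.

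One small correction to your well-definedness check: the claim that $Z_{\widehat G^+}(z^{\val_F}\cdot\phi) = Z_{\widehat G^+}(\phi)$ does not follow from the two parameters agreeing on $I_F$, since the centralizer depends on all of $\phi$, not merely on $\phi|_{I_F}$. The correct reason is simply that any lift $\tilde z$ of $z$ lies in $Z(\widehat G)$, so multiplication by $\tilde z^{\val_F(w)}$ is central and leaves the centralizer unchanged. With that adjustment your discussion is fine and matches the paper's treatment.
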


Let $\Out_F(G)$ denote the group
of ($F$-rational) outer automorphisms of~$G$.

\begin{conjecture} \label{compatibleouter}
Let $\theta\in\Out_F(G)$.  Then
$\rec_{\theta(\frak w)}\bigl(\theta(\dot\pi)\bigr)
= \theta\bigl(\rec_{\frak w}(\dot\pi)\bigr)$.
\end{conjecture}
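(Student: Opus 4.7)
My plan is to reduce the conjecture to naturality of the local Langlands correspondence under $F$-automorphisms of $G$. Since $G$ is quasi-split, the outer automorphism $\theta \in \Out_F(G)$ lifts to an $F$-rational automorphism $\tilde\theta$ of $G$, unique up to composition with an inner automorphism by an element of $G(F)$. The automorphism $\tilde\theta$ acts on $\Pi^{\rig,Z}(G)$ by pullback (transporting rigid inner twists via $\tilde\theta$, noting that $\tilde\theta$ preserves $Z$ because $Z$ is characteristic in $G$) and on Whittaker data in the obvious way. On the dual side, $\tilde\theta$ induces via Langlands duality an automorphism of $\widehat{G}$ preserving a fixed $\Gamma$-pinning, hence an automorphism of ${}^LG$, which in turn acts on $\Phi_{\tn{e}}^Z({}^LG)$.

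First I would verify that both actions descend to $\Out_F(G)$: on the automorphic side, inner automorphisms by $G(F)$ act trivially on isomorphism classes of representations and (by definition) on $G(F)$-conjugacy classes of Whittaker data; on the Galois side, the induced automorphism of $\widehat{G}$ attached to an inner $F$-automorphism is inner, hence trivial on $\widehat{G}$-conjugacy classes of enhanced parameters. Then the conjecture becomes the naturality statement
\[
\rec_{\tilde\theta(\frak w)}\bigl(\tilde\theta(\dot\pi)\bigr)
= \tilde\theta\bigl(\rec_{\frak w}(\dot\pi)\bigr)
\]
for every lift $\tilde\theta$ of $\theta$.

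The main obstacle is that this naturality is not automatic from the bare definition of a weak compound local Langlands correspondence: one has to know something about how $\rec_{\frak w}$ was constructed. The most natural approach is axiomatic, treating $\Aut_F(G)$-equivariance as an additional desideratum that any \emph{canonical} correspondence ought to satisfy, and which is built into the characterizations available in the cases where the correspondence is known (for instance, via stability and endoscopic character identities, or via the Whittaker-normalized depth-zero and Yu constructions). A second approach is to reduce to inner automorphisms by $\widehat{G}$ after transporting the problem to the Galois side: the $\widehat{G}$-conjugacy on $\Phi_{\tn{e}}^Z({}^LG)$ already identifies any two enhanced parameters differing by an inner automorphism of $\widehat{G}$, so once one pins down how $\tilde\theta$ changes the Whittaker datum and the rigid inner twist, the two sides of the putative equality are literally equal as $\widehat{G}$-orbits.

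Finally, it would be worth checking that the statement is consistent with the other two conjectures: the action of $\tilde\theta$ commutes with cuspidal support (since $\tilde\theta$ maps Levi subgroups to Levi subgroups and the constructions of \Cref{sec:bernstein:support} are manifestly natural in the ambient group) and with unramified twisting (via the functoriality of the Kottwitz isomorphism \eqref{thm54}), so Conjecture~\ref{compatibleouter} reduces, on each Bernstein block, to the analogous equivariance for the supercuspidal correspondence of Conjecture~\ref{compatiblesc}. This reduction by itself does not resolve the conjecture but it isolates the essential content to the supercuspidal case, where explicit constructions are most accessible.
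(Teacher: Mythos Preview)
The statement in question is a \emph{conjecture}, not a theorem, and the paper does not attempt to prove it. It is listed among the desiderata (\Cref{compatiblesc,compatibleunram,compatibleouter}) that a ``true'' local Langlands correspondence is expected to satisfy, and is then used as a \emph{hypothesis} in \Cref{thm55} and \Cref{thm56}. The paper simply refers the reader to \cite[Conjecture~2.12]{kaletha22} for further discussion and warns that outer automorphisms can mix equivalence classes of rigid inner twists.

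Your proposal correctly identifies the core issue: the equivariance under $\Out_F(G)$ is not a consequence of the bare definition of a weak compound correspondence $\rec_{\frak w}$, but an additional axiom one imposes. Once you recognize this, there is nothing to prove here---the ``reduction to the supercuspidal case'' you sketch is not a proof of the conjecture but rather a reformulation showing that \Cref{compatibleouter} for~$G$ would follow from \Cref{compatibleouter} for its Levi subgroups together with the other compatibilities, which is exactly how the paper packages things in \Cref{thm55}. So your discussion is reasonable as commentary, but it should not be framed as a proof attempt: the paper deliberately leaves this as an open expectation about the (as yet hypothetical) correspondence.
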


We refer the reader to the \cite[Conjecture 2.12]{kaletha22}
and the discussion surrounding it for a fuller
explanation of this conjecture.
There are some subtleties here:
outer automorphisms of~$G$ can mix
the equivalence classes of its rigid inner twists,
meaning that $\theta(\dot\pi)$ and $\dot\pi$
might be representations of different groups.

All together, the following properties ensure compatibility
with the constructions of \Cref{sec:bernstein}.

\begin{conjecture} \label{thm55}
Every Levi subgroup of~$G$ satisfies \Cref{compatiblesc}
and satisfies \Cref{compatibleunram,compatibleouter}
for $\dot\pi$ supercuspidal.
\end{conjecture}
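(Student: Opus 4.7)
The plan is to observe that \Cref{thm55} is not asking for a fundamentally new theorem but rather for the three compatibilities \Cref{compatiblesc,compatibleunram,compatibleouter} applied to Levi subgroups, so the strategy is to reduce each claim to the analogous statement for arbitrary quasi-split reductive groups and then appeal to the current status of the local Langlands correspondence. Once these conjectures are established for all quasi-split reductive groups, specializing to a Levi subgroup $M$ of~$G$ is immediate because $M$ is itself a quasi-split reductive $F$-group, so no genuinely new argument at the Levi level is required.

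For \Cref{compatiblesc}, one approach is to use the theory of types: on the automorphic side, supercuspidals are characterized as compact inductions from maximal simple types, while on the Galois side, cuspidality of the enhanced $L$-parameter is characterized via \Cref{sec:springer} by the vanishing of all proper (quasi-)parabolic restrictions. One would then match these two characterizations via the explicit constructions of Bushnell--Kutzko and Henniart for $\GL_n$, of Arthur, M\oe{}glin, and Mok for quasi-split classical groups, and of Kaletha for regular supercuspidals in tame residual characteristic. In each case, the matching of types with cuspidal enhanced parameters is either already in the literature or reduces to unwinding definitions.

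For \Cref{compatibleunram}, equivariance under unramified twists is essentially built into any reasonable formulation of the LLC: local class field theory provides such equivariance for tori via the isomorphism \eqref{thm54}, and the functoriality of the Langlands dual group construction, together with compatibility with parabolic induction, extends this to general quasi-split groups; on the automorphic side one needs only that twisting a supercuspidal by an unramified character again yields a supercuspidal, which is elementary. For \Cref{compatibleouter} restricted to supercuspidals, equivariance under outer automorphisms is more delicate because of the role of the Whittaker datum, which is permuted by $\Out_F(G)$; the approach would be to leverage uniqueness results for a correspondence satisfying the expected stable and endoscopic character identities, as in Kaletha's axiomatic characterization, since outer automorphisms preserve these identities and hence intertwine the two candidate correspondences $\rec_{\frak w}$ and $\rec_{\theta(\frak w)}$.

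The main obstacle is that the LLC is currently established with all of its expected compatibilities only for a limited class of groups --- $\GL_n$, quasi-split classical groups, and regular supercuspidals in tame residual characteristic --- so a complete proof of \Cref{thm55} must wait on further progress in the LLC programme itself. In the meantime, the most realistic partial result would be to verify the conjecture case-by-case for those groups and Levis where an explicit correspondence is available, which is likely why the authors have chosen to state \Cref{thm55} as a conjecture and use it as a working hypothesis rather than attempt a proof.
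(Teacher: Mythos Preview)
The statement labeled \Cref{thm55} is a \emph{Conjecture} in the paper, not a theorem, and the paper offers no proof whatsoever. It is stated purely as a working hypothesis: immediately afterward, \Cref{thm56} begins ``Suppose $G$ satisfies \Cref{thm55}'' and deduces consequences from it, and \Cref{thm60} and the surrounding discussion treat it the same way. So there is no ``paper's own proof'' to compare against.

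Your proposal is therefore not a proof attempt in the usual sense but a commentary on what a proof would require, and in your final paragraph you correctly identify this: the conjecture bundles together three expected compatibilities of the local Langlands correspondence (supercuspidal $\leftrightarrow$ cuspidal, equivariance under unramified twists, equivariance under outer automorphisms) that are known only in limited cases, and the authors deliberately leave it open. That assessment is accurate and matches the paper's intent.

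The earlier paragraphs of your proposal, however, are written as if outlining an actual proof strategy, and this is misleading. Phrases like ``the strategy is to reduce each claim'' and ``one approach is to use the theory of types'' suggest you are about to carry something out, but nothing is carried out, and indeed nothing can be in general: \Cref{compatiblesc,compatibleunram,compatibleouter} are themselves open conjectures about a correspondence $\rec_{\frak w}$ that is not yet known to exist for arbitrary $G$. Your reduction ``specializing to a Levi subgroup $M$ is immediate because $M$ is itself quasi-split'' is correct but vacuous until the conjectures are settled for all quasi-split groups. If your goal was to explain why the statement is a conjecture rather than a theorem, say so from the outset rather than framing the discussion as a proof plan.
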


\begin{lemma} \label{thm56}
Suppose $G$ satisfies \Cref{thm55}.
Then the maps $(\rec_{\frak w_M})_{M\in\Lev(G)}$
induce a commutative diagram
\[
\begin{tikzcd}
\Pi^{\rig,Z}(G) \rar{\Sc}\dar{\rec_{\frak w}} &
\Omega_{\rig,Z}(G) \rar\dar &
\frak B^{\rig,Z}(G) \rar\dar &
H^1(\cal E,Z\to G) \dar \\
\Phi_\tn{e}^Z(G) \rar{\Cusp} &
\Omega_Z({}^LG) \rar &
\rar \frak B({}^LG) \rar &
\pi_0\bigl(Z(\widehat G)^{\Gamma,+}\bigr)^*
\end{tikzcd}
\]
in which the vertical maps are isomorphisms
and the second of these is a map of varieties.
\end{lemma}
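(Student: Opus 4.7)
The plan is to construct the middle two vertical maps from the Levi-local Langlands correspondences supplied by \Cref{thm55}, then check commutativity of each of the three squares using the three compatibilities built into that hypothesis. First I would define the middle map as follows: given $[(M', \sigma)]_{G^{\cal T}(F)} \in \Omega(G^{\cal T}) \subseteq \Omega_{\rig,Z}(G)$, use \Cref{LeviRIT} to uniquely enrich $M'$ to a Levi subgroup $(M',\xi_M,\cal T_M,\bar h_M)$ of the ambient rigid inner twist, let $M\in\Lev(G)$ be the corresponding Levi of $G$, and send $\dot\sigma$ via $\rec_{\frak w_M}$ to a cuspidal enhanced $L$-parameter $(\psi,v,\varrho)$ for the Levi $L$-subgroup $\cal M$ of ${}^LG$ dual to $M$; define the middle map by
\[
[(M',\sigma)]_{G^{\cal T}(F)}\longmapsto [\cal M,\psi,v,\varrho]_{\widehat G}.
\]
Independence of the choice of representative reduces to \Cref{compatibleouter} applied to the inner automorphisms of $M$ induced by $G^{\cal T}(F)$-conjugation. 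Passing to $\pi_0$ and using \Cref{compatibleunram} (so that unramified twisting on the two sides is compatible under the Kottwitz identification \eqref{thm54}) gives the third vertical map.

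For bijectivity, the inverse of the middle map is supplied by \Cref{AMS7.3bis}: given any representative $(\cal M,\psi,v,\varrho)$ of a cuspidal datum relevant for some $\cal T$, one finds, after possibly replacing the rigid inner twist by an equivalent one, a Levi rigid inner twist $(M',\xi_M,\cal T_M,\bar h_M)$ for which $(\psi,v,\varrho)$ is relevant, and one applies $\rec_{\frak w_M}^{-1}$ to obtain a supercuspidal $\sigma$ of $M'(F)$. Bijectivity of the third map is then automatic from bijectivity of the middle one.

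Commutativity of the left square is the formal content of \Cref{compatiblesc} for Levi subgroups: a representation $\dot\pi$ with supercuspidal support $[M',\sigma]_{G^{\cal T}(F)}$ sits inside a parabolic induction from $\sigma$, and the cuspidal support map $\Cusp$ on enhanced $L$-parameters from \Cref{sec:bernstein:support} was built precisely to track this. The middle square commutes by the very definition of the third vertical map. For the right square, the natural map $H^1(\cal E,Z\to M)\to H^1(\cal E,Z\to G)$ corresponds under \Cref{RigidTN} to restriction along $\pi_0(Z(\widehat G)^{\Gamma,+})\to\pi_0(Z(\widehat M)^{\Gamma,+})$, and the fact that this agrees with the composition through cuspidal support is exactly the central-character compatibility of \Cref{thm53}.

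Finally, to see that the middle map is algebraic, each connected component of $\Omega_{\rig,Z}(G)$ has the shape \eqref{thm28} and each component of $\Omega_Z({}^LG)$ has the shape \eqref{thm29}. The Kottwitz isomorphism \eqref{thm54} gives an algebraic identification $X^*_\tn{nr}(M)\simeq X^*_\tn{nr}(\cal M)$ that intertwines the two twisting actions by \Cref{compatibleunram}, and \Cref{compatibleouter} together with \Cref{AMS8.2} identifies $W(G,M')_\sigma$ with $W({}^LG,\cal M)_{[\psi,v,\varrho]_{\cal M^\circ}}$; taking quotients gives the required morphism of varieties. The main obstacle is keeping track of conjugacy ambiguities---in particular ensuring that the choices made in enriching $M'$ to a Levi of the rigid inner twist, and in identifying $M'$ with a Levi of $G$ via $\xi$, all wash out after passing to $\widehat G$-conjugacy on the Galois side---but the three compatibilities of \Cref{thm55} are tailor-made for this bookkeeping.
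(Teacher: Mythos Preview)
Your construction of the middle vertical map, the verification that it is well-defined on conjugacy classes via \Cref{compatibleouter}, and the argument that it is a morphism of varieties via the quotient descriptions \eqref{thm28}, \eqref{thm29} together with \Cref{compatibleunram} and \Cref{compatibleouter}, all match the paper's proof sketch closely. Your treatment is in fact more detailed than the paper's, for instance in explicitly exhibiting the inverse via \Cref{AMS7.3bis} and in checking the rightmost square.

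There is, however, a genuine gap in your treatment of the leftmost square. You write that its commutativity ``is the formal content of \Cref{compatiblesc} for Levi subgroups'' and that $\Cusp$ ``was built precisely to track this''. But \Cref{compatiblesc} only asserts that supercuspidality corresponds to cuspidality; it says nothing about how $\rec_{\frak w}$ interacts with parabolic induction or with the supercuspidal support of a \emph{non}-supercuspidal $\dot\pi$. The assertion
\[
\Cusp\bigl(\rec_{\frak w}(\dot\pi)\bigr)
= \bigl[{}^LM,\rec_{\frak w_M}(\dot\sigma)\bigr]_{\widehat G}
\quad\text{when}\quad \Sc(\dot\pi)=[M',\dot\sigma]
\]
requires a compatibility between $\rec$ and parabolic induction that is not among the hypotheses of \Cref{thm55}. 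That $\Cusp$ was \emph{designed} to mirror $\Sc$ is motivation, not proof: the construction of $\Cusp$ in \Cref{sec:bernstein:support} is entirely on the Galois side and makes no reference to $\rec$. Note that the paper's own proof sketch does not address this square either, and \Cref{thm65} uses the word ``ought'' when describing the resulting recipe for supercuspidal support---so the commutativity of the left square is really the substantive conjectural content (cf.\ \Cref{introconj1}, \Cref{thm60}), not something that the three compatibilities of \Cref{thm55} by themselves deliver.
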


\begin{proof}[Proof sketch]
Let $(M',\xi_M,\cal T_M,\bar h_M)$ be a Levi subgroup
of $(G',\xi,\cal T,\bar h)$ corresponding to
a Levi subgroup~$M$ of~$G$
and let 
\[
\dot s \defeq (M',\xi_M,\cal T_M,\bar h_M,\dot\sigma)
\]
be a cuspidal datum for~$(G',\xi,\cal T,\bar h)$.
Writing $(\psi,\varrho) \defeq \rec_{\frak w_M}(\dot\sigma)$,
we have a dual cuspidal datum
\[
\hat s \defeq ({}^LM,\psi,\varrho).
\]
It follows from \Cref{compatibleouter}
and the well-known isomorphism
\[
W(G',M') \simeq W({}^LG,{}^LM)
\]
that the $\widehat G$-conjugacy class
of $\hat s$ depends only on
the $\widehat G$-conjugacy class of~$\dot s$.
The assignment $[s]_G\mapsto [\dot s]_{\widehat G}$
defines a bijection
\begin{equation} \label{thm59}
\Omega^{\dot{\frak s}}_{\rig,Z}(G) \simeq
\Omega^{\hat{\frak s}}({}^LG)
\end{equation}
and these assemble to the desired map
$\Omega_{\rig,Z}(G)\simeq\Omega_Z({}^LG)$.
By \Cref{compatibleunram}, the map \eqref{thm59}
with $G$ replaced by~$M$ is an isomorphism of varieties,
as the variety structure in this case
is defined using unramified twisting.
Using \Cref{compatibleouter} 
and the quotient description \eqref{thm29}
of $\Omega^{\dot{\frak s}}_{\rig,Z}(G)$,
we find that \eqref{thm59},
and thus the composite map on Bernstein varieties,
is an isomorphism of varieties.
\end{proof}

\begin{remark} \label{thm65}
\Cref{thm56} gives a recipe for computing
the supercuspidal support map arithmetically
using the local Langlands correspondence.
Namely, given $\pi\in\Pi(G')$,
choose a rigid enhancement $(G',\xi,\cal T,\bar h)$ of~$G'$
and a Whittaker datum~$\frak w$ for~$G$.
Letting $[\cal M,\phi,v,\varrho]$ be
the cuspidal support of $\rec_{\frak w}(\dot\pi)$,
choose a Levi subgroup $M$ of~$G$
with ${}^LM$ conjugate to~$\cal M$
and let $\dot\sigma\defeq\rec_{\frak w_M}^{-1}(\phi,v,\varrho)$.
Then $[M,\sigma]_G$ ought to be
the supercuspidal support of~$\pi$.

On the one hand, this procedure appears to depend
on the choices of rigidification and Whittaker datum.
On the other hand, the supercuspidal support map
is independent of such choices.
Comparing these two points of view predicts
compatibility conditions on the cuspidal support map
for $L$-parameters with respect to change of rigidification
and change of Whittaker datum.
We will formulate and verify these conditions
in \Cref{sec:corr:rigid,sec:corr:whittaker}.
\end{remark}

\Cref{thm56} suggests a strategy for reducing
local Langlands correspondences to the supercuspidal case,
an idea that goes back to \cite{aubert_baum_plymen_solleveld17a}.
Supposing \Cref{thm55} to be satisfied---the reduction---
one would hope to use the isomorphism
$\Omega_{\rig,Z}(G)\simeq\Omega_Z({}^LG)$
to construct the reciprocity map $\rec_{\frak w}$.
For this, it would suffice to match the fibers
of the supercuspidal and cuspidal support maps,
and we are free to work one Bernstein block at a time.

On the Galois side, everything is understood:
\Cref{AMS9.3} describes each Bernstein block
as a twisted extended quotient.
On the automorphic side, one would expect that
each Bernstein block again admits a natural
structure of a twisted extended quotient.
This expectation is known as the ABPS conjecture
\cite[Conjecture~2]{aubert_baum_plymen_solleveld17a},
and was recently proved in full generality
by Solleveld \cite[Theorem~E]{solleveld22}:
he constructs, for each Levi~$M'$ of~$G'$
and inertial equivalence class $\frak s_{M'}$ of~$M'$,
a family of cocycles~$\natural$, gluing maps,
and a bijection
\[
\Pi^{\frak s_{G'}}(G') \simeq
\bigl(\Pi^{\frak s_{M'}}(M')\sslash W(G',M')_{\frak s_{M'}}\bigr)_\natural.
\]
One expects the local Langlands correspondence
to identify these two twisted extended quotients.

\begin{conjecture} \label{thm60}
Let $(M',\xi_M,\cal T_M,\bar h_M)$
be a Levi subgroup of $(G',\xi,\cal T,\bar h)$,
let $s = (M',\sigma)$ be a cuspidal datum for~$G'$,
and let $\hat s = ({}^LM,\psi,v,\varrho)$
be a cuspidal datum for~${}^LG$ such that
$(\psi,v,\varrho)$ is relevant for $(M',\xi_M,\cal T_M,\bar h_M)$.
Suppose $G$ satisfies \Cref{thm55}
and $\dot{\frak s}_{G'}$ corresponds to
$\frak s_{\widehat G}$ as in \Cref{thm56}.
Then there is an isomorphism of twisted extended quotients
fitting into a commutative diagram
\[
\begin{tikzcd}
\bigl(\Omega_{\rig,Z}^{\dot{\frak s}_M}(M)\sslash
W(G',M')_{\frak s_{M'}}\bigr)_\natural \rar{\sim}\dar &
\bigl(\Omega^{\hat{\frak s}_M}({}^LM)\sslash 
W({}^LG,{}^LM)_{\hat{\frak s}_M}\bigr)_\kappa \dar \\
\Omega_{\rig,Z}^{\dot{\frak s}_G}(G) \rar &
\Omega_Z^{\hat{\frak s}_G}({}^LG)
\end{tikzcd}
\]
in which the vertical maps are projections
to the naive quotients.
\end{conjecture}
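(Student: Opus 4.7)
The plan is to build the top arrow by combining the local Langlands correspondence for the Levi subgroup $(M',\xi_M,\cal T_M,\bar h_M)$ (available on cuspidal data by our standing hypothesis \Cref{thm55}) with the canonical identification $W(G',M') \simeq W({}^LG,{}^LM)$, then to argue that Solleveld's cocycle $\natural$ matches the AMS cocycle $\kappa$ under this identification. At the level of supercuspidal data, \Cref{compatiblesc} applied to $M'$ gives a bijection $\Pi_\tn{sc}^{\rig,Z}(M') \simeq \Phi_\tn{e,cusp}^Z({}^LM)$, and \Cref{compatibleunram} makes it equivariant for the twisting action of $X^*_\tn{nr}(M') \simeq X^*_\tn{nr}({}^LM)$. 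Hence we obtain an isomorphism of Bernstein components
\[
\Omega_{\rig,Z}^{\dot{\frak s}_M}(M) \simeq \Omega^{\hat{\frak s}_M}({}^LM).
\]
By \Cref{compatibleouter} this isomorphism is equivariant for the $W(G',M') \simeq W({}^LG,{}^LM)$ action, hence preserves stabilizers.

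To produce the top arrow, I would then send a pair $(s,\tau)$, with $s = [M',\sigma]_M$ and $\tau \in \Irr(\bbC[W(G',M')_{s},\natural_{s}])$, to the pair $(\hat s, \hat\tau)$, where $\hat s$ is the image of $s$ under the Bernstein-component isomorphism above and $\hat\tau$ is the transport of $\tau$ through the hoped-for identification of twisted group algebras. The commutativity of the square in the statement, relative to the vertical projections, would then be built into the construction, since after forgetting the representations we recover exactly the map of \Cref{thm56}. Combining these twisted extended quotients as $s$ varies within $\Omega_{\rig,Z}^{\dot{\frak s}_M}(M)$, using a suitable variant of \Cref{thm58} applied to the partition by $W(G',M')_{\frak s_{M'}}$-orbits, would yield the claimed isomorphism of twisted extended quotients.

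The essential obstacle is the identification of the two cocycles $\natural$ and $\kappa$. Both are morally the obstruction to compatibly normalizing self-intertwiners of a parabolic induction from a supercuspidal datum: on the automorphic side, Solleveld's cocycle controls the decomposition of $\ind_{P'}^{G'}(\sigma)$ into irreducible summands via intertwiners indexed by $W(G',M')_{\frak s_{M'}}$; on the Galois side, the AMS cocycle, produced in \Cref{thm87} from the endomorphism algebra of a parabolically induced perverse sheaf on $\cal U_{H^+}$, controls the analogous decomposition of $\ind_{L\subseteq P}^{H^+}\IC(\cal O,q\cal E)$. The expected matching would reduce to identifying the affine Hecke algebras (with parameters) that govern the two Bernstein blocks via the LLC, and then comparing their finite-group quotients. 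Verifying this Hecke-algebra matching in full generality is a deep open problem, and its absence is precisely why this statement remains a conjecture rather than a theorem; cases where the Hecke algebra on the automorphic side has been computed (cf.\ \cite{solleveld22}) should allow one to verify the cocycle matching directly and establish special cases of the conjecture.
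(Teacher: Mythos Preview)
Your analysis is correct and aligns with the paper's own treatment: this statement is a \emph{conjecture}, not a theorem, and the paper does not prove it. The paper's discussion after the conjecture says exactly what you do---that a direct proof would require analyzing and matching the cocycle families $\natural$ and $\kappa$, which is expected to be subtle in general---and adds one observation you omit: when both cocycle families are trivial (as happens for many Bernstein blocks), the conjecture follows already from \Cref{thm55}, since in that case the twisted extended quotients reduce to ordinary extended quotients and the equivariant bijection of Bernstein components you constructed suffices.
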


Here $\dot{\frak s}_{G'}$ is the enhancement
of $\frak s_{G'}$ in which $M'$ is replaced
by the rigid inner twist $(M',\xi_M,\cal T_M,\bar h_M)$.

A direct proof of \Cref{thm60} would require an analysis
of the families of cocycles $\natural$ and~$\kappa$,
which we expect to be quite subtle in general.
However, when the cocycle families are trivial,
which is true for many Bernstein blocks
(cf.~\cite[Remark 3.1.44]{AX22}),
\Cref{thm60} follows already from \Cref{thm55}.

\begin{remark} \label{thm61}
Putting everything together,
we see that a proof of the general local Langlands correspondence
follows from \Cref{thm55} and the triviality
of the cocycle families $\natural$ and~$\kappa$.
This strategy can also be used to prove partial
local Langlands correspondences:
if one has a local Langlands correspondence $\rec_{\frak w_M}$ 
for a Levi~$M'$ defined only on a single Bernstein block $\frak s_{M'}$
then \Cref{compatibleunram,compatibleouter}
together with the triviality of $\natural$ and~$\kappa$
yields a local Langlands correspondence
for the corresponding Bernstein block $\frak s_{G'}$ of~$G'$.
\end{remark}

\subsection{Change of rigidification}
\label{sec:corr:rigid}
Although our formulation of the local Langlands correspondence uses compound $L$-packets,
and hence requires rigid inner forms on the automorphic side,
the supercuspidal support map has no such requirement.
This observation points to a compatibility between
the cuspidal support map for $L$-parameters and
change of rigidification as we now explain,
elaborating on \Cref{thm65}.

We need an analogue of Tate-Nakayama duality
for finite multiplicative group schemes on the gerbe $\mathcal{E}$.

\begin{proposition}\label{Zduality} There is a canonical, functorial isomorphism 
$$H^{1}(\mathcal{E},Z) \xrightarrow{\sim} Z^{1}(\Gamma, \widehat{Z})^{*},$$
where $\widehat{Z} \defeq \mathrm{Hom}(X^{*}(Z), \mathbb{Q}/\mathbb{Z}) \cong \mathrm{Hom}(X^{*}(Z), \mathbb{C}^\times).$
\end{proposition}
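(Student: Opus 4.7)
The plan is to compute $H^{1}(\mathcal{E}, Z)$ via an explicit \v{C}ech cochain description and then use the defining structure of $u$ to identify the result with $Z^{1}(\Gamma, \widehat{Z})^{*}$.

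First I would pick a \v{C}ech $2$-cocycle $a \in u(\overline{F}\otimes_{F}\overline{F}\otimes_{F}\overline{F})$ representing the class of $\mathcal{E}$ and work with the concrete model $\mathcal{E}_{a}$. In parallel with the description of $H^{1}(\mathcal{E}, Z \to G)$ recalled before \Cref{RigidTN}, a class in $H^{1}(\mathcal{E}, Z)$ is represented by a pair $(f, c)$, where $f \colon u \to Z$ is a morphism of algebraic $F$-groups and $c \in Z(\overline{F}\otimes_{F}\overline{F})$ satisfies the twisted cocycle relation $dc = f(a)$, modulo an equivalence relation generated by ordinary $Z$-valued coboundaries.

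The main structural ingredient is a canonical Pontryagin-type identification $\mathrm{Hom}_{F}(u, Z) \cong C^{1}(\Gamma, \widehat{Z})^{*}$ arising from the construction of $u$. Since $u = \varprojlim_{E/F, n} \mathrm{Res}_{E/F}(\mu_{n})/\mu_{n}$ and $Z$ is finite multiplicative, any such morphism $f$ factors through some finite quotient $\mathrm{Res}_{E/F}(\mu_{n})/\mu_{n}$. Computing $\mathrm{Hom}_{\Gamma}\bigl(X^{*}(Z), X^{*}(\mathrm{Res}_{E/F}(\mu_{n})/\mu_{n})\bigr)$ via Shapiro's lemma and the explicit description $X^{*}(\mathrm{Res}_{E/F}(\mu_{n})/\mu_{n}) = \ker\bigl(\mathrm{Ind}_{\Gamma_{E}}^{\Gamma_{F}}\mathbb{Z}/n \to \mathbb{Z}/n\bigr)$, then passing to the colimit, yields the desired isomorphism. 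The critical compatibility is that under this identification the assignment $f \mapsto f(a)$ corresponds, up to sign, to the coboundary operator $d \colon C^{1}(\Gamma, \widehat{Z})^{*} \to C^{2}(\Gamma, \widehat{Z})^{*}$; this is where the normalization that $\mathcal{E}$ represents $-1 \in H^{2}(F, u) = \widehat{\mathbb{Z}}$ must enter.

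Given these two inputs, the twisted cocycle relation $dc = f(a)$ dualizes to the assertion that the dual of $f$ is a $1$-cocycle on $\Gamma$ valued in $\widehat{Z}$, with $c$ providing exactly the witness that $f(a)$ becomes a coboundary. The cochain equivalence on pairs matches the fact that trivial pairs $(0, c)$ with $c$ an ordinary Galois coboundary of an element of $Z(\overline{F})$ correspond to the zero element of $Z^{1}(\Gamma, \widehat{Z})^{*}$ (no quotient by coboundaries is taken, since the ``$f$" data records the full cocycle, not its class). Functoriality in $Z$ is automatic from the functoriality of the cochain description and of character groups. The main obstacle I anticipate is verifying the precise coboundary compatibility in the Pontryagin duality step: it requires careful tracking of the \v{C}ech sign conventions and of the structural isomorphism $H^{2}(F, u) \cong \widehat{\mathbb{Z}}$, and is really the technical core of the proposition, following the same philosophy that underlies Kaletha's earlier identification in \Cref{RigidTN}.
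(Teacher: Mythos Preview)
The paper does not give a proof at all: it simply cites \cite[\S 6.1]{kaletha18a} for the characteristic zero case and \cite[Proposition~5.1]{Dillery23} for arbitrary nonarchimedean local fields. Your proposal, by contrast, sketches an actual argument via the \v{C}ech-cochain model of $\mathcal{E}_a$ and a Pontryagin-style identification $\mathrm{Hom}_F(u,Z)\cong C^1(\Gamma,\widehat{Z})^*$, which is broadly the strategy those references carry out. So there is no discrepancy in approach to flag; you are reconstructing what the cited sources do rather than diverging from the paper.

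That said, your sketch glosses over the most delicate point, which you correctly identify as the obstacle: the compatibility between $f\mapsto f(a)$ and the dual coboundary map. This is genuinely the technical heart of the argument in the references, and your description of it (``up to sign'', ``where the normalization $\ldots$ must enter'') is a placeholder rather than an argument. Also, one needs to be careful in positive characteristic that the \v{C}ech description over $\overline{F}\otimes_F\overline{F}$ really captures $H^1(\mathcal{E},Z)$ for finite multiplicative $Z$; this is handled in \cite{Dillery23} but is not automatic. If you want to write a self-contained proof rather than a citation, those are the two places requiring real work.
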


\begin{proof} In characteristic zero this is proved in \cite[\S 6.1]{kaletha18a}) and \cite[Proposition 5.1]{Dillery23} proves it for arbitrary nonarchimedean local fields.
\end{proof}

Let $z_{1}, z_{2}$ denote two rigidifications of
the same inner twist $(G', \xi)$ of $G$,
and set 
\[
\Irr\bigl(\pi_{0}\bigl(Z_{\widehat G}(\varphi)^+\bigr), z_{i}\bigr)
\defeq \tn{rec}_{\varphi, \mathfrak{w}, z_{i}}(\Pi_{\varphi}(G')).
\]

Since $z_{1}, z_{2}$ rigidify the same inner twist, there is some
$y \in Z^{1}(\mathcal{E}, Z)$ with $z_{2} = y \cdot z_{1}$.
We summarize the arguments in \cite[\S 6.3]{kaletha18a}
which explain how to extract from $y$ a character of
$\pi_{0}\bigl(Z_{\widehat G}(\varphi)^+\bigr)$, using \Cref{Zduality}. 
The differential (with respect to $W_F \times \SL_{2}$-cohomology) induces a map 
\[
d \colon \pi_{0}\bigl(Z_{\widehat G}(\varphi)^+\bigr) \to Z^{1}(\Gamma, \widehat Z),
\]
and so we obtain a character of $\pi_{0}\bigl(Z_{\widehat G}(\varphi)^+\bigr)$ by taking the composition $[y]^{*} \circ -d$, where we are denoting the dual of $[y] \in H^{1}(\mathcal{E}, Z)$ as in Proposition \ref{Zduality} by $[y]^{*}$. We then define a bijection 
\[
\Irr(\pi_{0}\bigl(Z_{\widehat G}(\varphi)^+\bigr), z_{1}) \to \Irr\bigl(\pi_{0}\bigl(Z_{\widehat G}(\varphi)^+\bigr), z_{2}\bigr)
\]
by twisting by this character. One checks that this is a well-defined map between the above two sets (and then it is evidently a bijection). It is shown loc.\ cit.\ that if restricting $\tn{rec}_{\varphi, \mathfrak{w}, z_{1}}$ gives the local Langlands correspondence for $\Pi_{\varphi}(G')$, then the composition of $\tn{rec}_{\varphi, \mathfrak{w}, z_{1}}$ with our above bijection gives the local Langlands correspondence for $\Pi_{\varphi}(G')$ as well, and is equal to $\tn{rec}_{\varphi, \mathfrak{w}, z_{2}}$.

Using this notation, we can formulate the following compatibility
of cuspidal support and change of rigidification.

\begin{proposition} \label{thm66}
The following diagram commutes:
\begin{center}
\begin{tikzcd}[column sep=huge]
\Phi_{\tn e}^Z(G) \rar{\chi_{y,-}} \dar{\Cusp} &
\Phi_{\tn e}^Z(G) \dar{\Cusp} \\
\bigsqcup\limits_{M\in\Lev(G)} \Phi_\tn{e,cusp}^Z(M)
\rar{\coprod_{M}\chi_{y,-}} &
\bigsqcup\limits_{M\in\Lev(G)} \Phi_\tn{e,cusp}^Z(M).
\end{tikzcd}
\end{center}
\end{proposition}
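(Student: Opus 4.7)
The plan is to reduce the claim to the compatibility between twisting by a character of the components group and cuspidal support on the nilpotent cone (\Cref{thm64}), using the fact that the construction $y \mapsto \chi_y$ is natural in the ambient group. More precisely, I would first unpack what $\chi_{y,-}$ does at the level of an enhanced $L$-parameter $(\phi,u,\rho)$. By construction, this twist acts trivially on $(\phi,u)$ and replaces $\rho$ by $\rho \otimes \chi_y$, where $\chi_y$ is the character of $\pi_0\bigl(Z_{\widehat G}(\phi,u)^+\bigr)$ obtained as $[y]^* \circ (-d)$. In particular, the group $H^+ = Z_{\widehat G^+}(\phi)$ and the unipotent element $u$ are preserved.

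Under the classical description (cf.~\Cref{thm32}) of enhancements $\rho$ as equivariant local systems on $[u]_{H^+}$, twisting by $\chi_y$ corresponds exactly to tensoring the associated perverse sheaf $\IC([u]_{H^+}, \cal E_\rho)$ with the pullback of $\chi_y$ along $\cal N_{H^+}/H^+ \to \tn{pt}/H^+$. Now \Cref{thm64} applies verbatim: if $[K^+, v, \varrho]_{H^+}$ is the quasi-cuspidal support of $(u,\rho)$, then $[K^+, v, \varrho \otimes \chi_y|_{K^+}]_{H^+}$ is the quasi-cuspidal support of $(u, \rho \otimes \chi_y)$. In particular $K^+$ and $v$ are unchanged, so that the Levi $L$-subgroup $\cal M = Z_{{}^LG}\bigl(Z^\circ(K^+)\bigr)$ assigned to $\Cusp$ is also unchanged. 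For the normalized version, the unramified twist $z = \chi_{(\phi,u),\rho}(q^{1/2})$ of \Cref{thm34} depends only on $(\phi, u)$ and $K^+$ and is therefore also unchanged by $\chi_{y,-}$.

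It remains to identify $\varrho \otimes \chi_y|_{K^+}$ with the result of applying $\chi_{y,-}$ on the Levi side, that is, to check that the character of $\pi_0\bigl(Z_{\widehat M}(\phi,v)^+\bigr)$ attached to $y$ restricts, via the canonical map $\pi_0\bigl(Z_{\widehat M}(\phi,v)^+\bigr) \to \pi_0\bigl(Z_{K^+}(v)\bigr)$, to the restriction of the character $\chi_y$ on $\pi_0\bigl(Z_{H^+}(u)\bigr)$ along the analogous map. This is purely a functoriality statement: the canonical isomorphism of \Cref{Zduality} is natural in the base group, and the differential $d$ is defined via the $W_F \times \SL_2$-action and therefore commutes with the passage from $H^+$ to its Levi $K^+$ and from $\widehat G$ to $\widehat M$. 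The diagram therefore commutes.

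The main obstacle I anticipate is purely bookkeeping: keeping straight the several instances of $\chi_y$ (as a character of $\pi_0\bigl(Z_{\widehat G^+}(\varphi)\bigr)$, of $\pi_0\bigl(Z_{\widehat M^+}(\psi,v)\bigr)$, of $\pi_0(H^+)$, and of $\pi_0(K^+)$), and verifying that the restriction diagrams relating them all commute. Once this naturality is set up carefully, the proof is immediate from \Cref{thm64}.
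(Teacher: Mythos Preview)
Your proposal is correct and follows essentially the same route as the paper's proof: both reduce to \Cref{thm64} after observing that $\chi_y$ extends naturally to a character of $\pi_0(H^+)$ via the same recipe $[y]^*\circ(-d)$, and then identify its restriction to the quasi-Levi with the corresponding $\chi_{y,\psi}$ on the Levi side. The paper frames the argument through the consequence (two rigidifications $z_1,z_2$ of the same inner twist yielding the same supercuspidal support), while you work directly with the diagram; your treatment is also slightly more explicit about the normalized twist $z$ of \Cref{thm34} being unchanged, which the paper leaves implicit.
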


Here $\chi_{y,-}$ acts on enhancements of $\varphi$
(an $L$-parameter for ${}^LG$ or ${}^LM$) via the character
$\chi_{y,\varphi}$ of $\pi_{0}\bigl(Z_{\widehat G}(\varphi)^+\big)$
attached to $y \in H^{1}(\mathcal{E}, Z)$ as explained above.

Consequently, the inertial class of the cuspidal pair $\frak s$
associated to a representation $\pi \in \Pi_{\varphi}(G')$
via \Cref{thm56} (as in \Cref{thm65}) does not depend on the choice of rigidification.

\begin{proof}  Let $\dot{\pi}_{i} = (z_{i}, \pi)$ be two rigidifications. By the above discussion, $\tn{rec}_{\mathfrak{w}}(\dot{\pi}_{2}) = \chi_{y}\cdot\tn{rec}_{\mathfrak{w}}(\dot{\pi}_{1})$. We need to compare $\Cusp(\varphi, \chi_{y} \tn{rec}_{\mathfrak{w}}(\dot{\pi}_{1}))$ to $\Cusp(\varphi, \tn{rec}_{\mathfrak{w}}(\dot{\pi}_{1}))$.

First, the group $H := Z_{\widehat{G}}(\varphi |_{W_F})$ is the same in both cases, as is the unipotent element in the $L$-parameter. Although $\chi_{y}$ is a character of $\pi_{0}\bigl(Z_{\widehat G}(\varphi)^+\bigr)$ (or $Z_{\widehat G}(\varphi)^+$) rather than $H^{+}$, we can define a similar character $\chi_{y,H^{+}}$ via the composition $[y]^{*} \circ -d$ applied to $H^{+}$ rather than $Z_{\widehat G}(\varphi)^+$. By construction, applying the differential $d$ to $H^{+}$ still lands in $Z^{1}(\Gamma, \widehat{Z})$, and since $\widehat{Z}$ is finite, this composition will factor through $\pi_{0}(H^{+})$, as in the $Z_{\widehat G}(\varphi)^+$-case. Moreover, the restriction of $\chi_{z,H^{+}}$ to $\pi_{0}(Z_{H^{+}}(u_{\varphi}))$ agrees with $\chi_{y,\varphi}$ via the identification of Corollary \ref{cuspidalparameterlemma}.

By \Cref{thm64}, if $[\mathcal{M}, v, \varrho]_{\mathcal{M}^{\circ}}$ is the quasi-cuspidal support associated to $(\varphi, \tn{rec}_{\mathfrak{w}}(\dot{\pi}_{1}))$ then the quasi-cuspidal support associated to $(\varphi, \chi_{y,\varphi} \tn{rec}_{\mathfrak{w}}(\dot{\pi}_{1})) $ is $$[\mathcal{M}, v, (\chi_{y,H^{+}}|_{\mathcal{M}}) \otimes \varrho]_{\mathcal{M}^{\circ}}.$$
Thus, by our construction of the map $\Cusp$, we find that if $\Cusp(\varphi, \tn{rec}_{\mathfrak{w}}(\dot{\pi}_{1})) = [{}^LM, \psi, \varrho]_{\widehat{G}}$, then 
\[
\Cusp(\varphi, \chi_{y,\varphi}\cdot\tn{rec}_{\mathfrak{w}}(\dot{\pi}_{1})) = [{}^LM, \psi, \chi_{y,\psi} \otimes \varrho]_{\widehat{G}}.
\]
Denote by $x_{1}, x_{2}$ the two transfers of $z_{1}, z_{2}$ to rigid inner twists of $M$. We have the identity $$\tn{rec}_{\mathfrak{w}_{M},x_{2}}^{-1} (\chi_{y,\psi} \otimes \varrho) = \tn{rec}_{\mathfrak{w}_{M},x_{1}}^{-1} (\varrho),$$ from which the result follows. \end{proof}

\subsection{Change of Whittaker datum}
\label{sec:corr:whittaker}
Although the parameterization $\tn{rec}_{\mathfrak{w}}$ of
the compound $L$-packets depends on the choice of Whittaker datum $\mathfrak{w}$,
the supercuspidal map does not.
As we mentioned in \Cref{thm65},
this observation leads to a compatibility condition
between cuspidal support and change of Whittaker datum,
which we explain in this section.

Fix a nontrivial character $\psi_0\colon F\to\bbC^\times$.
There is a natural construction of a Whittaker datum
from a quasi-pinning of~$G$ depending only on the choice of~$\psi_0$.
Given a quasi-pinning $p=(B,T,\{X_\alpha\}_{\alpha\in\Delta})$,
let $U$ be the unipotent radical of~$B$
and $U_+$ the smooth subgroup of~$U$
whose Lie algebra is spanned by
the root spaces for the roots of $\Phi^+\setminus\Delta$.
The inclusions $U_\alpha\to U$ of simple root groups
induce an isomorphism $U/U_+\simeq\prod_{\alpha\in\Delta} U_\alpha$.
The elements~$X_\alpha$ in turn yield
an isomorphism $U/U_+\simeq\prod_{\alpha\in\Delta/\Gamma}
\tn{Res}_{F_\alpha/F}\bbG_{\tn a}$.
Define the algebraic character $\psi_p\colon U/U_+\to\bbG_{\tn a}$
as the composition of this isomorphism with the componentwise trace-summation map
$(t_\alpha)_{\alpha\in\Delta/\Gamma}
\mapsto\sum_{\alpha\in\Delta/\Gamma}
\tn{Tr}_{F_\alpha/F}(t_\alpha)$.
On $F$-points, $\psi_p$ induces a map $U(F)/U_+(F)\to F$,
also denoted by~$\psi_p$.
From the quasi-pinning~$p$ we form the Whittaker datum $(B,\psi_0\circ\psi_p)$.

Every Whittaker datum is constructed from a quasi-pinning in this way.
Although we cannot recover the quasi-pinning from the Whittaker datum,
any two quasi-pinnings that yield the same Whittaker datum are $G(F)$-conjugate.
Moreover, the set of $G(F)$-conjugacy classes of quasi-pinnings of~$G$
is a $G_\tn{ad}(F)/G(F)$-torsor \cite[XXIV.3.10]{sga3-3}.
Since the assignment $p\mapsto(B,\psi_0\circ\psi_p)$
is $\Out(G)(F)$-equivariant,
the action of $G_\tn{ad}(F)$ on the set of Whittaker data
makes the set of $G(F)$-conjugacy classes of Whittaker data
a torsor for~$G_\tn{ad}(F)/G(F)$.

The bijection $p\mapsto(B,\psi_0\circ\psi_p)$
on $G(F)$-conjugacy classes depends on~$\psi_0$
in the following way.
Any other nontrivial character of~$F$
is of the form $a\psi_0$ for some $a\in F^\times$.
Then
\[
(a\psi_0)\circ\psi_p = \psi_0\circ\psi_{\rho(a)p}
\]
where $\rho\in X_*(T_\tn{ad})$ is the half-sum of positive coroots
(all taken with respect to~$p$).
But since $G_\tn{ad}(F)/G(F)$ is abelian,
the bijection on $G(F)$-conjugacy classes
is ultimately independent of the choice of~$\psi_0$.

We can restrict Whittaker data to Levi subgroups $M$ of~$G$ as follows.
Using the bijection above, it is enough to explain
how to restrict $G(F)$-conjugacy classes of quasi-pinnings of~$G$
to $M(F)$-conjugacy classes of quasi-pinnings of~$M(F)$.
Fix a minimal Levi subgroup~$T$ (a torus) of~$M$
and Borel subgroups $B_M$ of~$M$ and~$B$ of~$G$ containing~$T$.
Every quasi-pinning of~$G$ is rationally conjugate to one that begins with $(B,T)$,
and such a quasi-pinning $p=(B,T,\{X_\alpha\}_{\alpha\in\Delta})$ restricts
to the quasi-pinning $p|_M=(B_M,T,\{X_\alpha\}_{\alpha\in\Delta_M}$ of~$M$.
The $M(F)$-conjugacy class of the restriction is independent
of the representative of~$p$ because any two such quasi-pinnings of~$G$
(namely, those that begin with $(B,T)$) are $T(F)$-conjugate.
Returning to Whittaker data yields a restriction map
\[
\frak w \mapsto \frak w_M.
\]
The notation is slightly abusive because
the map is defined only on conjugacy classes
of Whittaker data, not on the Whittaker data themselves.

Let $(\phi,u)$ be an enhanced $L$-parameter.
When $F$ has characteristic zero,
\cite[\S4]{kaletha13} constructs a map
\begin{equation} \label{thm10}
\pi_0\bigl(Z_{\widehat G}(\phi)/Z(\widehat G)^\Gamma\bigr)
\to \bigl(G_\tn{ad}(F)/G(F)\bigr)^*
\end{equation}
which is natural with respect to passage to Levi subgroups.
In fact, such a map exists in positive characteristic as well:
one can use the same crossed-module arguments
from \cite[\S 5.5]{kaletha12} together with
\cite[Appendix A]{Dillery2} for the key case of tori.
Given two Whittaker data $\frak w$ and $\frak w'$,
let $(\frak w,\frak w')$ denote the pullback
of the corresponding character of $G_\tn{ad}(F)/G(F)$
along \eqref{thm10} and the natural projection
to the group $\pi_0\bigl(Z_{\widehat G}(\phi)^+\bigr)$,
with the evident dependence on~$\phi$
suppressed from the notation.

\begin{proposition} \label{thm95}
Let $\frak w$ and $\frak w'$ be Whittaker data
and let $(\varphi,\rho)$ be an enhanced $L$-parameter.
\begin{enumerate}
\item
$(\varphi,\rho)$ is cuspidal if and only if
$(\varphi,(\frak w,\frak w')\cdot\rho)$ is cuspidal.

\item
Change of Whittaker datum commutes with cuspidal support
in the sense that the following diagram commutes.

\begin{center}
\begin{tikzcd}[column sep=huge]
\Phi_{\tn e}^Z(G) \rar{(\frak w,\frak w')} \dar{\Cusp} &
\Phi_{\tn e}^Z(G) \dar{\Cusp} \\
\bigsqcup\limits_{M\in\Lev(G)} \Phi_\tn{e,cusp}^Z(M)
\rar{\coprod_M(\frak w_M,\frak w'_M)} &
\bigsqcup\limits_{M\in\Lev(G)} \Phi_\tn{e,cusp}^Z(M)
\end{tikzcd}
\end{center}
\end{enumerate}
\end{proposition}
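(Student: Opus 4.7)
The plan is to reduce both assertions to a combination of two inputs: the naturality of Kaletha's map \eqref{thm10} with respect to Levi subgroups, and the compatibility of quasi-cuspidal support with character twists established in \Cref{thm64}.

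First, I would unpack the definition of $(\frak w,\frak w')$ and reinterpret the action. By construction, $(\frak w,\frak w')$ is the pullback along \eqref{thm10} of the ratio character $\frak w/\frak w' \in (G_\tn{ad}(F)/G(F))^*$, so it defines a character $\chi_{\frak w,\frak w'}$ of $\pi_0\bigl(Z_{\widehat G}(\phi)^+\bigr)$, and its action on an enhancement $\rho$ is by tensoring with the restriction of $\chi_{\frak w,\frak w'}$ to $\pi_0(Z_{\widehat G^+}(\phi,u)) = \pi_0(Z_{H^+}(u))$. With this reinterpretation, part (1) is almost immediate: applying \Cref{thm64} inside $H^+$ to the central character twist by $\chi_{\frak w,\frak w'}|_{H^+}$ shows that the quasi-cuspidal support of $(u,\chi\cdot\rho)$ is obtained from the quasi-cuspidal support $[K^+,v,\varrho]$ of $(u,\rho)$ simply by tensoring $\varrho$ with $\chi_{\frak w,\frak w'}|_{K^+}$; in particular, $K^+=H^+$ holds before the twist if and only if it holds after, which means cuspidality is preserved in both directions. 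Combined with the fact that $\varphi$ itself is unchanged, discreteness and hence cuspidality of the enhanced parameter is invariant under the twist.

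For part (2), let $[\cal M, \psi, v, \varrho]_{\widehat G}$ denote $\Cusp(\varphi,\rho)$ with corresponding Levi $M$ of $G$. Applying \Cref{thm64} exactly as in the previous paragraph, together with the construction of $\Cusp$ from the quasi-cuspidal support plus an unramified correction (the cocharacter $\chi_{(\phi,u),\rho}$ from the proof of \Cref{thm34}, which depends only on $(\phi,u)$ and the conjugacy class of the Jacobson--Morozov $\SL_2$-triple, not on $\rho$), we obtain
\[
\Cusp\bigl(\varphi,\chi_{\frak w,\frak w'}\cdot\rho\bigr)
= [\cal M, \psi, v,\; \chi_{\frak w,\frak w'}|_{K^+}\otimes\varrho]_{\widehat G}.
\]
The residual task is to identify $\chi_{\frak w,\frak w'}|_{K^+}$ with the restriction of $\chi_{\frak w_M,\frak w'_M}$ to $\pi_0(Z_{\widehat M^+}(\psi,v)) = \pi_0(Z_{K^+}(v))$, which is exactly the naturality of \eqref{thm10} for the inclusion of the Levi: Kaletha's construction (\cite[\S4]{kaletha13} in characteristic zero, and the crossed-module argument of \cite[\S5.5]{kaletha12} together with \cite[Appendix~A]{Dillery2} for tori in positive characteristic) is built from compatible maps on each centralizer, and the restriction of a Whittaker datum to a Levi is defined precisely so that the comparison character $\frak w/\frak w'$ restricts to $\frak w_M/\frak w'_M$ in $(M_\tn{ad}(F)/M(F))^*$.

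The main obstacle, and the only genuinely substantive point, is that naturality step: one must check that Kaletha's duality pairing commutes with passing to a Levi subgroup and that our definition of $\frak w\mapsto\frak w_M$ via quasi-pinnings matches the restriction on the dual side. This is effectively contained in the references above once the Levi subgroup $\cal M$ is identified with ${}^LM$ for $M$ corresponding to $M'$; the remaining bookkeeping (handling the unramified twist $z^{\val_F}$ which relates $\phi$ to $\psi$, and checking it does not affect the characters since $\chi_{\frak w,\frak w'}$ is trivial on $Z(\widehat G)^{\Gamma,+}$) is routine.
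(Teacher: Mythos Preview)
Your proposal is correct and follows essentially the same route as the paper: both parts reduce to \Cref{thm64} together with the naturality of \eqref{thm10} with respect to Levi subgroups. The one notable difference is in part~(1): the paper argues more directly, observing that $(\frak w,\frak w')$ is inflated from $\pi_0(H_\phi^+)$ and hence restricts trivially to $\pi_0\bigl(Z_{H_\phi^{+\circ}}(u)\bigr)$, so twisting leaves $\res^{H^+}_{H^{+\circ}}(\rho)$ unchanged and cuspidality (which by \Cref{thm63} depends only on that restriction) is manifestly preserved. Your argument, deducing $K^+=H^+$ invariance from \Cref{thm64}, is equally valid but slightly less economical; on the other hand it has the virtue of setting up part~(2) in the same stroke. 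Your remarks on the unramified correction $z$ being unaffected (since $\phi$, $u$, $v$, and $K^+$ are all unchanged by the twist) and on the triviality of $(\frak w,\frak w')$ on $Z(\widehat G)^{\Gamma,+}$ are correct bookkeeping that the paper leaves implicit.
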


The diagram also commutes
if we use $\uCusp$ in place of $\Cusp$.

Consequently, the (inertial class of the) cuspidal pair $\frak s$
associated to a representation $\pi \in \Pi_{\varphi}(G')$
via \Cref{thm56} (as in \Cref{thm65})
does not depend on the choice of Whittaker datum.

\begin{proof}
Let $H_\phi^+ = Z_{\widehat G}(\phi)$,
so that $(\frak w,\frak w')\colon\pi_0(H_\phi^+)\to\bbC^\times$.
The first part follows from the fact that
in this disconnected setting cuspidality, by definition,
depends only on the restriction of~$\rho$
to $\pi_0\bigl(Z_{H_\phi^{+\circ}}(u)\bigr)$
and twisting by $(\frak w,\frak w')$ leaves this restriction unchanged.
The second part follows from the naturality of \eqref{thm10} and Proposition \ref{thm64}.
\end{proof}

\subsection{Non-singular enhanced $L$-parameters}
Following the recent work of Aubert and Aubert--Xu
(\cite{AX22, aubert23}) we extend the definition of non-singular $L$-parameters to the rigid setting and connect it to our framework.

First, we briefly recall the notion of a \mathdef{nonsingular representation} of a connected reductive $F$-group $G'$. Such representations arise from the following arithmetic data.

\begin{definition}
A \mathdef{tame $F$-non-singular pair} $(S, \vartheta)$ consists of $S$ an elliptic maximal torus of $G'$ split over a tamely ramified extension $E/F$ along with a character $\vartheta \colon S \to C^\times$ such that:
\begin{enumerate}
\item
$p$ is a very good prime for~$G'$.

\item
Inside the unique reductive $F$-group~$G'^0$ containing $S$ and with root system
\[
R_{0+} = \bigl\{\alpha \in \Phi(G'_{F^{\text{sep}}}, S_{F^{\text{sep}}})
\mid \bigl(\vartheta\circ N_{E/F}\circ \alpha^{\vee}\bigr)(E_{0+}^{\times} = 1\bigr\}
\]
the torus $S$ is maximally unramified.

\item
The character $\vartheta$ is \mathdef{$F$-non-singular} with respect to $G'^{0}$.
\end{enumerate}
\end{definition}

Kaletha showed \cite{kaletha19} how to rewrite a certain case of
Yu's construction of supercuspidal representations \cite{yu01}
in terms of tame $F$-non-singular pairs,
producing from such a pair $(S,\vartheta)$
a supercuspidal representation~$\pi_{S,\vartheta}$.
A representation of $G'(F)$ is called \mathdef{non-singular}
if it arises by this construction.
He isolated a certain class of
\mathdef{torally wild} (supercuspidal) $L$-parameters.
Such $L$-parameters $(\phi,u)$ are always \mathdef{supercuspidal},
meaning (by definition) that $u=1$.%
\footnote{In what follows we suppress the trivial unipotent element
from our notation for supercuspidal $L$-parameters.}
Conversely, when $p$ does not divide the order
of the absolute Weyl group of~$G'$,
every supercuspidal $L$-parameter is torally wild.
Working backwards from the parameterization of non-singular
supercuspidals~$\pi$ by tame $F$-non-singular pairs,
Kaletha attached to each nonsingular
supercuspidal $\dot\pi$ of a rigid inner twist of~$G$
an enhanced $L$-parameter $(\phi_{\dot\pi},\rho_{\dot\pi})$
with $\phi_{\dot\pi}$ torally wild.

So all in all, Kaletha's work constructs
an $L$-packet for every supercuspidal $L$-parameter
when $p$ is not too small,
fleshing out a large portion of the local Langlands correspondence
for supercuspidal representations.

It is natural to extend these developments
to nonsupercuspidal representations via the Bernstein decomposition.
Since non-singularity is preserved by unramified twisting,
we can define a Bernstein block to be \mathdef{non-singular}
if its underlying class of supercuspidal representations is non-singular.
On the level of representations, one has the following
definitions of Aubert \cite{aubert23}.

\begin{definition}
An irreducible smooth representation of $G'$ is \mathdef{non-singular}
if its supercuspidal support is non-singular.
\end{definition}

Similarly, on the Galois side, Aubert isolated
the class of enhanced $L$-parameters that ought
to correspond to non-singular representations.
Her definition makes sense more generally 
in the setting of \Cref{sec:springer}
where $H^+$ is a (possibly disconnected) reductive group over~$\bbC$.

\begin{definition}
A quasi-cuspidal support $[K^+, v, \varrho]_{K^+}$ is \mathdef{semisimple} if $v = 1$. 
\end{definition}

Aubert shows in \cite[Lemma 4.2.4]{aubert23} that the semisimple quasi-cuspidal supports are exactly those where $K^+ = Z_{H^+}(\mathcal{T})$ for a maximal torus $T \subset K^{+\circ}$ and
$\varrho \in \Irr\bigl(\pi_0(K^+)\bigr)$. 
Now specialize this definition to $L$-parameters, taking $H^+ = Z_{\widehat{G}^{+}}(\phi)$.

\begin{definition}
A cuspidal datum $(\cal M, \psi, v, \varrho)$ is \mathdef{semisimple}
if the underlying quasi-cuspidal support is semisimple. 
An enhanced $L$-parameter $(\phi,u,\rho)$ of ${}^LG$ is \mathdef{non-singular}
if its cuspidal support is semisimple.
\end{definition}

In other words, an enhanced $L$-parameter is non-singular
if and only if (the $L$-parameter in) its cuspidal support is supercuspidal.
Unlike in the definition of non-singular representations,
there is no technical obstruction to formulating
the definition of non-singular enhanced $L$-parameter for general~$G$ and~$p$,
and we expect this notion to be quite useful for the general theory.

Aubert and Xu have made partial progress towards
proving a local Langlands correspondence for non-singular representations
via the strategy of \Cref{thm61}.
The first step is to extract from Kaletha's construction
an enhanced $L$-parameter $[\psi_{\sigma}, \rho^\tn{sc}_{\sigma}]_{\widehat{M}}$
in the sense of \cite{AMS18}.
Proposition~3.1.51 of \cite{AX22} shows that the construction
$\sigma \mapsto [\psi_{\sigma}, \rho^{\tn{sc}}_{\sigma}]_{\widehat{M}}$
is compatible with respect to unramified twists (\Cref{compatibleunram}).
When in addition \Cref{compatibleouter} holds
and the families of cocyles $\natural$ and $\kappa$ are trivial
for a particular non-singular Bernstein block
$\Pi^{\frak s_{G'}}(G')$ in question,
we say $\frak s_{G'}$ is \mathdef{good}.
In this case, we have a correspondence.

\begin{theorem}[{\cite[Theorem 3.1.32]{AX22}}]
Let $s$ be a good nonsingular cuspidal datum for~$G$.
The assignment $\sigma \mapsto [\psi_{\sigma}, \rho^\tn{sc}_{\sigma}]_{\widehat{G}}$
induces horizontal bijections fitting into the commutative diagram
\[
\begin{tikzcd}
\Pi^{\frak s_{G'}}(G') \rar[leftrightarrow] \arrow{d} &
\Phi_\tn{AMS}^Z({}^LG)^{\hat{\frak s}_G} \arrow{d} \\
(\Pi^{\frak s_{M'}}(M')\sslash W_{{\frak s}_M})_\natural \rar[leftrightarrow] &
(\Phi_\tn{AMS}^Z({}^LM)^{\hat{\frak s}_M} \sslash W_{\hat{\frak s}_M})_\kappa
\end{tikzcd}
\]
\end{theorem}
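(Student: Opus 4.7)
The plan is to split the diagram into the horizontal bijection on the supercuspidal layer, which is already known, and the passage from this supercuspidal layer to entire Bernstein blocks via twisted extended quotients on both sides.

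First I would establish the bottom horizontal bijection. Given a non-singular supercuspidal $\sigma \in \Pi^{\frak s_{M'}}(M')$, Kaletha's construction produces a torally wild $L$-parameter $\psi_\sigma$ with enhancement $\rho^\tn{sc}_\sigma$; by construction the underlying $L$-parameter is supercuspidal (that is, the unipotent element is trivial), so the enhanced parameter $[\psi_\sigma,1,\rho^\tn{sc}_\sigma]_{\widehat M}$ is cuspidal in the sense of \Cref{sec:bernstein} and has semisimple cuspidal support, matching the non-singular condition on the Galois side. This gives a well-defined map $\Pi^{\frak s_{M'}}(M') \to \Phi^Z_\tn{AMS,cusp}({}^LM)^{\hat{\frak s}_M}$, which is a bijection because Kaletha's non-singular supercuspidal construction is a bijection onto its image.

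Second I would propagate this bijection to entire Bernstein blocks by the strategy of \Cref{thm61}. By hypothesis $\frak s_{G'}$ is good, which unpacks as: (i) Kaletha's construction on the cuspidal layer is compatible with unramified twists (\cite[Prop.~3.1.51]{AX22}, i.e.\ our \Cref{compatibleunram}); (ii) it is $W({}^LG,{}^LM)_{\hat{\frak s}_M}$-equivariant (our \Cref{compatibleouter}); and (iii) the cocycle families $\natural$ on the automorphic side and $\kappa$ on the Galois side are trivial for this block. Property (i) ensures the induced bijection descends to $X^*_\tn{nr}(M)$-orbits of cuspidal pairs, property (ii) ensures equivariance for the relevant Weyl group action, so both sides admit parallel untwisted extended quotient descriptions. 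On the automorphic side this description is Solleveld's solution to the ABPS conjecture, and on the Galois side it is our \Cref{AMS9.3bis}.

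Finally I would combine these ingredients. Once the bottom row is fixed and is both $W$-equivariant and $X^*_\tn{nr}(M)$-equivariant, the two extended quotient descriptions can be identified block by block, producing the top horizontal bijection. Commutativity of the square is then automatic from the construction: the left vertical arrow is the supercuspidal support map by definition, while the right vertical arrow, by \Cref{thm56} applied within our framework, matches the cuspidal support map $\Cusp$ on the Galois side. The hard part is entirely contained in property (iii) of goodness, namely the simultaneous triviality of $\natural$ and $\kappa$: this is a block-by-block verification that the 2-cocycle produced by our \Cref{thm87} matches, up to a coboundary, the cocycle arising on the automorphic side from analytic Bernstein--Zelevinsky theory, and it is precisely the content of \cite[Theorem 3.1.32]{AX22} to which we defer.
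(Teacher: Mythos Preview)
Your proposal is correct and aligns with the paper's treatment: the paper does not prove this theorem at all but simply cites it as \cite[Theorem~3.1.32]{AX22}, and the brief description of the argument given in the proof of the subsequent Proposition (the rigid-enhancement analogue) matches your outline---namely, the key step is that $\sigma\mapsto[\psi_\sigma,\rho_\sigma]$ induces a bijection on the twisted extended quotients, after which the top bijection is obtained by composing around the square. Your final sentence, deferring the cocycle-triviality verification to \cite{AX22}, is exactly what the paper does.
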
 

Here we write ``AMS'' in subscript to mean that
we use the notion of enhanced $L$-parameters from \cite{AMS18}
This result is also true for rigid enhancements.

\begin{proposition}
Let $s$ be a good nonsingular cuspidal datum for~$G$.
The assignment $\sigma \mapsto [\psi_{\sigma}, \rho_{\sigma}]_{\widehat{G}}$
induces horizontal bijections fitting into the commutative diagram
\[
\begin{tikzcd}
\Pi^{\frak s_{G'}}(G') \rar[leftrightarrow] \arrow{d} &
\Phi_\tn{e}^Z({}^LG)^{\hat{\frak s}_{G'}} \arrow{d} \\
(\Pi^{\frak s_{M'}}(M')\sslash W_{{\frak s}_M})_\natural \rar[leftrightarrow] &
(\Phi_\tn{e}^Z({}^LM)^{\hat{\frak s}_M} \sslash W_{\hat{\frak s}_M})_\kappa
\end{tikzcd}
\]
\end{proposition}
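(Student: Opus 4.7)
The plan is to bootstrap from the AMS version of the result, using the explicit comparison between rigid enhancements and AMS enhancements recalled in \Cref{thm50}. First, I would fix the rigidification $\mathcal{T}$ of $G'$ associated to $\frak s_{G'}$, together with a lift $\mathcal{T}^{\tn{sc}}$ to $Z^1(\cal E, Z(\widehat G_\tn{sc}) \to G_\tn{sc})$, and let $\zeta_{\mathcal{T}}$ and $\zeta_{\mathcal{T}^\tn{sc}}$ denote the corresponding characters of $\pi_0\bigl(Z(\widehat G)^{\Gamma,+}\bigr)$ and $Z(\widehat G_\tn{sc})$. By \Cref{thm50}, the assignment $\rho \mapsto \rho \otimes \zeta_{\mathcal{T}}$ is a canonical bijection
\[
\Phi_\tn{e}^Z({}^LG)^{\hat{\frak s}_{G'}} \simeq \Phi_\tn{AMS}^Z({}^LG)^{\hat{\frak s}_{G'}},
\]
and similarly for~$M'$ in place of~$G'$ using the induced rigidification~$\mathcal{T}_M$ of~$M'$ (which exists by \Cref{LeviRIT}).

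Second, I would compose this bijection horizontally with the Aubert--Xu theorem to obtain the top and bottom horizontal maps in the desired diagram. The content of the proposition is then that these composite maps are compatible with the vertical cuspidal-support-type maps (producing the commutative square) and that they arise from the advertised assignment $\sigma \mapsto [\psi_{\sigma}, \rho_{\sigma}]_{\widehat G}$. The second point is immediate from Kaletha's construction, since $\rho_{\sigma} = \rho_{\sigma}^\tn{sc} \otimes \zeta_{\mathcal{T}}$ by definition of the rigid enhancement attached to a non-singular supercuspidal. For compatibility with cuspidal support, I would apply \Cref{thm53} (which says that $\rho$ and its image under the unnormalized cuspidal support agree on the restriction to $Z(\widehat G)^{\Gamma,+}$): this guarantees that twisting by $\zeta_{\mathcal{T}}$ on the $G$-side and by $\zeta_{\mathcal{T}_M}$ on the $M$-side are compatible, so our cuspidal support map $\Cusp$ in the rigid setting corresponds to the AMS cuspidal support map on the subsets of enhanced $L$-parameters with central character $\zeta_{\mathcal{T}}$.

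Third, I would verify that the twisted extended quotient structures match under the bijection. The Weyl group $W_{\hat{\frak s}_M}$, the inertial orbit $\hat{\frak s}_{\cal M^\circ}$, and the cocycle $\kappa_{\hat{\frak s}_M}$ produced in \Cref{sec:bernstein:quotient} are constructed entirely from the underlying $L$-parameter and the image of the enhancement under the quasi-cuspidal support construction of \Cref{sec:param:fibers}; twisting $\rho$ by the fixed character $\zeta_{\mathcal{T}}$ (which is invariant under $W({}^LG,{}^LM)$ since it factors through the $W$-equivariant quotient $\pi_0\bigl(Z(\widehat G)^{\Gamma,+}\bigr)$) commutes with all of these operations. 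Hence the bijection $\rho \mapsto \rho \otimes \zeta_{\mathcal{T}}$ intertwines the AMS and rigid twisted extended quotient structures, and the commutative square for the rigid enhancement descends from the one proved by Aubert--Xu.

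The main obstacle I anticipate is bookkeeping, not mathematical substance: one must check carefully that $\zeta_{\mathcal{T}}$ transfers correctly through the cuspidal support construction (so that ``the twist of the cuspidal support equals the cuspidal support of the twist'', in the precise sense needed for the diagram), and that the $W_{\hat{\frak s}_{\cal M^\circ}}$-action used to form the extended quotient is unaffected by this twist. Both of these are guaranteed by \Cref{thm94,thm53} on the nose, so no new input beyond the Aubert--Xu theorem and our framework is required.
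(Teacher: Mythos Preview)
Your approach is genuinely different from the paper's. The paper does not reduce to the Aubert--Xu theorem via the comparison of \Cref{thm50}; instead it observes that the \emph{identical argument} of \cite{AX22} runs verbatim in the rigid setting, because all of the ingredients (the cuspidal support map, the twisted extended quotient description of $\Phi_\tn{e}^Z({}^LG)^{\hat{\frak s}}$ from \Cref{AMS9.3}, the parameterization $q\Sigma_{q\frak t}$) have been constructed directly for rigid enhancements in \Cref{sec:bernstein}. The key step is showing that $\sigma \mapsto [\psi_\sigma,\rho_\sigma]$ gives a bijection on the bottom twisted extended quotients; the top bijection is then defined as the composite of the other three arrows in the square, and commutativity is by construction.

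Your reduction has a real gap. To transport the Aubert--Xu square along the bijection of \Cref{thm50}, you need the two right vertical maps---the AMS cuspidal-support-plus-parameterization and the paper's---to be intertwined by $\rho \mapsto \rho \otimes \zeta_{\mathcal T}$. Citing \Cref{thm53} and \Cref{thm94} does not establish this: those results say that the paper's $\Cusp$ preserves the $Z(\widehat G)^{\Gamma,+}$-character of the enhancement, not that the paper's $\Cusp$ and the AMS $\Cusp$ agree under the identification. The paper explicitly flags in \Cref{thm96} that agreement of the two quasi-cuspidal support maps is \emph{not} verified. Your route can probably be completed along the lines of \cite[\S7.2--7.3]{Solleveld}, but only for $Z = Z(G_\tn{der})$ (a restriction hard-wired into \Cref{thm50}), and it requires more than the bookkeeping you sketch. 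The paper's direct re-running of the argument sidesteps both the unverified comparison and the restriction on~$Z$.
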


\begin{proof}
The identical proof as in \cite{AX22} works here.
The key part of their argument is showing that the map
$\sigma \mapsto [\psi_{\sigma}, \rho_{\sigma}]$ induces a bijection on the twisted extended quotients
\[
(
\Omega_{\rig,Z}^{\dot{\frak s}_M}(M) \sslash W_{{\frak s}_M})_{\kappa}
\to (\Phi_{\tn e}^Z({}^LM)^{\dot{\hat{\frak s}}_M} \sslash W_{\hat{\frak s}_M})_{\kappa}.
\]
From here, the desired bijection is defined as the composition
of the other three maps (or their inverses) in the above diagram.
\end{proof}

\appendix
\section{Tables of nilpotent orbits
and cuspidal perverse sheaves}
\label{sec:tables}
In spite of the elaborate trappings of
the generalized Springer correspondence,
nilpotent orbits are classified combinatorially
and cuspidal perverse sheaves are rare enough
that both objects can be explicitly tabulated.
For the edification of the reader we collect the tabulations here,
taking the descriptions of nilpotent orbits from
\cite[5.1, 6.1, 8.4]{collingwood_mcgovern93}
and the classification of cuspidal perverse sheaves
from \cite[\S 10--15]{lusztig84b}
(and, for the Spin group, \cite[4.9]{lusztig_spaltenstein85}).
In each of the following sections,
let $G$ be a simply-connected group
of the given type.
(\cite[10.1]{lusztig84b} explains how to reduce to this case.)
Given a nilpotent orbit~$\cal O$, let
\[
A(\cal O) \defeq \pi_0\bigl(Z_G(u)\bigr)
\]
where $u\in\cal O$.
Although this group depends on~$u$,
its (inner) isomorphism class does not.
We will explicitly describe the cuspidal support map
for the exceptional groups,
but for the four infinite families
we refer the read to the combinatorial
description of Lusztig's original article
\cite[\S 11--14]{lusztig84b}.
Abbreviate ``equivariant perverse sheaf on the nilpotent cone''
to ``perverse sheaf''.

\subsection{Classical groups}
Recall that a \mathdef{partition}
of an integer~$n$ is a nonincreasing finite sequence
\[
\lambda = (\lambda_1 \leq \lambda_2 \leq \cdots \leq \lambda_k)
\]
of integers such that $\sum_{i=1}^k \lambda_i = n$.
The integers $\lambda_i$ are called the \mathdef{parts} of $\lambda$,
and for $d$ a part of~$\lambda$,
the number of $i$ such that $\lambda_i=d$ is
called the \mathdef{multiplicity} of~$d$ in~$\lambda$.
Nilpotent orbits of classical groups
are parameterized in terms of certain partitions.
The bijection is quite explicit
and ultimately passes through the Jordan matrices
\[
\begin{bmatrix}
0 & 1 \\
  & 0 & 1 \\
  &   & \ddots & \ddots  \\
  &   &        & 0  \\
\end{bmatrix}
\]
of elementary linear algebra.
Given a partition~$\lambda$,
we will write $\cal O_\lambda$ for the corresponding
nilpotent orbit.
There are two caveats to this notation.
\begin{enumerate}
\item
In types $B$, $C$, and~$D$,
nilpotent orbits are not parameterized by arbitrary partitions,
but by partitions where every even part (in types $B$ and~$D$)
or odd part (in type~$C$) has even multiplicity.

\item
In type~$D$, some partitions~$\lambda$ correspond to
two distinct nilpotent orbits.
We will denote them by $\cal O_\lambda^\tn{I}$
and $\cal O_\lambda^\tn{II}$.
\end{enumerate}
The group $A(\cal O_\lambda)$
has a simple description in terms of~$\lambda$.
For the description of cuspidality,
we recall that a number is \mathdef{triangular}
if it is of the form $1 + 2 + \cdots + d = d(d+1)/2$.

\subsubsection{Type \texorpdfstring{$A_n$}{Aₙ}}
Nilpotent orbits are in bijection with
partitions~$\lambda$ of~$n+1$ and
\[
A(\cal O_\lambda) \simeq \bbZ/c_\lambda\bbZ
\]
where $c_\lambda$ is the gcd of $(\lambda_1,\dots,\lambda_k)$.
The only nilpotent orbit that supports
a cuspidal perverse sheaf is the regular orbit $\cal O_{n+1}$,
and a character of $A(\cal O_{n+1})$ is cuspidal
if and only if it has order~$n+1$.

\subsubsection{Type \texorpdfstring{$B_n$}{Bₙ}}
Nilpotent orbits are in bijection with partitions of~$2n+1$
for which every even part has even multiplicity.
To describe $A(\cal O_\lambda)$,
let $a_\lambda$ be the number of distinct odd parts of~$\lambda$.
If some odd part of $\lambda$ has multiplicity $\geq2$ then
\[
A(\cal O_\lambda) \simeq (\bbZ/2\bbZ)^{a_\lambda-1}.
\]
Otherwise, if every odd part of $\lambda$
has multiplicity $1$ then
$A(\cal O_\lambda)$ is a certain
central extension of $\bbZ/2\bbZ$ by
$(\bbZ/2\bbZ)^{a_\lambda-1}$
whose precise description can be found
in \cite[\S 14.3]{lusztig84b}.
The group $\Spin_{2n+1}$ has a cuspidal sheaf
if and only if $2n+1$ is either square or triangular.
In the square case, where $2n+1$ has a partition as
$2n + 1 = 1 + 3 + 5 + \cdots$, the sheaf is supported on
\[
\cal O_{1,3,5,\dots}
\]
and has trivial central character.
Similarly, in the triangular case,
the sheaf is supported on either%
\footnote{Say $n = d(d-1)/2$. 
A short calculation shows that the first case happens
when $d$ is even and the second when $d$ is odd.}
\[
\cal O_{1,5,9,\dots}
\qquad\tn{or}\qquad
\cal O_{3,7,11,\dots},
\]
has nontrivial central character,
and corresponds to a representation of $A(\cal O)$
of dimension $2^{\lfloor(a_\lambda-1)/2\rfloor}$.
So if $2n+1$ is both triangular and square,
the smallest such $n$ being $n=612$ and $706860$,
then there are two cuspidal sheaves.

\subsubsection{Type \texorpdfstring{$C_n$}{Cₙ}}
Nilpotent orbits are in bijection with partitions of~$2n$
for which every odd part has even multiplicity.
Let $b_\lambda$ denote the number of
distinct even (nonzero) parts of~$\lambda$.
Then
\[
A(\cal O_\lambda) \simeq (\bbZ/2\bbZ)^{b_\lambda}.
\]
The group $\Sp_{2n}$ admits a cuspidal perverse sheaf
if and only if $n$ is triangular,
in which case $2n$ has a partition as
$2n=2+4+6+\cdots$ and the sheaf is supported on the orbit
\[
\cal O_{2,4,6,\dots}.
\]
The central character is trivial when $n$ is even
and nontrivial when $n$ is odd.

\subsubsection{Type \texorpdfstring{$D_n$}{Dₙ}}
Call a partition~$\lambda$ \mathdef{very even}
if every part of~$\lambda$ is even with even multiplicity.
Nilpotent orbits are in bijection with partitions of~$2n$
for which every even part has even multiplicity,
except that very even partitions give rise
to two partitions, denoted $\cal O_\lambda^\tn{I}$
and $\cal O_\lambda^\tn{II}$.
The description of $A(\cal O_\lambda)$
is exactly the same as in type~$B_n$,
except that when every odd part of $\lambda$
has multiplicity $1$ we must write $\max(0,a_\lambda-1)$
instead of $a_\lambda-1$ to account
for the possibility that $a_\lambda=0$, 
that is, there are no odd parts.
The description of cuspidal sheaves
is nearly the same as in type~$B_n$.
The group $\Spin_{2n}$ has a cuspidal sheaf
if and only if $2n$ is either square or triangular.
In the square case,
where $2n$ has a partition as $2n = 1 + 3 + 5 + \cdots$,
the sheaf is supported on
\[
\cal O_{1,3,5,\dots}
\]
and its central character is trivial when $n/2$ is even
and is nontrivial but descends to $Z(\SO_{2n})$ when $n/2$ is odd.
If $2n$ is triangular, the orbit
\[
\cal O_{1,5,9,\dots}
\qquad\tn{or}\qquad
\cal O_{3,7,11,\dots}
\]
supports two cuspidal sheaves,
one for each of the remaining central characters,
whose corresponding representation of $A(\cal O)$
has dimension $2^{\lfloor(a_\lambda-1)/2\rfloor}$.
So if $2n$ is both triangular and square,
the smallest such $n$ being $n=18$ and~$20808$,
then there are two cuspidal sheaves.

\subsection{Exceptional groups}
The nilpotent orbits of the exceptional groups
are parameterized not by partitions
but by Bala--Carter diagrams,
certain labeled Dynkin diagrams
whose precise definition is not relevant for us.
The exceptional nilpotent orbits,
together with their dimensions and $A(\cal O)$'s,
are tabulated in \cite[8.4]{collingwood_mcgovern93}.
Without reproducing these tables,
we give a flavor for them by recording
the possible values of $A(\cal O)$
and the number of times each occurs.

Our tables also indicate the central characters.
When $G$ is of type $E_8$, $F_4$, and~$G_2$
the center is trivial and there is nothing to say.
In types $E_6$ and~$E_7$, the centers are nontrivial and
isomorphic to $\mu_3$ and~$\mu_2$ respectively.
So $Z(G)$ either maps injectively into~$A(\cal O)$
or has trivial image.
In the latter case we write $\mu_3$
or~$\mu_2$ (respectively) in the table
to indicate the image of the center.

As for cuspidality, all simply-connected exceptional groups
have exactly one cuspidal perverse sheaf
except for~$E_6$, which has two.
In every case the nilpotent orbit~$\cal O_\tn{cusp}$
that supports a cuspidal perverse sheaf
can be described in the following way:
$\cal O_\tn{cusp}$ is distinguished and of minimal dimension
among the distinguished orbits.
In addition, when $G$ does not have type~$E_6$,
the orbit $\cal O_\tn{cusp}$ is the unique nilpotent orbit
for which $A(\cal O_\tn{cusp})$ has maximum cardinality.

The group $A(\cal O_\tn{cusp})$ is 
the product of $Z(G)$ and a symmetric group on~$\leq5$ letters.
The representations of $A(\cal O_\tn{cusp})$
that correspond to cuspidal perverse sheaves
are the product of the sign character of the symmetric group
and a central character, which in types $E_6$ and~$E_7$
must be nontrivial.

The cuspidal support map is no more complicated
than it must be, in light of the fact that
it preserves the central character.
When the center is trivial, in types $E_8$, $F_4$, and~$G_2$,
every noncuspidal perverse sheaf lies in the principal series,
and when the center is nontrivial, in types $E_6$ and~$E_7$,
the cuspidal support of a noncuspidal perverse sheaf
is determined by its central character.
In the table we use boldface to indicate the orbit
that supports a cuspidal sheaf.
The reader may find it illuminating to compare our table
with the tables in the appendix to \cite{achar_henderson_juteau_riche17b}
for $\ell\gg0$.

\begin{table}[h]
\caption{$\#\{\cal O : A(\cal O) = A\}$ for $G$ exceptional}
\begin{tabular}{c@{\hskip 3em}ccccc@{\hskip 2em}cc@{\hskip 2em}ccc} \toprule
& \multicolumn{10}{c}{$A$} \\ \cmidrule{2-11}
$G$ & triv & $S_2$ & $S_3$ & $\bm{S_4}$ & $\bm{S_5}$
    & $\mu_3$ & $\bm{\mu_3\times S_2}$
    & $\mu_2$ & $\mu_2\times S_2$ & $\bm{\mu_2\times S_3}$ \\ \midrule
$G_2$ & 4  &    & \textbf{1} \\
$F_4$ & 9  & 6  &   & \textbf{1} & \\
$E_6$ & 13 & 1  & 1 &   &   & 5 & \textbf{1} \\
$E_7$ & 17 & 8  & 1 &   &   &   &   & 15 & 3 & \textbf{1} \\
$E_8$ & 38 & 25 & 6 &   & \textbf{1} \\ \bottomrule
\end{tabular}
\end{table}

\subsubsection{Type \texorpdfstring{$E_6$}{E₆}}
A perverse sheaf of central character~$\chi$
has cuspidal support $[T,1,\overline\bbQ_\ell]_G$ if $\chi=1$
and is either cuspidal or has cuspidal support
$[L,\cal O_\tn{reg},\cal E_\chi]$ if~$\chi\neq1$.
Here $L$ is a Levi subgroup with $L_\tn{sc} = \SL_3^2$ and $\pi_0(Z_L)\simeq\mu_3$
(see \cite[15.1(b)]{lusztig84b} for a more precise description),
$\cal O_\tn{reg}$ is its regular nilpotent orbit,
and $\cal E_\chi$ is the unique cuspidal perverse sheaf
of~$L$ with central character~$\chi$.

\subsubsection{Type \texorpdfstring{$E_7$}{E₇}}
A perverse sheaf of central character~$\chi$
has cuspidal support $[T,1,\overline\bbQ_\ell]_G$ if $\chi=1$
and is either cuspidal or has cuspidal support
$[L,\cal O_\tn{reg},\cal E_\chi]$ if~$\chi\neq1$.
Here $L$ is a Levi subgroup with $L_\tn{sc} = \SL_2^3$ and $\pi_0(Z_L)\simeq\mu_2$
(see \cite[15.2(b)]{lusztig84b} for a more precise description),
$\cal O_\tn{reg}$ is its regular nilpotent orbit,
and $\cal E_\chi$ is the unique cuspidal perverse sheaf
of~$L$ with central character~$\chi$.

\subsubsection{Types \texorpdfstring{$E_8$}{E₈},
\texorpdfstring{$F_4$}{F₄}, and
\texorpdfstring{$G_2$}{G₂}}
Every noncuspidal perverse sheaf has cuspidal support
$[T,1,\overline\bbQ_\ell]_G$.

\bibliography{rigid-galois-bernstein.bib}
\bibliographystyle{amsalpha}

\end{document}